\documentclass{amsart}
\usepackage{amsmath, amssymb, amsthm, color, graphicx, enumitem, epsfig, imakeidx}
\usepackage[hidelinks]{hyperref}
\usepackage{chngcntr}
\counterwithin{equation}{section}

\newtheorem{Theorem}{Theorem}[section]
\newtheorem{Lemma}[Theorem]{Lemma}
\newtheorem{Corollary}[Theorem]{Corollary}
\newtheorem{Conjecture}[Theorem]{Conjecture}
\newtheorem{Proposition}[Theorem]{Proposition}
\theoremstyle{definition}
\newtheorem{Example}[Theorem]{Example}
\newtheorem{Definition}[Theorem]{Definition}

\newtheorem{Remark}[Theorem]{Remark}

\def\Index#1{{\it #1}\index{#1}}

\newcommand{\phgq}[4]{
{}_{2}\mathbb P_{1} \left[
\begin{matrix}
#1 & #2\smallskip \\
   & #3
\end{matrix}
\;;\; #4
\right]
}

\newcommand{\fqhat}
{\widehat{\mathbb{F}_{q}^{\times}}}
\newcommand{\fq}
{\mathbb{F}_q}

\newcommand*\HYPERskip{&}
\catcode`,\active
\newcommand*\pFq{
\begingroup
\catcode`\,\active
\def ,{\HYPERskip}%
\doHyper
}
\catcode`\,12
\def\doHyper#1#2#3#4#5{%
\, _{#1}F_{#2}\left[\begin{matrix}#3 \smallskip \\  #4\end{matrix} \; ; \; #5\right]%
\endgroup
}

\newcommand*\HYPER{&}
\catcode`,\active
\newcommand*\pFFq{
\begingroup
\catcode`\,\active
\def ,{\HYPER}%
\doHyperF
}
\catcode`\,12
\def\doHyperF#1#2#3#4#5{%
\, _{#1}{\mathbb F}_{#2}\left[\begin{matrix}#3 \smallskip \\  #4\end{matrix} \; ; \; #5\right]%
\endgroup
}

\newcommand*\HYPERp{&}
\catcode`,\active
\newcommand*\pPq{
\begingroup
\catcode`\,\active
\def ,{\HYPERp}%
\doHyperFp
}
\catcode`\,12
\def\doHyperFp#1#2#3#4#5{%
\, _{#1}{P}_{#2}\left[\begin{matrix}#3 \smallskip \\  #4\end{matrix} \; ; \; #5\right]%
\endgroup
}

\newcommand*\HYPERpp{&}
\catcode`,\active
\newcommand*\pPPq{
\begingroup
\catcode`\,\active
\def ,{\HYPERpp}%
\doHyperFpp
}
\catcode`\,12
\def\doHyperFpp#1#2#3#4#5{%
\, _{#1}{\mathbb P}_{#2}\left[\begin{matrix}#3 \smallskip \\  #4\end{matrix} \; ; \; #5\right]%
\endgroup
}

\def\[{\left[}
\def\]{\right]}
\def\[{\left[}
\def\]{\right]}
\def\({\left(}
\def\){\right)}
\def\ol{\overline}
\def \fp{\frak p}
\def\CC#1#2{\binom {#1}{#2}}
\def\C{\mathbb{C}}
\def\R{\mathbb{R}}
\def\Z{\mathbb{Z}}
\def\Q{\mathbb{Q}}
\def\F{\mathbb{F}}
\def\O{\mathcal{O}}
\def\H{\mathfrak H}
\def\M#1#2#3#4{\begin{pmatrix}#1&#2\\#3&#4\end{pmatrix}}

\def \l {\lambda}
\def\la{\left\langle}
\def\ra{\right\rangle}

\def \eps{\varepsilon}
\def\G{\Gamma}

\def\OqN{ {\mathcal O}_{\mathbb Q(\zeta_N)}}

\def \z{\zeta}
\def\chiup{\protect\raisebox{1pt}{$\chi$}}

\newcommand  \bk{\color{black}}

\makeindex

\begin{document}


\title{Hypergeometric functions over finite fields}

\author{Jenny Fuselier, Ling Long, Ravi Ramakrishna, Holly Swisher, Fang-Ting Tu}

\address{High Point University, High Point, NC 27268, USA}
\email{jfuselie@highpoint.edu}

\address{Louisiana State University, Baton Rouge, LA 70803, USA}
\email{llong@lsu.edu}

\address{Cornell University, Ithaca, NY 14853, USA}
\email{ravi@math.cornell.edu}

\address{Oregon State University, Corvallis, OR 97331, USA}
\email{swisherh@oregonstate.edu}

\address{ Louisiana State University, Baton Rouge, LA 70803, USA}
\email{ftu@lsu.edu}


\subjclass[2010]{11T23, 11T24, 33C05, 33C20, 11F80, 11S40}

\keywords{Hypergeometric functions, finite fields, Galois representations}


\maketitle

\tableofcontents

\begin{abstract}
Building on the developments of many people including Evans, Greene, Katz, McCarthy,  Ono,  Roberts, and Rodriguez-Villegas, we consider period functions for hypergeometric type algebraic varieties over finite fields and consequently study hypergeometric functions over finite fields in a manner that is parallel to that of the classical hypergeometric functions. Using a comparison between the classical gamma function and its finite field analogue the Gauss sum, we give a systematic way to obtain certain types of  hypergeometric  transformation and evaluation formulas over finite fields and interpret them geometrically using a Galois representation perspective. As an application, we obtain a few finite field analogues of algebraic hypergeometric identities, quadratic and higher transformation formulas, and evaluation formulas. We further apply these finite field formulas to compute the number of rational points of certain  hypergeometric varieties.
\end{abstract}


\section*{Acknowledgments}

Both Long and Tu were supported by NSF DMS \#1303292 and Long was further supported by NSF DMS \#1602047.  Ramakrishna was supported by Simons Foundation Grant \#524863. We would like to thank Microsoft Research and the Office of Research \& Economic Development at Louisiana State University (LSU) for support  which allowed Fuselier and Swisher to visit LSU in 2015. Tu was supported in part by the
National Center for Theoretical Sciences (NCTS) in Taiwan for her visit to LSU in 2015. We are  very grateful to George Andrews,  Frits Beukers,  Irene Bouw, Luca Candelori,  Henri Cohen, Henri Darmon, Ron Evans, Sharon Frechette, Jerome  Hoffman,  Wen-Ching Winnie Li, Matt Papanikolas, Dennis Stanton, John Voight, Daqing Wan, Tonghai Yang and Wadim Zudilin for their interest, enlightening discussions/suggestions  and/or  helpful references.  Special thanks go to Dennis Stanton who gave us two interesting problems and  to Yifan Yang for sharing his results which are stated in Example \ref{eg:(2,3,3)}. Special thanks also go to Rodriguez-Villegas for his lecture on Hypergeometric Motives given at NCTS in 2014.  Our statements are verified by {Magma} or {Sage} programs. The collaboration was carried out on  SageMathCloud.  Long, Swisher and Tu further thank the Institute for Computational and Experimental Research in Mathematics (ICERM) at Brown University for  its  hospitality as part of this work was done while they were ICERM research fellows  during  Fall 2015.  Finally, we thank the referees for the extremely careful reading of our work and the many detailed and helpful suggestions.



\section{Introduction}\label{intro}

\subsection{Overview}\label{overview}
Starting with seminal works of Evans \cite{Evans81,Evans86,Evans91}, Greene \cite{Greene, Greene87,Greene93}, Katz \cite{Katz}, Koblitz \cite{Koblitz}, McCarthy \cite{McCarthy, McCarthy-p-adic},  Ono \cite{Ono}, Roberts and Rodriguez-Villegas \cite{RRW}, Greene and  Stanton \cite{Greene-Stanton} et al., the finite field analogues of special functions, in particular generalized hypergeometric functions, have been developed theoretically, and are known to be related to various arithmetic objects  \cite[et al.]{BCM, FOP, Fuselier, Lin-Tu, McCarthy-Papanikolas}. Computations on the corresponding hypergeometric motives have been implemented in computational packages like \texttt{Pari} and  \texttt{Magma},  in particular, see  \cite{Watkins} for the \texttt{Magma} documentation by Watkins.

We focus on the finite field analogues of classical transformation and evaluation formulas based on the papers mentioned above, especially \cite{Greene} by Greene  and \cite{McCarthy} by McCarthy. To achieve our goals, we modify their notation slightly so that the analysis more closely parallels the classical case, and  the  proofs in the complex case can be extended naturally to the finite field setting. We emphasize that the classical $_{n+1}F_n$ functions with rational parameters and suitable  normalizing factors can be computed from the
$_{n+1}P_n$  period  functions
(see {{\eqref{inductiveintegral}}}). These are related to the periods of the hypergeometric varieties of the form
\begin{equation}\label{eq:hyp-var}y^N=x_1^{i_1}\cdots x_n^{i_n}(1-x_{1} )^{j_1}\cdots (1-x_n)^{j_n}(1-\l x_1x_2\cdots x_n)^k,\end{equation}
where $N,i_s,j_t,k$ are  positive  integers, $\l$ is a fixed parameter, and $y$, $x_m$ are variables. It is helpful to distinguish the $_{n+1}F_n$ functions from  the period functions  $_{n+1}P_n$. In particular we consider their finite field analogues, which we denote by $_{n+1}\mathbb F_n$ and $_{n+1}\mathbb P_n$, respectively (see definitions \eqref{n+1Pn} and \eqref{general_HGF}).

In the  finite field setting, we use two main methods throughout. The first method is calculus-style and is based on the conversion between the classical and finite field settings that is well-known to experts. This is summarized in a table in \S \ref{dictionary}.  In the classical setting, the gamma function satisfies reflection and multiplication formulas (see Theorems \ref{thm:reflect} and \ref{thm:multiplication}) which yield many interesting hypergeometric identities.  Roughly speaking, Gauss sums are finite field analogues of the gamma function \cite{Evans91} and they satisfy very similar reflection and multiplication formulas (see \eqref{Gauss1} and Theorem \ref{Hasse Davenport}).  The technicality in converting identities from the classical setting lies in analyzing  the `error terms' associated with Gauss sum computations (which are delta terms here, following Greene's work  \cite{Greene}). This approach  is particularly handy for translating  to the finite field setting a classical transformation formula that satisfies the following condition:
\begin{quote}\index{($\ast$) condition}
($\ast$) it can be proved using only  the binomial theorem,  the reflection and multiplication formulas,  or their corollaries (such as the Pfaff-Saalsch\"utz formula {\color{black} \eqref{eq:pf-s}}).
\end{quote}

For the second method,  we rely on the built-in symmetries for the $_{n+1}\mathbb{F}_n$  hypergeometric functions over finite fields, such as the Kummer relations for $_2\F_1$ (\S \ref{ss:Kummer}), \bk which are very helpful for many purposes including getting around the error term analysis.

We emphasize alignment with geometry by putting the finite field analogues in the explicit context of hypergeometric varieties and their corresponding Galois representations. This gives us important guidelines as the finite field analogues, in particular evaluation formulas such as the Pfaff-Saalsch\"utz identity, do not look exactly like the classical cases. From a different perspective, hypergeometric functions over finite fields are twisted exponential sums in the sense of \cite{AS1, AS2} and are related to hypergeometric motives \cite{BCM, Katz,   RRW}, abstracted from cohomology groups of the algebraic varieties. This direction is pursued by Katz \cite{Katz} and further developed and implemented by many others including  Roberts and Rodriguez-Villegas \cite{RRW},   and Beukers,  Cohen, Mellit \cite{BCM} with a focus on motives defined over $\Q$. It offers a profound  approach to hypergeometric motives and their L-functions by combining both classical  and $p$-adic analysis, arithmetic, combinatorics, computation, and geometry. Our approach to the Galois perspective is derived from  our attempt to understand hypergeometric functions over finite fields parallel to the classical setting and Weil's approach to consider Jacobi sums as Gr\"ossencharacters  (or Gr\"ossencharakteres)  \cite{Weil}. This is reflected in our notation in   {{Sections}} \ref{Gal-background} and \ref{Gal} \bk on the Galois perspective.\\

\subsection{Organization and  the main results}

This memoir is organized as follows. Secrtion \ref{prelim} contains preliminaries on gamma and beta functions and their finite field counterparts, Gauss and Jacobi sums.  In  \S \ref{dictionary} we give a correspondence between objects in the classical and finite field settings.  We recall some well-known facts for classical hypergeometric functions in Section \ref{setting}. We assume in this memoir that the parameters for all the hypergeometric functions are rational numbers. In Section \ref{finitefield}, we introduce the period functions $_{n+1} \mathbb P_n$  in Definition \eqref{n+1Pn},  and the normalized period functions $_{n+1} \mathbb F_n$  in Definition \eqref{general_HGF}.

In Section \ref{Gal-background}, we {{provide}} some related background on Galois representations. In Section \ref{Gal}, we interpret the $_{n+1} \mathbb P_n$ and $_{n+1} \mathbb F_n$ functions, in particular the $n=1$ case, as traces of Galois representations at Frobenius elements via explicit hypergeometric algebraic varieties.  Let  $\z_N:=e^{2\pi i/N}$ be a primitive $N$th root of $1$ and $K=\Q(\zeta_N)$.   We use $\O_K$ for its ring of integers, $\ol K$ for its algebraic closure, and set $G_K:=\text{Gal}(\ol K/K)$.   Recall that a prime ideal $\fp$ of $\mathcal O_K$ is unramified if it is coprime to the discriminant of $K$.  Given a  fixed  rational number of the form $\frac{i}{N}$,  for any  nonzero  prime ideal $\fp$ of $K$ coprime to $N$, one can assign a multiplicative character,  denoted by   $\iota_{\fp}(\frac iN)$, on the residue field  $\O_K/\fp$  of $\fp$ with size $q(\fp):=\#(\O_K/\fp)$,  see \eqref{msymbol}. This assignment,  based on the $N$th power residue symbol, is \emph{compatible} with the Galois perspective when $\fp$ varies, by which we mean one can build continuous   Galois representations using the residue symbol.   Then our setting for the finite field period functions and the dictionary between the two settings give us one way to convert the classical hypergeometric functions to the hypergeometric functions over the residue fields  in a compatible way  in the same sense mentioned above.  We illustrate the map by the following diagram
\begin{multline*}
 \pPq{n+1}{n}{a_1&a_2&\cdots&a_{n+1}}{&b_1&\cdots& b_n}{\l} \\
 \mapsto \pPPq{n+1}{n}{\iota_\fp(a_1)&\iota_\fp(a_2)&\cdots&\iota_\fp(a_{n+1})}{&\iota_\fp(b_1)&\cdots& \iota_\fp(b_n)}{\l; q(\fp)},
 \end{multline*}
where
$N$ is the least positive common denominator of all $a_i$'s and $b_j$'s, and $\fp$ is any prime ideal of $\O_K$ unramified in $K/\Q$. There is a similar correspondence between the hypergeometric functions $_{n+1}F_n$ and $_{n+1}\F_n$.

Thus when we speak of the finite field analogues of classical period (or hypergeometric) functions in this paper we mean the  converted functions over the finite residue fields, unless we specify otherwise. We show the following.

\begin{Theorem}\label{thm:WIN3a}
Let $a,b,c\in \Q$ with least common denominator $N$ such that $a$, $b$, $a-c$, $b-c\notin \Z$  and $\l\in   \Q\setminus \{0,1\}$. Set $K=\Q(\z_N)$  and denote its ring of integers $\mathcal O_K$. Let $\ell$ be any prime.
Then there exists a   representation $$\sigma_{\l,\ell}:G_{K}:=\text{Gal}(\overline K/K) \to GL_2(\Q_\ell(\zeta_N)),$$ depending on $a,b$ and $c$, that is unramified at all prime ideals $\fp$ of $\mathcal O_K$
that are relatively prime to $\ell \cdot Disc(K/\Q)$ and satisfy $ord_{\fp}(\l)=0=ord_{\fp}(1-\l)$.
Furthermore,
the trace of Frobenius at $\fp$ in the image of $\sigma_{\l,\ell}$
is the well-defined algebraic integer 
\begin{equation}
- \phgq{\iota_\fp(a)}{\iota_\fp(b)}{\iota_\fp(c)}{\l; q(\fp)}.
\end{equation}
\end{Theorem}

Our proof {{(see \S \ref{ss:6.3})}} makes use of  generalized Legendre curves (recalled in   \S \ref{GLC})  which are $1$-dimensional hypergeometric varieties. Viewing the finite field period functions as the traces of  finite dimensional Galois representations at Frobenius elements is helpful when we translate classical results to the finite field setting. Using Weil's result (\cite{Weil}) realizing Jacobi sums as Gr\"ossencharacters, one can give a similar interpretation for the normalized period functions $_{n+1}\F_n$. In Section \ref{Ramanujan} we  use the Galois perspective to discuss a finite field analogue of the Clausen formula by Evans and Greene, as well as its relations to analogues of some Ramanujan type formulas for $1/\pi$.   For readers only interested in  direct finite field analogues of classical formulas,  Sections \ref{Gal-background}, \ref{Gal} and \ref{Ramanujan} can be skipped as our later proofs (except in Section \ref{ss:application}) are mainly about character sums over finite fields.

In Section \ref{translations} we first recall and reinterpret the finite field analogues of Kummer's 24  relations considered by Greene \cite{Greene}. These relations give us many built-in symmetries for the $_2\F_1$ functions,  which are closely related to the classical Kummer relations for $_2F_1$  (see \S \ref{ss:Kummer}).   Then we use the dictionary between the classical and finite field settings to obtain a few finite field analogues of algebraic hypergeometric identities.   For example, we recall the following formula of Slater (see \cite[(1.5.20)]{Slater}):
\begin{equation}\label{eq:Slater}\pFq{2}{1}{a&a-\frac 12}{&2a}z=\( \frac{1+\sqrt{1-z}}2\)^{1-2a}.
\end{equation} Here, we call such a formula, which expresses a hypergeometric function (formally) as an algebraic function of the argument $z$,  an {\it algebraic hypergeometric identity}.   When $a\in \Q$,  the corresponding monodromy group (\S \ref{monodromy}) is a   finite  group.   The identity \eqref{eq:Slater}  also satisfies the ($\ast$) condition of \S \ref{overview}. To see its finite field analogue, it is tempting to translate the right hand side into a corresponding character evaluated at $\frac{1+\sqrt{1-z}}2$ using the correspondence in  \S \ref{dictionary}. However, Theorem \ref{thm:WIN3a} implies that {this type of direct translation will not yield a correct formula, } as the corresponding Galois representations should be $2$-dimensional. In fact, we will show (see Theorem \ref{thm:FF-Dihedral}) that for $\F_q$ of odd characteristic, $\phi$ the quadratic character, $A$ any multiplicative character on $\mathbb{F}_q^{\times}$ having order at least 3, and $z\in \F_q$,
\begin{equation*}
\pFFq{2}1{A&A\phi}{&A^2}{z}=\( \frac{1+\phi(1-z)}2\) \(\overline A^2\(\frac{1+\sqrt{1-z}}2\)+\overline A^2\(\frac{1-\sqrt{1-z}}2\)\).\end{equation*}Note the right hand side is well-defined as its first factor takes value zero if  $1-z$ is not a square in $\F_q$ and 1 otherwise. 
Here we are using a bar to denote complex conjugation.  This result was inspired by a recent result of Tu and Yang in \cite{Tu-Yang2}.  It implies the following  formula.  Writing $\fqhat$ for the set of all multiplicative characters on $\F_q^\times$, if $A$, $B$, $AB$, $A\ol B\in\widehat{\F_q^\times}$ each have order larger than 2, then for $z\neq 1$,
\begin{align*}
  \pFFq{2}{1}{A&{A}\phi}{& A^2}{z}& \pFFq{2}{1}{B&B\phi}{&B^2}{z}\\
  =&\pFFq{2}{1}{AB&  AB\phi}{&(AB)^2}{z} +\ol B^2\(\frac z4\)\pFFq{2}{1}{A\ol B &  A\ol B \phi}{&(A\ol B)^2}{z}.
\end{align*}Note that the last term is necessarily symmetric in $A$ and $B$, although this is not obvious from its appearance.

In Section \ref{higher}, we  apply our main technique, which is to obtain analogues of classical formulas satisfying the ($\ast$) condition using the dictionary between the two settings, to prove a quadratic formula over finite fields (Theorem \ref{thm:quad-2F1}). This is equivalent to a quadratic formula of Greene in \cite{Greene}. The  proof appears technical on the surface, but is parallel to the classical proof.   In comparison, the approaches of Evans and Greene to higher order transformation formulas (such as \cite{Evans-Greene3, Greene}) often involve subtle and clever changes of variables.

Using a similar approach, we prove analogues of the Bailey cubic $_3F_2$ formulas (see Theorems \ref{Thm:Bailey-FF} and \ref{Thm: Bailey FF2}) and consequently an analogue of a formula by Andrews and Stanton, see Theorem \ref{thm:A-S-FF}.

Next, we use a different approach to obtain a finite field analogue of  the following cubic formula (\cite[(5.18)]{Gessel-Stanton})  by Gessel and Stanton. For $a,x\in\C$,
\begin{equation*}
\pFq{2}{1}{a&-a}{&\frac 12}{\frac{27x(1-x)^2}4}=\, \pFq{2}{1}{3a&-3a}{&\frac 12}{\frac{3x}4},
\end{equation*} when both sides converge. 
This formula  satisfies the ($\ast$) condition.   We obtain  the following explicit finite field analogue.
\begin{Theorem}\label{thm:cubic1}Let $\F_q$ be the finite field of $q$ elements and assume its residue characteristic is at least $3$.
 If $A\in \widehat{\F_q^\times}$, then for all $x\in \F_q$,
\begin{multline*}
\pFFq{2}{1}{A&\ol{A}}{&\phi}{\frac{27x(1-x)^2}4}=\\ \pFFq{2}{1}{A^3&\ol{A^3}}{&\phi}{\frac{3x}4}-\phi(-3)\delta(x-1)-\phi(-3)A(-1)\delta(x-{4}/3),
\end{multline*}where  $\phi$ is the quadratic character,  $\delta(0)=1$ and $\delta(x)=0$ otherwise.
 \end{Theorem}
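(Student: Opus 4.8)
The plan is to mirror the classical proof, which rests on the evaluation ${}_2F_1(a,-a;\tfrac12;\sin^2\theta)=\cos(2a\theta)$ together with the triple-angle identity $\sin^2(3\theta)=\sin^2\theta\,(3-4\sin^2\theta)^2$: writing $\tfrac{3x}{4}=\sin^2\psi$ one checks that $\tfrac{27x(1-x)^2}{4}=\sin^2(3\psi)$, so both sides equal $\cos(6a\psi)$. Over $\F_q$ I would first produce a finite-field \emph{character cosine} evaluation of $\pFFq{2}{1}{A&\ol A}{&\phi}{z}$, then transport the triple-angle identity through a Chebyshev polynomial identity, and finally account for the two $\delta$-terms as the arithmetic defect in that transport.

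For the evaluation, introduce the quadratic $t^2-2(1-2z)t+1$, whose roots $w,w^{-1}$ satisfy $w+w^{-1}=2(1-2z)$ and $1-z=\tfrac{(w+1)^2}{4w}$; this is the finite-field shadow of $w=e^{2i\theta}$. I expect an identity of the shape
\[
\pFFq{2}{1}{A&\ol A}{&\phi}{z}=\frac{1+\phi\bigl(z(z-1)\bigr)}{2}\bigl(A(w)+A(w^{-1})\bigr)+(\text{boundary terms at }z=0,1),
\]
in which the weight $\tfrac{1+\phi(z(z-1))}{2}$ detects whether the discriminant $16\,z(z-1)$ is a square, equivalently whether $w\in\F_q$. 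I would obtain this either directly from the defining character sum --- the bottom entry $\phi$ forces a quadratic-Gauss-sum (square-root) structure once $z$ is parametrized by $w$ --- or by feeding the Dihedral evaluation of Theorem \ref{thm:FF-Dihedral} through a finite-field Kummer relation, since $\pFFq{2}{1}{A&\ol A}{&\phi}{z}$ and $\pFFq{2}{1}{A&A\phi}{&A^2}{z}$ carry the same (Dihedral) local exponent data.

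Next comes the Chebyshev step. Put $z_2=\tfrac{3x}{4}$ and $z_1=\tfrac{27x(1-x)^2}{4}$, and let $w$ be a root of the $z_2$-quadratic, so that $c:=w+w^{-1}=2-3x$. From $w^3+w^{-3}=c^3-3c$ one verifies the polynomial identity $(2-3x)^3-3(2-3x)=2-27x(1-x)^2=2(1-2z_1)$, which says exactly that $w^3$ is a root of the $z_1$-quadratic. Since $A^3(w)=A(w^3)$, the character-cosine term of $\pFFq{2}{1}{A^3&\ol{A^3}}{&\phi}{z_2}$ coincides with that of $\pFFq{2}{1}{A&\ol A}{&\phi}{z_1}$, which yields the asserted identity away from the exceptional fibers.

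The $\delta$-terms are the main obstacle, and they originate in an arithmetic asymmetry of cubing: $w\in\F_q$ forces $w^3\in\F_q$, but not conversely. The failures occur precisely when $w$ is a sixth root of unity lying in $\F_{q^2}\setminus\F_q$, for then $w^3=\pm1\in\F_q$ while $w\notin\F_q$, so the $z_1$-side registers a rational root invisible to the $z_2$-side. Solving $w+w^{-1}=2-3x$ gives $w=\zeta_3$ at $x=1$ (whence $z_1=0$ and $w^3=1$) and $w=-\zeta_3$ at $x=\tfrac13$ (whence $z_1=1$ and $w^3=-1$); in both cases the discriminant $3x(3x-4)$ of the $z_2$-quadratic equals $-3$, so the obstruction is governed by $\phi(-3)$, while the extra factor $A(-1)$ at $x=\tfrac13$ is the value $A(w^3)=A(-1)$. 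The remaining and most delicate task is to compute the exact discrepancy at these two fibers --- together with the degenerate arguments $z\in\{0,1\}$ sitting inside the boundary terms --- thereby pinning down the coefficients $-\phi(-3)$ and $-\phi(-3)A(-1)$ and verifying the identity for every $x\in\F_q$.
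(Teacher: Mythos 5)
Your outline is, in substance, the paper's own proof written in different coordinates. The ``character cosine'' evaluation you expect is exactly what the paper obtains by composing a Pfaff relation (Proposition \ref{prop: normalized independent solutions}) with Theorem \ref{thm:FF-Dihedral}: setting $z=\lambda/(\lambda-1)$ one has $\ol A(1-\lambda)\,\ol A^2(1\pm\sqrt z)=A(w^{\mp1})$ with $w=(1-\sqrt z)/(1+\sqrt z)$ and $w+w^{-1}=2(1-2\lambda)$, which is precisely your quadratic; and your Chebyshev identity $(2-3x)^3-3(2-3x)=2-27x(1-x)^2$ is the same computation as the paper's substitution $\sqrt z=t(t^2+3)/(1+3t^2)$, $1\pm\sqrt z=(1\pm t)^3/(3t^2+1)$, $T=t^2=-3x/(4-3x)$. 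Your identification of the exceptional fibers ($w$ a primitive cube or sixth root of unity, rational exactly when $\phi(-3)=1$, contributing $A(w^3)=A(\pm1)$ at $x=1$ and $x=\tfrac13$) is correct and is where the stated coefficients $-\phi(-3)$ and $-\phi(-3)A(-1)$ come from.

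That said, there are genuine gaps beyond the one you flag. Most seriously, the whole strategy presupposes $A^6\neq\eps$: Theorem \ref{thm:FF-Dihedral} requires the character to have order larger than $2$, so your cosine evaluation does not exist when $A^2=\eps$, and when $A^3\in\{\eps,\phi\}$ the right-hand ${}_2\mathbb F_1$ is imprimitive and must be evaluated through Proposition \ref{prop: 2FF1-imprimitive} (giving expressions such as $\phi(-3x)+\phi(4-3x)$) rather than through any root-of-a-quadratic formula. The theorem is asserted for \emph{every} $A\in\fqhat$, and the paper devotes roughly half of its proof to these four families ($A=\eps$, $A=\phi$, $A$ of exact order $3$, $A$ of exact order $6$), where the delta terms arise from the conventions $\eps(0)=\chiup(0)=0$ rather than from irrational roots of unity. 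Second, the part you explicitly defer --- pinning down the two delta coefficients and reconciling the degenerate arguments --- is where most of the labor sits: the Pfaff step itself carries a $\delta(1-\lambda)$ correction that must be tracked, the values $x\in\{0,1,\tfrac43\}$ (where $\lambda\in\{0,1\}$) have to be checked directly against the normalization, and closing the order-$3$ and order-$6$ cases requires the explicit evaluations $(1\pm\sqrt{-3})^6=64$ and $(1\pm\sqrt{-1/3}\,)^6=-64/27$. So the route is the right one, but as written this is a correct skeleton with its two hardest components --- the small-order characters and the delta bookkeeping --- still open.
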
As a consequence of this cubic formula, we obtain the following  \emph{evaluation formula},  by which we mean it is a formula which gives the value of a hypergeometric series at a fixed argument.
In the theorem statement below $J(\, ,\,)$ and $B(\, , \,)$ denote the Jacobi sum and beta function. We will recall their definitions in  \S \ref{gamma and beta} and \S \ref{ss:Gauss&Jacobi}.  As we shall see later, the Jacobi sums are the finite field analogues of the beta functions.

\begin{Theorem}\label{thm4}  Assume that $q$ is a prime power with $q\equiv 1 \pmod 6$, and let $A$, $\chiup,\eta_3\in \widehat{\F_q^\times}$ be such that $\eta_3$ has order 3, and none of $A^6, \chiup^6,(A\chiup)^3,(\ol{A}\chiup)^3$  is  the trivial character.  Then,
$$
\pFFq{3}{2}{A^3&\ol{A}^3& \ol{\chiup}}{&\phi& \ol{\chiup}^3}{\frac 34}=  \sum_{\substack{B\in \widehat{\F_q^\times}\\ B^3=A^3}}\frac{J(B\chiup, \eta_3)J(\ol{B}\chiup,\ol{\eta_3})}{J(B,\chiup \eta_3)J(\ol{B},\chiup\ol{\eta_3})}= A(-1)\sum_{B^3=A^3}\frac{J(B\chiup, \ol B\chiup)}{J( \chiup\eta_3,\chiup \ol\eta_3)}.
$$
\end{Theorem}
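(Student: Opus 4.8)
The plan is to realize the $_3\mathbb F_2$ at $3/4$ as a one-parameter convolution of a $_2\mathbb F_1$, apply the cubic transformation of Theorem~\ref{thm:cubic1} inside that convolution, and then collapse the resulting double character sum using the reflection and multiplication (Hasse--Davenport) relations. Concretely, I would first write the left-hand side
\[
\pFFq{3}{2}{A^3,\ol A^3,\ol\chiup}{,\phi,\ol\chiup^3}{\tfrac34}
\]
via the finite-field analogue of the Euler integral (the recursive beta-integral form of the definition of $_{n+1}\mathbb F_n$), expressing it as a normalized sum over $y\in\F_q$ of a beta-type weight $\ol\chiup(y)\,\ol\chiup^2(1-y)$ against $\pFFq{2}{1}{A^3,\ol A^3}{,\phi}{\tfrac{3y}4}$. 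This reduces the problem to understanding the inner $_2\mathbb F_1$ with argument $\tfrac{3y}4$, summed against a Jacobi-sum weight in $y$, which is exactly the shape to which the cubic formula applies.

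The key step is to substitute the cubic transformation. Solving Theorem~\ref{thm:cubic1} for the right-hand $_2\mathbb F_1$ gives
\[
\pFFq{2}{1}{A^3,\ol A^3}{,\phi}{\tfrac{3y}4}
=\pFFq{2}{1}{A,\ol A}{,\phi}{\tfrac{27y(1-y)^2}4}
+\phi(-3)\delta(y-1)+\phi(-3)A(-1)\delta(y-\tfrac13),
\]
so that the two delta terms contribute only the explicit boundary values of the weight at $y=1$ and $y=\tfrac13$, while the main term is the same sum with the cubically twisted argument. For the main term I would then expand $\pFFq{2}{1}{A,\ol A}{,\phi}{w}$ into its defining character sum over an auxiliary character $\psi$; since $\psi\!\left(\tfrac{27y(1-y)^2}4\right)=\psi(3)^3\psi(2)^{-2}\,\psi(y)\,\psi^2(1-y)$, the summation over $y$ becomes a single Jacobi sum $J(\ol\chiup\psi,\ol\chiup^2\psi^2)$. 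This turns the whole expression into a one-dimensional sum over $\psi$ of products of Jacobi sums, with the factor $\psi(3)^3\psi(2)^{-2}$ carrying the arithmetic of $3$ and $2$.

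Next I would evaluate this $\psi$-sum. The square $\psi^2$ and the cube $\psi(3)^3$ are precisely what the multiplication formula (Hasse--Davenport, Theorem~\ref{Hasse Davenport}) and the reflection formula \eqref{Gauss1} are designed to absorb: applying them to the order-$3$ characters $\eta_3,\eta_3^2$ rewrites the sum over $\psi$ as a sum over the three characters $B$ with $B^3=A^3$, namely $B\in\{A,A\eta_3,A\eta_3^2\}$, producing the middle expression $\sum_{B^3=A^3}\frac{J(B\chiup,\eta_3)J(\ol B\chiup,\ol\eta_3)}{J(B,\chiup\eta_3)J(\ol B,\chiup\ol\eta_3)}$. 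The hypotheses that $A^6,\chiup^6,(A\chiup)^3,(\ol A\chiup)^3$ are nontrivial keep every Jacobi sum in its generic range, so no trivial-character degeneration occurs and the delta contributions are accounted for rather than spawning spurious terms. Finally, for the second equality I would simplify each summand using a further application of Hasse--Davenport and the standard expression of Jacobi sums as ratios of Gauss sums, clearing the $\eta_3$-factors and extracting the sign $A(-1)$ to reach $A(-1)\sum_{B^3=A^3}\frac{J(B\chiup,\ol B\chiup)}{J(\eta_3\chiup,\chiup\ol\eta_3)}$.

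The main obstacle I anticipate is the bookkeeping in the two middle steps: correctly tracking the Gauss-sum factors introduced by $\psi(3)^3\psi(2)^{-2}$ and by the multiplication formula for the order-$3$ characters, reconciling the boundary contributions at $y=1,\tfrac13$ with the main term, and checking that the generic hypotheses exclude every degenerate case so that exactly the clean three-term cube-root sum survives. Each identity is routine in isolation, but assembling them without sign or normalization errors---especially the factors $\phi(-3)$ and $A(-1)$---is where the real care is required.
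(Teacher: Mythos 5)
Your setup is exactly the paper's: you realize the ${}_3\mathbb F_2$ at $3/4$ as the beta-type convolution $\sum_{x}\ol\chiup(x(1-x)^2)\cdot{}_2\mathbb F_1$ with argument $3x/4$ (up to the normalizing Jacobi sum $J(\ol\chiup,\ol\chiup^2)$), substitute Theorem \ref{thm:cubic1} to trade the argument $3x/4$ for $27x(1-x)^2/4$ at the cost of the two boundary deltas at $x=1$ and $x=1/3$, and expand the resulting ${}_2\mathbb F_1$ so that the $x$-sum produces $J(\ol\chiup\psi,\ol\chiup^2\psi^2)$. Up to this point you are tracking the paper's proof step for step.

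The gap is in your fourth step. Applying the reflection and Hasse--Davenport relations to $J(\ol\chiup\psi,\ol\chiup^2\psi^2)$ does \emph{not} collapse the $\psi$-sum onto the three characters with $\psi^3=\chiup^3$. What it actually yields is a decomposition $I=I_1+I_2$: the piece $I_1$, coming from the $\delta(\psi^3\ol\chiup^3)$ correction in the Jacobi-to-Gauss conversion, is indeed a three-term sum over cube roots; but the main piece $I_2$ remains a full $(q-1)$-term character sum which, after collecting Gauss sums, is a ${}_4\mathbb P_3$ function evaluated at $1$ with upper parameters $A,\ol A,\ol\chiup\phi,\ol\chiup$ and lower parameters $\phi,\ol\chiup\eta_3,\ol\chiup\ol\eta_3$. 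This object is not zero and is not disposed of by any further multiplication or reflection formula; its evaluation is the heart of the proof. The paper handles it by observing that $\phgq{A}{\ol A}{\phi}{1/x}\cdot\phgq{\eta_3}{\ol\eta_3}{\phi}{1/x}$ has this ${}_4\mathbb P_3$ at $1$ as its $\ol\chiup$-coefficient, evaluating the product of the two ${}_2\mathbb F_1$'s via the dihedral fusion rule (Corollary \ref{Cor: product of 2F1}, a consequence of Theorem \ref{thm:FF-Dihedral}), and comparing coefficients of $\ol\chiup$; two delta terms hidden inside $I_2$ then cancel two of the three summands of $I_1$, and the fusion-rule evaluation supplies the two summands that replace them. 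Without this step the ``three-term cube-root sum'' is an unproved assertion: it is not a formal consequence of Hasse--Davenport but the outcome of a genuine ${}_4\mathbb P_3$ evaluation at $1$ that your proposal never performs --- consistent with the paper's own remark that this identity lies beyond the ($\ast$) condition. Your final step, passing between the two displayed forms of the answer, is routine Gauss-sum bookkeeping and is fine.
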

This is an analogue of the following result of Gessel and Stanton \cite{Gessel-Stanton}  for  $n\in \mathbb N$, $a\in \C$,
\begin{equation}\label{GSIntro}
\pFq{3}{2}{1+3a&1-3a&-n}{&\frac 32& -1-3n}{\frac 34}=\frac{B(1+a+n,\frac 23)B(1-a+n,\frac 43)}{B(1+a,n+\frac 23)B(1-a,n+\frac 43)}.
\end{equation}

It is worth mentioning that Gessel and Stanton's proof of \eqref{GSIntro} uses a derivative, {{and}} thus cannot be translated directly to the finite field setting  using our first technique.  We include this case here to make the following point. The Galois perspective { can provide helpful guidance in general, beyond using } the ($\ast$) condition.

In  Section  \ref{ss:application},   we give an explicit application of  finite field formulas in point counting on
 hypergeometric varieties. To be more explicit, we apply Theorem \ref{thm:quad-2F1}, a finite field quadratic transformation,  to obtain the decomposition of a generically 4-dimensional abelian variety arising naturally from  the generalized Legendre curve $y^{12}=x^9(1-x)^5(1-\l x)$, (see Theorem \ref{thm:isogeny}).

In Section \ref{ss:formulas}, we conclude with a summary and some open questions based on numerical evidence.

In summary, our setup and main techniques allow us to make translations  from the classical setting to the finite field setting   in a straightforward manner, and  the Galois perspective gives us additional  helpful guidelines.  Furthermore,  these formulas over finite fields can be used to better understand and further compute global arithmetic invariants  of the corresponding hypergeometric abelian varieties. We have to be careful: if a result requires additional structures in its proof such as derivative structure or the Lagrange inversion formula, for example in the case of the Pascal triangle identity $\binom{n}{m}+\binom{n}{m-1}=\binom{n+1}{m+1}$, then we cannot expect a direct translation.\footnote{In \cite{Evans86} by Evans, there is an analogue for the derivative operator over finite fields (\cite[(2.8)]{Evans86}) which satisfies an analogue of the Leibniz rule (\cite[Theorem 2.2]{Evans86}). However it is not a derivation. In \cite{Greene87}, Greene gives the Lagrange inversion formula over finite fields, see Theorem \ref{FFlagrange} below, and points out its drawbacks.}    Rather,  our approaches are more suitable for translating  results with geometric/motivic interpretation.   In this sense, we will give a few finite field analogues of Ramanujan type formulas for $1/\pi$, see  \S \ref{ss:Ram}.\\

\begin{Remark}
We note that our definition of the finite field hypergeometric function is inspired by the advantages afforded by both the definitions of Greene \cite{Greene} and McCarthy \cite{McCarthy}. We make a small adjustment with the purpose of keeping the benefits of both of these while also closely aligning with the classical context and remaining compatible with the Galois representation perspective.  We note, in particular, that our choice of notation implies that results can be stated for all values of $x$ in $\F_q$ and we can interchange parameters to streamline alignment with classical proofs as much as possible.  For a comparison of our definition with the others, see \S \ref{S:comparison}.
\end{Remark}

This monograph  is an extension of our previous  
hybrid approach using both finite field and Galois methods.  In \cite{WIN3a},  Deines et al. study Jacobian varieties arising from generalized Legendre curves. In \cite{WIN3b},  the authors use finite field hypergeometric evaluation formulas to compute the local zeta functions of some higher dimensional hypergeometric varieties. In \cite{FST} the authors consider the Appell-Lauricella hypergeometric functions over finite fields while in the more recent paper \cite{LLT2} the authors obtain a Whipple $_7F_6(1)$ formula over finite fields and use it to construct decomposable Galois representations. This information allows us to explain the geometry behind several  supercongruences satisfied by the corresponding truncated hypergeometric functions. These supercongruences were proved using a method in \cite{Long-Ramakrishna}  by Long and Ramakrishna  which is based on the local analytic properties of the $p$-adic Gamma functions. The approaches to supercongruences involve tools such as hypergeometric functions over finite fields as in  \cite{AO} by Ahlgren and Ono and  \cite{Osburn-S} by Osburn and Schneider, \cite{LTYZ} by Long,  Tu, Yui and Zudilin,  \cite{Long18}  by Long,  or the Wiff-Zeilberger method as in \cite{Zudilin}  by Zudilin and \cite{OZ} by Osburn and Zudilin.

\section{Preliminaries for the Complex and Finite Field Settings}\label{prelim}

In this section, we describe  the foundation for the finite field analogue of the hypergeometric function. To do so, we first recall preliminaries from the classical setting. For more details on the classical setting see \cite{AAR, Bailey, Slater}, and see \cite{Berndt-Evans-Williams,Evans86,Evans91,Greene} for the finite field setup.

\subsection{Gamma and beta functions}\label{gamma and beta}
Most of the details in this subsection can be found in Chapter 1 of  Andrews, Askey and Roy, \cite{AAR}.  We let $\mathbb{N}=\mathbb{Z}_{>0}$ throughout.

Recall the usual binomial coefficient $$\binom{n}{k}=\frac{n!}{k!(n-k)!}.$$   For any  $a\in \C$ and  $n\in \mathbb N$, define the rising factorial by

\begin{equation}\label{risingfactorial}
(a)_n:=a(a+1)\cdots(a+n-1),
\end{equation}

\noindent and define $(a)_0=1$.  Note then, that
\begin{equation}\label{classicbinom}
\frac{(-1)^k(-n)_k}{k!}=\binom{n}{k}.
\end{equation}

The \Index{gamma function} $\G(x)$ and \Index{beta function} $B(x,y)$ are defined as follows.

\begin{Definition}
For $\text{Re}(x)>0$,
$$
\G(x):=\int_0^{\infty} t^{x-1}e^{-t}\, \text{d}t.
$$
\end{Definition}

While  $\G(x)$ can be extended to a meromorphic function with poles at nonpositive integers, its reciprocal $1/\G(x)$ is an entire function and, like $\sin(x)$, has a product representation (see Theorem 1.1.2 of \cite{AAR}).  The gamma function satisfies the functional equation
\begin{equation}\label{Gfunctional}
\G(x+1) = x\G(x),
\end{equation}
which can easily be derived using integration by parts. By \eqref{Gfunctional}, one has that for $n\in \Z_{\ge 0}$,
\begin{equation}
(a)_n=\frac{\G(a+n)}{\G(a)}.
\end{equation}

\begin{Definition}\label{def:beta}
For $\text{Re}(x)>0, \text{Re}(y)>0$
$$
B(x,y):=\int_0^{1} t^{x-1}(1-t)^{y-1}\, \text{d}t.
$$
\end{Definition}
The assumptions on $x$ and $y$  can be relaxed by integrating along the Pochhammer contour path around $0$ and $1$ defined as follows.

  \begin{Definition}\label{def:poch}Let $a$, $b$ be two points in $\mathbb \C P^1$. Each Pochhammer contour $\gamma_{ab}$  is a closed curve corresponding to a commutator of the form $ABA^{-1}B^{-1}$ in the fundamental group of $\pi_1(\C P^1\setminus\{a,b,\infty\})$, where  $A$, $B  \in \pi_1(\C P^1\setminus\{a,b,\infty\})$ are loops around both of the points $a$, $b$ and the superscript $-1$ denotes a path taken in the opposite direction.
\end{Definition}

For example, when  $a=0$, $b=1$,  a contour $\gamma_{01}$ is a closed curve starting from a fixed point $T\in (0,1)$, going
around $0$  and $1$ counterclockwise (in that order) and returning to $T$.
Then one loops around $0$ and $1$ clockwise returning again  to $T$, as indicated by the picture below.
$$
\includegraphics[scale=.85]{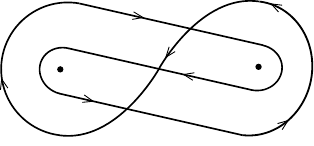}
$$
Integrating over the double contour loop $\gamma_{01}$, the integral
$$
  B(x,y)=\frac1{\(1-e^{2\pi ix}\)\(1-e^{2\pi iy}\)}\int_{\gamma_{01}} t^{x-1}(1-t)^{y-1}\text{d}t
$$
converges for all values of $x$ and $y$.
For details, see \cite{Klein, Whittaker-Watson,Yoshida-love}.

\noindent The beta function satisfies the functional equation
\begin{equation}\label{Bfunctional}
B(x,y)=\frac{x+y}{y}B(x,y+1),
\end{equation}
which can also be obtained using integration by parts (see (1.1.14) in \cite{AAR}).

Using \eqref{Bfunctional}, one can prove (see Theorem 1.1.4 in \cite{AAR}) that the gamma and beta functions are related via

\begin{equation}\label{beta_gamma}
B(x,y)=\frac{\G(x)\G(y)}{\G(x+y)}.
\end{equation}

For $a\in \C$ and a variable $z$, the binomial theorem can be restated in the notation of the rising factorial as
\begin{equation}\label{binomial}
(1-z)^{a} = \sum_{k=0}^\infty \binom{a}{k}(-z)^k =\sum_{k=0}^\infty \frac{(-a)_k}{k!}z^k.
\end{equation}

Binomial coefficients, rising factorials, gamma and beta functions are all used frequently in the  classical theory of hypergeometric functions. In fact,  gamma and beta functions play more primary roles while binomial coefficients and rising factorials make the notation more compact. Also of fundamental importance are specific formulas involving the gamma function, which we now state. First, we give Euler's reflection formula.

\begin{Theorem}[Euler's Reflection Formula\index{Euler reflection formula},  Theorem 1.2.1 of \cite{AAR}]\label{thm:reflect}
For $a\in\mathbb{C}$,
$$ \G(a)\G(1-a)=\frac{\pi}{\sin(\pi a)}.$$
\end{Theorem}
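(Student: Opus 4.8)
The plan is to first establish the identity on the open strip $0<\text{Re}(a)<1$ and then extend to all of $\C$ by analytic continuation. On this strip both $\G(a)$ and $\G(1-a)$ are given by convergent integrals, and the key observation is that their product is a beta value. Setting $x=a$ and $y=1-a$ in the relation \eqref{beta_gamma} and using $\G(1)=1$ gives
\begin{equation*}
\G(a)\G(1-a)=\frac{\G(a)\G(1-a)}{\G(1)}=B(a,1-a)=\int_0^1 t^{a-1}(1-t)^{-a}\,\text{d}t,
\end{equation*}
where the integral converges precisely because $0<\text{Re}(a)<1$. Thus the reflection formula reduces to the evaluation of this single beta integral.

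Next I would transform the integral into a standard form amenable to contour integration. The substitution $t=\frac{u}{1+u}$, so that $1-t=\frac1{1+u}$ and $\text{d}t=\frac{\text{d}u}{(1+u)^2}$, sends $(0,1)$ to $(0,\infty)$ and yields
\begin{equation*}
B(a,1-a)=\int_0^\infty \frac{u^{a-1}}{1+u}\,\text{d}u.
\end{equation*}
The main obstacle is the evaluation of this last integral, which I expect to be the crux of the argument. I would compute it using a keyhole contour encircling the positive real axis, placing the branch cut of $u^{a-1}$ along $[0,\infty)$. The only pole inside the contour is the simple pole at $u=-1$, with residue $e^{i\pi(a-1)}$ for the chosen branch. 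Adding the contributions of the two sides of the cut produces a factor $1-e^{2\pi i(a-1)}$ times the integral, while the residue theorem supplies $2\pi i\,e^{i\pi(a-1)}$; the small and large circular arcs contribute nothing in the limit, thanks precisely to the strict inequalities $0<\text{Re}(a)<1$. Solving for the integral and simplifying with $\sin(\pi a)=\frac{e^{i\pi a}-e^{-i\pi a}}{2i}$ collapses the answer to $\frac{\pi}{\sin(\pi a)}$.

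Finally I would remove the restriction on $a$ by analytic continuation. Both sides of the claimed identity are meromorphic on $\C$: the left side because $\G$ is meromorphic, and the right side because $\sin(\pi a)$ is entire with isolated zeros. Since they agree on the nonempty open strip $0<\text{Re}(a)<1$, they agree wherever both are defined, and one checks that the simple poles of $\G(a)\G(1-a)$ at every integer $a=n$ correspond exactly to the zeros of $\sin(\pi a)$, so the identity holds as an equality of meromorphic functions.

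Should one prefer to avoid contour integration, an alternative is to start from the Weierstrass product for $1/\G$ alluded to just after \eqref{Gfunctional}: writing $\frac1{\G(a)}=a\,e^{\gamma a}\prod_{n\ge1}(1+\tfrac an)e^{-a/n}$ and multiplying the products for $\frac1{\G(a)}$ and $\frac1{\G(1-a)}$ causes the exponential factors to telescope, leaving $a\prod_{n\ge1}(1-\tfrac{a^2}{n^2})$, which equals $\frac{\sin(\pi a)}{\pi}$ by the Euler product for the sine. This route trades the contour computation for the sine product formula as its main input.
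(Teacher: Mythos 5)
Your proposal is correct. Note that the paper itself offers no proof of this statement; it is quoted verbatim as Theorem 1.2.1 of \cite{AAR}, so there is nothing internal to compare against. Your main route --- reducing $\G(a)\G(1-a)$ to $B(a,1-a)=\int_0^\infty \frac{u^{a-1}}{1+u}\,\text{d}u$ on the strip $0<\text{Re}(a)<1$, evaluating by a keyhole contour around the branch cut with the single residue $e^{i\pi(a-1)}$ at $u=-1$, and then extending by the identity theorem for meromorphic functions --- is one of the standard textbook arguments, and every step checks out (the substitution $t=u/(1+u)$ is computed correctly, and the arc estimates do require exactly the strict inequalities you impose). Your alternative sketch via the Weierstrass product and Euler's sine product is in fact closer in spirit to the proof given in \cite{AAR} itself, which works from the limit/product representation of $1/\G$ rather than from contour integration; the contour route buys a self-contained evaluation of the beta integral at the cost of complex analysis, while the product route trades that for the sine product formula as an external input. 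Either is acceptable here, since the paper treats the reflection formula purely as a classical fact feeding the dictionary with the Gauss-sum analogue \eqref{Gauss1}.
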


We note that Theorem \ref{thm:reflect} implies $\G(\frac 12)=\sqrt{\pi}$ and, for example, that $\G(\frac 13)\G(\frac 23)=\frac{2\sqrt{3}\pi}{3}$.

The gamma function also satisfies multiplication formulas.
\begin{Theorem}[Legendre's Duplication Formula, Theorem 1.5.1 of \cite{AAR}]\label{thm:double}
For $a\in\mathbb{C}$,
$$\G(2a)\(2 \pi\)^{1/2}  =2^{2a-\frac 12} \G(a) \G\left(a+\frac{1}{2} \right).$$
\noindent Stated in terms of rising factorials, Theorem \ref{thm:double} gives that for all $n\in \mathbb N$,
\begin{equation}\label{eq:double-rising}
(a)_{2n}=2^{2n} \left( \frac{a}{2} \right)_n \left( \frac{a+1}{2} \right)_n.
\end{equation}
\end{Theorem}
One way to prove Theorem \ref{thm:double} is the following.  Using the functional equation \eqref{Gfunctional} for $\G(x)$, one can see that $$ h(x) :=2^{2x-1}\frac{\G(x)\G(x+\frac{1}{2})}{\G(\frac{1}{2})\G(2x)}$$ satisfies the functional equation $$ h(x+1)=h(x).$$  The result can then be derived using Stirling's formula, which describes the asymptotic behavior of $\G(x)$ (see Remark 1.5.1 of \cite{AAR}).

The more general case is Gauss' multiplication formula, given below.

\begin{Theorem}[Gauss' Multiplication Formula\index{Gauss multiplication formula},  Theorem 1.5.2 of \cite{AAR}]\label{thm:multiplication}
For $m\in\mathbb{N}$ and $a\in\C$,
$$\G(ma) (2\pi)^{(m-1)/2}=m^{ma-\frac{1}{2}}\G(a) \G\left(a+\frac{1}{m} \right)\cdots \G \left( a+\frac{m-1}{m} \right).$$ Stated in terms of rising factorials, for any $n\in \mathbb N$,
$$(ma)_{mn}=m^{mn} \prod_{i=0}^{m-1} \( a+\frac{i}m \)_n.$$
\end{Theorem}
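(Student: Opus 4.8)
The plan is to generalize exactly the argument sketched for the duplication formula (Theorem~\ref{thm:double}): introduce the ratio of the two sides as a function of $a$, show it is periodic via the functional equation \eqref{Gfunctional}, and then pin it down as a constant using Stirling's formula. Concretely, I would set
$$
g(a):=\frac{m^{ma}\,\G(a)\,\G\!\left(a+\tfrac1m\right)\cdots\G\!\left(a+\tfrac{m-1}m\right)}{\G(ma)},
$$
so that the claimed identity is equivalent to $g(a)\equiv m^{1/2}(2\pi)^{(m-1)/2}$.

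First I would establish the functional equation $g\!\left(a+\tfrac1m\right)=g(a)$. Replacing $a$ by $a+\tfrac1m$ shifts the product $\prod_{i=0}^{m-1}\G\!\left(a+\tfrac im\right)$ to $\prod_{i=1}^{m}\G\!\left(a+\tfrac im\right)$, which by \eqref{Gfunctional} (the telescoping $\G(a+1)/\G(a)=a$) multiplies it by $a$; meanwhile $m^{ma}$ is multiplied by $m$ and $\G(ma)$ becomes $\G(ma+1)=ma\,\G(ma)$. The factors $m\cdot a$ in the numerator cancel against $ma$ in the denominator, giving $g\!\left(a+\tfrac1m\right)=g(a)$. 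This periodicity is the algebraic heart of the proof and is entirely routine.

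Next I would argue that $g$ is constant. Since $1/\G$ is entire and $\G$ has no zeros, $g$ extends to an entire, nowhere-vanishing function that is periodic with real period $\tfrac1m$; writing $w=e^{2\pi i m a}$ presents $g$ as a holomorphic function of $w$ on $\C^\times$. Stirling's formula (Remark~1.5.1 of \cite{AAR}) shows $g$ stays bounded as $\operatorname{Im}(a)\to\pm\infty$, that is, as $w\to0$ and $w\to\infty$, so the singularities at $0$ and $\infty$ are removable and $g$ is constant. I would then evaluate that constant by letting $a\to+\infty$ along the reals and inserting the asymptotic $\log\G(x)=(x-\tfrac12)\log x-x+\tfrac12\log(2\pi)+o(1)$ into $\log g(a)$. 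The $\log a$ terms and the terms linear in $a$ cancel after summing $\sum_{i=0}^{m-1}\tfrac im=\tfrac{m-1}2$, leaving $\tfrac12\log m+\tfrac{m-1}2\log(2\pi)$, whence $g\equiv m^{1/2}(2\pi)^{(m-1)/2}$ and the first formula follows upon clearing denominators.

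Finally, the rising-factorial form is a formal consequence. Writing $(ma)_{mn}=\G(m(a+n))/\G(ma)$ and applying the just-proved identity to both $\G(m(a+n))$ and $\G(ma)$, the factors $(2\pi)^{(m-1)/2}$ and the half-powers $m^{-1/2}$ cancel, the powers of $m$ combine to $m^{mn}$, and each gamma ratio $\G\!\left(a+\tfrac im+n\right)/\G\!\left(a+\tfrac im\right)$ becomes $\left(a+\tfrac im\right)_n$, yielding $(ma)_{mn}=m^{mn}\prod_{i=0}^{m-1}\left(a+\tfrac im\right)_n$. The only genuinely delicate point is the complex-analytic step: confirming via Stirling that $g$ is bounded in the imaginary direction, so that periodicity forces constancy; the asymptotic bookkeeping that produces the constant, though lengthy, is routine.
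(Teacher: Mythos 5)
Your proof is correct. The paper itself gives no proof of Theorem \ref{thm:multiplication} (it simply cites Theorem 1.5.2 of \cite{AAR}), but your argument is exactly the generalization to arbitrary $m$ of the periodicity-plus-Stirling sketch the paper offers for the duplication formula (Theorem \ref{thm:double}): the functional equation $g(a+\tfrac1m)=g(a)$, the boundedness of $g$ as $\operatorname{Im}(a)\to\pm\infty$ forcing constancy, the evaluation of the constant via the asymptotic expansion of $\log\G$, and the routine translation into rising factorials are all carried out correctly.
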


We end this subsection with  the Lagrange inversion theorem for formal power series. If $f(z)$ and $g(z)$ are formal power series where $g(0)=0$ and  $g'(0)\neq 0$, then Lagrange's inversion theorem gives a way to write $f$ as a power series in $g(z)$. In particular, one can write

\begin{equation}\label{ClassicalLagrange}
f(z)=f(0)+\sum_{k=1}^{\infty}c_kg(z)^k,
\end{equation}
where
$$
  c_k=\text{Res}_{z}\, \frac{f'(z)}{kg(z)^{k}},
$$
and $\text{Res}_{z}(f)$ denotes the coefficient of $1/z$ in the power series expansion of $f$. Lagrange's original result can be found in \cite{Lagrange}, and \cite{Good} by Good provides a simple proof, as well as generalizations to several variables. We discuss finite field analogues of this result in \S \ref{2.3}.

\subsection{Gauss and Jacobi sums}\label{ss:Gauss&Jacobi}

We now lay the necessary corresponding groundwork for the finite field setting based on \cite{AAR, Berndt-Evans-Williams,Evans81, Evans83, Evans91,Greene}. We  recall the Gauss and Jacobi sums, which are the finite field analogues of the gamma and beta functions, and consequently the binomial coefficient and the rising factorial. We also state analogues of the formulas for $\G(x)$ given in Theorems \ref{thm:reflect}-\ref{thm:multiplication}.  For a comprehensive treatment of Gauss sums and Jacobi sums, see the book by Berndt, Evans and Williams \cite{Berndt-Evans-Williams}.

To begin, fix an odd prime $p$ and let $\F_q$ be a finite field of size $q$, where $q=p^e$.  Recall that a \Index{multiplicative character} $\chiup$ on $\F_q^{\times}$ is a group homomorphism $$\chiup:\F_q^{\times} \rightarrow \C^{\times},$$ and the set $\fqhat$ of all multiplicative characters on $\F_q^{\times}$ forms a cyclic group of order $q-1$ under  multiplication. Throughout, we fix the following notation
\begin{align*}
\eps &= \mbox{ the trivial character in }\fqhat \\
\phi &= \mbox{ the quadratic character in }\fqhat ,
\end{align*}
so that $\eps(a)=1$ for all $a\neq 0$, and $\phi$ is nontrivial such that $\phi^2=\eps$.   We extend the definition of each character $\chiup\in\fqhat$ to all of $\F_q$ by setting $\chiup(0)=0$, including $\eps(0)=0$.\footnote{A different choice would be $\eps(0)=1$, see page 9 of \cite{Berndt-Evans-Williams}. However, we would prefer to work with functions that are multiplicative  for all elements in $\F_q$. Thus $\eps(0)=0$ is preferred.} \index{characters!trivial character, $\eps$}\index{characters!quadratic character, $\phi$}\index{$\eps$: trivial character}\index{$\phi$: quadratic character}

Further, following Greene in \cite{Greene}, it is useful to define a function $\delta$ for $\chiup \in \fqhat$, or $x\in\fq$, respectively,  by \index{$\delta$-function}
\begin{align*}
  \delta(\chiup):=\delta_\eps(\chiup) &:=
  \begin{cases}
    1\, &\mbox{ if } \chiup=\eps,\\
    0 \, &\mbox{ if } \chiup\neq\eps;
  \end{cases} \\
   \delta(x):=\delta_0(x) &:=
  \begin{cases}
    1\, &\mbox{ if } x=0, \\
    0\, &\mbox{ if } x\neq 0.
  \end{cases}
\end{align*}
 This definition will allow us to describe formulas which hold for all characters, without having to separate cases involving trivial characters.

Let $\z_N:=e^{2\pi i/N}$. We fix a primitive $p^{th}$ root of unity $\zeta_p$ and $A,B\in \widehat{\F_q^\times}$. We define the \Index{Gauss sum} and \Index{Jacobi sum}, respectively as follows.
\begin{align*}
  g(A)&:=\sum_{x\in \F_q^\times}A(x)\zeta_p^{\text{Tr}^{\F_q}_{\F_p}(x)}, \\
  J(A,B)&:=\sum_{x\in \F_q} A(x)B(1-x),
\end{align*}
where $\text{Tr}^{\F_q}_{\F_p}(x):=x+x^p+x^{p^2}+\cdots +x^{p^{e-1}}$  is the trace of $x$  viewed as a surjective linear map  from $\F_q$ to $\F_p$.
It is not hard to see that  $$g(\eps)=-1,$$ a fact  we will make use of frequently.  While  in general the Gauss sums depend on the choice of primitive root $\zeta_p$,
the Jacobi sums do not. By definition,  $J(A,B)$ is in the ring of integers of the cyclotomic field $\Q(\z_{q-1})$, while $g(A)$ is in $\Q(\z_{p(q-1)})$.

For a more detailed introduction to characters, see \cite{Berndt-Evans-Williams} or Chapter 8 of \cite{IR}.  We note that although \cite{Berndt-Evans-Williams} and \cite{IR} contain proofs of most results we give throughout the rest of this subsection, care must be taken for cases involving the trivial character $\eps$, which is defined to be $1$ at $0$ in both references.

The Gauss sum is the finite field analogue of the gamma function, while the Jacobi sum is the finite field analogue of the beta function \cite{Evans91}. With this in mind, we observe that the following are finite field analogues of Euler's reflection formula from Theorem \ref{thm:reflect}.


\begin{equation}\label{Gauss1}
 \begin{split}
  &g(A)g(\overline A)=qA(-1)-(q-1)\delta(A) \\
  &\frac 1{g(A)g(\overline A)}=\frac{A(-1)}q+\frac{q-1}q\delta(A).
\end{split}
\end{equation}

Note that the second equation of \eqref{Gauss1}
can be derived from the first by multiplying both right hand sides together to obtain $1$.  Furthermore, since $g(\eps)=-1$, we can write


\begin{equation}\label{Gauss1a}
\begin{split}
 &g(\overline A)=\frac{qA(-1)}{g(A)}+(q-1)\delta(A) \\
  &\frac 1{g(\overline A)}=\frac{A(-1)g(A)}q - \frac{q-1}q\delta(A).
\end{split}
\end{equation}

\noindent The finite field analogues of the multiplication formulas for $\G(x)$ given in Theorems \ref{thm:double} and \ref{thm:multiplication} are known as the Hasse-Davenport Relations. The general version given below is a finite field analogue of Theorem \ref{thm:multiplication}.

\begin{Theorem}[Hasse-Davenport Relation\index{Hasse-Davenport relation},  see Theorem 11.3.5 of \cite{Berndt-Evans-Williams}]\label{Hasse Davenport}
Let $m\in \mathbb N$  and $q=p^e$ be a prime power with $q\equiv1 \pmod{m}.$  For any multiplicative character $\psi\in \widehat{\mathbb{F}_{q}^{\times}}$, we have
$$\prod_{\substack{\chiup \in \widehat{\F_q^\times}\\ \chiup^m=\eps}}g(\chiup\psi)=-g(\psi^m)\psi(m^{-m})\prod_{\substack{\chiup \in \widehat{\F_q^\times} \\ \chiup^m=\eps}} g(\chiup).$$
\end{Theorem}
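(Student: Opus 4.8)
The plan is to prove the relation by directly expanding the left-hand product of Gauss sums into a single multiple character sum and then separating off one copy of $g(\psi^m)$, in analogy with the way the left side $\G(ma)$ of the classical multiplication formula (Theorem \ref{thm:multiplication}) is isolated. Fix the additive character $e(x)=\zeta_p^{\text{Tr}^{\F_q}_{\F_p}(x)}$ and a character $\theta$ of exact order $m$, so that $\{\chi:\chi^m=\eps\}=\{\theta^0,\dots,\theta^{m-1}\}$, which has exactly $m$ elements since $q\equiv 1\pmod m$. Expanding the definition of each Gauss sum gives
$$\prod_{j=0}^{m-1}g(\theta^j\psi)=\sum_{t_0,\dots,t_{m-1}\in\F_q^\times}\theta\!\left(t_1t_2^2\cdots t_{m-1}^{m-1}\right)\psi(t_0t_1\cdots t_{m-1})\,e(t_0+\cdots+t_{m-1}).$$

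First I would perform the change of variables $t_j=t_0w_j$ for $1\le j\le m-1$, with $t_0\in\F_q^\times$ and $w_j\in\F_q^\times$, which is a bijection onto the summation domain. This factors the sum so that the dependence on $t_0$ collapses into an inner Gauss-type sum $\sum_{t_0}(\psi^m\tau)(t_0)\,e\big(t_0(1+w_1+\cdots+w_{m-1})\big)$, where $\tau:=\theta^{m(m-1)/2}$. Using the standard evaluation $\sum_{t\in\F_q^\times}\chi(t)e(ct)=\overline\chi(c)g(\chi)$ for $c\ne0$ (and $0$ when $c=0$ for nontrivial $\chi$), this inner sum contributes $g(\psi^m\tau)$ times $\overline{\psi^m\tau}(1+\sum_j w_j)$. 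The main Gauss sum $g(\psi^m)$ thus emerges (after reconciling $\tau$), and the remaining $(m-1)$-fold sum over the $w_j$ is a convolution of Jacobi-type sums that, by repeated application of $g(A)g(B)=J(A,B)g(AB)$ and character orthogonality, should collapse to $g(\psi^m)\,\psi(m^{-m})\prod_{j=1}^{m-1}g(\theta^j)$; since $\prod_{j=1}^{m-1}g(\theta^j)=-\prod_{\chi^m=\eps}g(\chi)$ (using $g(\eps)=-1$), this is exactly the claimed right-hand side.

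The hard part will be pinning down the exact constant, namely the factor $\psi(m^{-m})$ together with the overall sign, and correctly handling the trivial-character (delta) boundary terms; this is precisely the finite-field analogue of extracting the power of $m$ and the factor $(2\pi)^{(m-1)/2}$ in Theorem \ref{thm:multiplication}. Two subtleties deserve care. When $m$ is even, $\tau=\theta^{m(m-1)/2}=\theta^{m/2}=\phi$ rather than $\eps$, so $g(\psi^m\phi)$ must be converted back to $g(\psi^m)$ using the reflection formula \eqref{Gauss1} together with the quadratic case (which can be established first); and the cases where some $\theta^j\psi$ or $\psi^m$ is trivial must be separated out, the error terms being governed by \eqref{Gauss1}--\eqref{Gauss2a}. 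A useful consistency check that also guides the constant computation is to compare absolute values: since $|g(\chi)|=\sqrt q$ for every nontrivial $\chi$ by \eqref{Gauss1}, both sides have modulus $q^{m/2}$ in the generic case, so the identity reduces to identifying a root of unity. This root can be anchored by the base case $\psi=\eps$, where both sides equal $\prod_{\chi^m=\eps}g(\chi)$, and then by tracking the $\psi$-dependence through the factor $\overline{\psi^m}(c)$ produced above. Alternatively, one could reduce $m$ to the prime case by multiplicativity, or, as an independent cross-check, deduce the relation from Gauss' multiplication formula for the $p$-adic gamma function via the Gross--Koblitz formula.
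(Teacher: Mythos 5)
There is a genuine gap here, and it sits exactly where you flag it yourself. First, for context: the paper does not prove this theorem at all; it cites Theorem 11.3.5 of \cite{Berndt-Evans-Williams} and notes explicitly that the proof there rests on Stickelberger's congruence for Gauss sums. Your opening moves are fine and standard: the expansion of $\prod_j g(\theta^j\psi)$ as an $m$-fold sum, the substitution $t_j=t_0w_j$, and the extraction of the inner sum $g(\psi^m\tau)\,\overline{\psi^m\tau}(1+\sum_j w_j)$ with $\tau=\theta^{m(m-1)/2}$ are all correct. But at that point you are left with the $(m-1)$-fold sum
$$S(\psi)=\sum_{w_1,\dots,w_{m-1}\in\F_q^\times}\theta\bigl(w_1w_2^2\cdots w_{m-1}^{m-1}\bigr)\,\psi(w_1\cdots w_{m-1})\,\overline{\psi^m\tau}\Bigl(1+\sum_j w_j\Bigr),$$
and the assertion that this ``should collapse'' to $\psi(m^{-m})$ times a $\psi$-independent product of Gauss sums \emph{is} the theorem. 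It does not follow from repeated use of $g(A)g(B)=J(A,B)g(AB)$ and orthogonality: the $\psi$-dependence enters both through $\psi(\prod w_j)$ and through $\overline{\psi^m}(1+\sum w_j)$, and disentangling these to produce precisely the constant $m^{-m}$ is the whole difficulty.

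Your fallback strategies do not close this gap either. The absolute-value comparison shows only that the ratio of the two sides is an algebraic number all of whose archimedean absolute values are $1$, hence a root of unity; determining \emph{which} root of unity is exactly what Stickelberger's congruence (or the Gross--Koblitz formula) is for, and anchoring at $\psi=\eps$ plus ``tracking the $\psi$-dependence'' is a restatement of the problem, not a solution. Note that in the classical analogue the corresponding constant in Theorem \ref{thm:multiplication} is pinned down by Stirling's formula, i.e.\ by asymptotics, and the paper itself points out (for the duplication formula) that this step has no direct finite-field counterpart. The one route you mention that genuinely works --- deducing the relation from the $p$-adic multiplication formula for the $p$-adic Gamma function via Gross--Koblitz --- is relegated to a ``cross-check'' and not developed. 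To have a complete argument you would need to either carry out that $p$-adic computation, or reduce to the Stickelberger-based proof the paper cites.
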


\noindent The proof in \cite{Berndt-Evans-Williams} for the Hasse-Davenport relation is based on Stickelberger's congruence for Gauss sums stated in Section 11.2 of \cite{Berndt-Evans-Williams} or Section 3.7 of \cite{Cohen1}.\footnote{H. Cohen pointed out to us that by equation \eqref{Gauss1}, the product $-\prod_{\chi \in \widehat{\F_q^\times}, \chi^m=\eps} g(\chiup)$
is necessarily a quadratic character when $m$ is odd; otherwise it is a quadratic character times $g(\phi)$, the quadratic Gauss sum.}

We can obtain the analogue of Legendre's duplication formula in Theorem \ref{thm:double} by taking the case $m=2$ of Theorem \ref{Hasse Davenport}.  This gives that for any multiplicative character $A$
\begin{equation}\label{Hasse-Dav-special}
g(A)g(\phi A)=g(A^2)g(\phi)\overline{A}(4),
\end{equation}where $\phi$ is the quadratic character as before.

Just as there is the relationship between the gamma and beta functions, \eqref{beta_gamma}, one can relate Gauss sums and Jacobi sums by

\begin{equation}\label{JacobiGaussrelation}
J(A,B)=\frac{g(A)g(B)}{g(AB)}+(q-1)B(-1)\delta(AB).
\end{equation}
{ The left hand side of equation \eqref{JacobiGaussrelation} is symmetric in $A$ and $B$ while there is an apparent asymmetry on the right hand side.
Note that $\delta(AB) \neq 0$ means $B=\overline{A}$ and thus
$B(-1)= \overline{A}(-1)=A(-1)$ so the right hand side is in fact symmetric.} 

Below, we note some special cases for Jacobi sums. We have,
\begin{align}
 J(A,\overline A)&=- A(-1)+(q-1)\delta(A) \label{eq:J(A,Abar)}
\end{align}
and
\begin{equation}\label{JAeps}
J(A,\eps)=-1+(q-1)\delta(A).
\end{equation}
\bk

In  \cite{Berndt-Evans-Williams} (see Theorem 2.1.4)  there is an elementary proof of \eqref{Hasse-Dav-special} written in terms of Jacobi sums. It gives, for $A\neq\eps$,
\begin{equation}\label{eq:double-Jac}
J(A,A)=\ol A(4)J(A,\phi).
\end{equation}

We now are ready to define finite field analogues for the rising factorial and binomial coefficient\footnote{Note that our binomial coefficient differs from Greene's by a factor of $-q$.} in \eqref{risingfactorial} and \eqref{classicbinom}.\index{finite field analogues!rising factorial}\index{finite field analogues!binomial coefficient} We let

\begin{align}
  (A)_\chi&:=\frac{g(A\chiup)}{g(A)}, \\
  \CC A\chiup&:=-\chiup(-1)J(A,\overline\chiup).\label{eq:FFbim}
\end{align}

\noindent Then we see directly that for  $A,\chiup_1,\chiup_2\in \widehat{\F_q^\times}$,
\begin{align}\label{eq:22}
  (A)_{\chi_1\chi_2}&=(A)_{\chi_1}(A\chiup_1)_{\chi_2},
\end{align}
which is the analogue of the classical identity $(a)_{n+m}=(a)_n(a+n)_m$, for integers $n,m\ge 0$.  The multiplication formula in Theorem \ref{Hasse Davenport} can be written in this notation as
\begin{equation}\label{eq:mul-Jacobi}
 (A^m)_{\psi^m}=\psi(m^m)\prod_{\substack{\chiup\in \widehat{\F_q^\times}\\ \chiup^m=\eps}}(A\chiup)_\psi,
\end{equation}
which compares with the second statement in Theorem \ref{thm:multiplication}.

Now we give a proof of a useful fact about Jacobi sums (see (2.6) of Greene \cite{Greene}), which is the analogue of the classical binomial identity $$\binom{n}{m}=\binom{n}{n-m}.$$
\begin{Lemma}\label{lem:1}
For any characters $A,B$ on $\F_q^\times$, $J(A,\overline B)=A(-1)J(A,B\overline {A}).$  Stated using binomial notation, $$\binom{A}{ B}=\binom{A}{A\ol B}.$$
\end{Lemma}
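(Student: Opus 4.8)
The plan is to establish the identity directly at the level of character sums, via a single Möbius change of variables, which has the advantage of treating all characters $A,B$ uniformly and thereby sidestepping any delta-term bookkeeping (in contrast to an approach routed through the Gauss-sum expression \eqref{JacobiGaussrelation}, where the trivial-character corrections would have to be tracked). Unfolding the definition, I would start from
$$
J(A,\ol B)=\sum_{x\in\fq}A(x)\,\ol B(1-x),
$$
and observe that the convention $\chiup(0)=0$ forces the terms $x=0$ and $x=1$ to vanish, so effectively $x$ runs over $\fq\setminus\{0,1\}$.

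The key step is the substitution $x=\dfrac{y}{y-1}$, which is a bijection of $\fq\setminus\{1\}$ onto itself (with $x=1$ corresponding to the excluded point at infinity, a term that is zero anyway). Under it one computes $A(x)=A(y)\ol A(y-1)$ and $1-x=\dfrac{-1}{y-1}$, so that, using $\ol B(t^{-1})=B(t)$,
$$
\ol B(1-x)=\ol B(-1)\,B(y-1).
$$
Combining the two factors into $(B\ol A)(y-1)$ gives
$$
J(A,\ol B)=\ol B(-1)\sum_{y\in\fq}A(y)\,(B\ol A)(y-1).
$$
Rewriting $(B\ol A)(y-1)=(B\ol A)(-1)\,(B\ol A)(1-y)$ then recognizes the remaining sum as $J(A,B\ol A)$, at the cost of the overall constant $\ol B(-1)\,(B\ol A)(-1)$.

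The final simplification of this constant is routine: $\ol B(-1)\,(B\ol A)(-1)=\ol B(-1)B(-1)\ol A(-1)=\eps(-1)\ol A(-1)=\ol A(-1)$, and since $A(-1)^2=A(1)=1$ we have $\ol A(-1)=A(-1)$, yielding $J(A,\ol B)=A(-1)J(A,B\ol A)$ as claimed. The binomial reformulation is then immediate from the definition \eqref{eq:FFbim}: writing $\CC AB=-B(-1)J(A,\ol B)$ and applying the identity (noting $B\ol A=\ol{A\ol B}$), one checks both $\CC AB$ and $\CC A{A\ol B}$ equal $-A(-1)B(-1)\,J(A,\ol{A\ol B})$. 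The only point requiring a little care—hence the main obstacle to a fully rigorous write-up—is the verification that the substitution's degenerate points together with the various trivial-character special cases all contribute zero, which I would dispatch cleanly by appealing throughout to the convention $\chiup(0)=0$.
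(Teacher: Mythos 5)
Your argument is correct, but it is not the route the paper's displayed proof takes: the paper proves Lemma \ref{lem:1} by passing through the Gauss-sum expression \eqref{JacobiGaussrelation} together with \eqref{Gauss1a} and \eqref{Gauss2a}, expanding $J(A,\ol B)$ as $g(A)g(\ol B)/g(A\ol B)$ plus correction terms and cancelling the various $(q-1)\delta(\cdot)$ contributions by hand. What you do instead is exactly the alternative the authors sketch in the Remark immediately following their proof: a single M\"obius change of variables on the defining sum (they use the composite of $x\mapsto 1-x$ and $x\mapsto 1/x$, you use the involution $x\mapsto x/(x-1)$ directly --- the same transformation packaged differently). Your computation checks out: the convention $\chiup(0)=0$ does kill the boundary terms at $x=0,1$ and at the point sent to infinity, the factor $\ol B(-1)(B\ol A)(-1)$ collapses to $\ol A(-1)=A(-1)$ since characters take values $\pm1$ at $-1$, and the binomial restatement follows from \eqref{eq:FFbim} as you say. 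The trade-off between the two methods is the one the authors themselves point out: the Gauss-sum route is longer but rehearses the delta-term bookkeeping that recurs throughout the paper, whereas the change-of-variables route is shorter and uniform in $A,B$ with no case analysis. Your write-up is complete as stated; the only cosmetic caveat is that your final sentence presents the degenerate-point verification as an outstanding obstacle, when in fact you have already supplied everything needed to dispatch it.
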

\begin{proof}
We observe
\begin{align*}
J(A,\overline B)
\overset{\eqref{JacobiGaussrelation}}{=}
&\frac{g(A)g(\overline B)}{g(A\overline B)}+(q-1)B(-1)\delta(A\overline B)\\
\overset{\eqref{Gauss1a}}{=}
&\frac{g(A)}{g(A\overline B)}
\(\frac{qB(-1)}{g( B)}+(q-1){\delta(B)}\)
+(q-1)B(-1)\delta(A\overline B)\\
\overset{\eqref{Gauss1a}}{=}& \frac{g(A)qB(-1)}{g( B)}\(\frac{AB(-1)}{q}{g(\overline A B)} -\frac{q-1}q \delta(\overline A B)\)\\
&+(q-1)\delta(B)+(q-1)B(-1)\delta(A\overline B)\\
=&A(-1)\frac{g(A)g(\overline A B)}{g(B)}+(q-1)\delta(B)\\
\overset{\eqref{JacobiGaussrelation}}{=}&A(-1)J(A,\overline AB).
\end{align*}
\end{proof}

\begin{Remark}
The proof of Lemma \ref{lem:1} demonstrates that one strategy for proving identities is to use the basic properties of Gauss and Jacobi sums while keeping track of the delta terms. Alternatively, one can reprove  the above by changing variables
\begin{multline*}A(-1)J(A,\overline AB)=A(-1)\sum_{x\in \F_q^\times}A(1-x)\ol AB(x)\\=A(-1)\sum_{x\in \F_q^\times}A\(\frac{1-x}x\)B(x)=A(-1)\sum_{x\in \F_q^\times}A\(\frac{1-x}x\)\ol B\(\frac 1x\)\\=\sum_{x\in \F_q^\times} A\(1-\frac 1x\)\ol B\(\frac 1x\)=J(A,\ol B).\end{multline*}
The technique  of changing variables  makes the proofs more  compact when used properly.
\end{Remark}

\subsection{Lagrange inversion}\label{2.3}
Below is a finite field analogue of the Lagrange inversion formula \eqref{ClassicalLagrange}, which is similar to the Fourier inversion formula given in \cite{BCM}.  We  state the version where the basis of complex valued functions on the finite field is comprised of all multiplicative characters in $\fqhat$, together with $\delta(x)$.

\begin{Theorem}[\cite{Greene87} Theorem 2.7]\label{FFlagrange}
Let $p$ be an odd prime, $q=p^e$, and suppose $f:\mathbb{F}_q\rightarrow \C$ and $g:\F_q\rightarrow \F_q$ are functions. Then
$$\sum_{\substack{y\in\F_q\\ g(y)=g(x)}}f(y)=\delta(g(x))  \sum_{\substack{y\in\F_q\\ g(y)=0}}f(y)+\sum_{\chiup\in\fqhat}f_{\chi}\chiup(g(x)),$$
where $$f_\chi=\frac{1}{q-1}\sum_{y\in\F_q}f(y)\overline{\chiup}(g(y)).
$$
\end{Theorem}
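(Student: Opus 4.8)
The plan is to recognize this ``Lagrange inversion'' statement as, at heart, a character-expansion identity on $\F_q$, proved by orthogonality of multiplicative characters. The key structural fact I would isolate first is that the collection $\{\chiup : \chiup\in\fqhat\}\cup\{\delta_0\}$ forms a basis for the $q$-dimensional space of all functions $\F_q\to\C$. This works precisely because of the convention $\chiup(0)=0$ adopted in this paper (including $\eps(0)=0$): evaluating at $0$ kills every character and isolates $\delta_0$, while on $\F_q^\times$ the $q-1$ characters are linearly independent by the standard orthogonality relation $\sum_{t\in\F_q^\times}\psi(t)=(q-1)\delta(\psi)$. Hence every $h:\F_q\to\C$ has a unique expansion $h(t)=h(0)\delta(t)+\sum_{\chiup}d_{\chiup}\chiup(t)$, where, by orthogonality, $d_{\chiup}=\frac{1}{q-1}\sum_{t\in\F_q^\times}h(t)\ol\chiup(t)$.

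Next I would introduce the fiber-sum function $F(t):=\sum_{y\in\F_q,\,g(y)=t}f(y)$, so that the left-hand side of the claim is exactly $F(g(x))$. Applying the expansion above to $h=F$ gives $F(t)=F(0)\delta(t)+\sum_{\chiup}d_{\chiup}\chiup(t)$, and evaluating the constant term directly yields $F(0)=\sum_{y:\,g(y)=0}f(y)$, which matches the coefficient of $\delta(g(x))$ in the target formula.

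It then remains to verify $d_{\chiup}=c_{\chiup}$. Starting from $d_{\chiup}=\frac{1}{q-1}\sum_{t\in\F_q^\times}F(t)\ol\chiup(t)$, I would substitute the definition of $F$ and interchange the order of summation, re-indexing the double sum over pairs $(t,y)$ with $g(y)=t\neq 0$ as a single sum over those $y$ with $g(y)\neq 0$, giving $d_{\chiup}=\frac{1}{q-1}\sum_{y:\,g(y)\neq 0}f(y)\ol\chiup(g(y))$. The final simplification again uses the convention $\ol\chiup(0)=0$: the terms with $g(y)=0$ contribute nothing, so the sum may be extended over all $y\in\F_q$ without change, producing exactly $c_{\chiup}=\frac{1}{q-1}\sum_{y\in\F_q}f(y)\ol\chiup(g(y))$. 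Substituting $t=g(x)$ then completes the proof.

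The argument is essentially bookkeeping once the basis statement is in place; I expect the only genuine subtlety, the main obstacle, to be the careful handling of the value at $0$, where the $\delta$-term and the character-sum term interact. The convention $\chiup(0)=0$ is doing real work in two places, namely separating off the $\delta_0$ component and harmlessly extending the coefficient sums over all of $\F_q$; a proof that silently used $\eps(0)=1$ would require extra correction terms. I would therefore state the orthogonality relation and the $\chiup(0)=0$ convention explicitly at the outset to keep these edge cases transparent.
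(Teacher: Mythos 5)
Your proof is correct. The paper itself does not prove this statement---it is quoted as Theorem 2.7 of Greene's \emph{Lagrange inversion over finite fields}---but your argument (expand the fiber-sum function $F(t)=\sum_{g(y)=t}f(y)$ in the basis $\{\delta_0\}\cup\fqhat$ of functions $\F_q\to\C$, using orthogonality on $\F_q^\times$ and the convention $\chiup(0)=0$ to identify the coefficients with $c_\chiup$) is exactly the mechanism underlying Greene's proof and the paper's own later use of the expansion \eqref{eq:f-f_chi}; note also that the hypothesis that $p$ is odd plays no role in the argument.
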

\noindent
Compared with the classical formula \eqref{ClassicalLagrange},  the assumptions  $f(0)=0$,  $f'(0)\neq 0$, i.e. the map being one-to-one near $0$, are not required. Greene pointed out that it is also the reason why the finite field version cannot be used to determine coefficients when $f$ is not a one-to-one function.

We will use Theorem \ref{FFlagrange} in Section \ref{finitefield}  to develop the finite field analogue of the hypergeometric function.

\subsection{A dictionary between the complex and finite field  settings}\label{dictionary}
We now list a dictionary that we will use for convenience.  It is  well-known to experts.   Let $N\in \mathbb N$, and $a,b\in\mathbb{Q}$  with common denominator $N$.
$$
\bgroup
\def\arraystretch{1.4}
\begin{array}{cccc}
\frac 1N &\leftrightarrow & \text{an order $N$ character $\eta_N \in \widehat{\F_q^\times}$} \\
a=\frac i N, \; b=\frac j N &\leftrightarrow & A, B\in \widehat{\F_q^\times},\;\; A=\eta_N^i,\; B=\eta_N^j\\
x^a &\leftrightarrow & A(x)\\
x^{a+b} &\leftrightarrow &  A(x)B(x)=A B(x)\\
a+b &\leftrightarrow & A\cdot B\\
-a &\leftrightarrow &\ol{A}\\
\G(a) &\leftrightarrow & g(A)\\
(a)_n=\G(a+n)/\G(a)&\leftrightarrow & (A)_\chi =g(A\chiup)/g(A)\\
B(a,b) &\leftrightarrow & J(A,B)\\
\int_0^1 \quad \text{d}x &\leftrightarrow &  \sum_{x\in \F} \\
\G(a)\G(1-a)=\frac{\pi}{\sin a \pi}, \;\; a\notin \Z&\leftrightarrow & g(A)g(\ol{A})=A(-1)q,\;\; A\neq \eps\\
{\displaystyle(ma)_{mn}=m^{mn} \prod_{i=1}^m \( a+\frac{i}m \)_n}&\leftrightarrow& {\displaystyle(A^m)_{\psi^m}=\psi(m^m) \prod_{i=1}^m(A\eta_m^i)_\psi }
\end{array}
\egroup
$$
\begin{Remark}
One could use $\frac{1}{q}\sum_{x\in \F}$  for the finite field analogue of $\int_0^1  \text{d}x$  so that the `volume' $\frac{1}{q}\sum_{x\in \F} \, 1=1$.   Another option is to use $\frac{1}{q-1}\sum_{x\in \F^\times}$.  Our choice above  is instead $\sum_{x\in \F}$,   which makes point counting formulas more natural.
\end{Remark}

\begin{Remark}\label{rmk:integer_note}
It is useful to note here that since integers correspond to the trivial character, integers appearing in exponents will disappear when translating to the finite field setting.
\end{Remark}
\color{black}

\section{Classical Hypergeometric Functions}\label{setting}

In this section, we recall the definition and basic properties of classical hypergeometric functions, including transformation and evaluation formulas. We also discuss the hypergeometric differential equation and its relationship to the Schwarz map and the monodromy group.

\subsection{Classical development}\label{3.1}

We now recall the basic development of the classical (generalized) hypergeometric functions.  See \cite{AAR,Bailey,Slater} for a thorough treatment. For all classical formulas stated in this paper,  the assumption is always that they  hold when both sides of the formula are well-defined and converge.

The classical (generalized) \Index{hypergeometric functions $_{n+1}F_{n}$} with complex parameters $a_1, \ldots, a_{n+1}$, $b_1, \ldots, b_{n}$, and argument $z$ are defined by
\begin{equation}\label{eq:HGS}
\pFq{n+1}{n}{a_1& a_2& \cdots& a_{n+1}}{& b_1& \cdots& b_{n}}{z} := \sum_{k=0}^\infty \frac{(a_1)_k\cdots (a_{n+1})_k}{(b_1)_k\cdots (b_{n})_k}\frac{z^k}{k!},
\end{equation}and converge when $|z|<1$.
Each  $_{n+1}F_{n}$ function satisfies an order $n+1$ ordinary Fuchsian differential equation in the variable $z$ with three regular singularities at $0$, $1$, and $\infty$ \cite[\S 2.1.2]{Slater}\cite{Yoshida}.  It also satisfies the following inductive integral relation of Euler \cite[Equation (2.2.2)]{AAR}.  Namely, when $\text{Re}(b_n)>\text{Re}(a_{n+1})>0$,
\begin{multline}\label{EulerIntegral}
\pFq{n+1}{n}{a_1& a_2& \cdots& a_{n+1}}{& b_1& \cdots& b_{n}}{z} = {B(a_{n+1}, b_n-a_{n+1})}^{-1}\\
\cdot \int_0^1 t^{a_{n+1}-1}(1-t)^{b_n-a_{n+1}-1}\cdot \pFq{n}{n-1}{a_1& a_2& \cdots& a_{n}}{& b_1& \cdots& b_{n-1}}{zt}\text{d}t.
\end{multline}

To relate these functions to periods of algebraic varieties (see \cite{KZ} for useful discussions on periods),  we first observe that by \eqref{binomial}
we  may  define  for general $a\in\mathbb{C}$,
\begin{equation}\label{1F0}
_1P_0  [a;z]:= (1-z)^{-a} = \sum_{k=0}^\infty \frac{(a)_k}{k!}z^k =\,  _1F_0 [a;z].
\end{equation}
Here we are using the notation $_1P_0$ to indicate a relationship to periods of  algebraic varieties when $a\in \Q$.   Next we let
\begin{multline}\label{Pintegral}
_2P_1 \left [\begin{array}{cc}a&b\\&c\end{array}  ;z\right ]:= \int_0^1 t^{b-1}(1-t)^{c-b-1} \,_1P_0  \left [a;zt \right ]  \text{d}t\\=\int_0^1 t^{b-1}(1-t)^{c-b-1}(1-zt)^{-a}\text{d}t,
\end{multline} or one can define it using the Pochhammer contour
described in \S \ref{gamma and beta}, see \cite[\S 1.6]{Slater}.
Up to an algebraic multiple, the value of this function at a given $z$
can be realized as a period of a corresponding generalized Legendre curve,  see \cite{Archinard, WIN3a, Wolfart} and \S \ref{GLC} below.  To relate it to the $_2F_1$ function, one uses \eqref{EulerIntegral} to see that when $\text{Re}(c)>\text{Re}(b)>0$,
\begin{align}\label{2P1to2F1}
_2P_1 \left [\begin{array}{cc}a&b\\&c\end{array}  ;z\right ]&= \int_0^1 t^{b-1}(1-t)^{c-b-1}\cdot\, _1P_0[a;zt] \text{d}t\\
&=\int_0^1 t^{b-1}(1-t)^{c-b-1}\cdot\, _1F_0[a;zt] \text{d}t   \nonumber \\
&=B(b,c-b) \cdot \pFq{2}{1}{a&b}{&c}{z}. \nonumber
\end{align}
Inductively one can define the (higher) periods $_{n+1}P_{n}$\index{period functions $_{n+1}P_{n}$} similarly by
\begin{multline}\label{inductiveintegral}
_{n+1}P_n \left [\begin{array}{ccccc}a_1 & a_2 & \cdots & a_{n+1}\\& b_1& \cdots & b_{n}\end{array}  ;z\right ]:= \\
\int_0^1 t^{a_{n+1}-1}(1-t)^{b_{n}-a_{n+1}-1} \, _{n}P_{n-1} \left [\begin{array}{ccccc}a_1 & a_2 & \cdots & a_{n}
\\& b_1& \cdots & b_{n-1}\end{array}  ;zt\right ] \text{d}t.
\end{multline}

Again using the beta function, one can show that
when $\text{Re}(b_i)>\text{Re}(a_{i+1})>0$ for each $i\geq 1$,
\begin{multline*}
\pFq{n+1}{n}{a_1 & a_2 & \cdots & a_{n+1}}{& b_1& \cdots & b_{n}}{z}=\\
\prod_{i=1}^n B(a_{i+1},b_i-a_{i+1})^{-1} \cdot\,_{n+1}P_n \left [\begin{array}{ccccc}a_1 & a_2 & \cdots & a_{n+1} \\ & b_1& \cdots & b_{n}\end{array} ;z\right ].
\end{multline*}

By the definition of $_{n+1}F_{n}$ in \eqref{eq:HGS}, any given $_{n+1}F_{n}$ function satisfies two nice properties:
\begin{itemize}
\item[1)] The leading coefficient is 1;

\item[2)] The roles of the upper entries $a_{i}$ (resp. lower entries $b_j$) are  symmetric.
\end{itemize}
Clearly, the $_{n+1}P_n$ period functions do not satisfy these properties in general. The hypergeometric functions can thus be viewed as periods that are `normalized' so that both properties 1) and 2) are satisfied.

 These two properties are important motivating factors for our definition of a finite field analogue of $_{n+1}F_{n}$ in \S \ref{ffdef}.

\subsection{Some properties of  hypergeometric functions with $n=1$}\label{sec:classcical 2F1-hyper}
Here we focus on some important properties of the $_{2}F_{1}$ functions.   For details see \cite{AAR, Beukers, Slater}. \bk

\subsubsection{The hypergeometric differential equation}\label{sec:HDE}
First, we recall that the hypergeometric function $$\pFq{2}{1}{a&b}{&c}{z}$$  is a solution of the \Index{hypergeometric differential equation}
\begin{equation}\label{classicalHDE}
HDE(a,b;c;z):z(1-z)F'' + [ c-(a+b+1)z ]F'  - abF = 0,
\end{equation} where  here $'$ means derivative with respect to $z$.
This is \bk a Fuchsian differential equation with $3$ regular singularities at $0$, $1$, and $\infty$ \cite{Yoshida}.

\begin{Proposition} \label{HDEsolutions}
The space of solutions to $HDE(a,b;c;z)$ has bases $\{ \beta_z, \gamma_z\}$ given below, where $\beta_z, \gamma_z$ are expanded near the singularity $z\in\{0,1,\infty\}$.
\begin{enumerate}
\item When $z=0$, and $c\notin \Z$,
\begin{align*}
\beta_0 &:= \pFq{2}{1}{a&b}{&c}{z} \\
\gamma_0 &:= z^{1-c}\pFq{2}{1}{1+a-c&1+b-c}{&2-c}{z}.
\end{align*}
\item When $z=1$, and $a+b-c \notin \Z$,
\begin{align*}
\beta_1 &:= \pFq{2}{1}{a&b}{&1+a+b-c}{1-z}\\
\gamma_1 &:= (1-z)^{c-a-b} \pFq{2}{1}{c-a&c-b}{&1+c-a-b}{1-z}.
\end{align*}
\item
When $z=\infty$, and  $a-b \notin \Z$,
\begin{align*}
\beta_\infty &:=  z^{-a}\pFq{2}{1}{a&1+a-c}{&1+a-b}{1/z} \\
\gamma_\infty &:= z^{-b} \pFq{2}{1}{b&1+b-c}{&1+b-a}{1/z}.
\end{align*}
\end{enumerate}
\end{Proposition}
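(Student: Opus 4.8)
The plan is to regard $HDE(a,b;c;z)$ from \eqref{classicalHDE} as a second-order Fuchsian equation and apply the Frobenius method at each of its three regular singular points. First I would record the Riemann scheme by computing indicial equations. Substituting a trial solution $F=z^{\rho}(1+O(z))$ near $z=0$ and retaining only the lowest-order terms gives $\rho(\rho-1)+c\rho=0$, so the local exponents at $0$ are $\rho=0$ and $\rho=1-c$; the analogous computations at $z=1$ and $z=\infty$ yield exponent pairs $\{0,\,c-a-b\}$ and $\{a,\,b\}$, respectively. Their total is $0+(1-c)+0+(c-a-b)+a+b=1$, the Fuchs relation for three singularities, which is a useful consistency check.

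For the base point $z=0$ I would treat the two exponents separately. The exponent $\rho=0$ produces an ordinary power series $\sum_k c_k z^k$, and inserting it into \eqref{classicalHDE} gives the two-term recurrence $(k+1)(k+c)\,c_{k+1}=(k+a)(k+b)\,c_k$, whose normalized solution is exactly the coefficient $\tfrac{(a)_k(b)_k}{(c)_k\,k!}$ of $\beta_0=\pFq{2}{1}{a&b}{&c}{z}$. For the second exponent I would substitute $F=z^{1-c}u(z)$ and verify by direct computation that $u$ satisfies $HDE(1+a-c,\,1+b-c;\,2-c;\,z)$; applying the first part to this shifted equation identifies $u$ with $\pFq{2}{1}{1+a-c&1+b-c}{&2-c}{z}$ and hence produces $\gamma_0=z^{1-c}\pFq{2}{1}{1+a-c&1+b-c}{&2-c}{z}$.

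For the remaining two singularities I would avoid repeating the full Frobenius analysis and instead exploit the symmetries of the equation. The change of variable $w=1-z$ carries $HDE(a,b;c;z)$ into $HDE(a,b;1+a+b-c;w)$ (the singular points being permuted accordingly), so the $z=0$ result, read in the variable $w$, produces $\beta_1$ and $\gamma_1$. Likewise the substitution $w=1/z$ combined with the prefactor $z^{-a}$ (respectively $z^{-b}$) transforms the equation into a hypergeometric equation in $w$ whose solution at $w=0$ gives $\beta_\infty$ (respectively $\gamma_\infty$). Finally, at each singular point the two constructed solutions have local exponents differing by $1-c$, $c-a-b$, and $a-b$; by the standard theory of Fuchsian equations, whenever this difference is not an integer the two solutions have distinct leading behavior and are therefore linearly independent, giving a basis of the two-dimensional solution space. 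These non-integrality conditions are precisely the hypotheses $c\notin\Z$, $a+b-c\notin\Z$, and $a-b\notin\Z$ stated in the three cases.

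The main obstacle I anticipate is purely computational: confirming that each of the substitutions $F=z^{1-c}u$, $w=1-z$, and $w=1/z$ returns the equation to standard hypergeometric form with exactly the advertised parameters. These are routine but error-prone changes of variable, and the bookkeeping at infinity is the most delicate, since clearing the $z^{-a}$ prefactor mixes the first- and second-derivative terms and is where sign and index errors are easiest to commit. By contrast, once the indicial exponents are in hand the linear-independence step is immediate, and matching the coefficient recurrence to the $_2F_1$ series is a short induction.
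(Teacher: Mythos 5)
Your proposal is correct and is precisely the standard argument: the paper states this proposition without proof, treating it as classical (it is Kummer's list of local solutions, found in \cite{AAR}, \cite{Slater}, and \cite{Yoshida}), and those references prove it exactly as you do --- the Frobenius method at $z=0$ giving the two exponents $0$ and $1-c$, followed by the substitutions $w=1-z$ and $w=1/z$ with the prefactors $z^{-a}$, $z^{-b}$ to transport the result to the other two singularities, with linear independence coming from the non-integrality of the exponent differences. Your indicial exponents, the recurrence $(k+1)(k+c)c_{k+1}=(k+a)(k+b)c_k$, and the transformed parameter sets all check out, and your closing remark about the non-integrality hypotheses correctly explains the paper's follow-up comment that logarithmic solutions appear when $c$, $a+b-c$, or $a-b$ lies in $\Z$.
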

We note that near the singularities $z=0$, $1$, or $\infty$, in the cases when $c\in \Z$, $a+b-c\in \Z$, or $a-b\in \Z$, respectively, the equation $HDE(a,b;c;z)$ has  solutions with logarithmic terms near the corresponding singularity.

\subsubsection{Schwarz map and Schwarz triangles}\label{ss:Schwarz}
When $a,b,c\in \Q$, there is  an explicit correspondence between a hypergeometric differential equation $HDE(a,b;c;z)$ and a Schwarz triangle $\Delta(p,q,r)$ with $p,q,r \in \Q$ due to the following theorem of Schwarz.\index{Schwarz map}\index{Schwarz triangle $\Delta(p,q,r)$}

 \begin{Theorem}[Schwarz, (see \cite{Yoshida})] \label{schwarz}\index{Schwarz theorem}
Let $f,g$ be two independent solutions to the differential equation  $HDE(a,b;c;z)$ at a point $z\in\H$,  the complex upper half-plane, and let $p= |1-c|$, $q=|c-a-b|$, and $r=|a-b|$.  Then the Schwarz map $D=f/g$ gives a bijection from $\H\cup \R$ onto a curvilinear triangle with vertices $D(0)$, $D(1)$, $D(\infty)$ and corresponding angles $p\pi$, $q\pi$, $r \pi$, as illustrated below. (Note that a change of the basis \{f,g\} corresponds to a fractional linear transformation which does not change the angles of the curvilinear triangle).
\end{Theorem}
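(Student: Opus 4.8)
The plan is to read off the three defining features of the Schwarz map $D=f/g$ directly from the local structure of $HDE(a,b;c;z)$: that $D$ is conformal on $\H$, that it carries each boundary interval of $\R$ to a circular arc, and that the exponent differences at the singular points give the vertex angles. First I would record that replacing $\{f,g\}$ by another basis of solutions post-composes $D$ with a M\"obius transformation $(aw+b)/(cw+d)$, so the shape of the image is intrinsic to the equation and we may normalize the basis locally at will (equivalently, the Schwarzian derivative $\{D,z\}$ is a rational function determined by the equation alone, independent of the basis). To prove conformality on $\H$, I would use Abel's formula: writing the equation as $F'' + PF' + QF = 0$ with $P = (c-(a+b+1)z)/(z(1-z))$, the Wronskian $W = fg'-f'g$ is a constant multiple of $\exp(-\int P\,dz)$ and hence nonvanishing; since $D' = -W/g^2$, the map $D$ is locally injective away from the zeros of $g$, and at those zeros one swaps $f$ and $g$ (composing with $w\mapsto 1/w$) to obtain local conformality everywhere.

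Next I would treat the boundary. The singular points divide $\R$ into the three intervals $(-\infty,0)$, $(0,1)$, $(1,\infty)$ bounding $\H$. Since $HDE(a,b;c;z)$ has real coefficients, each interval $I$ admits a real basis of solutions $\{f_I,g_I\}$, so $D_I = f_I/g_I$ is real on $I$ and sends $I$ into $\R\cup\{\infty\}$. Expressing the global basis through the local one writes $D = M_I\circ D_I$ for a fixed M\"obius map $M_I$, whence $D(I)$ lies on the circle-or-line $M_I(\R\cup\{\infty\})$; thus the three boundary intervals map to the three circular arcs that will form the sides of the curvilinear triangle.

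For the vertex angles I would invoke the local exponent data already recorded in Proposition \ref{HDEsolutions}. At $z=0$ the exponents are $0$ and $1-c$, so (for $c\notin\Z$) a local basis is $h_1(z)$ and $z^{1-c}h_2(z)$ with $h_i$ holomorphic and nonzero at $0$, and the ratio behaves like a nonzero multiple of $z^{1-c}$. Because a conformal germ $w\sim(z-z_0)^{\lambda}$ multiplies angles by $\lambda$ while $\H$ has interior opening $\pi$ at each boundary point, the image angle at $D(0)$ is $|1-c|\pi = p\pi$. The same computation at $z=1$ (exponents $0$, $c-a-b$) and at $z=\infty$ (exponents $a$, $b$, with local coordinate $1/z$ and ratio $\sim z^{b-a}$) yields the angles $q\pi$ and $r\pi$.

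The hard part will be the global univalence, that is, upgrading local conformality plus the boundary description to a genuine bijection of $\H$ onto the open triangle. My approach is the boundary correspondence principle: having shown $D$ extends continuously to $\overline{\H}$ and maps $\partial\H = \R\cup\{\infty\}$ homeomorphically onto the closed contour of three arcs meeting at $D(0),D(1),D(\infty)$, one applies the argument principle to count preimages and conclude that $D$ is a conformal bijection onto the triangle's interior. The genuinely delicate inputs are that the exponent differences $p,q,r$ be non-integral, which is exactly what guarantees the clean power-law local forms of Proposition \ref{HDEsolutions} with no logarithmic terms and hence an injective boundary map near each vertex, and that the resulting triangle be embedded rather than overlapping. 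These are the classical subtleties of the Schwarz map, carried out in full in \cite{Yoshida}.
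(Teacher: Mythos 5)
The paper does not prove this theorem: it is quoted as a classical result with a reference to \cite{Yoshida}, so there is no in-paper argument to compare against. Your outline is essentially the standard proof one finds in that reference: basis independence up to M\"obius transformation via the Schwarzian, local conformality from the nonvanishing of the Wronskian together with $D'=-W/g^2$, circular-arc images of the three boundary intervals from the existence of real local bases, and vertex angles read off from the exponent differences at $0$, $1$, $\infty$. All of these steps are sound. The one place where your sketch is genuinely incomplete --- and you flag it yourself --- is the final boundary-correspondence step: the argument principle only yields a bijection onto the interior once you know the image of $\partial\H$ is a \emph{simple} closed curve traversed once, and this is false for arbitrary $a,b,c$ (if one of $|1-c|$, $|c-a-b|$, $|a-b|$ exceeds $1$, the ``triangle'' can be immersed rather than embedded, and $D$ is then only locally injective). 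The classical theorem is proved under the implicit normalization $0\le p,q,r<1$ (achievable by the Kummer substitutions when the exponent differences are non-integral), and the paper's statement suppresses this hypothesis just as your sketch does; to make the last paragraph a proof you would need to either impose that normalization explicitly or replace the argument-principle step with the reflection/continuation argument that handles the general case. As a reconstruction of the proof the paper is deferring to, your outline is faithful.
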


\scriptsize
$$
\def\svgwidth{100mm}
\begingroup%
  \makeatletter%
  \providecommand\color[2][]{%
    \errmessage{(Inkscape) Color is used for the text in Inkscape, but the package 'color.sty' is not loaded}%
    \renewcommand\color[2][]{}%
  }%
  \providecommand\transparent[1]{%
    \errmessage{(Inkscape) Transparency is used (non-zero) for the text in Inkscape, but the package 'transparent.sty' is not loaded}%
    \renewcommand\transparent[1]{}%
  }%
  \providecommand\rotatebox[2]{#2}%
  \ifx\svgwidth\undefined%
    \setlength{\unitlength}{460.02955627bp}%
    \ifx\svgscale\undefined%
      \relax%
    \else%
      \setlength{\unitlength}{\unitlength * \real{\svgscale}}%
    \fi%
  \else%
    \setlength{\unitlength}{\svgwidth}%
  \fi%
  \global\let\svgwidth\undefined%
  \global\let\svgscale\undefined%
  \makeatother%
  \begin{picture}(1,0.34822893)%
    \put(0,0){\includegraphics[width=\unitlength,page=1]{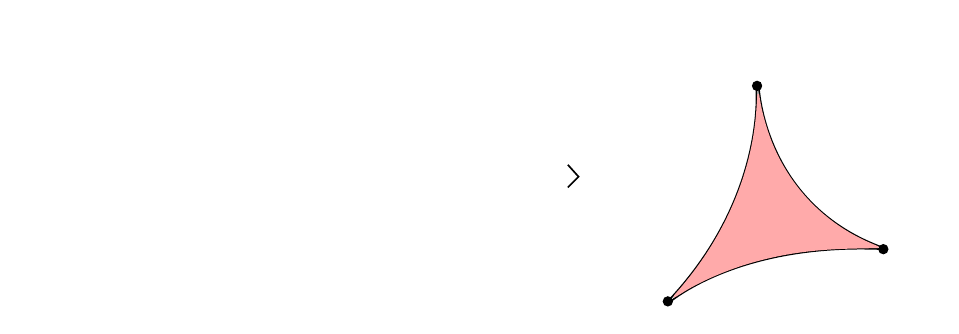}}%
    \put(-0.07961083,0.34295887){\color[rgb]{0,0,0}\makebox(0,0)[lb]{\smash{}}}%
    \put(0,0){\includegraphics[width=\unitlength,page=2]{SchwarzMap.pdf}}%
    \put(0.7374817,0.27832579){\color[rgb]{0,0,0}\makebox(0,0)[lb]{\smash{$D(0)$}}}%
    \put(0.64853059,0.00655584){\color[rgb]{0,0,0}\makebox(0,0)[lb]{\smash{$D(1)$\\ }}}%
    \put(0.90764007,0.05749929){\color[rgb]{0,0,0}\makebox(0,0)[lb]{\smash{$D(\infty)$}}}%
    \put(0,0){\includegraphics[width=\unitlength,page=3]{SchwarzMap.pdf}}%
    \put(0.83410794,0.26571228){\color[rgb]{0,0,0}\makebox(0,0)[lb]{\smash{$p\pi$}}}%
    \put(0.73784078,0.00361731){\color[rgb]{0,0,0}\makebox(0,0)[lb]{\smash{$q\pi$}}}%
    \put(0.9285118,0.13652805){\color[rgb]{0,0,0}\makebox(0,0)[lb]{\smash{$r\pi$}}}%
    \put(0,0){\includegraphics[width=\unitlength,page=4]{SchwarzMap.pdf}}%
     \put(0.52356887,0.18186672){\color[rgb]{0,0,0}\makebox(0,0)[lb]{\smash{$D$}}}%
    \put(0,0){\includegraphics[width=\unitlength,page=5]{SchwarzMap.pdf}}%
    \put(0.01304264,0.242484){\color[rgb]{0,0,0}\makebox(0,0)[lb]{\smash{$\mathfrak{H}$}}}%
  \end{picture}%
\endgroup%

$$
\normalsize

 Note that a Schwarz triangle with angles $p\pi$, $q\pi$, and $r\pi$ as described in Theorem \ref{schwarz} can be used to tile the sphere, the Euclidean plane, or the hyperbolic plane through reflections along its edges, depending on whether $p+q+r$ is equal to, greater than, or less than $1$, respectively.  Therefore, each Schwarz triangle $\Delta(p,q,r)$ can be associated to the symmetry group of this tiling, which we denote by $S_\Delta(p,q,r)$.

\subsubsection{Kummer relations}\label{ss:Kummer}
For generic $z$,  $HDE(a,b;c;z)$ has  group of symmetries $G(a,b;c;z)$ of order $24$. This symmetry group
acts projectively on its vector space of solutions. Let $\Sigma$  be the set of all ordered triples  $(a,b,c)$.  The group $G(a,b;c;z)$ is related both to the set of all 6 linear fractional transformations $$\{z,1-z,1/z, z/(z-1),1/(1-z),1-1/z\}$$ that permutes $0,1,\infty$, and to the set of finite order maps from $\Sigma$ to itself  preserving the (unordered) set of Schwarz angles $\{|1-c|, |c-a-b|,|a-b|\}$. For example, both $(a,b,c)\mapsto (c-a,b,c)$ and $(a,b,c)\mapsto (a,c-b,c)$ have order 2 and preserve the set of  Schwarz angles. So does the composition map $(a,b,c)\mapsto (c-a,c-b,c)$.

The group $G(a,b;c;z)$ is isomorphic to the symmetric group $S_4$ (see \cite{Kummer}, \cite[\S 2.3]{AAR})  and is an extension of $S_3$ (permuting the singular points $0$, $1$, $\infty$), by the Klein $4$-group $V_4$ via a short exact sequence
$$
1 \rightarrow V_4 \rightarrow G(a,b;c;z) \rightarrow S_3 \rightarrow 1.
$$
In particular,  the action of $V_4$  is  via the following two identities (see   \cite[Theorem 2.2.5]{AAR}), which correspond to the order 2 maps on $\Sigma$ above.   The first one is due to Pfaff
\begin{align}
   \pFq{2}{1}{a&b}{&c}{z}
                        &= (1-z)^{-a}\pFq{2}{1}{a&c-b}{&c}{\frac z{z-1}} \label{eq:Pfaff-transf} \\
                         &= (1-z)^{-b}\pFq{2}{1}{ c-a&b}{&c}{\frac z{z-1}}, \nonumber
\end{align}
and can be obtained from the {\color{black} integral representation} of the $_{2}F_{1}$ and a change of variables.  The second is Euler's formula

\begin{align}\label{eq:Euler-transf}
   \pFq{2}{1}{a&b}{&c}{z}=& (1-z)^{c-a-b}\pFq{2}{1}{c-a&c-b}{&c}{z},
\end{align}
which can be deduced from the Pfaff transformations in \eqref{eq:Pfaff-transf}.\index{Pfaff transformation}\index{Euler transformation}

In addition to $\eqref{eq:Pfaff-transf}$ and $\eqref{eq:Euler-transf}$, the other Kummer   relations  describe the analytic continuation of hypergeometric functions at the other singularities.  For example, the analytic continuation of  $$\pFq{2}{1}{a&b}{&c}{z}$$  from near 0 to   $\infty$  can be expressed by following equation \cite[Theorem 2.3.2]{AAR},
\begin{multline}\label{eq:z->1/z}
\pFq{2}{1}{a&b}{&c}{z}=\frac{\G(c)\G(b-a)}{\G(c-a)\G(b)}\cdot (-z)^{-a} \pFq{2}{1}{a&a-c+1}{&a-b+1}{1/z}\\
+\frac{\G(c)\G(a-b)}{\G(c-b)\G(a)}\cdot (-z)^{-b} \pFq{2}{1}{b&b-c+1}{&b-a+1}{1/z}.
\end{multline}
For more details, see \cite{AAR, Kummer, Slater}.

\subsubsection{Monodromy}\label{monodromy}
Consider $\C P^1\setminus\{0,1,\infty\}$ as a topological space with a fixed base point $t_0$.  The fundamental group $\pi_1(\C P^1\setminus\{0,1,\infty\},t_0)$ is generated by three simple loops based at $t_0$, one around each of the points $\{0,1,\infty\}$, with the  relation that an appropriate composition of the three loops is  homotopic to a contractible loop. \bk

Given two linearly independent solutions $f,g$ of $HDE(a,b;c;z)$ near $t_0$, their analytic continuations along any oriented loop $C$ in $\C P^1\setminus\{0,1,\infty\}$ starting and ending at  $t_0$ will produce two linearly independent  solutions $\tilde{f},\tilde{g}$ near $t_0$.  This process corresponds to an action of the fundamental group $\pi_1(\C P^1\setminus\{0,1,\infty\},t_0)$ on the solution space $S(a,b;c;z)$ of $HDE(a,b;c;z)$ which results in a homomorphism

\begin{align*}
\pi_1(\C P^1\setminus\{0,1,\infty\},t_0) &\rightarrow GL(S(a,b;c;z))\\
[C] &\mapsto M(C),
\end{align*}
called the \Index{monodromy representation} of $HDE(a,b;c;z)$.
It is determined up to conjugation in $GL_2(\C)$  depending on where  $t_0$ is located  \cite{Beukers, Yoshida}.  The mondromy representation is a degree-$2$ representation whose image $M(a,b;c;z)$ is called the \Index{monodromy group} of $HDE(a,b;c;z)$.  The monodromy group $M(a,b;c;z)$ is generated by the images $M_0$, $M_1$, and $M_\infty$ of the three loops described above which generate the fundamental group $\pi_1(\C P^1\setminus\{0,1,\infty\},t_0)$, respectively.  The \Index{projective monodromy group} is defined to be the image of the monodromy group in $PGL_2(\C)$.

\begin{Example}Assume $c\notin \Z$, fix $t_0$ to be near $z=0$, and let $C_0$ be a simple clockwise-oriented loop based at $t_0$ which contains only the singularity $0$. Let
\begin{align*}
f &=\pFq{2}{1}{a&b}{&c}{z}\\
g &= z^{1-c}\, \pFq{2}{1}{1+a-c&1+b-c}{&2-c}{z}.
\end{align*}
  Then $$M_0:=M(C_0)=\M 100{e^{2\pi i (1-c)}}.$$ In the case $c\in \Z$, $M_0$ is conjugate to a matrix of the form $\left(\begin{smallmatrix}1 & * \\ 0 & 1\end{smallmatrix}\right)$.
\end{Example}

When $a,b,c\in \Q$, the monodromy group, denoted by $M(a,b;c)$, is a \Index{triangle group}, which, using the standard notation set by Takeuchi in \cite{Takeuchi-triangle}, is of the form
\begin{equation}\label{tri}
(e_1,e_2,e_3):=\la x,y \mid x^{e_1}=y^{e_2}=(xy)^{e_3}=1\bk \ra,
\end{equation}
where $e_i\in \mathbb N\cup \{\infty\}$ can be computed from $a,b,c$, and by symmetry  we can assume that $e_1\le e_2\le e_3$. Here $x^\infty=1$ means the order of $x$ is infinite.

Suppose $c$, $a+b-c$, $a-b\not\in \Z$. The monodromy group $M(a,b;c;z)$  is $GL_2(\C)$-conjugate to the triangle group generated by
\begin{align*}
M_0 &=\M 100{e^{2\pi i (1-c)}} ,\\
M_1 &=A \M 100{e^{2\pi i (c-a-b)}} A^{-1}, \\
M_\infty &=B \M {e^{2\pi i a}}00{e^{2\pi i b}} B^{-1},
\end{align*}
where
$$
  A=\M{\frac{\G(c)\G(c-a-b)}{\G(c-a)\G(c-b)}}{\frac{\G(c)\G(a+b-c)}{\G(a)\G(b)}}{\frac{\G(2-c)\G(c-a-b)}{\G(1-a)\G(1-b)}}{\frac{\G(2-c)\G(a+b-c)}{\G(a-c+1)\G(b-c+1)}},
$$
$$
  B=\M{\frac{\G(c)\G(b-a)}{\G(c-a)\G(b)}}{\frac{\G(c)\G(a-b)}{\G(c-b)\G(a)}}{\frac{(-1)^{c-1}\G(2-c)\G(b-a)}{\G(1-a)\G(b-c+1)}}{\frac{(-1)^{c-1}\G(2-c)\G(a-b)}{\G(a-c+1)\G(1-b)}},
$$
which are computed from   \eqref{eq:z->1/z}
and the  Kummer relation

\begin{multline}\label{eq:z->1-z}
\pFq{2}{1}{a&b}{&c}{z}=\frac{\G(c)\G(c-a-b)}{\G(c-a)\G(c-b)} \pFq{2}{1}{a&b}{&a+b+1-c}{1-z}\\
+\frac{\G(c)\G(a+b-c)}{\G(a)\G(b)}(1-z)^{c-a-b} \pFq{2}{1}{c-a&c-b}{&1+c-a-b}{1-z},
\end{multline}
(see Corollary 2.3.3 of \cite{AAR}).


\begin{Example}\label{ex_monodromy}
Let $a=1/4$, $b=3/4$, $c=1/2$. Then the generators of the monodromy group $M(\frac14, \frac34, \frac12 ;z)$ can be computed using the above formulas. We simplify them into the following expressions using the reflection formula (Theorem \ref{thm:reflect}) and the multiplication formulas (Theorem \ref{thm:multiplication}) for the gamma function: \bk
$$
M_0=\M 100{-1},\quad M_1=\M 0{-1/2}{-2}0,\quad M_\infty=\M 0{-1/2}{2}0,
$$
which are each of finite order and satisfy
$$
M_0M_\infty=M_1^{-1} (=M_1) =M_\infty^{-1}M_0.
$$
Thus the monodromy group is isomorphic to the triangle group $(2,2,4)$, which is also isomorphic to the Dihedral group of order 8. Modulo the center, the projective monodromy group is isomorphic to the triangle group $(2,2,2)$, which is isomorphic to the Klein 4-group.
\end{Example}

\begin{Example}\label{(a,6,6)}
We work out the cases $(a,b,c)=(1/6,1/3,5/6)$ and
$(1/12,1/4,5/6)$.

Let $a=1/6$, $b=1/3$, $c=5/6$. The generators of the monodromy group $M(\frac16, \frac13, \frac56 ;z)$  computed using the above formulas are
$$
M_0=\M 100{\zeta_6},\quad M_1=A \M 100{\zeta_3} A^{-1}, \quad M_\infty=B \M {\zeta_6}00{\zeta_3} B^{-1},
$$
where matrices $A$ and $B$ can be computed from the formulas above and simplified to the following expressions using the functional equations of the gamma function \bk
 $$ A=\M{\frac{\G\(\frac56\)\G\(\frac 13\)}{\G\(\frac12\)\G\(\frac23\)}}{-3\frac{\G\(\frac56\)\G\(\frac 23\)}{\G\(\frac16\)\G\(\frac13\)}}{\frac16\frac{\G\(\frac16\)\G\(\frac 13\)}{\G\(\frac56\)\G\(\frac23\)}}{-\frac12\frac{\G\(\frac16\)\G\(\frac 23\)}{\G\(  \frac12  \)\G\(\frac13\)}},\quad
  B=\M{\sqrt 3}{-6\frac{\G\(\frac56\)^2}{\G\(\frac16\)\G\(\frac12\)}}{\frac{(-1)^{-1/6}}6\frac{\G\(\frac16\)^2}{\G\(\frac56\)\G\(\frac12\)}}{(-1)^{5/6}\sqrt 3}.
$$
Thus both the monodromy group and  the projective monodromy group are isomorphic to the triangle group $(3,6,6)$.

Let $a=1/12$, $b=1/4$, $c=5/6$. The  monodromy group $M(\frac1{12}, \frac14, \frac56 ;z)$  are generated by
$$
M_0=\M 100{\zeta_6},\quad M_1=A \M 100{-1} A^{-1}, \quad M_\infty=B \M {\zeta_{12}}00{\sqrt{-1}} B^{-1},
$$
where
$$
  A=\M{\frac{\G\(\frac56\)\G\(\frac 12\)}{\G\(\frac34\)\G\(\frac7{12}\)}}{-2\frac{\G\(\frac56\)\G\(\frac 12\)}{\G\(\frac1{12}\)\G\(\frac14\)}}{\frac16\frac{\G\(\frac16\)\G\(\frac 12\)}{\G\(\frac{11}{12}\)\G\(\frac34\)}}{-\frac13\frac{\G\(\frac16\)\G\(\frac 12\)}{\G\(\frac14\)\G\(\frac5{12}\)}}=\G\(\frac 12\)\M{\frac{\G\(\frac56\)}{\G\(\frac34\)\G\(\frac7{12}\)}}{-2\frac{\G\(\frac56\)}{\G\(\frac1{12}\)\G\(\frac14\)}}{\frac16\frac{\G\(\frac16\)}{\G\(\frac{11}{12}\)\G\(\frac34\)}}{-\frac13\frac{\G\(\frac16\)}{\G\(\frac14\)\G\(\frac5{12}\)}},$$ and $B=\M{\sqrt 2}{-6\frac{\G\(\frac56\)^2}{\G\(\frac1{12}\)\G\(\frac7{12}\)}}{\frac{(-1)^{-1/6}}6\frac{\G\(\frac16\)^2}{\G\(\frac{5}{12}\)\G\(\frac{11}{12}\)}}{(-1)^{5/6}\sqrt 2}.
$

Thus the monodromy group is isomorphic to the triangle group $(2,6,12)$. Modulo the center, the projective monodromy group is isomorphic to the triangle group $(2,6,6)$.

\end{Example}

We note that $S_\Delta(p,q,r)$ (see \S \ref{ss:Schwarz}) is not the same as the  monodromy group written in the notation of the triangle group $(e_1,e_2,e_3)$. The Schwarz map is defined as $f/g$, so $\Delta(p,q,r)$ is related to the projective monodromy group. In fact, the projective monodromy group of  $HDE(a,b;c;z)$  is isomorphic to the subgroup of index $2$ of the group $S_\Delta(p,q,r)$, consisting of all elements which are products of an even number of reflections. \index{Schwarz triangle $\Delta(p,q,r)$}\index{triangle group! $(e_1,e_2,e_3)$}\index{projective monodromy group}

\normalsize

\subsubsection{Schwarz's list and algebraic hypergeometric identities}

In \cite{Schwarz}, Schwarz determined a list of 15 (unordered)  triples $$\{|1-c|,|c-a-b|,|a-b|\},$$  for which the corresponding differential equations $HDE(a,b;c;z)$ have  finite  monodromy groups. 
Beukers and Heckman \cite[Theorem 4.8]{BH}
have generalized Schwarz's work by
finding general $_{n+1}F_n$ with rational parameters and finite monodromy groups.  It is well-known that  $$\pFq{2}{1}{a&b}{&c}{z}$$ is an algebraic function of $z$ when the monodromy group for $HDE(a,b;c;z)$ is a finite group; we give an example of this phenomenon below.\index{algebraic hypergeometric series}

\begin{Example}
For a classical example, let $r = \frac{i}{N}$ be reduced, with $\frac{N}{2} < i < N$.  Then, to compute the Schwarz triangle corresponding to $HDE(r-\frac12,r;2r;z)$, we see that
$p = |1-2r|$, $q= \frac 12$, $r = \frac 12$. Thus the corresponding triangle group is $(2,2,1/ |1-2r|)$\index{triangle group!$(2,2,n)$}, which can be identified with a Dihedral group. This corresponds to a tiling of the unit sphere, since $\frac12 + \frac12 + |1-2r| >1$. The corresponding algebraic expression for the hypergeometric functions is equation \eqref{eq:Slater}, as we have seen before.

In particular, when $N=3$, $i=2$,  i.e.  when $r=\frac 23$, by the algorithm described above, the monodromy group $M(\frac16,\frac23;\frac43)$ is generated by
$$
M_0=\M 100{\zeta_3^2},\, M_1=\M 0{2^{2/3}}{2^{-2/3}}0,\, M_\infty=\M 0{2^{2/3}}{2^{-2/3}\zeta_3}0,
$$
which  is isomorphic to the triangle group $(2,3,6)$.  To be more precise, we have
$$
  A=\M{\frac{\G\(\frac 43\)\G\(\frac 12\)}{\G\(\frac 76\)\G\(\frac 23\)}}{\frac{\G\(\frac 43\)\G\(-\frac 12\)}{\G\(\frac 16\)\G\(\frac 23\)}}{\frac{\G\(\frac 23\)\G\(\frac 12\)}{\G\(\frac 56\)\G\(\frac 13\)}}
 {\frac{\G\(\frac 23\)\G\(-\frac 12\)}{\G\(-\frac 16\)\G\(\frac 13\)}},
 \quad
  B=\M{\frac{\G\(\frac 43\)\G\(\frac 12\)}{\G\(\frac 76\)\G\(\frac 23\)}}{\frac{\G\(\frac 43\)\G\(-\frac 12\)}{\G\(\frac 16\)\G\(\frac 23\)}}{(-1)^{1/3}\frac{\G\(\frac 23\)\G\(\frac 12\)}{\G\(\frac 56\)\G\(\frac 13\)}}
 {(-1)^{1/3}\frac{\G\(\frac 23\)\G\(-\frac 12\)}{\G\(-\frac 16\)\G\(\frac 13\)}}
$$
and using the identities
$$
  \G(1+z)=z\G(z), \quad  \G(z)\G(1-z)=\pi/\sin{\pi z}, \quad \G(1/2)=\sqrt \pi
$$
and
$$
  \G\( \frac 13\)=\G\(2\cdot \frac 16\)=\frac{2^{1/3}}{2\sqrt{\pi}}\G\(\frac 16\)\G\(\frac 23\),
$$
we obtain
$$
   A=\M{2^{1/3}}{- \frac13 2^{ 1/3}}{2^{- 1/3}}{\frac 132^{- 1/3}}, \quad
   B=\M{2^{1/3}}{-\frac 13 2^{ 1/3}}{\zeta_6 2^{-1/3}}{\zeta_6 \frac 132^{-1/3}}
$$
and thus
\begin{align*}
  M_1&=A\M100{-1}A^{-1}= \M 0{2^{2/3}}{2^{-2/3}}0,\\
  M_\infty&=B\M{\zeta_6}00{\zeta_3^2}B^{-1}=\M 0{2^{2/3}}{2^{-2/3}\zeta_3}0.
\end{align*}

The   projective monodromy group  is isomorphic to the triangle group $(2,2,3)$, which can be identified with the Dihedral group of size $6$.  This corresponds to the tiling of the sphere depicted below.
$$
\includegraphics[scale=0.15]{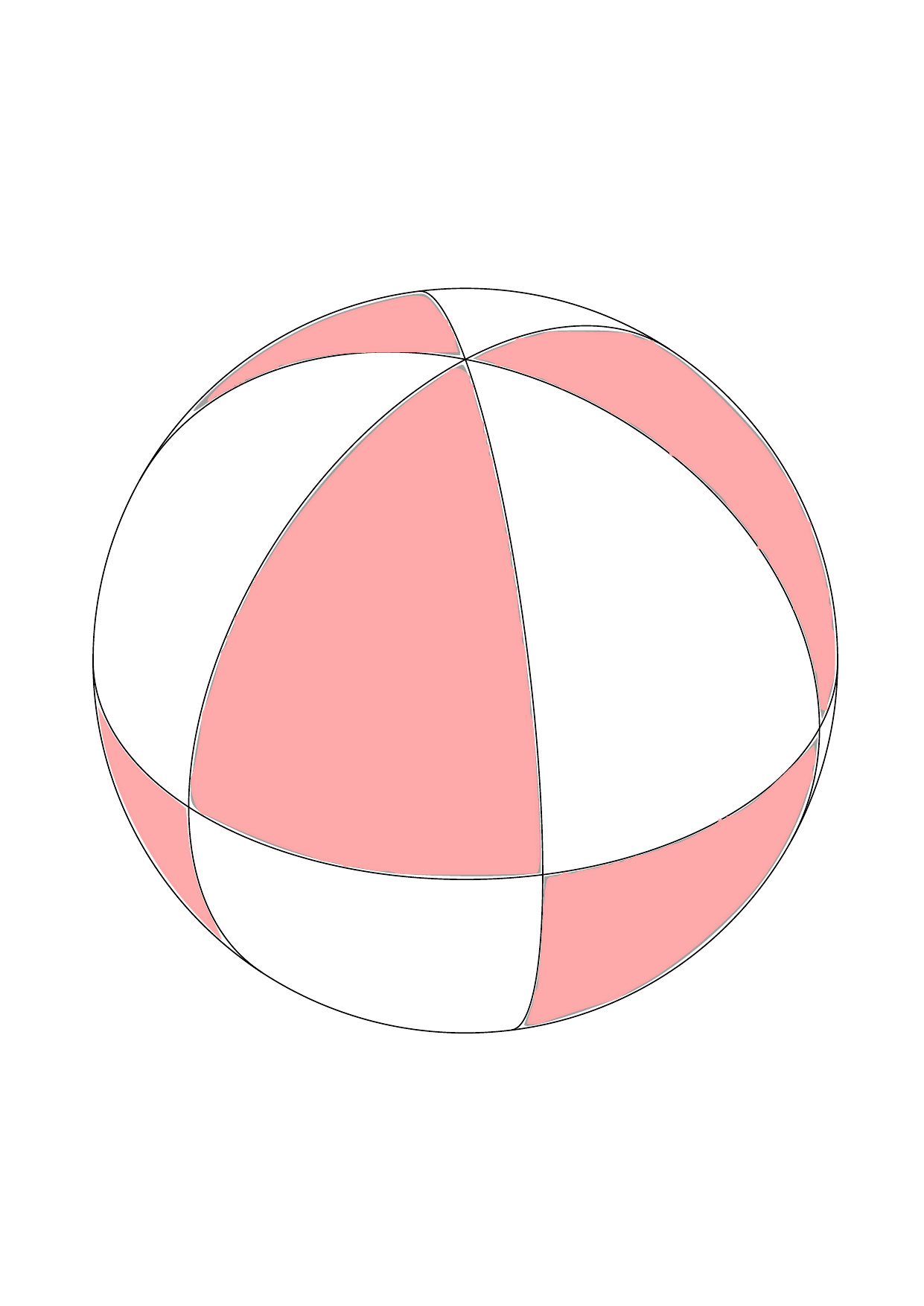}
$$
 In the above picture, one can think of each darker (respectively lighter) triangle as the image of the upper (respectively lower) half complex plane under the corresponding Schwarz map as described in Theorem \ref{schwarz}.

Furthermore, when $N=8$, $i=5$,  the monodromy group $M(\frac18,\frac58;\frac54;z)$  is $(2,4,8)$ and the  projective monodromy group  is the triangle group $(2,2,4)$, which can be identified with the Dihedral group of size $8$.
\end{Example}

\begin{Example}\label{eg:(2,3,3)} When $a=\frac 14, b=\frac 34, c=\frac 43$ or $\frac 23$ the corresponding Schwarz triangle is $(\pi/2,\pi/3,\pi/3)$ and the  monodromy  group is generated by

$$
   M_0=\M 100{\zeta_3^2},\quad M_1=\M {\frac{\sqrt{-3}}{3}}{\frac{\G\(\frac{1}{3}\)^2}{\G\(\frac 1{12}\)\G\(\frac 7{12}\)}\frac{8\sqrt 2}{\sqrt 3+i}}
   {\frac{\sqrt 3 -i}{12\sqrt 2}\frac{\G\(\frac 1{12}\)\G\(\frac 7{12}\)}{\G\(\frac 13\)^2}}{\frac{3+\sqrt{-3}}6},
$$
$$
  M_\infty=\M {-\frac{\sqrt{-3}}{3}}{\frac{\G\(\frac{1}{3}\)^2}{\G\(\frac 1{12}\)\G\(\frac {7}{12}\)}\frac{8\sqrt 2}{\sqrt 3-i}}
   {\frac{\G\(\frac{2}{3}\)^2}{\G\(\frac 5{12}\)\G\(\frac {11}{12}\)}\frac{i-\sqrt 3}{4\sqrt 2}}{\frac{\sqrt{-3}}{3}},
$$
with $M_0^3=M_1^3=id$, $M_\infty^2=-id$,  $M_\infty M_1M_0 =id$. This is  isomorphic to the triangle group $(3,3,4)$ and the  projective  monodromy group is isomorphic to $(2,3,3)$\index{triangle group! $(2,3,3)$}. In a private communication, Yifan Yang told us the following, which  he computed numerically.
Let
$$f(t):=\pFq{2}{1}{\frac 14&\frac 34}{&\frac 43}{t}.$$  Then
$$4096-4096f^2+1152tf^4-27t^2f^8 =0.$$ For a generic choice of $t$, the Galois group  of  the splitting field of the polynomial $4096-4096x+1152tx^2-27t^2x^4$ over $\Q(t)$  is isomorphic to the symmetric group $S_4$. For special choices of $t$,  the Galois group is smaller.  For instance when   $t=1/2$, the Galois group is isomorphic to $D_4$, the Dihedral group of order 8.

Similarly, Yifan Yang computed that
$$g(t) :=\pFq{2}{1}{\frac 14&\frac 34}{&\frac 23}{t}$$ satisfies
$$1+8g^2+18(1-t)g^4-27(1-t)^2g^8=0.$$ We have verified his claims by using the  recursive relations satisfied by the coefficients of $f$ and $g$.  See \S \ref{Sec:analogues}, Example \ref{eg:18}, specifically \eqref{FF-g^2},  for the finite field analogues of $f(t)^2$ and $g(t)^2$ with $t=\frac{\l}{\l-1}$.

Yang's first result is related, by a Pfaff transformation, to the first formula on page 165 of \cite{Vidunas2} of Vid{\=u}nas:
$$ \pFq{2}{1}{\frac 14&-\frac{1}{12}}{&\frac{2}3}{\frac{x(x+4)^3}{4(2x-1)^3}}=\frac1{(1-2x)^{1/4}}.$$ We give a numeric finite field analogue of this formula in Conjecture \ref{eq:Vidunas}.
\end{Example}

We note here that Vid{\=u}nas \cite{Vidunas2} gave a list of several other algebraic hypergeometric $_2F_1$ functions.  Moreover, Baldassarri and Dwork \cite{BD} determined general second order linear differential equations with only algebraic solutions.

\subsubsection{Evaluation Formulas}\label{ss:3.2.2}
There are many evaluation and transformation formulas for the classical $_{2}F_{1}$ hypergeometric functions that are incredibly useful for obtaining higher transformation formulas, computing periods, or proving supercongruences.  For example, this technique is employed in work of the second, third, and fourth authors \cite{Long, Long-Ramakrishna, Swisher}, among others. These evaluation formulas are obtained in various ways; for example, see \cite{AAR, Gessel-Stanton, Slater, Whipple}.

First, the Gauss summation formula \cite[Thm. 2.2.2]{AAR}  can be obtained by using the integral form of the $_{2}F_{1}$ from \eqref{Pintegral} and \eqref{2P1to2F1}, together with the beta integral from Definition \ref{def:beta}.  In particular  when $\text{Re}(c-a-b)>0$,
\begin{equation}\label{eq:Gauss}
\pFq{2}{1}{a&b}{&c}{1}=\frac{B(b,c-a-b)}{B(b,c-b)}=\frac{\G(c)\G(c-a-b)}{\G(c-a)\G(c-b)}.
\end{equation}\index{Gauss summation formula}

Next, we have the Kummer evaluation theorem \cite[Cor. 3.1.2]{AAR}, which gives that
\begin{equation}\label{eq:Kummer}
\pFq{2}{1}{c&b}{&1+c-b}{-1}=\frac{B(1+c-b,\frac c2+1)}{B(1+c,\frac c2+1-b)}.
\end{equation}\index{Kummer evaluation theorem}

Also, we have the very useful Pfaff-Saalsch\"utz formula \cite[Thm. 2.2.6] {AAR}.  This result follows from comparing coefficients on both sides of \eqref{eq:Euler-transf}. It states that for a positive integer $n$ and $a,b,c\in \C$,
\begin{multline}\label{eq:pf-s}
\pFq{3}{2}{a&b&-n}{&c&1+a+b-n-c}{1}=\frac{(c-a)_n(c-b)_n}{(c)_n(c-a-b)_n}\\
=\frac{\G(c-a+n)\G(c-b+n)\G(c)\G(c-a-b)}{\G(c-a)\G(c-b)\G(c+n)\G(c-a-b+n)}\\=\frac{B(c-a+n,c-b+n)B(c,c-a-b)}{B(c+n,c-a-b+n)B(c-a,c-b)}.
\end{multline} Note that the left side is a terminating series as $-n$ is a negative integer while the right side is a product. Such formulas are useful for proving
supercongruences as in \cite{Long-Ramakrishna, Swisher,Zudilin}.
\bk
In a more symmetric form, it can be written as the following when one of $a,b,d$ is a negative integer,
\begin{multline}\label{eq:pf-s2}
\pFq{3}{2}{a&b&d}{&c&1+a+b+d-c}{1}= \\
\frac{\G(c-a-d)\G(c-b-d)\G(c-a-b)\G(c)}{\G(c-a)\G(c-b)\G(c-d)\G(c-a-b-d)}.
\end{multline}\index{Pfaff-SaalSch\"utz evaluation formula}

There is a rich supply of evaluation formulas in the literature, see for instance \cite{AAR,Gessel-Stanton,   Whipple}, \cite[Appendix III]{Slater}.

\subsubsection{Higher transformation formulas}\label{ss:3.2.6}
There are also many quadratic or higher order  transformation formulas for $_2F_1$ (and a few for $_3F_2$ functions) which are obtained in various ways \cite{Goursat, Tu-Yang1,Vidunas1}.  An example is the following Kummer quadratic transformation formula \cite[Thm. 3.1.1]{AAR} or \cite[2.3.2.1]{Slater}  which gives that
\begin{equation*}
\pFq{2}{1}{c&b}{&c-b+1}{z} = (1-z)^{-c}\, \pFq{2}{1}{\frac{c}2&\frac{1+c}2-b}{&c-b+1}{\frac{-4z}{(1-z)^2}}.
\end{equation*}\index{Kummer quadratic transformation}

 From Theorem \ref{schwarz}, we observe that the Schwarz triangle corresponding to the hypergeometric function on the left is $\Delta(|b-c|,|b-c|, |1-2b|)$. It can be tiled by two copies of the Schwarz triangle $\Delta(|b-c|,\frac 12, \frac 12 |1-2b|)$, which corresponds to the hypergeometric function appearing on the right hand side.   In other words, the projective monodromy group corresponding to the left hand side is commensurable with  the projective monodromy group on the right hand side.  This is illustrated in the diagram below.  \\

$$
\def\svgwidth{50mm}
\begingroup%
  \makeatletter%
  \providecommand\color[2][]{%
    \errmessage{(Inkscape) Color is used for the text in Inkscape, but the package 'color.sty' is not loaded}%
    \renewcommand\color[2][]{}%
  }%
  \providecommand\transparent[1]{%
    \errmessage{(Inkscape) Transparency is used (non-zero) for the text in Inkscape, but the package 'transparent.sty' is not loaded}%
    \renewcommand\transparent[1]{}%
  }%
  \providecommand\rotatebox[2]{#2}%
  \ifx\svgwidth\undefined%
    \setlength{\unitlength}{126.09118004bp}%
    \ifx\svgscale\undefined%
      \relax%
    \else%
      \setlength{\unitlength}{\unitlength * \real{\svgscale}}%
    \fi%
  \else%
    \setlength{\unitlength}{\svgwidth}%
  \fi%
  \global\let\svgwidth\undefined%
  \global\let\svgscale\undefined%
  \makeatother%
  \begin{picture}(1,0.77869539)%
    \put(0,0){\includegraphics[width=\unitlength,page=1]{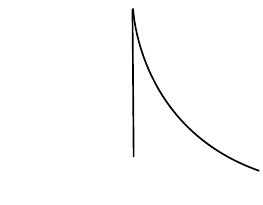}}%
    \put(-2.67230903,1.05439629){\color[rgb]{0,0,0}\makebox(0,0)[lb]{\smash{}}}%
    \put(0,0){\includegraphics[width=\unitlength,page=2]{DoubleTiling.pdf}}%
    \put(0.60526856,0.00972409){\color[rgb]{0,0,0}\makebox(0,0)[lb]{\smash{$|b-c|\pi$\\ }}}%
    \put(0.23514678,0.00748156){\color[rgb]{0,0,0}\makebox(0,0)[lb]{\smash{$|b-c|\pi$\\ }}}%
    \put(0,0){\includegraphics[width=\unitlength,page=3]{DoubleTiling.pdf}}%
    \put(0.60078233,0.75445388){\color[rgb]{0,0,0}\makebox(0,0)[lb]{\smash{$\frac12|1-2b|\pi$}}}%
  \end{picture}%
\endgroup%

$$

\section{Finite Field Analogues}\label{finitefield}

In this section we build on the discussion of period and hypergeometric functions for the classical complex setting described in Section \ref{setting}  by defining period and hypergeometric functions over finite fields which behave analogously. We conclude by comparing our definition to those previously defined by others.

\subsection{Periods in the finite field setting}

For any function $f:\, \F_q\longrightarrow \C$, the orthogonality of characters \cite{Berndt-Evans-Williams, IR} or Greene's Lagrange inversion formula over finite fields (Theorem \ref{FFlagrange}  with $g(x)=x$\bk)  implies that  $f$ has a unique representation
\begin{equation}\label{eq:f-f_chi}
  f(x)=\delta(x)f(0)+\sum_{\chi \in \widehat{\F_q^\times}}f_\chi\chiup(x),
\end{equation}
where
$$
  f_\chi=\frac 1{q-1}\sum_{y\in \F_q} f(y)\overline\chiup(y).
$$
Later, we will use the uniqueness in this statement to obtain identities.

It is also useful to note  that when $a\neq 0$,
\begin{equation}\label{eq:delta}
\delta(x-a)=\sum_{\chi \in \widehat{\F_q^\times}}\frac{\chi(x/a)}{q-1}.
\end{equation}

We now define a new version of hypergeometric functions for the finite field $\F_q$ parallel to \S \ref{3.1}. We start with a natural analogue to \eqref{1F0} by letting
\begin{equation}\label{1P0}
{}_1\mathbb P_0[A;x;q]:=\overline A(1-x),
\end{equation}
and then inductively defining
\begin{multline}\label{n+1Pn}
  {}_{n+1}\mathbb P_n
  \left[
   \begin{matrix}
    A_1 & A_2 & \cdots  & A_{n+1} \\
         & B_1 & \cdots & B_n \\
\end{matrix} ;\l;q
  \right]
 : = \\
  \sum_{y\in\F_q} A_{n+1}(y)\overline A_{n+1} B_n(1-y) \cdot
  {}_{n}\mathbb P_{n-1}
  \left[
   \begin{matrix}
    A_1 & A_2 & \cdots  & A_{n} \\
         & B_1 & \cdots & B_{n-1} \\
\end{matrix}
 ;\l y;q\right],
\end{multline}
which corresponds to \eqref{inductiveintegral} via the dictionary in \S \ref{dictionary} (and recalling Remark \ref{rmk:integer_note}).  \index{period functions $_{n+1}\mathbb P_n$ over finite fields}\index{finite field analogues!period function  $_{n+1}\mathbb P_n$} We note the asymmetry among the $A_i$ (resp. $B_j$) in the definition.  Part of the reason we start with the analogue of the period, rather than the hypergeometric function is because the periods are  related to point counting.
When there is no ambiguity in our choice of field $\mathbb{F}_q$, we will leave out the $q$ in this notation and simply write
$$
 {}_{n+1}\mathbb P_n
  \left[
   \begin{matrix}
    A_1 & A_2 & \cdots  & A_{n+1} \\
         & B_1 & \cdots & B_n \\
\end{matrix} ;\l
  \right]
:=
 {}_{n+1}\mathbb P_n
  \left[
   \begin{matrix}
    A_1 & A_2 & \cdots  & A_{n+1} \\
         & B_1 & \cdots & B_n \\
\end{matrix} ;\l;q
  \right].
$$

When $n=1$ we have,
\begin{equation}\label{Eq:30}
 \phgq {A_1}{A_2}{B_1}{\l}=\sum_{y\in\fq} A_2(y)\overline {A_2}B_1(1-y)\overline A_1(1-\l y). \end{equation}
In terms of Jacobi sums,
\begin{multline}
\label{eq:for6.7}
   \phgq {A_1}{A_2}{B_1}{\l}=            \begin{cases}
               J(A_2, \overline {A_2}{B_1}), &\mbox{ if } \l=0\\
               \displaystyle\frac 1{q-1}\sum_{\chi \in\fqhat}
               J(\overline {A_1}, \overline{\chiup})
               J(A_2\chiup,B_1\overline {A_2})\chiup(\l), & \mbox{ if } \l\neq 0
               \end{cases} .\end{multline}
          By Lemma \ref{lem:1} \begin{equation} \label{eq:for6.7-1}
             \phgq {A_1}{A_2}{B_1}{\l}= \begin{cases}  J(A_2, \overline {A_2}{B_1}),& \mbox{ if } \l=0,\\
                \displaystyle\frac {A_2(-1)}{q-1}\sum_{\chi \in\fqhat}
                J({A_1}\chiup, \overline \chiup)
                J(A_2\chiup,\overline {B_1\chiup})\chiup(\l),&  \mbox{ if } \l\neq 0.\end{cases}
          \end{equation} 

While they are equivalent, the second expression is more  symmetric  when the finite field analogue of binomial coefficients defined in \eqref{eq:FFbim} is used. A general formula for the $_{n+1}\mathbb{P}_n$ period function highlighting this symmetry is:

\begin{multline*}
 {}_{n+1}\mathbb P_n
  \left[
   \begin{matrix}
    A_1 & A_2 & \cdots  & A_{n+1} \\
         & B_1 & \cdots & B_n \\
\end{matrix} ;\l
  \right]\\
=\frac{(-1)^{n+1}}{q-1}\cdot \left(\prod_{i=1}^n A_{i+1}B_i(-1) \right) \sum_{\chi\in\fqhat} \binom{A_1\chiup}{\chiup}\binom{A_2\chiup}{B_1\chiup}\cdots\binom{A_{n+1}\chiup}{B_n \chiup}\chiup(\lambda)\\
+ \delta(\lambda)\prod_{i=1}^n J(A_{i+1},\overline{A_{i+1}}B_i).
\end{multline*}

This is similar to Greene's definition \cite[Def. 3.10]{Greene}, though we note that our binomial coefficient differs from Greene's by a factor of $-q,$  and we define the value at $\l=0$ differently. Our version  gives a natural reference point for the  normalization to come later,  as it does in the classical  case.

\subsection{Hypergeometric varieties}\label{ss:HV} It is well-known that Greene's hypergeometric functions can be used to count points on varieties over finite fields \cite[et al.]{Ahlgren,Barman,WIN3a,WIN3b,FOP,Fuselier,Koike,Lennon-count, Lennon2, Ono,Rouse,Salerno,Ve11} and evaluate twisted Kloosterman  sums \cite{Lin-Tu}.  For example, the following result of Ono (stated here in terms of our notation) gives the number of points on the Legendre curves $E_\l: y^2 = x(x-1)(x-\l)$ over $\F_q$.
\begin{Theorem}[Ono \cite{Ono}] \label{thm:Ono}For $\l \in \F_q$ with $\l \neq 0,1$,
$$
\#E_\l(\F_q) = q + 1 +  \phgq {\phi}{\phi}{\eps}{\l}.
$$
\end{Theorem}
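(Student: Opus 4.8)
The plan is to count points on the Legendre curve $E_\l: y^2 = x(x-1)(x-\l)$ directly by summing the quadratic character over the fiber count, and then to recognize the resulting character sum as the finite field period $\phgq{\phi}{\phi}{\eps}{\l}$ defined in \eqref{Eq:30}. First I would recall that for fixed $x$, the number of solutions $y$ to $y^2 = f(x)$ over $\F_q$ equals $1 + \phi(f(x))$ when $f(x)\neq 0$, and equals $1$ when $f(x)=0$; using the convention $\phi(0)=0$ adopted in the excerpt, this count is uniformly $1 + \phi(f(x))$ for all $x\in\F_q$. Summing over $x$ and adding the single point at infinity gives
\begin{equation*}
\#E_\l(\F_q) = 1 + \sum_{x\in\F_q}\bigl(1+\phi(x(x-1)(x-\l))\bigr) = 1 + q + \sum_{x\in\F_q}\phi(x)\phi(x-1)\phi(x-\l).
\end{equation*}

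Next I would manipulate the character sum $S:=\sum_{x\in\F_q}\phi(x)\phi(x-1)\phi(x-\l)$ into the shape of \eqref{Eq:30}. Using $\phi=\overline\phi$ and the multiplicativity of $\phi$ (recall $\phi(0)=0$, so terms with $x\in\{0,1,\l\}$ drop out automatically), I would rewrite the factors so that the summation variable matches the defining sum
\begin{equation*}
\phgq{\phi}{\phi}{\eps}{\l}=\sum_{y\in\fq} \phi(y)\,\overline\phi\,\eps(1-y)\,\overline\phi(1-\l y) = \sum_{y\in\fq}\phi(y)\phi(1-y)\phi(1-\l y),
\end{equation*}
since $\overline\phi\,\eps = \phi$ and $\overline\phi=\phi$. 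The remaining task is a change of variable: I would substitute $x = \l/(\l-1)\cdot$ (an appropriate Möbius transformation) or more simply factor out constants via $\phi(x-\l)=\phi(-\l)\phi(1-x/\l)$ and rescale $x\mapsto \text{(something)}$ so that the three linear factors $x$, $x-1$, $x-\l$ are carried to $y$, $1-y$, $1-\l y$ up to a total $\phi$-factor that works out to $1$. Tracking these constant character values carefully (using $\phi(\l)\phi(\l-1)$ type identities) should show $S$ equals the displayed period exactly.

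The main obstacle I anticipate is the bookkeeping of constant character factors under the change of variables: each linear rescaling introduces a factor $\phi(\text{const})$, and I must verify these collectively cancel to give precisely $\phgq{\phi}{\phi}{\eps}{\l}$ rather than a nonzero multiple of it. A secondary subtlety is confirming that the convention $\phi(0)=0$ (extended from the excerpt's choice $\eps(0)=0$) makes the fiber-count formula $1+\phi(f(x))$ hold uniformly, so that no separate treatment of the branch points $x\in\{0,1,\l\}$ is needed; these contribute $y=0$ as a single point each, consistent with $1+\phi(0)=1$. Once the change of variables is pinned down and the constant factors verified to cancel, the identity $\#E_\l(\F_q)=q+1+\phgq{\phi}{\phi}{\eps}{\l}$ follows immediately. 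This is essentially the standard computation relating Legendre curve point counts to hypergeometric character sums, so I expect no deep difficulty beyond the careful normalization.
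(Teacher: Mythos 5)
Your proposal is correct; note that the paper itself gives no proof of this statement, quoting it directly from Ono, so the only thing to check is that your computation closes up. It does: the one substitution you left open is simply $x=\l y$ (a bijection of $\F_q$ since $\l\neq 0$), under which
$$x(x-1)(x-\l)=\l y\,(\l y-1)\,\l(y-1)=\l^2\,y(1-y)(1-\l y),$$
so $\phi\bigl(x(x-1)(x-\l)\bigr)=\phi(\l^2)\,\phi\bigl(y(1-y)(1-\l y)\bigr)=\phi\bigl(y(1-y)(1-\l y)\bigr)$ and the constant factor is exactly $1$, with no residual $\phi$-twist. Since $\phgq{\phi}{\phi}{\eps}{\l}=\sum_{y\in\fq}\phi(y)\phi(1-y)\phi(1-\l y)$ by \eqref{Eq:30} (using $\overline\phi=\phi$ and $\overline\phi\,\eps=\phi$ in $\fqhat$), your sum $S$ equals the period function on the nose. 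Your handling of the fiber count is also fine: with the convention $\chiup(0)=0$ the formula $1+\phi(f(x))$ holds uniformly, the three branch points each contribute the single solution $y=0$, and the unique point at infinity of the Weierstrass model supplies the remaining $+1$.
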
See also \cite{Rouse} by Rouse.

We use the period functions $_{n+1}\mathbb{P}_{n}$ for this purpose in a more general context.  Consider the family of \Index{hypergeometric algebraic varieties}
$$X_{\l}: \quad y^N=x_1^{i_1}\cdots x_{n}^{i_{n }}\cdot (1-x_1)^{j_{1}}\cdots(1-x_{n })^{j_{n }}\cdot (1-\l x_1\cdots x_{n })^k.$$
We can count the points on $X_{\l}$ in a manner similar to that in \cite{WIN3b}, where the case $N=n+1 $, $i_s=n $, $j_t=1$ for all $s,t$ and $k=1$  is  considered. Different algebraic models are used for the hypergeometric algebraic varieties over $\Q$ in \cite[Thm. 1.5]{BCM}.  We choose our model in order to make  the next statement straightforward.

\begin{Proposition}\label{point-count}
Let  $q=p^e\equiv 1\pmod{N}$ be a prime power, and $\eta_N \in \fqhat$  a primitive order $N$ character. Then
$$
\#X_{\l}(\fq) = 1+q^{n } + \sum_{m=1}^{N-1}  \,
_{n+1 }\mathbb{P}_{n }
\left[
\begin{matrix}
\eta_N^{-mk} & \eta_N^{mi_{n }} & \dots & \eta_N^{mi_1} \\
     & \eta_N^{mi_n+mj_{n }} & \dots & \eta_N^{mi_1+mj_1} \\
\end{matrix}
; \lambda;q
\right].$$
\end{Proposition}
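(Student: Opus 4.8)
The plan is to fibre the point count over the base coordinates, reduce the number of $N$-th roots to a sum of multiplicative characters, and then recognize each nontrivial character sum as one of the period functions ${}_{n+1}\mathbb P_n$.

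Write $f(x_1,\dots,x_n):=x_1^{i_1}\cdots x_n^{i_n}(1-x_1)^{j_1}\cdots(1-x_n)^{j_n}(1-\l x_1\cdots x_n)^k$, so that $\#X_\l$ is, up to the boundary term discussed below, $\sum_{\vec x\in\F_q^n}\#\{y\in\F_q:\ y^N=f(\vec x)\}$. Since $q\equiv 1\pmod N$ there is a character $\eta_N$ of exact order $N$, and for $a\neq 0$ the number of $N$-th roots of $a$ equals $\sum_{m=0}^{N-1}\eta_N^m(a)$, while $a=0$ has the single root $y=0$. Using the convention $\chiup(0)=0$, I would peel off the $m=0$ term: the base points with $f(\vec x)\neq 0$ each contribute $\eta_N^0(f(\vec x))=1$, and together with the single root over the locus $\{f(\vec x)=0\}$ these account for exactly $q^n$ affine base points. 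What remains is $\sum_{m=1}^{N-1}\sum_{\vec x}\eta_N^m(f(\vec x))$, where the constraint $f(\vec x)\neq 0$ may be dropped because $\eta_N^m$ vanishes at $0$ for $m\geq 1$.

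The heart of the argument is to identify, for each fixed $m$, the sum $S_m:=\sum_{\vec x}\eta_N^m(f(\vec x))=\sum_{\vec x}\eta_N^{mk}(1-\l x_1\cdots x_n)\prod_{s=1}^{n}\eta_N^{mi_s}(x_s)\,\eta_N^{mj_s}(1-x_s)$ with the corresponding period function. For this I would fully unfold the inductive definition \eqref{n+1Pn} down to the base case \eqref{1P0}, obtaining
$$ {}_{n+1}\mathbb P_n\left[\begin{matrix}A_1 & \cdots & A_{n+1}\\ & B_1 & \cdots & B_n\end{matrix};\l\right]=\sum_{y_1,\dots,y_n}\ol{A_1}(1-\l y_1\cdots y_n)\prod_{t=1}^{n}A_{t+1}(y_t)\,\ol{A_{t+1}}B_t(1-y_t). $$
With the parameters in the Proposition one has $\ol{A_1}=\eta_N^{mk}$, $A_{t+1}=\eta_N^{mi_{n+1-t}}$, and the key simplification $\ol{A_{t+1}}B_t=\eta_N^{mj_{n+1-t}}$; after the reindexing $s=n+1-t$, $x_s:=y_{n+1-s}$ (which leaves the symmetric product $x_1\cdots x_n$ unchanged) the unfolded period is exactly $S_m$. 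The only real care needed is the bookkeeping: tracking the reversal of the indices $i_1,\dots,i_n$ between the top row of the symbol and the summation variables, and verifying that each lower entry combines with its upper neighbour as $\ol{A_{t+1}}B_t=\eta_N^{mj_{n+1-t}}$.

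Assembling everything yields the affine count $q^n+\sum_{m=1}^{N-1}{}_{n+1}\mathbb P_n[\cdots;\l]$, which is the asserted formula up to the additive constant $1$. As in the prototypical case of Ono's Theorem \ref{thm:Ono}, where the smooth projective Legendre curve contributes $q+1$ rather than $q$, this last $+1$ records the contribution of the point at infinity of the chosen (smooth) model of $X_\l$; I would invoke that model directly, or simply fix the normalization so as to be consistent with Theorem \ref{thm:Ono}. The main obstacle is therefore not any hard estimate but the careful alignment of the recursive period definition with the character-sum expansion of $f$, together with the correct accounting of the boundary term.
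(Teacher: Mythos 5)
Your proof is correct and follows essentially the same route as the paper's: expand the fibre count over $y$ as a sum of multiplicative characters $\eta_N^m(f(\vec x))$, separate off the $m=0$ main term, and identify each remaining sum $S_m$ with the fully unfolded inductive definition of ${}_{n+1}\mathbb P_n$. The only differences are cosmetic — you spell out the unfolding and the index reversal explicitly and discuss the $+1$ normalization, whereas the paper simply applies \eqref{n+1Pn} $n$ times and cites \cite{WIN3b} for the main term $1+q^n$.
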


\begin{proof}
We begin as in the proof of Theorem 2 in \cite{WIN3b}, to see that

\begin{multline*}
\#X_{\l}(\fq) = 1+q^n\\ +\sum_{m=1}^{N-1} \sum_{x_i\in\fq} \eta_N^m(x_1^{i_1}\cdots x_{n }^{i_{n }}(1-x_1)^{j_1}\cdots(1-x_{n })^{j_{n}}(1-\lambda x_1\cdots x_{n })^k).
\end{multline*}

Then, by applying \eqref{n+1Pn} $n$ times, we see that

\begin{multline*}
_{n +1 }\mathbb{P}_{n } \left[
\begin{matrix}
\eta_N^{-mk} & \eta_N^{mi_{n }} & \cdots & \eta_N^{mi_1} \\
     & \eta_N^{mi_{n }+mj_{n }} & \cdots & \eta_N^{mi_1+mj_1} \\
\end{matrix}
; \lambda ; q
\right]
\\=  \sum_{x_i\in\fq} \eta_N^{mi_1}(x_1)\eta_N^{mj_1}(1-x_1)\cdots \eta_N^{mi_{n }}(x_{n })\eta_N^{mj_{n }} (1-x_{n }) \eta_N^{mk}(1-\l x_1\cdots x_{n }).
\end{multline*}

This gives the result. \end{proof}

It is more convenient to obtain the major term $1+q^n$ by using $\eps(0)=1$. The total point count is independent of the choice of $\eps(0)$. Also, we are not dealing with the smooth model of $X_\l$ here. The point count here is related to Dwork's work on zeta functions of hypersurfaces which will be recalled in \S \ref{ss:zeta}.

\subsection{Hypergeometric functions over finite fields}\label{ffdef}
In the classical case (see \S \ref{3.1}), a hypergeometric function is normalized to have constant term $1$, obtained from the corresponding $_{n+1}P_{n}$ function  divided by its value  at $0$. Here we will  similarly  normalize the  finite field period functions.   We thus define
\begin{equation}\label{normalized2FF1}
_2\F_1\left[
\begin{matrix}
A_1 & A_2 \\
   & B_1 \\
\end{matrix}
; \l
\right] := \frac{1}{J(A_2,B_1\overline{A_2})} \cdot \phgq {A_1}{A_2}{B_1}{\l}.
\end{equation}\index{hypergeometric functions  $_{n+1}\mathbb F_n$ over finite fields}\index{finite field analogues!hypergeometric function $_{n+1}\mathbb F_n$}
The $_2\F_1$ function satisfies
\begin{itemize}
\item[1)] $_2\F_1\left[\begin{matrix}A_1 & A_2 \\   & B_1 \\\end{matrix}; 0  \right]=1$;
\item[2)] $_2\F_1\left[
\begin{matrix}
A_1 & A_2 \\   & B_1 \\\end{matrix}; \l\right]=\,_2\F_1\left[\begin{matrix}A_2 & A_1 \\   & B_1 \\
\end{matrix}; \l\right]$,  if $A_1$, $A_2\neq \eps$, and $A_1$, $A_2\neq B_1$.
\end{itemize}
Property  1) follows from the definition and \eqref{Eq:30}; 2) will be proved in Proposition \ref{prop: commute and conjugation} below. Intuitively, with the additional Jacobi sum factor one can rewrite  the right hand side of \eqref{normalized2FF1} using the finite field analogues of rising factorials  with the roles of $A_1$ and $A_2$ being symmetric.

More generally, we define
\begin{multline}\label{general_HGF}
  \pFFq{n+1}n {A_1&A_2&\cdots &A_{n+1}}{&B_1&\cdots&B_n}\l \\ :=
  \frac{1}{\prod_{i=1}^nJ(A_{i+1}, B_{i}\overline{A_{i+1}})}
  {}_{n+1}\mathbb P_n
  \left[
   \begin{matrix}
    A_1 & A_2 & \cdots  & A_{n+1} \\
         & B_1 & \cdots & B_n \\
\end{matrix} ; \l\right].
\end{multline}
\begin{Definition}\label{def:3}
A period function  ${}_{n+1}\mathbb P_n
  \left[
   \begin{matrix}
    A_1 & A_2 & \cdots  & A_{n+1} \\
         & B_1 & \cdots & B_n \\
\end{matrix} ; \l\right]$
or the corresponding ${}_{n+1}\mathbb F_n
  \left[
   \begin{matrix}
    A_1 & A_2 & \cdots  & A_{n+1} \\
         & B_1 & \cdots & B_n \\
\end{matrix} ; \l\right]$
is said to be \emph{primitive} if $A_i\neq \eps$ and $A_i\neq B_j$ for all $i,j$; otherwise it is said to be \emph{imprimitive}. \bk Similarly, we call the corresponding   classical period or hypergeometric function  \emph{primitive}  if $a_i,a_i-b_j\notin \Z$ for any $a_i,b_j$.
\end{Definition}\index{primitive hypergeometric/period functions over finite fields}\index{finite field analogues!primitive hypergeometric/period function}

As in the $n=1$ case,  the value of the $_{n+1} \mathbb F_n$ functions at $\l=0$ is $1$, and for primitive $_{n+1} \mathbb F_n$ the characters $A_i$ (resp. $B_j$) can be permuted without effect,  which can be seen from \eqref{eq:compMcCarthy} below.

\subsection{Comparison with other finite field hypergeometric functions}\label{S:comparison}

Alternative definitions for hypergeometric functions over finite fields have been { given in the papers of Greene \cite{Greene}, McCarthy \cite{McCarthy}, Katz \cite[Ch. 8.2]{Katz}, and Beukers, Cohen, Mellit \cite{BCM}}.  Sometimes, they are referred to as Gaussian hypergeometric functions or finite hypergeometric functions.
For the sake of consistency within this paper, below we use the notation $_{j}\F_k$ rather than $_{j}F_k$ which is used in  \cite{Greene} and \cite{McCarthy}.
Greene's version is defined by
\begin{multline*}_{n+1}\F_n  \(
   \begin{matrix}
    A_1 & A_2 & \cdots  & A_{n+1} \\
         & B_1 & \cdots & B_n \\
\end{matrix} ;\l \)^G := \\
\frac{(-1)^{n+1}}{q^n(q-1)}\cdot \sum_{\chi\in\fqhat} \binom{A_1\chiup}{\chiup}\binom{A_2\chiup}{B_1\chiup}\cdots\binom{A_{n+1}\chiup}{B_n \chiup}\chiup(\lambda).
\end{multline*}

\noindent McCarthy's version is defined by

\begin{multline*}_{n+1}\F_n  \(
   \begin{matrix}
    A_1 & A_2 & \cdots  & A_{n+1} \\
         & B_1 & \cdots & B_n \\
\end{matrix} ;\l
  \)^{M} := \\  \frac{1}{q-1} \sum_{\chiup\in\fqhat}\prod_{i=1}^{n+1} \frac{g(A_i\chiup)}{g(A_i)}\prod_{j=1}^{ n } \frac{g(\overline{B_j\chiup})}{g(\overline{B_j})} g(\overline{\chiup})\chiup(-1)^{n+1}\chiup(\l)  \end{multline*}

\noindent and is symmetric in the $A_i$'s and $B_j$'s.  
It can be shown that the ``hypergeometric sum" defined by Katz in \cite{Katz}  can be written as

$$_{n}\F_m  \(
   \begin{matrix}
    A_1 & A_2 & \cdots  & A_{n} \\
    B_1  & B_2 & \cdots & B_m \\
\end{matrix} ;\l
  \)^{K} :=  \frac{1}{q-1}\sum_{\chiup\in\fqhat}\overline{\chiup}(\l) \prod_{i=1}^n g(A_i\chiup)\prod_{j=1}^mg(\overline{B_j\chiup})B_j\chiup(-1).  $$
  { In \cite{BCM}, the authors  used the following modified version of Katz's hypergeometric sum for the case $m=n$
  $$_{m}\F_m  \(
   \begin{matrix}
    A_1 & A_2 & \cdots  & A_{m} \\
    B_1  & B_2 & \cdots & B_m\\
\end{matrix} ;\l
  \)^{BCM} :=  \frac{1}{1-q}\sum_{\chiup\in\fqhat}{\chiup}(\l) \prod_{i=1}^m \frac{g(A_i\chiup)}{g(A_i)}\frac{g(\overline{B_j\chiup})}{g(\ol B_j)}\chiup(-1),  $$
  which is equivalent to McCarthy's version with $B_1=\varepsilon$.
 In the work of \cite{BCM}, the hypergeometric sum is the finite field version of the hypergeometric functions corresponding to hypergeometric motives over $\Q$, so we have additional conditions on the characters $A_i$, $B_j$} such that  the sets $\{A_i\}$ and $\{B_j\}$ are closed under Galois conjugates respectively.\bk

Our period functions are closely related to Greene's Gaussian hypergeometric functions and both  can be used to count points. The relationship  between the two  is

\begin{multline*}
    {}_{n+1}\mathbb P_n
  \left [
   \begin{matrix}
    A_1 & A_2 & \cdots  & A_{n+1} \\
         & B_1 & \cdots & B_n \\
\end{matrix} ;\l
  \right ] \\ = q^n\(\prod_{i=1}^n A_{i+1}B_{i}(-1)\)
               {}_{n+1} \F_n
  \(
   \begin{matrix}
    A_1 & A_2 & \cdots  & A_{n+1} \\
         & B_1 & \cdots & B_n \\
\end{matrix} ;\l
  \)^{G}\\
  +\delta(\l)\prod_{i=1}^n J(A_{i+1},\overline{A_{i+1}}B_{i}).
\end{multline*}

In the primitive case, the normalized  $_{n+1}\mathbb F_n$-hypergeometric function  defined in \eqref{general_HGF}  is the same as McCarthy's hypergeometric function over finite fields, when $\l\neq 0$. To be precise,  in the primitive case
\begin{multline}\label{eq:compMcCarthy}
  \pFFq{n+1}n {A_1&A_2&\cdots &A_{n+1}}{&B_1&\cdots&B_n}\l\\=
  \frac{1}{\prod_{i=1}^nJ(A_{i+1}, B_{i}\overline{A_{i+1}})}
  {}_{n+1}\mathbb P_n
  \left[
   \begin{matrix}
    A_1 & A_2 & \cdots  & A_{n+1} \\
         & B_1 & \cdots & B_n \\
\end{matrix} ; \l\right]\\
= \,_{n+1}\F_n  \(
   \begin{matrix}
    A_1 & A_2 & \cdots  & A_{n+1} \\
         & B_1 & \cdots & B_n \\
\end{matrix} ;\l
  \)^{M}+\delta(\l).
\end{multline}

We note that McCarthy's hypergeometric function is related to Katz's via (see Prop. 2.6 of \cite{McCarthy})

\begin{multline*}_{n+1}\F_n  \(
   \begin{matrix}
    A_1 & A_2 & \cdots  & A_{n+1} \\
         & B_1 & \cdots & B_n \\
\end{matrix} ;\l
  \)^{M} =  \left[\frac{1}{g(A_1)} \prod_{i=1}^n \frac{B_i(-1)}{g(A_{i+1})g(\overline{B}_i)} \right] \\
   \quad \cdot \, _{n+1}\F_{n+1}  \(
   \begin{matrix}
    A_1 & A_2 & \cdots  & A_{n+1} \\
     \varepsilon & B_1 & \cdots & B_n \\
\end{matrix} ;\frac{1}{\l}
  \)^{K}.
  \end{multline*}

\section{Some Related Topics on Galois Representations}\label{Gal-background}

In this section, we interpret the finite field analogues of periods and  hypergeometric functions using a Galois perspective. See books by Serre \cite{Serre1,Serre2} on basic representation theory and  Galois representations. Readers can choose to skip this section as most of the later proofs  can be obtained using the setup in the previous sections. The Galois interpretation gives us a global picture and allows us to predict the finite field analogues of classical formulas quite efficiently. The approach below is derived using work of Weil \cite{Weil} and is used  to reinterpret some results in \cite{WIN3a} on generalized Legendre curves.  See  \cite{Katz} by Katz and \cite{BCM} by Beukers, Cohen and Mellit for related more general discussions on the topic.

\subsection{Absolute Galois groups and Galois representations}\label{ss:Galois5.1}\index{Galois representation}
 We now recall some standard results in algebraic number theory.  Let $J/L$ be a finite Galois extension of number fields with Galois group $\text{Gal}(J/L)$, and let ${\mathcal O}_J$ and ${\mathcal O}_L$ be the respective rings of integers.   As ${\mathcal O}_J$ and ${\mathcal O}_L$ are Dedekind domains,  every  prime ideal $\mathfrak p$ of
 ${\mathcal O}_L$ factors in ${\mathcal O}_J$ into a product of prime ideals $\wp_i$ over $\mathfrak p$.  Since the Galois group $\text{Gal}(J/L)$ acts transitively on primes over $\mathfrak p$, this factorization has the form $\prod^g_{i=1} {\wp}^e_i$, where $e$ is called the ramification degree of $\mathfrak p$ in $L$ and is independent of $\wp_i$ as $J/L$ is Galois. The transitive action also implies the quantity  $f=[{\mathcal O}_{J}/{\wp}_i : {\mathcal O}_L/{\mathfrak p} ]$ is  independent of $i$.
For a prime ideal $\wp_i$ of ${\mathcal O}_J$, the \Index{decomposition group} of $\wp_i$  is defined  by  $D_{{\wp}_i} = \{ \sigma \in \text{Gal}(J/L) \mid \sigma({\wp}_i) = {\wp}_i\}$.
One easily sees from the transitive action
that the decomposition groups $D_{{\wp}_i}$ are conjugate to one another within $\text{Gal}(J/L)$. Thus if $\text{Gal}(J/L)$ is abelian we have that $D_{{\wp}_i}$ depends only on ${\mathfrak p}$.  Furthermore,  $\# D_{{\wp}_i} =ef$, and $efg=[J:L]$.

The discriminant $\text{Dis}_{J/L}$ ideal  is a product of powers of prime ideals of ${\mathcal O}_L$.  If a prime \color{black}  does not divide $\text{Dis}_{J/L}$ then it is a standard result that $e=1$. Such primes are called \Index{unramified}. 
Assume ${\mathfrak p}$ is unramified in $J/L$.
As the extension
$\({\mathcal O}_{J}/{\wp}_i\) \big /
\({\mathcal O}_L/{\mathfrak p}\)$ is a finite
extension of finite fields it has cyclic Galois group with generator
given by $x \mapsto x^{\# {\mathcal O_L}/{\mathfrak p}}$. Any element of $Gal(J/L)$ whose restriction to the residue field $ {\mathcal O_J}/{\wp}_i$ is the inverse of the above map is called a (geometric) \emph{Frobenius} automorphism.  In this case
$$\Z/f\Z \simeq \text{Gal}\(\({\mathcal O}_{J}/{\wp}_i\) \big /
\({\mathcal O}_L/{\mathfrak p}\)\) \simeq D_{{\wp}_i}  \subset
\text{Gal}(J/L).$$ Thus
for unramified primes $\fp$ in $J/L$ we have  the well-defined \Index{Frobenius conjugacy class}, $\text{Frob}_{\fp} \subset \text{Gal}(J/L)$.

\begin{Example}
Let $\zeta_5$ be a primitive $5$th root of unity and
set $J=\mathbb Q (\zeta_5)$ and $L=\mathbb Q$. Then $\text{Dis}_{J/L} =(5^3)$ and
all primes other than $(5)$ are unramified.
For an unramified  prime $p$, $f$ is the smallest integer satisfying
$p^f \equiv 1\pmod5$. One determines $g$ by solving
$efg = [\mathbb Q(\zeta_5):\mathbb Q]=4$.
The primes $p$ that are $1$ modulo $5$ have $(f,g)=(1,4)$.
For primes that are $2$ and $3$ modulo $5$ we have $(f,g)=(4,1)$.
Finally, primes that are $4$ modulo $5$ have $(f,g)=(2,2)$.

In this example $\text{Gal}(\mathbb Q(\zeta_5)/\mathbb Q) \simeq
\mathbb Z /4\Z$ is cyclic. In a biquadratic extension such as
$\mathbb Q(\sqrt{5},\sqrt{13})/\mathbb Q$ with Galois group $(\mathbb Z/2\Z)^2$
one will never have an unramified
prime $p$ with $\text{Frob}_p$ having order $4$. It will either be the case that $(f,g)=(2,2)$ or $(f,g)=(1,4)$.
\end{Example}

Arithmetic objects over a number field $L$ often have a representation of the absolute Galois group of $L$,
$G_L:=\text{Gal}(\ol{L}/L)$, attached to them. For instance given an elliptic curve $E_{/L}$  one can study its $m$-torsion $E[m]$  over $\ol L$.  Since $E(\mathbb C) \simeq S^1\times S^1$ we see $E[m] \simeq \mathbb Z/m \Z  \times \mathbb Z/m \Z $. More importantly, the abelian group structure of $E(\ol L) \subset E(\mathbb C)$ is defined over $L$ so $E(\ol L)$ comes equipped with an action
of $G_L$. Thus we have a continuous homomorphism $G_L \to \operatorname{Aut}(E[m]) \subset GL_2(\mathbb Z/m\Z)$. Now fix a prime $\ell$, set $m=\ell^n$ and take an inverse limit as $n \to \infty$ to get a homomorphism $\rho_{E,\ell}:G_L \to GL_2({\mathbb Z}_\ell)$.  By extending the scalars to $\Q_\ell$ or any of its extension fields by tensoring, one gets a 2-dimensional representation of  $G_L$ over an $\ell$-adic field.  This homomorphism has a number of important properties. First it is  unramified almost everywhere.
As the field fixed by the kernel of $\rho_{E,\ell}$ is an infinite extension
of $L$, this
last statement requires some interpretation.
Let $J$ be the field  fixed by  the kernel of $\rho_{E,\ell}$.  The extension $J/L$ is infinite, but for any finite $M/L$ with $M \subset J$
the relative discriminant of $\text{Dis}_{M/K}$ is relatively prime to all but a fixed finite
set of prime ideals of $L$ depending only on $E$ and $\ell$.
The primes of $L$ outside this set are called {\it unramified}.
Taking the inverse limit, this implies there
is, for every unramified prime ideal $\fp$ in $J/L$,
a conjugacy class $\text{Frob}_{\fp}$ associated to $\text{Gal}(J/L) \subset
GL_2({\mathbb Z}_\ell)$. Its characteristic polynomial,
$\text{det}(I-\text{Frob}_{\fp}X)$, is then well-defined. A priori
this polynomial is in ${\mathbb Z}_\ell[X]$ but one can in fact prove
it is in $\mathbb Z [X]$. It contains
important information about $E$, namely that  the coefficient of $X$, denoted
$-a_{\mathfrak p}$, determines the number of points of the elliptic curve
mod $\mathfrak p$, that is,
 $$\# E({\mathbb F}_{\mathfrak p}) = \# {\mathbb F}_{\mathfrak p} +1 -a_{\mathfrak p}.$$
These polynomials, as we vary $\mathfrak p$, determine the local at $\mathfrak p$ factors of the $L$-function of $E$ (except for the ramified primes where the local
$L$-factor is slightly different), which according to the  Birch and Swinnerton-Dyer Conjecture  determines the rank of the abelian group $E(L)$, of $L$-rational points of $E$.

\begin{Example}For the elliptic curve $E: y^2=x^3+1$, it is known that   for each prime $p>3$,
\begin{equation}\label{eqn: CM3}
\# E(\F_p)=\left \{ \begin{array}{lll}p+1+J(\phi, \eta_3)+\ol {J(\phi, \eta_3)}& \text{ if } p\equiv 1 \pmod 3\\
p+1 &\text{ if } p\equiv 2 \pmod 3 \end{array} \right. ,
\end{equation}  where $\phi$ is the unique quadratic character on
$\F^{\times}_p$ and  $\eta_3$ is a cubic character of $\F^{\times}_p$,
 see \cite[\S18.3]{IR}.
Furthermore $E$ has complex multiplication by $\mathbb Z
[\zeta_3]$, that is, its endomorphim ring is an order in $\mathbb Z[\zeta_3]$ properly containing $\mathbb Z$. These `extra' endomorphisms are generated over $\mathbb Z$ by $(x,y) \mapsto (\zeta_3 x,y)$. When viewing $E$ as a variety  over $\mathbb Q$, these endomorphisms are necessarily defined
over ${\mathbb Q}(\zeta_3)$. The image of
$G_{{\mathbb Q}(\zeta_3)}$ under
$\rho_{E,\ell}: G_{{\mathbb Q}(\zeta_3)} \to GL_2({\mathbb Z}_\ell)$ commutes with these extra endomorphisms. It is then easy to see that the image of
$\rho_{E,\ell}|_{G_{{\mathbb Q}(\zeta_3)}}$ is abelian. When $\ell \equiv 1 \pmod3$  (the ordinary case) one finds that $\rho_{E,\ell}|_{G_{{\mathbb Q}(\zeta_3)}}$ is a direct sum of two characters. For $\ell \equiv 2 \pmod 3$  one
must extend the coefficients from $\Z_{\ell}$ to a larger ring to get the direct sum.
\end{Example}

\subsection{Gr\"ossencharacters in the sense of Hecke}\label{ss:Grossencharacter}
We now recall a result of Weil which is relevant to our discussion below. Weil computed the local zeta functions for (homogeneous) Fermat curves of the form $X^n+Y^n=Z^n$ or special generalized Legendre curves of the form $y^N=x^{m_1}(1-x)^{m_2}$ (cyclic covers of $\C P^1$ only ramify at $0,1,\infty$). In both cases the local zeta functions can be expressed in terms of explicit Jacobi sums, see \cite{Weil1} by Weil or the textbook \cite{IR} by Ireland and Rosen. In \cite{Weil}, Weil  explained how to consider Jacobi sums as Gr\"ossencharacters (also written  `Gr\"ossencharakter' by Hecke and Weil and sometimes  called Hecke characters) which we recall  below. Here, we mainly use  Weil's notation below.

\begin{Definition}\label{def:Grossen}[See Weil \cite{Weil}]
Let $L$ be a  number field  with $r_1$ non-equivalent real embeddings and $r_2$ non-equivalent complex embeddings.  Fix an embedding of $L$ to $\C$  and let $\mathcal O_L$ be its ring of integers as before. Let $\frak m$ be a  nonzero ideal of $\mathcal O_L$ and use $\mathcal I(\frak m)$ to denote  the set of  ideals of $\mathcal O_L$ that are prime to $\frak m$. A \Index{Gr\"ossencharacter}   (or `Gr\"ossencharakter')  of $L$ with defining ideal $\frak m$, according to Hecke, is a complex-valued function $f$ on  the set of ideals of $\mathcal O_L$   such that
\begin{enumerate}
\item $f(\frak a)f(\frak b)=f(\frak a\frak b)$ for all $\frak a,\frak b\in \mathcal I(\frak m)$
\item There are rational integers $e_i$ and  rational  numbers $c_i$, with $1\le i\le  r_1+r_2$, such that if $a
\in \mathcal O_L$, $a\equiv 1 \pmod{\frak m}$ and $a$ is positive at all real
embeddings $L \hookrightarrow \mathbb R$,
then the value of $f$ at the principal ideal $(a)$ satisfies $$f((a))=\prod_{i=1}^{ r_1+r_2} a_i^{e_i}{|a_i|}^{c_i},$$  where $a_1=a,a_2,\cdots, a_{ r_1+r_2}$ are the non-equivalent embeddings of $a$ to $\C$, $|\cdot|$ denotes the complex absolute value,  and we use the principal branch for ${|a_i|}^{c_i}$.
\end{enumerate}The defining ideal ${\frak m}$ is not unique. The greatest common  divisor of all  defining ideals is called the \emph{conductor} of $f$ and is also a defining ideal.
\end{Definition}

\begin{Remark} In fact, the above two conditions being satisfied simultaneously  imposes many  restrictions  on the  $e_i$'s and $c_j$'s. For instance, the condition (1) implies that if $a$ is a unit which is 1 modulo $\frak m$, then $f((a))=1$. From this
one deduces a dependence relation on the $|a_i|$ with the
 $e_i$'s and $c_j$'s as coefficients.
\end{Remark}


\begin{Example}\label{eg:Tate}Let $L=\Q$ and $\frak m=(1)$, then the set $\mathcal I(\frak m)$ contains all ideals of $\Z$, which are all principal. For $I \in \mathcal I (\frak m)$, write $I=(a)$ with $a>0$.
Define a function $\mathcal T$ on $\mathcal I(\frak m)$ by $\mathcal T(I)=a$. Then $\mathcal T$ is a Gr\"ossencharacter of $\Q$ with $ r_1=1, r_2=0$, $e_1=0$ and $c_1=1$.
\end{Example}

\begin{Example}\label{eg:Cond-64} Let $L=\Q(\sqrt{-1})$ with $r_1=0,r_2=1$  and let  { $\frak m=(4)$.} 
The ring of integers  of $L$ is $\mathbb Z [i]$, a PID. Define a function $f$ on  prime ideals $(a+b\sqrt{-1})\in \mathcal I (\frak m)$ as follows
$$f((a+b\sqrt{-1} )) =(a+b\sqrt{-1})\chi(a+b\sqrt{-1}),$$
where

$$
  \chi(a+b\sqrt{-1})=
  \begin{cases}
     (-1)^{\frac{b+1}2}\sqrt{-1},&\, \mbox{ if } a\equiv 0 \pmod{2}, \; b\equiv 1 \pmod{2},\\
    (-1)^{\frac{a-1}2},&\, \mbox{ if } a\equiv 1\pmod{2}, \; b\equiv 0 \pmod{2},\\
      0,&\, \mbox{ otherwise.}
  \end{cases}
$$

It is straightforward to check that $f$ is a multiplicative function on ideals in $\mathcal I (\frak m)$. Also,  when $a+b\sqrt{-1}\equiv 1 \pmod {\frak m}$, $f((a+b\sqrt{-1}))=a+b\sqrt{-1}$.  So $f$ is a Gr\"ossenchacter of $L$ with defining ideal $(4)$. In this case, $e_1=1$ and $c_1=0$.
\end{Example}

\begin{Remark}\label{rem:GK-GL}For any fixed number field $L$, by definition, the set of Gr\"ossencharacters is closed under multiplication and division.
Two Gr\"ossencharacters are equivalent if  they agree on all ideals coprime to both defining ideals of the characters.  The classes of equivalent Gr\"ossencharacters are in  bijective  correspondence with the homomorphisms from the group of idele classes of $L$ to $\C^\times$. In general, these characters,  according to Definition \ref{def:Grossen} need  not take value in $S^1=\{z\in \C: |z|=1\}$.   By class field theory, finite order characters of $G_{L}=  \text{Gal}(\ol L/L)$ correspond to  finite order  Gr\"ossencharacters  of $L$. In this case, we sometime use the same notation for both the Gr\"ossencharacter and the corresponding character of $G_L$. \bk
\end{Remark}

\subsection{Notation for the $N$th power residue symbol}
\label{ss:mthpower}
Let $N$ be a natural number and set $K=\Q(\zeta_N)$. Recall that $\O_K$ denotes the ring of integers of $K$.
For each finite prime ideal $\frak p\subset \O_K$ that is coprime to $N$, let  $q(\frak p):=\#(\O_K/\frak p)$ and let $\F_{\fp}$ be the residue field. Necessarily, $q(\fp)\equiv 1\pmod N$. We define a map from $ \O_K$ to the set of $N$th roots of unity together with $0$ using the following \Index{$N$th power residue} symbol notation $\displaystyle \(\frac{x}{\frak p}\)_N$
\index{$\displaystyle \(\frac{x}{\frak p}\)_m$:   $N$th power residue}(here we simply call it the \Index{$N$th symbol}),  see pp. 240-241 of \cite{Milne-cft} by Milne.
\begin{Definition} For $x\in \O_K$
define $\displaystyle \(\frac{x}{\frak p}\)_N:=0$  if  $x\in \frak p$, and if $x\notin \frak p$, let the symbol take the value of the unique $N$th root of unity such that
\begin{equation}\(\frac{x}{\frak p}\)_N\equiv x^{(q(\frak p)-1)/N} \pmod{\frak p}.
\end{equation}
We can extend the definition to  $x \in K$,
provided $\mathfrak p$ does not appear to a negative power in the factorization of the fractional ideal $(x)$.
\end{Definition}

For explicit examples of the $N$th symbols with $N=2,3,4,6$, see \cite{IR}.\medskip

For a fixed unramified prime ideal $\frak p$, the $N$th symbol   induces a multiplicative map from the residue field  $\F_{\fp}$ to $\C$ which sends $0\in \F_{\fp}$ to 0  and thus is compatible with our notion of multiplicative characters on $\F_{\fp}$  introduced in \S \ref{ss:Gauss&Jacobi}.

\begin{Definition}\label{def:iota}\index{$\iota_{\frak p}(\cdot)$-function}
Now for any fixed rational number of the form $\frac iN$ with $i,N\in \Z$, we define a map $\iota_{(\cdot)}\!\(\frac iN\)$  from  the set of  unramified prime ideals $\fp$ of
$\O_K$  to multiplicative characters of the corresponding residue fields $\O_K/\fp$ by
\begin{equation}\label{msymbol}\iota_{\frak p}\(\frac iN\)(\cdot)
=\(\frac{\cdot}{\frak p} \)_N^i.
\end{equation}
\end{Definition}

By definition,  for any integers $i,j,N$,
\begin{equation*}\iota_{\frak p}\(\frac {i+j}N\)=\iota_{\frak p}\(\frac {i}N\)\iota_{\frak p}\(\frac {j}N\), \mbox{ and }  \quad \overline {\iota_{\frak p}\(\frac {i}N\)}=\iota_{\frak p}\(\frac {-i}N\),
\end{equation*} \bk where the bar denotes complex conjugation as before. \bk

\begin{Example}For each unramified prime ideal with residue field of odd characteristic, $\iota_{\frak p}(\frac 12)=\phi$, the quadratic character.
\end{Example}

Fix a $c \in K^\times$ and let $\mathfrak p$ be a prime ideal of
${\mathcal O}_{K}$
such that $\mathfrak p$ is prime to $N$ and $\rm{ord}_{\mathfrak p}(c)=0$.  Let Frob$_{\mathfrak p}$
denote the Frobenius automorphism associated to ${\mathfrak p}$ in the abelian Kummer extension $K(c^{1/N})/K$. Using the Artin symbol one easily checks that for varying $\mathfrak p$
\begin{equation}
\label{eq:m-Artin}\iota_{\frak p}\(\frac {1}N\)(c) = \frac{ \mbox{Frob}_{\mathfrak p}(c^{1/N})}{c^{1/N}}.
\end{equation}

\
For $c\in K^\times$,  we extend the $N$th symbol $\(\frac{c}{\cdot}\)_N$  multiplicatively to get a map from   the set of   ideals of  $\mathcal O_{K}$ that are coprime to both $c$
and $\text{Dis}_{K/\mathbb Q}$ to the multiplicative group $\mu_N$ as follows. For each ideal $\frak a$ of such, we decompose it as $\frak p_1^{e_1}\cdots \frak p_s^{e_s}$ where $\frak p_i$ are distinct prime ideals. Then we let $$\(\frac{c}{\frak a}\)_N:=\(\frac{c}{\frak p_1}\)_N^{e_1}\cdots \(\frac{c}{\frak p_s}\)_N^{e_s}.$$
See page 241 of \cite{Milne-cft}.  When $\frac iN=\frac 12$, $K=\Q$, and $c\in \Q^\times$, this step means we extend the Legendre symbol $\(\frac{c}{\cdot}\):=\(\frac{c}{\cdot}\)_2$ to  the Jacobi symbol.
\begin{Proposition}\label{prop:2}For $N\in \mathbb N$ and
$c\in K^\times$ \bk the map  $\mathfrak a \mapsto \(\frac{c}{\mathfrak a}\)_N$ corresponds to a $1$-dimensional  representation of $G_K$  and the kernel is $G_{K(\sqrt[N]{c})}$.
\end{Proposition}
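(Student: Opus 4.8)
The plan is to recognize the assertion as a repackaging of Kummer theory together with the relation \eqref{eq:m-Artin}, by exhibiting the symbol as the Artin pullback of one explicit character. First I would fix a root $c^{1/m}\in\ol K$ of $X^m-c$. Since $\zeta_m\in K$, every root of $X^m-c$ is of the form $\zeta_m^j c^{1/m}$, so for each $\sigma\in G_K$ one may write $\sigma(c^{1/m})=\omega_\sigma\, c^{1/m}$ with $\omega_\sigma\in\mu_m$, and I would define the candidate representation by
$$\chi_c(\sigma):=\frac{\sigma(c^{1/m})}{c^{1/m}}=\omega_\sigma\in\mu_m.$$
I would first check that this is independent of the chosen $m$th root: replacing $c^{1/m}$ by $\zeta c^{1/m}$ with $\zeta\in\mu_m\subset K$ changes nothing, since $\sigma$ fixes $\zeta$. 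The same remark shows $\chi_c$ is a homomorphism, for from $\tau(c^{1/m})=\chi_c(\tau)c^{1/m}$ and $\sigma(\chi_c(\tau))=\chi_c(\tau)$ one gets $\sigma\tau(c^{1/m})=\chi_c(\sigma)\chi_c(\tau)c^{1/m}$, i.e. $\chi_c(\sigma\tau)=\chi_c(\sigma)\chi_c(\tau)$. Thus $\chi_c\colon G_K\to\mu_m$ is the sought $1$-dimensional representation.

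Next I would compute the kernel. By construction $\sigma\in\ker\chi_c$ precisely when $\sigma(c^{1/m})=c^{1/m}$, which holds exactly when $\sigma$ fixes $K(\sqrt[m]{c})=K(c^{1/m})$; hence $\ker\chi_c=G_{K(\sqrt[m]{c})}$, which is the stated property. In particular $\chi_c$ factors through the finite cyclic quotient $\mathrm{Gal}(K(\sqrt[m]{c})/K)$, into which $\chi_c$ embeds via $\mu_m$, so $\chi_c$ is automatically continuous and unramified outside the primes dividing $mc$.

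It then remains to identify the symbol with $\chi_c$ on the ideal group. For an unramified prime $\mathfrak p$ (coprime to $mc$), $\mathrm{Frob}_{\mathfrak p}$ is a well-defined element of $\mathrm{Gal}(K(\sqrt[m]{c})/K)$, and \eqref{eq:m-Artin} together with Definition \ref{def:iota} gives
$$\chi_c(\mathrm{Frob}_{\mathfrak p})=\frac{\mathrm{Frob}_{\mathfrak p}(c^{1/m})}{c^{1/m}}=\iota_{\mathfrak p}\(\tfrac 1m\)(c)=\(\frac{c}{\mathfrak p}\)_m.$$
Extending both sides multiplicatively over $\mathfrak a=\mathfrak p_1^{e_1}\cdots\mathfrak p_s^{e_s}$ yields $\(\frac{c}{\mathfrak a}\)_m=\prod_i\chi_c(\mathrm{Frob}_{\mathfrak p_i})^{e_i}$, so the symbol is exactly the composite of the Artin map $\mathfrak a\mapsto\prod_i\mathrm{Frob}_{\mathfrak p_i}^{e_i}$ into $\mathrm{Gal}(K(\sqrt[m]{c})/K)$ followed by $\chi_c$, which makes precise the phrase ``corresponds to a $1$-dimensional representation.'' The one genuinely nontrivial input, and the step I expect to be the main obstacle to state cleanly, is global class field theory: I must invoke Artin reciprocity to guarantee that $\mathfrak p\mapsto\mathrm{Frob}_{\mathfrak p}$ extends to a well-defined multiplicative map on fractional ideals coprime to the conductor, and that this extension is precisely the multiplicative prolongation of the symbol in \eqref{msymbol}. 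Everything else is the formal Kummer-theoretic bookkeeping above, so once this identification is granted the proposition follows.
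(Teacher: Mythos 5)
Your proposal is correct and is essentially the argument the paper intends: Proposition \ref{prop:2} is stated without proof, with the preceding display \eqref{eq:m-Artin} and the multiplicative extension of the symbol serving as the whole justification, and your Kummer-theoretic character $\chi_c(\sigma)=\sigma(c^{1/m})/c^{1/m}$ with kernel $G_{K(\sqrt[m]{c})}$ is exactly that justification made explicit. One small correction: you do not need Artin reciprocity to get a well-defined multiplicative prolongation of $\mathfrak p\mapsto\mathrm{Frob}_{\mathfrak p}$ (or of the symbol) to ideals coprime to $mc$, since the group of such fractional ideals is free abelian on its primes; reciprocity only enters later, when one wants to know that the resulting ideal character is a ray class (Gr\"ossen-) character with a specific conductor, as in Weil's Theorem \ref{thm:Weil}.
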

\begin{proof}By the construction, the map $\mathfrak a \mapsto \(\frac{c}{\mathfrak a}\)_N$ is a Gr\"ossencharacter with finite image and we may take as a defining ideal the discriminant ideal $\text{Dis}_{K(\sqrt[N]{c})/K}$. By  Kummer theory,  it corresponds to a $1$-dimensional  representation of $G_K$  with kernel $G_{K(\sqrt[N]{c})}$.
\end{proof}

If we fix $a=\frac iN$ and $c\in K^\times$, then  for  any prime ideal $\frak a$ coprime to $\text{Dis}_{K(\sqrt[N]{c})/K}$, the map
\begin{equation}\label{eq:chi,im} \frak a \mapsto
\chi_{\frac{i}N,c}(\frak a):= \(\frac{c}{\frak a}\)_N^i\end{equation}  also corresponds to a  1-dimensional representation of $G_K$  with kernel being $G_{K(\sqrt[N]{c})}$. \bk So the above map $\chi_{a,c}$ is a Gr\"ossencharacter, see Remark \ref{rem:GK-GL}. \bk

\begin{Example}\label{eg:5}
Let $K=\Q$, $c=-1$ in which case $ r_1=0, r_2=1$, $N=2$ and  $\frak m=(4)$. In this case, if $e_1=1$ and $c_1=-1$ then condition (2) of Definition~\ref{def:Grossen} holds for the map
$\chi_{\frac 12,-1}$.
\end{Example}\medskip

Conversely, for any character $A\in \widehat{\F_{\frak p}^\times}$ of order dividing $N$, we have another map  $\kappa_{\F_{\frak p}}$
which assigns a rational number  to $A$. In order to define this map, we first recall that if  $x \in \O_K \backslash \frak p$ then $A(x) \in \mu_N$. 
We will show there exists $i\in \mathbb N$ such that $A(x) = \displaystyle \( \frac{x}{\frak p}\)^i_N$ for all $x\in \O_K$. To see that fix an $x_0$ whose image generates the cyclic group $\(\O_K / \frak p \)^\times$. As the order of $A(x_0)$ divides
$N$, which is the order of
$\displaystyle \( \frac{x_0}{\frak p}\)_N$, we have that
$A(x_0)= \displaystyle \( \frac{x_0}{\frak p}\)^i_N$ for some $i$. For any $x \in \O_K$ but not in $\frak p$ we have, for some $r$, that $x \equiv x^r_0 \pmod{\frak p}$.  Then for all $x$,
\begin{equation}
A(x)= A(x^r_0)= A(x_0)^r =\( \frac{x_0}{\frak p}\)^{ri}_N=
\( \frac{x^r_0}{\frak p}\)^i_N = \( \frac{x}{\frak p}\)^i_N.
\end{equation}
We may thus  define $\kappa_{{\mathbb F}_{\frak p}}(A)=\frac{i}{N}$.  Then for any $a=\frac iN$, and  any prime ideal $\fp$ of $\OqN$ coprime to $N$,
$$
\kappa_{\F_\fp}(\iota_\fp(a)) \equiv  a \pmod{\Z}.
$$

Next we will see that the $N$th symbol notation is compatible with field extensions.
Recall that one can lift a multiplicative character $A\in \widehat{\F_q^\times}$ to any finite extension of $\F_q$ by using the norm map. Let $L$ be a finite extension of $\Q(\zeta_N)$ and let $\wp$ be a prime ideal in the ring of integers $\mathcal O_L$ of $L$ above $\fp$, with $\fp$ coprime to the discriminant of $L$. Then $\F_{\wp}:=\mathcal O_L/\wp$ \bk is a finite extension of $\F_{\frak p}:=\OqN/\fp$ and we denote the degree of the extension by $f$.
\begin{multline}\label{eq:mth-conjugate}
\(\frac{\text{N}^{\F_{\wp}}_{\F_{\frak p}}(x)}{\frak p}\)_N=
\( \frac{ x\cdot x^{q(\frak p)} \cdot x^{q(\frak p)^2} \cdots
x^{q(\frak p)^{f-1}}}{\frak p}\)_N =
\(  \frac{ x^{\frac{ q(\frak p)^f-1}{q(\frak p)-1} }}{\frak p}
\)_N
\\
\equiv
x^{(q(\frak p)^f-1)/N} \pmod{\frak p}.
\end{multline}
But $\displaystyle x^{(q(\frak p)^f-1)/N}
= x^{(q(\wp)-1)/N}
\equiv
\( \frac{x}{\wp}\)_N \pmod{\wp}$.
Thus on the residue field level, this means the $\(\frac{\cdot}{\wp}\)_N$ symbol  on $\F_{\wp}$  can be  computed from the norm map
$\text{N}^{\F_{\wp}}_{\F_{\frak p}}$
composed with the  map $\(\frac{\cdot}{\fp}\)_N$ on $\F_\fp$.

Using $\iota_\fp(\cdot)$ (see \eqref{msymbol}), one can associate to any hypergeometric function
 $$\pPq{n+1}{n}{a_1&a_2&\cdots&a_{n+1}}{&b_1&\cdots& b_n}{\l}$$
such that $a_i,b_j,\l\in \Q$, a collection of hypergeometric functions over finite residue fields $\F_{\fp}$ (varying in $\fp$)
$$\pPPq{n+1}{n}{\iota_\fp(a_1)&\iota_\fp(a_2)&\cdots&\iota_\fp(a_{n+1})}{&\iota_\fp(b_1)&\cdots &\iota_\fp(b_n)}{\l;q(\fp)},$$
where $\fp$  runs through all unramified prime ideals of $\Q(\zeta_N)$ with $N$ being the least positive common denominator of all $a_i$ and $b_j$. We will see this explicitly for the $n=1$ case  in \S \ref{ss:6.3}.\index{finite field analogues!period function  $_{n+1}\mathbb P_n$}\index{period functions $_{n+1}\mathbb P_n$}

\subsection{Jacobi sums and Gr\"ossencharacters}\label{ss:Jacobi-Grosse}

Let
$\frak p$ be an unramified prime ideal of $K=\Q(\zeta_N)$. We will now give Weil's result (for his $r=2$ case).
Let $\underline{a}=(\frac{a_1}N,\frac{a_2}N)$ with $a_i\in \Z$.  For each prime ideal $\frak p$ coprime to $N$, let
\begin{align*}
 \mathcal J_{\underline a}(\frak p)&:=-\(\frac{-1}\fp\)^{a_1+a_2}_N\sum_{x \in \mathcal O_K/\frak p} \(\frac{x}\fp\)_N^{a_1} \(\frac{1-x}\fp\)_N^{a_2}\\
 &=-\(\frac{-1}\fp\)^{a_1+a_2}_NJ\(\iota_\fp\(\frac{a_1}N\),\iota_\fp\(\frac{a_2}N\) \).
 \end{align*}
Alternatively, one can write
\begin{equation}\label{J->mathcal J}
\mathcal J_{\underline a}(\frak p) = -\chi_{\frac{a_1+a_2}N,-1}(\fp)\cdot J\(\iota_\fp\(\frac{a_1}N\),\iota_\fp\(\frac{a_2}N\) \).
\end{equation}
\index{$\mathcal J_{\underline a}(\frak p)$: Weil's Jacobi sum}

Next we extend $\mathcal J_{\underline a}$ to all  ideals in $\mathcal I ((N))$ by using
$$\mathcal J_{\underline a}(\frak a\frak b )=\mathcal J_{\underline a}(\frak a)\mathcal J_{\underline a}(\frak b),$$ if $\frak a, \frak b \in \mathcal I((N))$.
\begin{Theorem}[Weil, \cite{Weil}]\label{thm:Weil}The map $\mathcal J_{\underline a}$ is a Gr\"ossencharacter of $\Q(\zeta_N)$ with a defining ideal $\frak m=(N^2)$.
\end{Theorem}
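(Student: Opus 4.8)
The plan is to check the two conditions of Definition \ref{def:Grossen} directly. Condition (1), multiplicativity, holds by construction: having prescribed $\mathcal{J}_{\underline{a}}$ on the prime ideals coprime to $m$ through \eqref{J->mathcal J} and then extended it to all of $\mathcal{I}((m))$ multiplicatively, the relation $\mathcal{J}_{\underline{a}}(\mathfrak{a}\mathfrak{b})=\mathcal{J}_{\underline{a}}(\mathfrak{a})\mathcal{J}_{\underline{a}}(\mathfrak{b})$ is automatic because $\mathcal{I}((m))$ is the free abelian group on these primes. Thus all the content lies in condition (2). Since $\Q(\zeta_m)$ is totally complex for $m>2$, it has no real embeddings and the positivity clause is vacuous, so I must exhibit integers $e_i$ (the infinity type) with all $c_i=0$ such that $\mathcal{J}_{\underline{a}}((\alpha))=\prod_i \sigma_i(\alpha)^{e_i}$ whenever $\alpha\equiv 1\pmod{m^2}$. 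Note first that $\mathcal{J}_{\underline{a}}(\fp)\in\Oqm$, since the Jacobi sum of $\mu_m$-valued characters lies in $\Z[\zeta_m]$ and the twisting factor $\left(\frac{-1}{\fp}\right)_m^{a_1+a_2}$ is a root of unity.

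First I would determine the candidate infinity type by computing the ideal generated by $\mathcal{J}_{\underline{a}}(\fp)$. Rewriting the Jacobi sum as a quotient of Gauss sums via \eqref{JacobiGaussrelation} and applying Stickelberger's theorem (the prime factorization of Gauss sums underlying the Stickelberger congruence already used for Theorem \ref{Hasse Davenport}) gives
\begin{equation*}
(\mathcal{J}_{\underline{a}}(\fp)) = \prod_{t\in(\Z/m\Z)^\times} \sigma_t^{-1}(\fp)^{c_t}, \qquad c_t=\left\langle\frac{ta_1}{m}\right\rangle+\left\langle\frac{ta_2}{m}\right\rangle-\left\langle\frac{t(a_1+a_2)}{m}\right\rangle,
\end{equation*}
where $\langle\cdot\rangle$ denotes the fractional part, $\sigma_t$ is the embedding $\zeta_m\mapsto\zeta_m^t$, and each $c_t\in\{0,1\}$. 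These $c_t$ are the candidate exponents $e_i$. Extending multiplicatively, $\mathcal{J}_{\underline{a}}((\alpha))$ generates $\left(\prod_t \sigma_t^{-1}(\alpha)^{c_t}\right)$ for every $\alpha$ coprime to $m$, so there is a unit $u=u(\alpha)\in\Oqm^\times$ with $\mathcal{J}_{\underline{a}}((\alpha))=u\prod_t\sigma_t^{-1}(\alpha)^{c_t}$. Condition (2) is then exactly the assertion that $u=1$ when $\alpha\equiv 1\pmod{m^2}$.

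The next reduction uses the Weil bound to force $u$ to be a root of unity. For a fixed embedding $\sigma_t$ one has $|\sigma_t\mathcal{J}_{\underline{a}}(\fp)|=q(\fp)^{1/2}$: indeed $\sigma_t$ sends the Jacobi sum of $\iota_\fp(a_1/m),\iota_\fp(a_2/m)$ to the Jacobi sum of their $t$-th powers, which again has absolute value $\sqrt{q(\fp)}$ so long as the relevant characters stay nontrivial. Comparing these archimedean absolute values with those of $\prod_t\sigma_t^{-1}(\alpha)^{c_t}$ shows $|\sigma_t(u)|=1$ for all $t$, whence $u$ is a root of unity in $\Q(\zeta_m)$, i.e. $u\in\mu_{2m}$. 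This simultaneously confirms that there is no $|\sigma_i(\alpha)|^{c_i}$ factor, so $c_i=0$ and the infinity type is purely algebraic.

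The last step, pinning down this root of unity, is where I expect the genuine difficulty to lie, and it is precisely what forces the defining ideal to be $(m^2)$ rather than $(m)$ and explains the normalizing factor $\left(\frac{-1}{\fp}\right)_m^{a_1+a_2}$. The tool is the refined Stickelberger congruence, which evaluates the Gauss sum — hence $\mathcal{J}_{\underline{a}}(\fp)$ — modulo a prime above $\fp$ in terms of the generator and the $m$-th residue data; propagating this congruence through the factorization of $(\alpha)$ and imposing $\alpha\equiv 1\pmod{m^2}$ forces $u\equiv 1$ modulo a prime of residue characteristic dividing $m$, and a root of unity of order dividing $2m$ that is congruent to $1$ there must equal $1$. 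The main obstacle is thus not the ideal-theoretic factorization (standard Stickelberger), but the exact determination of the unit: the sign twist is needed to control the behaviour at $-1$, and the modulus $m^2$ is needed to kill the $m$-part of the unit group of roots of unity. Once $u=1$ is established, condition (2) holds with $e_{\sigma_t}=c_{t^{-1}}$ and all $c_i=0$, completing the proof.
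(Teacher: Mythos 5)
The paper offers no proof of this statement --- it is quoted from Weil's article \cite{Weil}, with only the remark that Weil verified condition (2) of Definition \ref{def:Grossen} for the modulus $(m^2)$ --- so the only meaningful comparison is with Weil's own argument, whose skeleton your proposal correctly reproduces: multiplicativity is automatic, Stickelberger's theorem gives the ideal factorization with exponents $c_t=\langle ta_1/m\rangle+\langle ta_2/m\rangle-\langle t(a_1+a_2)/m\rangle$, and the archimedean estimate $|\sigma(\mathcal J_{\underline a}(\fp))|=q(\fp)^{1/2}$ reduces condition (2) to showing that a root of unity $u(\alpha)$ equals $1$ whenever $\alpha\equiv 1\pmod{m^2}$. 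Two smaller omissions: the estimate holds at every embedding, and the relation $c_t+c_{-t}=1$ needed to match absolute values on both sides holds, only under the nondegeneracy hypothesis $m\nmid a_1,\, a_2,\, a_1+a_2$, which you should impose explicitly rather than leave as a parenthetical.

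The genuine gap is in the final step, which carries the entire content of the claim that $(m^2)$ is a defining ideal. The tool you name --- the Stickelberger congruence for the Gauss sum at a prime lying over $\fp$ --- cannot do this job: such a prime has residue characteristic $p$ equal to that of $\fp$, which is coprime to $m$, and that congruence is exactly what produces the ideal factorization you have already exploited; it carries no information at the primes dividing $m$, so the asserted implication ``forces $u\equiv 1$ modulo a prime of residue characteristic dividing $m$'' does not follow from it. What is actually required is a separate congruence for the Jacobi sum at the primes above $m$, obtained by expanding $\sum_x \chi^{a_1}(x)\chi^{a_2}(1-x)$ term by term using $\chi(x)\equiv 1\pmod{(1-\zeta_m)}$ and carrying the computation to second order; this is where the leading minus sign and the twist $\left(\frac{-1}{\fp}\right)_m^{a_1+a_2}$ in \eqref{J->mathcal J} earn their keep, and it is the reason the defining ideal is $(m^2)$ rather than $(m)$ (compare Example \ref{eg:Yamamoto}, where the sign ambiguity is precisely a failure modulo $(m)$). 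Without that congruence your argument establishes only the factorization of $\mathcal J_{\underline a}((\alpha))$ up to an undetermined root of unity depending on $\alpha$; the existence of any defining ideal, let alone $(m^2)$, rests entirely on the step you have described but not carried out, and with a tool that does not reach it.
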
Namely, Weil showed that property  (2)  in Definition \ref{def:Grossen}  above also holds for this map when $\frak m=(N^2)$.  Note that the conductor is not $(N)$ in general. For example, when $\underline{a}=(\frac 12,1)$, $\mathcal J_{\underline a}(\frak p)=\chi_{\frac 12,-1}(\frak p)$ which was mentioned in Example \ref{eg:5}. Its conductor is (4) instead of $(2)$.

\medskip

Sometimes  quotients of Jacobi sums  take values that are roots of unity.

\begin{Example}By Example 2 of \cite{WIN3a}, we have that for $N=10$ and $\frak p$ any unramified prime ideal of $\mathcal O_{\Q(\zeta_{10})}$, the Gr\"ossencharacter 
$J_{(\frac1{10},\frac6{10})}/\mathcal J_{(\frac{2}{10},\frac5{10})}\(\fp\)$
of $\Q(\zeta_{10})$ satisfies $$\mathcal J_{(\frac1{10},\frac6{10})}/\mathcal J_{(\frac{2}{10},\frac5{10})}\(\fp\)=\(\frac{2}{\frak p}\)_{10}^8=\chi_{\frac 8{10},2}(\fp).$$
\end{Example}

\begin{Example}\label{eg:Yamamoto}\index{Yamamoto's example}It is shown by Yamamoto (see \cite[\S20]{Yamamoto}) that the  Gr\"ossencharacter $\mathcal J_{(\frac 2{12},\frac 5{12})}/\mathcal J_{(\frac 3{12},\frac 4{12})}(\fp)$  of $\Q(\zeta_{12})$   is not a multiplicative character of $\F_\fp$ for all primes $\fp$ coprime to 12, \bk but its square is. To be more precise, Yamamoto showed that
$$\(\mathcal J_{(\frac 2{12},\frac 5{12})}/\mathcal J_{(\frac 3{12},\frac 4{12})}\)^2(\fp)=\(\frac{27/4}{\frak p}\)_{12}=\chi_{\frac 1{12},\frac{27}4}(\fp).$$ One reason for the above phenomenon,  known as the sign ambiguity,  is that in Weil's Theorem \ref{thm:Weil}, a  defining ideal is $(N^2)$, but not $(N)$ in general.
\end{Example}

\section{Galois Representation Interpretation}\label{Gal}

In this section we use the theory discussed in Chapter  \ref{Gal-background}  to interpret our finite field period function in terms of Galois representations, starting with the $_1\mathbb P_0$ functions in \S \ref{ss:Galois0P1}.  Then we review the specific case associated to generalized Legendre curves in \S \ref{GLC}.  In \S \ref{ss:6.3}, we describe the Galois interpretation for the period functions $_2\mathbb P_1$ and prove Theorem \ref{thm:WIN3a}.  We then discuss the interpretation of the special cases of imprimitive $_2\mathbb P_1$ functions in \S \ref{ss:impure2P1}, and the Galois interpretation of the normalized $_2\mathbb F_1$ functions in \S \ref{ss:Galois_normalization}.  In \S \ref{ss:zeta} we discuss how finite field period functions can give information about local zeta functions for hypergeometric varieties and give a few examples.

\subsection{Galois interpretation for $_1\mathbb P_0$}\label{ss:Galois0P1}
To illustrate our ideas, we will first discuss the Galois representation background behind the $_1\mathbb P_0$ function defined in   \eqref{1P0}  using the $\iota_{(\cdot)}$  map (see \eqref{msymbol}).  Recall that by our notation  given in  \eqref{1F0},
$$_1P_0\left[\frac iN; \l\right]=(1-\l)^{-\frac iN}.$$
Let $\l\in  \Q$ and  $K=\Q(\zeta_N)$.  Now fix $\frac iN, \l \in {\Q}^\times$ and allow $\fp$ to vary.  For each  prime ideal $\fp$ of $\mathcal O_K$  coprime to the discriminant ideal of $K(\sqrt[N]{1-\l})/\Q$,
\bk we have
$$_1\mathbb P_0\left[\iota_\fp \(\frac iN\);\l; q(\fp)\right]=\ol{\iota_\fp \(\frac iN\)}\(1-\l\)={\iota_\fp \(-\frac iN\)}\(1-\l\)=\chi_{-\frac iN, 1-\l}(\fp),$$
by  the definition in  \eqref{1P0}.
On the other hand, if we fix $\fp$ and let $i$ vary, then by Proposition \ref{prop:2}, $_1\mathbb P_0[\iota_\fp(\cdot);\l;q(\fp)]$ provides a map  which sends  rational numbers of the form $\frac iN$ to finite order characters of the Galois group
$G_{\Q(\zeta_N)}$ whose kernel contains $G_{\Q(\sqrt[N]{1-\l},\zeta_N)}$. Further  one  can  compute  the corresponding Artin $L$-function\index{Artin $L$-function}
\begin{multline}\label{eq:L-A}
L\(\frac iN, \l;s\) \\
=\prod_{\text{good } \fp \text{ of } {\mathcal O}_K}
\(1-\, _1\mathbb P_0\left[\iota_\fp \(\frac iN\);\l; q(\fp)\right]q(\fp)^{-s}\)^{-1}.
\end{multline}
Note the right side
only includes those prime ideals $\fp$
that are coprime to the absolute discriminant of $\Q(\sqrt[N]{1-\l},\zeta_N)$.

\begin{Example}For $\frac iN=\frac 12$ and  $\l=-1$ \bk as in Example \ref{eg:5}, we have
$$L\( \frac 12, -1;s\)=\prod_{p \text{ odd prime}} \(1-\(\frac{-1}p\)p^{-s}\)^{-1}.$$ This is the Dirichlet L-function for the Dirichlet Character $\chi_{\frac 12, {-1}}$.
\end{Example}

\subsection{Generalized Legendre curves and their Jacobians}\label{GLC}
 We now describe the Galois interpretation of our finite field period functions for the specific setting corresponding to generalized Legendre curves and their Jacobians.

For any complex numbers $a$, $b$, $c$, $z$, with $\text{Re}(c)>\text{Re}(b)>0$, we have that the formula of Euler (see \cite{AAR})
$$
  \int_0^1 x^{b-1}(1-x)^{c-b-1}(1-zx)^{-a}dx
$$
converges, and the classical hypergeometric series $ \pFq{2}{1}{a&b}{&c}{z} $ can be expressed,
for $|z|<1$,
as
\begin{equation*}
  \pFq{2}{1}{a&b}{&c}{z}=\frac 1{ B(b,c-b)} \int_0^1 x^{b-1}(1-x)^{c-b-1}(1-z x)^{-a}dx,
\end{equation*}
 where the branch is determined by
$$
  \arg(x)=0, \, \arg(1-x)=0, \,\,\, |\arg(1-zx)|<\frac{\pi}2, \, x \in (0,1),
$$
and $B(\cdot,\cdot)$ is the beta function defined in Definition \ref{def:beta}.
As described in \S \ref{gamma and beta}, the restriction  $\text{Re}(c)>\text{Re}(b)>0$ needed for $B(\cdot,\cdot)$ can be dropped if we take the Pochhammer contour $\gamma_{01}$
(see Definition \ref{def:poch} for the notation for $\gamma_{ab}$) \bk as the integration path in the definition of $B(\cdot,\cdot)$.

If the parameters $a$, $b$, $c\in\Q$, and $a,b,a-c,b-c \notin \Z$,  which correspond to the non-degenerate cases,  Wolfart \cite{Wolfart} realized that the integrals
\begin{multline*}
  \frac1{\(1-e^{2\pi ib}\)\(1-e^{2\pi i(c-b)}\)}\int_{\gamma_{01}} x^{b-1}(1-x)^{c-b-1}(1-\l x)^{-a}dx\\=B(a,b) \pFq{2}{1}{a&b}{&c}{\l}=\pPq{2}{1}{a&b}{&c}{\l}
\end{multline*} and
\begin{multline}
\frac1{(1-e^{-2\pi ia})(1-e^{2\pi i (c-a)})} \int_{\gamma_{\frac 1{\l}\infty}}x^{b-1}(1-x)^{c-b-1}(1-\l x)^{-a}dx\\
=(-1)^{c-a-b-1}\l^{1-c}B(1+a-c,1-a)\pFq{2}{1}{1+b-c&1+a-c}{&2-c}{\l}\\
\\
=(-1)^{c-a-b-1}\l^{1-c}\pPq{2}{1}{1+b-c&1+a-c}{&2-c}{\l}
\end{multline}
are both \emph{periods}  (related to differential 1-forms of the form \eqref{eq:differentials} below)  of  a so-called \Index{generalized Legendre curve} of the form \begin{equation}\label{eq:GLC}y^N=x^i(1-x)^j(1-\l x)^k,
\end{equation}where
\begin{equation}\label{eq:abc->Nijk}N= \operatorname{lcd}(a,b,c),\quad i=N\cdot (1-b),\quad j=N\cdot (1+b-c),\quad k=N\cdot a,
\end{equation}
and lcd  means the  least (positive)  common  denominator.  By the assumption that $a,b,a-c,b-c\notin \Z$, we know $N\nmid i,j,k,i+j+k$.  This assumption is to require
that the curve be a cover of $\C P^1$ ramifying at
exactly  the four distinct points $0,1,1/\l, \infty$
($\l \neq 0,1$).  Also by changing variables if needed, one can assume $0<i,j,k<N$.

\begin{Example}\label{(a,6,6)-2}
  In Example \ref{(a,6,6)}, we see that for the order 2 hypergeometric differential equation with parameters $a=1/6,b=1/3,c=5/6$, the projective monodromy group is isomorphic to the arithmetic triangle group $(3,6,6)$. Using \eqref{eq:abc->Nijk}, one can compute that $N=6, i=4,  j=3,k=1$ for this case. Similarly, the projective monodromy group for the hypergeometric differential equation with parameters $a=1/12,b=1/4,c=5/6$ is isomorphic to  the arithmetic triangle group $(2,6,6)$. For the latter case, by \eqref{eq:abc->Nijk}, $N=12,i=9,j=5,k=1$.
\end{Example}
For $a=b=\frac 12,c=1$ and any fixed $\l\in {\Q}$  with $\l \neq 0,1$,  the corresponding curve is the well-known Legendre curve
\begin{equation}\label{eq:Llambda}L_\l: y^2=x(1-x)(1-\l x),
\end{equation}see \cite{Silverman}. We summarize a few relevant properties of the Legendre curves here.\index{Legendre curve}
\begin{itemize}
\item It is a double over of {$\C P^1$} which ramifies only at $0,1, \frac 1\l, \infty$ as demonstrated by the picture below.
 Going from right to left, first cut the torus twice including half of each of the indicated
 boundaries on each torus, to get two cylinders. Each cylinder can be
realized as the sphere on the  left   by pinching the ends together.
Gluing along the slits gives the double cover.
\scriptsize
$$
 \def\svgwidth{100mm}
\begingroup%
  \makeatletter%
  \providecommand\color[2][]{%
    \errmessage{(Inkscape) Color is used for the text in Inkscape, but the package 'color.sty' is not loaded}%
    \renewcommand\color[2][]{}%
  }%
  \providecommand\transparent[1]{%
    \errmessage{(Inkscape) Transparency is used (non-zero) for the text in Inkscape, but the package 'transparent.sty' is not loaded}%
    \renewcommand\transparent[1]{}%
  }%
  \providecommand\rotatebox[2]{#2}%
  \ifx\svgwidth\undefined%
    \setlength{\unitlength}{439.17451859bp}%
    \ifx\svgscale\undefined%
      \relax%
    \else%
      \setlength{\unitlength}{\unitlength * \real{\svgscale}}%
    \fi%
  \else%
    \setlength{\unitlength}{\svgwidth}%
  \fi%
  \global\let\svgwidth\undefined%
  \global\let\svgscale\undefined%
  \makeatother%
  \begin{picture}(1,0.25826985)%
    \put(0,0){\includegraphics[width=\unitlength,page=1]{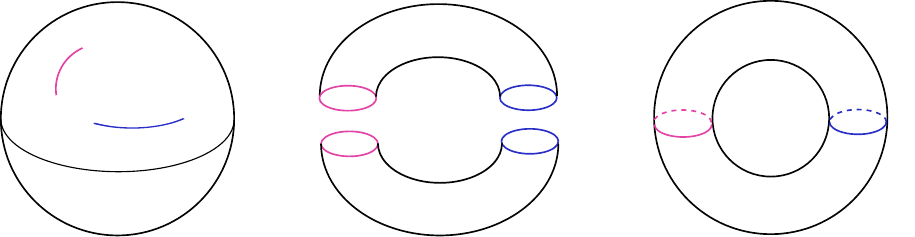}}%
    \put(0.07002161,0.10829233){\color[rgb]{0,0,0}\makebox(0,0)[lb]{\smash{}}}%
    \put(0.07490232,0.11875096){\color[rgb]{0,0,0}\makebox(0,0)[lb]{\smash{$0$}}}%
    \put(0.20497759,0.12846278){\color[rgb]{0,0,0}\makebox(0,0)[lb]{\smash{$1$}}}%
    \put(0.08117752,0.21148441){\color[rgb]{0,0,0}\makebox(0,0)[lb]{\smash{$\infty$}}}%
    \put(0.03910364,0.14262648){\color[rgb]{0,0,0}\makebox(0,0)[lb]{\smash{$\frac{1}{\lambda}$}}}%
    \put(0.51904662,0.09783369){\color[rgb]{0,0,0}\makebox(0,0)[lb]{\smash{$0$}}}%
    \put(0.51765213,0.14664069){\color[rgb]{0,0,0}\makebox(0,0)[lb]{\smash{$0$}}}%
    \put(0.62065454,0.09725599){\color[rgb]{0,0,0}\makebox(0,0)[lb]{\smash{$1$}}}%
    \put(0.61699911,0.14652112){\color[rgb]{0,0,0}\makebox(0,0)[lb]{\smash{$1$}}}%
    \put(0.41515726,0.1438517){\color[rgb]{0,0,0}\makebox(0,0)[lb]{\smash{$\infty$}}}%
    \put(0.41864347,0.09713636){\color[rgb]{0,0,0}\makebox(0,0)[lb]{\smash{$\infty$}}}%
    \put(0.78330199,0.12084273){\color[rgb]{0,0,0}\makebox(0,0)[lb]{\smash{$\infty$}}}%
    \put(0.9799744,0.11660965){\color[rgb]{0,0,0}\makebox(0,0)[lb]{\smash{$1$}}}%
    \put(0.87464106,0.12084278){\color[rgb]{0,0,0}\makebox(0,0)[lb]{\smash{$0$}}}%
    \put(0.3191423,0.1465707){\color[rgb]{0,0,0}\makebox(0,0)[lb]{\smash{$\frac{1}{\lambda}$}}}%
    \put(0.3191423,0.09825418){\color[rgb]{0,0,0}\makebox(0,0)[lb]{\smash{$\frac{1}{\lambda}$}}}%
    \put(0.67905112,0.12191935){\color[rgb]{0,0,0}\makebox(0,0)[lb]{\smash{$\frac{1}{\lambda}$}}}%
  \end{picture}%
\endgroup%

$$
\normalsize

\item  As  $\l\in  \Q \backslash \{0,1\}$, the curve $L_\l$ is an algebraic curve defined over $ \Q$ with genus  1.

\item Every holomorphic differential on $L_\l$ is a scalar multiple of $$\displaystyle \omega_\l:=\frac{dx}{\sqrt{x(1-x)(1-\l x)}}.$$

\item A period of $L_\l$ is $ 2 \cdot  \int_0^1 \omega_\l = 2\cdot \, \pPq{2}{1}{\frac 12&\frac 12}{&1}{\l}=2\pi\cdot \pFq{2}{1}{\frac 12&\frac 12}{&1}{\l}$.  As described in section \S\ref{sec:HDE}, the hypergeometric function $\pFq{2}{1}{\frac 12&\frac 12}{&1}{\l}$ satisfies the differential equation  $HDE(\frac 12,\frac 12;1;\l)$   and   thus the monodromy group of $HDE(\frac 12,\frac 12;1;\l)$ is isomorphic to the arithmetic triangle group $(\infty,\infty,\infty)$  as described in \S \ref{monodromy}.

\item  For simplicity fix $\l\in
\Q\setminus \{0,1\}$. As we recalled in \S\ref{ss:Galois5.1} (see also \cite{Serre2,Silverman}), there is a compatible family of 2-dimensional $\ell$-adic  representations $\{\rho_{L_\l,\ell}\}$ of $G_\Q$ constructed from the Tate module of $L_\l$.  For any prime $\ell$ and for any prime  $p\neq \ell$  not dividing the discriminant of the elliptic curve $L_\l$ (denoted by  $N(L_\l)$),  $\rho_{\lambda,\ell}$ is unramified at $p$. Thus it makes sense to consider the trace and determinant of $\rho_{\lambda,\ell}$ evaluated at the conjugacy class of Frobenius at $p$. In particular,
$$\text{Tr} \,\rho_{L_\l,\ell}(\text{Frob}_p)=-\sum_{x\in \F_p} \phi(x(1-x)(1-\l x))=-\pPPq{2}{1}{\phi&\phi}{&\eps}{\l}$$ and $\text{det}\, \rho_{L_\l,\ell}(\text{Frob}_p)=p$,  where $\text{Frob}_p$ stands for the   Frobenius conjugacy class  of $p$ in $G_\Q$.

\item For $\l\in  \Q\setminus \{0,1\}$, the L-function of $L_\l$ is
\begin{equation}\label{eq:L-L}L(L_\l,s)``=" \prod_{p\nmid N(L_\l)}\(1+\pPPq{2}{1}{\phi&\phi}{&\eps}{\l}p^{-s}+p^{1-2s}\)^{-1}.  \end{equation}Here, the quotations indicate that we are only giving a formula for the good $L$-factors.
\end{itemize}

\medskip

For more general cases, Archinard in \cite{Archinard} explained how to construct the
smooth model $X_\l^{[N;i,j,k]}$ of $C_\l^{[N;i,j,k]}$  for $\l\neq 0$, $1$ of \eqref{eq:GLC} and to compute all periods of first and second kind on $C_\l^{[N;i,j,k]}$ using hypergeometric functions. This  is also recast in  \cite{WIN3a}, and we follow that  development here.
Like $L_\l$, this curve is also a cyclic cover of $\C P^1$ ramifying at  $0$, $1$, $\frac{1}{\l}$, and  $\infty$.  When $\l=0$ or 1, the  number of ramification points of the covering map  is   at most $3$ and hence the covering curve  is a quotient  of a Fermat curve. This is the reason behind the degenerate situation happening at  $\l=0$ or 1.   Note that when $N\mid i,j,k,$ or $i+j+k$,  one can rewrite $C_\lambda^{[N;i,j,k]}$ as $y^N=x^i(1-x)^j$ by changing variables. We exclude this degenerate case below.

To consider the generic case of $C_\l^{[N;i,j,k]}$ being a cyclic cover of  $\C P^1$ ramifying at exactly 4 points, we will assume the following for the remaining of this section:
$$ N\nmid i,j,k,i+j+k, \quad\textnormal{gcd}(i,j,k,N)=1, \quad \l\neq 0,1,$$where $\gcd$ stands for greatest common divisor.

\begin{Remark}
Note that the assumptions $N\nmid i,j,k,i+j+k$ are satisfied if $i,j,k,N$ are computed by \eqref{eq:abc->Nijk} from any given  $(a,b,c)\in \Q^3$ satisfying $a,b,c-a,c-b\notin \Z$. Also, by changing variables, one can assume $i,j,k>0$.
\end{Remark}

Assume  $\l\in {\Q}\setminus \{0,1\}$.
Then $X_\l^{[N;i,j,k]}$ has genus
$$
g(N;i,j,k):= 1+N-\frac{\gcd(N,i+j+k)+\gcd(N,i)+\gcd(N,j)+\gcd(N,k)}2,
$$
see \cite{Archinard} by Archinard.  We use $J_\l^{[N;i,j,k]}$ to denote the Jacobian of $X_\l^{[N;i,j,k]}$, which is an abelian variety of dimension given by $g(N;i,j,k)$ and is defined over $\Q$.   For each proper divisor $d$ of $N$, there is a curve  $C_\l^{[d;i,j,k]}$  and a canonical map $C_\l^{[N;i,j,k]}\, \rightarrow \, C_\l^{[d;i,j,k]}$ sending $(x,y)$ to $(x,y^{N/d})$ which is generically of degree $N/d$. This canonical map induces a surjective homomorphism $\pi_d: J_\l^{[N;i,j,k]}\, \rightarrow \, J_\l^{[d;i,j,k]}$. We now use $J_\l^{\text{prim}}$ to denote the primitive part of $J_\l^{[N;i,j,k]}$, the identity component of $\displaystyle \bigcap_{d\mid N}\mbox{ker}\, {\pi_d}$.

 The curve  $C_\lambda^{[N;i,j,k]}$  admits an automorphism $A_{\zeta_N}:\,(x,y)\mapsto (x,\zeta_N^{-1} y)$ and this map induces {a representation of the finite group $\Z/N\Z$, depending on the choice of the primitive $N$th root $\zeta_N$,} on the vector space $H^0(X_\lambda^{[N;i,j,k]}, \Omega^1)$ of the holomorphic differential $1$-forms on  $X_\lambda^{[N;i,j,k]}$. When  $N,i,j,k,\l$ are fixed, we   denote this curve by $X(\l)$ below for simplicity.  A basis of $H^0(X(\lambda), \Omega^1)$ can be chosen by the regular pull-backs of differentials on $C_\l^{[N;i,j,k]}$ of the  form
\begin{equation}\label{eq:differentials}
 \omega= \frac{x^{b_0}(1-x)^{b_1}(1-\lambda x)^{b_2}dx}{y^n}, \quad 0\leq n\leq N-1, \, b_i\in\Z,
\end{equation}
satisfying the following conditions equivalent to the pullback of $\omega$ being regular at $0,1,\frac 1\l, \infty$ respectively,
\begin{align*}
  b_0 \geq \frac{ni+\gcd(N,i)}N-1,\;\;
  b_1 \geq \frac{nj+\gcd(N,j)}N-1,\;\;
  b_2 \geq \frac{nk+\gcd(N,k)}N-1,
  \end{align*}
\begin{equation*}
   b_0+b_1+b_2 \leq \frac{n(i+j+k)-{\gcd(N,i+j+k)}}{N}-1.
\end{equation*}
 For details  on this construction, see the work of Archinard and Wolfart in \cite{Archinard, Wolfart}.
For each $0\leq n<N$, we let $V_n$ denote the isotypical component of $H^0(X(\lambda), \Omega^1)$ associated to the character $\sigma_n: \, \zeta_N\mapsto\zeta_N^n$, where $\zeta_N$ is a primitive $N$th root of unity.  Then the space $H^0(X(\lambda), \Omega^1)$ is decomposed into a direct sum $\displaystyle \bigoplus_{n=0}^{N-1}V_n$.
If $\gcd(n,N)=1$, the dimension of $V_n$ is given by
 $$
  \dim V_n=\left \{\frac{ni}N\right \}+\left \{\frac{nj}N\right \}+\left \{ \frac{nk}N\right \}-\left \{ \frac{n(i+j+k)}N\right \},
$$
where $\left \{ x\right \} =x-\lfloor x \rfloor$ denotes the fractional part of $x$, see \cite{Archinard-exceptional}.  Furthermore,
$$
 \dim V_n +\dim V_{N-n}=2,
$$ when $\gcd(n,N)=1$. The elements of $V_n$ with $\gcd(n,N)=1$ are said to be \emph{new}. The subspace
 $$
  H^0(X(\lambda), \Omega^1)^{\mbox{new}}=\displaystyle \bigoplus_{\gcd(n,N)=1}V_n
 $$ is of dimension $\varphi(N)$,  Euler's totient function of $N$, see \cite{Archinard}.

Let $S$ be a basis of $ H^0(X(\lambda), \Omega^1)^{\mbox{new}}$ whose elements are of the form $ \omega_n={x^{b_0}(1-x)^{b_1}(1-\lambda x)^{b_2}dx}/{y^n}$  with $\gcd(N,n)=1$. Under our assumptions, $J_\l^{\text{prim}}$ is of dimension $\varphi(N)$, and is defined over $\Q$.

The Jacobian variety $J^{\text{prim}}_\l$  is isomorphic to the quotient of $\C^{\varphi(N)}$ by the lattice of periods and it is isogenous to the complex torus $\C^{\varphi(N)}/\Lambda(\l)$ with
$$
  \Lambda(\l)=\left\{ \(\sigma_n(u)\int_{\gamma_{01}}\omega+\sigma_n(v)\int_{\gamma_{\frac1\l\infty}}\omega\)_{\omega\in S}: u,v \in \Z[\zeta_N]\right\},
$$
by Archinard and Wolfart \cite{Archinard, Wolfart}.
Here $\gcd(n,N)=1$, and $\sigma_n$ is the automorphism of $\Z[\zeta_N]$ defined by $\zeta_N\mapsto \zeta_N^n$.

In the case of  $0<i,j,k<N$ and  $N<i+j+k<2N$, we have
$$
 \dim V_n =\dim V_{N-n}=1, \quad \gcd(N,n)=1,
$$
and
$\omega_n$ is  $ x^{-\{ni/N\}}(1-x)^{-\{nj/N\}}(1-\l x)^{-\{nk/N\}}dx$. For instance, the differential forms $\omega_1$ and $\omega_{N-1}$ are $\omega_1=dx/y$ and $\omega_{N-1}=\frac{x^{i-1}(1-x)^{j-1}(1-\l x)^{k-1}}{y^{N-1}} dx$. Thus, when $ 1 \leq i, j, k < N$, $\gcd(N, i, j, k) = 1$, $N \nmid i + j$ nor $i + j + k$, and $\l\neq 0$, $1$, the lattice $\Lambda(\l)$ can be expressed in terms of
$$
   _2P_1 \left [\begin{array}{cc} {\{\frac {nk}N\}}&{1-\{\frac{ni}N\}}\\ &{2-\{\frac{ni}N\}-\{\frac{nj}N\}}\end{array}  ;\l\right ].
$$

 However, when $0<i+j+k< N$ or $2N<i+j+k< 3N$,
we do not have such a general form for the vector space $V_n$.
\begin{Example}
The spaces $V_1$ and $V_3$ corresponding to the family $C_\l^{[4;1,1,1]}$ have dimension $0$ and $2$, respectively. The space $H^0\(X(\l),\Omega^1\)$ is spanned by
$$
   S=\{ dx/y^2,\, dx/y^3,\, xdx/y^3\},
$$
and hence
$$
  _2P_1 \left [\begin{array}{cc} \frac 12& \frac 12\\ & 1\end{array}  ;\l\right ],\, _2P_1 \left [\begin{array}{cc} \frac 34& \frac 14\\ & \frac 12\end{array}  ;\l\right ],\, _2P_1 \left [\begin{array}{cc} \frac 34& \frac 54\\ & \frac 32\end{array}  ;\l\right ]
$$
are periods of $C_\l^{[4;1,1,1]}$.

For the family  $C_\l^{[5;3,4,4]}$, we have $\dim V_1=2$ and $\dim V_2=1$. A basis of $H^0\(X(\l),\Omega^1\)$ is

$$
  S=\left\{ \frac{dx}y,\, \frac{xdx}y,\, \frac{x(1-x)(1-\l x)dx}{y^2},\,  \frac{x(1-x)^2(1-\l x)^2dx}{y^3}\right\}.
$$
\end{Example}

\subsection{Galois interpretation for $_2\mathbb P_1$}\label{ss:6.3}
In this section we prove
Theorem \ref{thm:WIN3a}.
Let  $\l \in \Q \setminus \{0,1\}$
and set $K=\Q(\z_N)$ and
$\ol K$ to be its algebraic closure.

 For any fixed prime $\ell$, similar to the elliptic curve case discussed in \S \ref{ss:Galois5.1}, the $\ell^n$-torsion points of the abelian variety $J_\l^{\text{prim}}$ gives rise to a continuous homomorphism  $\rho^{\text{prim}}_{\l,\ell}$ from group $\text{Gal}(\ol{K}/K)$ to $GL_{2\varphi(N)}(\Z_\ell)$.  
For simplicity,  we extend the scalar  rings to $\ol {\Q}_\ell$ and note that $\rho^{\text{prim}}_{\l,\ell}$ only ramifies at finite many places.  Recall that we write $\eta_N$ to denote any  character of order $N$ on $\F_q^\times$ and extend $\eta_N$  to be defined on $\F_q$ by setting $\eta_N(0)=0$. Evaluating the traces of the representation $\rho^{\text{prim}}_{\l,\ell}$ at $\text{Frob}_\fp$  for any unramified prime $\fp$ of $\mathcal O_K$   with $q(\fp)=q$, one has
\begin{multline}\label{eq:count-Jprime}\text{Tr} \, \rho^{\text{prim}}_{\l,\ell}(\text{Frob}_\fp)= -\sum_{m\in (\Z/N\Z)^\times} \(\sum_{x\in \F_q} \eta_N^m\( x^i(1-x)^j(1-\l x)^k\) \)\\
=-\sum_{m\in (\Z/N\Z)^\times} \pPPq{2}{1}{\eta_N^{-mk}&\eta_N^{mi}}{&\eta_N^{m(i+j)}}{\l;q}.
\end{multline}
For a related discussion, see Proposition \ref{point-count}; for more details, see \cite{WIN3a}. Essentially,  the equation (\ref{eq:count-Jprime}) can be proved by using induction on $N$.\\

We are now ready to prove Theorem \ref{thm:WIN3a}.

\begin{proof}[Proof  of Theorem \ref{thm:WIN3a}]
Fix an embedding of $\zeta_N$ to $\ol\Q_\ell$. Observe that the map $A_{\zeta_N}$, which is defined over $K$, induces an order $N$ automorphism $A_{\zeta_N}^*$ on $J_\l^{\text{prim}}$ and hence the representation spaces of $\rho^{\text{prim}}_{\l,\ell}$ over $\ol \Q_\ell$ by our assumption. Consequently,  by the construction of $J_\l^{\text{prim}}$  $$\rho^{\text{prim}}_{\l,\ell}|_{G_{K}}\cong \bigoplus_{\gcd(m,N)=1} \sigma_{\l,m},$$ where $\sigma_{\l,m}$ corresponds to the $\zeta_N^m$ eigenspace of $A_{\zeta_N}^*$. Due to the symmetric roles of $\sigma_{\l,m}$, they have the same dimension, which has to be 2.

For any $c\in K^\times$, fix $\sqrt[N]{c}$ an $N$th root. We consider the smooth model of the following twisted generalized Legendre curve $$C_{\l,c}^{[N;i,j,k]}: \quad y^N=cx^i(1-x)^j(1-\l x)^k.$$ It is isomorphic to $C_\l^{[N;i,j,k]}$  via the map $T_c: C_{\l,c}^{[N;i,j,k]} \rightarrow C_{\l}^{[N;i,j,k]}$ defined by $(x,y)\mapsto (x,\sqrt[N]{c}y)$.   Note that the primitive part of its Jacobian  variety $J_{\l,c}^{\text{prim}}$ is also $\varphi(N)$ dimensional defined over $\Q$. Let $\rho^{\text{prim}}_{\l,c,\ell}$ denote the corresponding Galois representation of $G_K$ over $\ol \Q_\ell$, and note its restriction over $G_{K}$ also decomposes into a direct sum of 2-dimensional subrepresentations, denoted by $\sigma_{\l,c,m}$ like the case $c=1$ before. To proceed, we fix a large prime $\ell$ and consider the $\ell^n$ division points  on the Jacobian $J_\l^{\text{prim}}$.  Identify the Jacobian variety $J_\l$ of the curve $C_\l^{[N;i,j,k]}$ with the group $Pic^0(C_\l^{[N;i,j,k]})$ of the classes of divisors of degree zero on $C_\l^{[N;i,j,k]}$.  Assume that $P$ represents the class $\sum_{i=1}^s n_i(x_i,y_i)$ where $(x_i,y_i)\in C_{\l,c}^{[N;i,j,k]}$ and degree $\sum_{i=1}^s n_i=0$ and $P$ is an $\ell^n$-division point on $J_{\l,c}^{\text{prim}}$. We further assume that as an element in the group algebra $\ol \Q_\ell \left [J_{\l,c}[\ell^n] \right]$, $P$ lies in the $\zeta_N^m$-eigenspace of the automorphism on $\rho^{\text{prim}}_{\l,c,\ell}[\ell^n]$ induced from  $A_{\zeta_N}:(x,y)\mapsto (x,\zeta_N^{-1}y)$ where $(m,N)=1$.  The isomorphism $T_c$ sends $\ell^n$-division points on $J_{\l,c}^{\text{prim}}$ to $\ell^n$-division points on $J_{\l}^{\text{prim}}$.   For any  $\text{Frob}_\fp\in G_{K}$ with $\fp$ coprime to the discriminant of $K(\sqrt[N]{c})$,
\begin{align*}
\text{Frob}_\fp(T_c(P)) & =  \text{Frob}_\fp\sum_{i=1}^s n_i\(x_i,\sqrt[N]{c} \cdot y_i\) \\
& \overset{\eqref{eq:m-Artin}}= \sum_{i=1}^s n_i \(\text{Frob}_\fp(x_i), \(\frac c{\fp}\)_N \sqrt[N]{c} \cdot\text{Frob}_\fp(y_i)\) \\
& = T_c\(\sum_{i=1}^s n_i \(\text{Frob}_\fp(x_i), \(\frac c{\fp}\)_N \cdot\text{Frob}_\fp(y_i)\)\) \\
& = T_c\(\frac c{\fp}\)_N^{-m}P.
\end{align*}
This means  $\sigma_{\l,c,m}\cong \sigma_{\l,m}\otimes \chi_{-\frac{m}N,c}$ where $\chi_{-\frac{m}N,c}$ is as in \eqref{eq:chi,im}. \bk Summing up all pieces, we have
\begin{equation*}\text{Tr} \rho^{\text{prim}}_{\l,c,\ell}(\text{Frob}_\fp)=\sum_{m\in (\Z/N\Z)^\times}\(\frac c{\fp}\)_N^{-m} \cdot \text{Tr} \sigma_{\l,m} (\text{Frob}_\fp).
\end{equation*}

Meanwhile, in terms of explicit point counting using characters,
\begin{multline*}\text{Tr} \rho^{\text{prim}}_{\l,c,\ell}(\text{Frob}_\fp)=-  \sum_{m\in (\Z/N\Z)^\times } \sum_{x\in \F_{\fp}}\iota_\fp\(\frac mN\) (c x^i(1-x)^j(1-\l x)^k) \\
=-  \sum_{m\in (\Z/N\Z)^\times } \(\frac{c}{\fp}\)_N^m \cdot \sum_{x\in \F_{\fp}}\iota_\fp\(\frac mN\) ( x^i(1-x)^j(1-\l x)^k).
\end{multline*} As the  above two equations hold for arbitrary $c \in K$, we have
\begin{multline*}\text{Tr} \sigma_{\l,m} (\text{Frob}_\fp)=-\sum_{x\in \F_{\fp}}\iota_\fp\(\frac {-m}N\) ( x^i(1-x)^j(1-\l x)^k)\\= -\pPPq{2}{1}{\iota_\fp(\frac{mk}N)& \iota_\fp(\frac{-mi}N)}{&\iota_\fp(\frac{-m(i+j)}N)}{\l}.\end{multline*} Using \eqref{eq:abc->Nijk}, if we let $\sigma_{\l,1}$ above  be the 2-dimensional representation $\sigma_{\l,\ell}$ stated in Theorem \ref{thm:WIN3a}, then $\iota_\fp(\frac{mk}N)=\iota_\fp(a)$, $\iota_\fp(\frac{-mi}N)=\iota_\fp(b)$, and $\iota_\fp(\frac{-m(i+j)}N)=\iota_\fp(c)$ respectively, which concludes the proof of Theorem \ref{thm:WIN3a}.

\end{proof}

\begin{Remark}\label{rem:1}In other words, for fixed rational numbers $a,b,c$, the corresponding $$-_{2}{\mathbb P}_{1} \left[ \begin{smallmatrix}\iota_\fp(a) &\iota_\fp(b)\\&\iota_\fp(c)\end{smallmatrix}; \l; q(\fp) \right]$$
functions are the traces (or characters) of an explicit 2-dimensional Galois representation $\sigma_{\l,\ell}$ of $G_K$ at the Frobenius elements.  When $a,b,c-a,c-b\notin \Z$, the representation is \emph{pure} in the sense that for each good prime ideal $\fp$ such that $\l\in \Z$ and $\l\neq 0,1 \pmod{p}$, the characteristic polynomial of $\sigma_{\l,\ell}(\text{Frob}_\fp)$ is of the form
\begin{equation}
\label{eq:Hp}H_\fp(T)=T^2- \text{Tr}\sigma_{\l,\ell}(\text{Frob}_\fp)T+\text{det}\sigma_{\l,\ell}(\text{Frob}_\fp)
\end{equation}
 and  has two roots of the same absolute value $\sqrt{q}$ where $q=\#(\mathcal O_K/\fp)$. See Corollary \ref{cor:det} for how to compute  $\text{det}\sigma_{\l,\ell}(\text{Frob}_\fp)$. When either $a,b,c-a,$ or $c-b$ in $\Z$, the corresponding representation {
 degenerates}. We will give some examples in \S \ref{ss:impure2P1}.
\begin{Example}\index{finite field analogues!Kummer evaluation formula}
The following analogue of Kummer's evaluation (see \eqref{eq:Kummer})  expresses the value of the $_2\mathbb P_1$ function at $-1$ in terms of not one but two Jacobi sums. To be more precise, let $B,D,\phi\in \widehat{\F_q^\times}$ where $\phi$ is of order 2.  Then, for $C=D^2$,
  \begin{equation}\label{eq:FF-Kummer}
  \phgq {B}{C}{C\ol B}{-1;q} ={J(D,\ol B)}+{J(D\phi, \ol B)}.
 \end{equation} This is proved by Greene in  \cite[(4.11)]{Greene}. For given $b,c\in \Q$  such that $b,c,c-2b\notin \Z$ and letting $M$ be the least positive common denominator of $\frac c 2,\frac{c+1}2,b$,  there is a 2-dimensional $\ell$-adic Galois representation $\sigma_{-1,\ell}$  of $G_{\Q(\zeta_N)}$ corresponding to $\pFq{2}{1}{b&c}{&c-b}{-1}$ via   Theorem \ref{thm:WIN3a} such that at each good unramified prime $\fp$,
 $$\text{Tr}\sigma_{-1,\ell} (\text{Frob}_\fp)=\chi_{\frac c2-b,-1}(\fp)\mathcal J_{(\frac c2,-b)}(\fp)+\chi_{\frac {c+1}2-b,-1}(\fp)\mathcal J_{(\frac {c+1}2, -b)}(\fp), $$
 where the notation $\mathcal J_{(a,b)}(\fp)=-\iota_\fp(a)\iota_\fp(b)(-1)J(\iota_\fp(a),\iota_\fp(b))$  as \eqref{J->mathcal J} in \S \ref{ss:Jacobi-Grosse}.
\end{Example}

 When $\l=0$ or 1, the $_2\mathbb{P}_1$ function corresponds to  a dimension 1 (instead of 2) compatible family of Galois representations  and in these cases the corresponding generalized Legendre curves have smaller genus.  When $\l=1$, the curve $C_{\l}^{[N;i,j,k]}$ becomes $y^N=x^i(1-x)^{j+k}$ which is a quotient of a Fermat curve.
 The following analogue of Gauss' evaluation formula \eqref{eq:Gauss} follows from \eqref{Eq:30}
 \begin{equation}\label{eq:Gauss-FF}\index{finite field analogues!Gauss  evaluation formula}
 \phgq {A}{B}{C}{1;q}=\sum_{y \in \F_q} B(y)\ol{AB}C(1-y)=J(B,\ol{AB}C).
 \end{equation}

They allow one to determine the Galois representation up to semisimplification and compute the determinants of the representations at Frobenius elements, see \eqref{eq:det} below. Consequently, one can use these period
functions to compute the Euler $p$-factors of the L-function of the corresponding Galois representations, as in equations (\ref{eq:L-A}) and (\ref{eq:L-L}). Watkins has written a \texttt{Magma} program which  computes these  $p$-factors under additional assumptions (namely when the hypergeometric motives are defined over $\Q$, see \cite{Watkins}. Also, in \cite{RRW}, there are discussions on the properties of such  $p$-factors, hypergeometric L-functions, their conductors and functional equations.)  \bk
\end{Remark}

\begin{Remark}
From the representation point of view, the products (resp. sums) of $_{2}{\mathbb P}_{1}$  functions over the same finite field correspond to tensor products (resp. direct sums) of the corresponding representations. See \cite{Serre1}.
\end{Remark}

\subsection{Some special cases of ${}_2\mathbb P_1$-functions}\label{ss:impure2P1}
 In Remark \ref{rem:1}, it is mentioned that when $a,b,a-c,b-c\notin \Z$, i.e. when $\pFq{2}{1}{a&b}{&c}{\l}$ is primitive (see Definition \ref{def:3}), the representation $\sigma_{\l,\ell}$ stated in Theorem \ref{thm:WIN3a} is pure.
We now  consider the cases below  giving identities for the imprimitive  cases, which correspond to $\pFq{2}{1}{a&b}{&c}{\l}$ with either $a,b,a-c,b-c\in \Z$.  {Indeed, in the first two formulas below without $\delta$ terms, the right sides can be thought of as `$q$ plus a unit' and thus resemble Eisenstein series corresponding to reducible representations. The last two formulas are also of this form when $\l \neq 1$ and resemble characters when $\l=1$.}

\begin{Proposition}\label{prop: 2F1-imprimitive} Suppose $\l \neq 0$.  Then,
\begin{align*}
\phgq {\eps}BC{\l} &=
                J(B, \overline BC)-\overline C(\l) \overline BC(\l-1); \\
\phgq {A}BB{\l} &=\overline B(\l)J(B,\overline A)-\overline A(1-\l); \\
\phgq {A}BA{\l} &=\overline B(\l-1)J(B,\overline A)-B(-1)\overline A(\l) +(q-1)\delta(1-\l)\delta(B);\\
\phgq {A}{\eps}C{\l} &=\overline C(-\l)\overline AC(1-\l)J(C,\overline A)-1+(q-1)\delta(1-\l)\delta(\overline AC).
\end{align*}
\end{Proposition}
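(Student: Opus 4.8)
The plan is to evaluate each of the four sums directly from the definition \eqref{Eq:30},
$$\phgq{A_1}{A_2}{B_1}{\l}=\sum_{y\in\fq}A_2(y)\,\ol{A_2}B_1(1-y)\,\ol{A_1}(1-\l y),$$
using the degeneracy that makes each function imprimitive. In the first, second, and fourth formulas exactly one character factor collapses to $\eps$: for $\phgq{\eps}{B}{C}{\l}$ the factor $\ol{A_1}(1-\l y)=\eps(1-\l y)$; for $\phgq{A}{B}{B}{\l}$ the factor $\ol{A_2}B_1(1-y)=\eps(1-y)$; and for $\phgq{A}{\eps}{C}{\l}$ the factor $A_2(y)=\eps(y)$. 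In each case I would write $\eps(u)=1-\delta(u)$, splitting the sum into a complete character sum minus the single term omitted by the delta. The complete sum is a Jacobi sum after an affine change of variables ($u=\l y$ for the second formula, and $z=1-y$ followed by $w=\frac{\l}{\l-1}z$ for the fourth), while the omitted term is a monomial in $\l$ and $\l-1$; collecting these and simplifying with $B(\l)\ol B(\l)=1$, $B(-1)^2=1$, and $C(-(1-\l))=C(-1)C(1-\l)$ reproduces the stated right-hand sides.

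The third formula $\phgq{A}{B}{A}{\l}=\sum_{y\in\fq}B(y)\,\ol B A(1-y)\,\ol A(1-\l y)$ has no such collapse, so I expect it to be the main obstacle and would handle it by the change-of-variables technique of the Remark after Lemma \ref{lem:1}. I would first combine the two $A$-factors as $A(1-y)\ol A(1-\l y)=A\big(\tfrac{1-y}{1-\l y}\big)$ and then set $t=\tfrac{y}{1-y}$, a bijection of $\fq\setminus\{1\}$ onto $\fq\setminus\{-1\}$ under which $B(y)\ol B(1-y)=B(t)$ and $A\big(\tfrac{1-y}{1-\l y}\big)=\ol A(1+(1-\l)t)$. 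Restoring the missing $t=-1$ term, which equals $B(-1)\ol A(\l)$, turns the sum into $\sum_{t\in\fq}B(t)\,\ol A(1+(1-\l)t)-B(-1)\ol A(\l)$.

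The last point, common to the third and fourth formulas, is that the affine substitution producing the Jacobi sum is only valid when $\l\neq 1$: there $\sum_{t\in\fq}B(t)\,\ol A(1+(1-\l)t)=\ol B(\l-1)J(B,\ol A)$, whereas at $\l=1$ the sum degenerates to $\sum_{t\in\fq^\times}B(t)=(q-1)\delta(B)$, which is precisely the origin of the extra term $(q-1)\delta(1-\l)\delta(B)$; the analogous degeneration in the fourth formula yields $(q-1)\delta(1-\l)\delta(\ol A C)$. I would then check that the $\l\neq 1$ closed form and the $\l=1$ value are captured uniformly by the stated right-hand side, using that the Jacobi-sum coefficients $\ol B(\l-1)$ and $\ol A C(1-\l)$ vanish at $\l=1$. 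Since $\l\neq 0$ throughout, every substitution above is legitimate, and the proof reduces to the bookkeeping just outlined.
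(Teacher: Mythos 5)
Your proof is correct. For the first two identities it coincides with the paper's argument: the $\eps$-factor kills exactly one point of the sum, so one completes the sum to a Jacobi sum via an affine substitution and subtracts the restored term. Where you genuinely diverge is on the third identity, the one with no collapsing factor. The paper evaluates $\phgq{A}{B}{A}{\l}$ by passing to the representation $\frac{B(-1)}{q-1}\sum_{\chi}J(A\chiup,\ol\chiup)J(B\chiup,\ol{A\chiup})\chiup(\l)$ and invoking the product identity $J(A,\ol B)J(C,\ol A)=B(-1)J(C,\ol B)J(C\ol B,\ol AB)-(q-1)\delta(A)+(q-1)\delta(B\ol C)$ to factor out $J(B,\ol A)$, after which the remaining sum over $\chiup$ collapses to $\ol B(1-\l)$ up to delta corrections; the fourth identity is then dismissed with ``the same type of argument.'' Your substitution $t=y/(1-y)$, which turns the three-factor defining sum directly into $\sum_{t}B(t)\ol A(1+(1-\l)t)$ minus the restored $t=-1$ term, is more elementary: it never leaves the defining sum, needs no auxiliary Jacobi-sum identity, and makes the provenance of both corrections transparent --- $-B(-1)\ol A(\l)$ from the excluded point and $(q-1)\delta(1-\l)\delta(B)$ from the degeneration of the final affine substitution at $\l=1$. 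What the paper's route buys is a rehearsal of the $\frac{1}{q-1}\sum_{\chi}$ expansion it uses throughout; what yours buys is brevity and a uniform treatment of all four cases. One small point worth making explicit in your write-up: the $y=1$ term of the defining sum for $\phgq{A}{B}{A}{\l}$ vanishes because $\ol BA(0)=0$ even when $A=B$ (by the convention $\eps(0)=0$), which is what legitimizes restricting to $y\neq 1$ before substituting.
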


\begin{proof}
When $\l \ne 0$, by \eqref{Eq:30}
\begin{multline*}
  \phgq {\eps}BC{\l}=\sum_y B(y)\ol BC(1-y)\eps(1-\l y)=\sum_{y\neq 1/\l}B(y) \ol BC(1-y)\\
               = \sum_{y}B(y) \ol BC(1-y)-\ol C(\l)\ol BC(\l-1) = J(B,\overline BC)-\ol C(\l)\ol BC(\l-1);
\end{multline*}

\begin{multline*}
 \phgq ABB{\l}=\sum_y B(y)\eps(1-y)\overline A(1-\l y)= \sum_{y\neq 1}B(y)\overline A(1-\l y) \\
 =\overline B(\l)J(B,\overline A)-\overline A(1-\l).
\end{multline*}
Taking $A_1=B_1=A$ and  $A_2=B$ in \eqref{eq:for6.7}
we have
the function $\phgq {A}BA{\l}$ can be written as
$$
      \phgq {A}BA{\l}=\frac{B(-1)}{q-1}\sum_\chi J(A\chiup,\overline {\chiup})J(B\chiup,\overline {A\chiup})\chiup(\l).
$$

The relation
$$
J(R,\overline S)J(T,\overline R)=S(-1)J(T,\overline S)J(T\overline S, \overline R S)-\delta(R)(q-1)+
\delta(S\overline T)(q-1)
$$
can  be verified using
\eqref{Gauss1} through \eqref{JAeps} and routine algebra.
It is simplest to deal with the cases where the various $\delta$ terms are nonzero first.

Now take
$R=A\chiup$, $S=\chiup$ and $T=B\chiup$ to see our
$_2{\mathbb P}_1$ is
\begin{multline*}
\frac{B(-1)}{q-1}J(B,\overline A)
\left[ \left(\sum_\chi J(B\chiup,\overline{\chiup})\chiup(-\l)\right] \right)-B(-1)\overline A(\l)+\delta(B)\delta(1-\l)(q-1)
\end{multline*}
which, upon replacing $J(B\chiup,\overline{\chiup})$ by its
defining sum and interchanging the order of
summation in the resulting double sum, becomes
\begin{equation*}
B(-1)J(B,\overline A)\overline B(1-\l) -B(-1)\overline A(\l) +\delta(B)\delta(1-\l)(q-1)
\end{equation*}
$$
=J(B,\overline A)\overline B(\l-1) -B(-1)\overline A(\l)+\delta(B)\delta(1-\l)(q-1).
$$
\bk
Using the same  type of  argument, we get the last claim.
\end{proof}

\subsection{Galois interpretation for $_{n+1} \mathbb F_{n}$}\label{ss:Galois_normalization}
We now use Theorem \ref{thm:WIN3a} to give a description of the Galois representations associated to the period functions $_{2} \mathbb P_{1}$. Recall $K=\Q(\zeta_N)$.

As mentioned earlier, when the parameter $\l \in  {\Q}$, we have that the period function $_{n+1} \mathbb P_{n}$ corresponds to the trace of a Galois representation of degree at most $n$. The normalized period, which is obtained by dividing $_{n+1} \mathbb P_{n}$ by a product of Jacobi sums, corresponds to the tensor product of the  Galois representation associated with $_{n+1} \mathbb P_{n}$ and  a linear character associated with the Gr\"ossencharacter arising from the Jacobi sums as in \S \ref{ss:Jacobi-Grosse}.

When $n=1$  and $\l\in  \Q\setminus \{0,1\}$, by Theorem \ref{thm:WIN3a} we have under the same assumptions, that there is a 2-dimensional $\ell$-adic  Galois representation  $$\tilde{\sigma}_{\l,\ell}:=\sigma_{\l,\ell}\otimes (\chi_{c,-1}\cdot \mathcal J_{(b,c-b)})^{-1}$$  over $\ol \Q_\ell$ such that for any good prime ideal $\fp$  of $\mathcal O_{K}$,
$$\text{Tr}\,\tilde \sigma_{\l,\ell} (\text{Frob}_\fp)=\, _2\F_1\left[\begin{matrix}\iota_\fp(a) &\iota_\fp(b) \\   & \iota_\fp(c) \\\end{matrix}; \l; q(\fp)\right].$$ See the proof of Theorem \ref{thm:WIN3a} for the construction of ${\sigma}_{\l,\ell}$, and \eqref{eq:chi,im} (resp.  \eqref{J->mathcal J}) for the notation for $\chi_{c,-1}$ (resp. $\mathcal J_{b,c-b}$).
Note that the $-$ sign is not needed for the above formula due to the relation between $\mathcal J_{(\cdot, \cdot)}$ and $J(\cdot,\cdot)$ given in \eqref{J->mathcal J}. Moreover,  when $c-b,b,c\notin \Z$,  the characteristic polynomial of $\tilde \sigma_{\l,\ell} (\text{Frob}_\fp)$ is degree 2 with two roots of  absolute value $1$. This will be particularly convenient when taking tensor products.

\subsection{Zeta functions and hypergeometric functions over finite fields}\label{ss:zeta}
In this section, we relate our finite field period functions $_{n+1}\mathbb P_n$ to the local zeta functions of hypergeometric varieties, and demonstrate how Theorem \ref{thm:WIN3a} can be used to compute the L-function of associated Galois representations.

Given any hypersurface $H_f$ defined by an algebraic affine (or projective) equation $f(x_1,\cdots,x_{n})=0$ over a finite field $\F_q$, use $N_s$ to denote the number of affine (projective) solutions of $f$ over $\F_{q^s}$. The \Index{zeta function} of $H_f$ over $\F_q$ is defined by  $$Z_q(H_f,T):=\exp\(\sum_{s=1}^\infty \frac{N_s T^s}{s}\).$$
Now we recall the following theorem of Dwork.
\begin{Theorem}[Dwork, see \cite{Koblitz-book-p-adic}]The zeta function of any affine or projective hypersurface given by $f(x_1,\cdots,x_{n})=0$ is a ratio of two polynomials with coefficients in $\Z$ and constant 1.
\end{Theorem}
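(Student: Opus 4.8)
The plan is to follow Dwork's original $p$-adic analytic argument, which yields rationality over $\Q$, and then to upgrade to the integrality and normalization asserted by an elementary Euler product consideration. First I would reduce the point counts to exponential sums. Working in the affine case (the projective case is entirely analogous), fix a nontrivial additive character $\Psi$ of $\fq$ and set $\Psi_s := \Psi \circ \mathrm{Tr}^{\F_{q^s}}_{\fq}$. Using the orthogonality relation that $\sum_{t \in \F_{q^s}} \Psi_s(t a)$ equals $q^s$ when $a=0$ and $0$ otherwise, one writes the number of affine zeros of $f$ in $n$ variables over $\F_{q^s}$ as
$$q^s N_s = q^{sn} + \sum_{t \in \F_{q^s}^\times}\ \sum_{x} \Psi_s(t f(x)).$$
The term $q^{sn}$ contributes only the harmless rational factor $(1-q^{n-1}T)^{-1}$ to $Z_q(H_f,T)$, so the arithmetic content lies in the family of exponential sums $S_s := \sum_{t,x} \Psi_s(t f(x))$.

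The heart of Dwork's method is the construction of a $p$-adic \emph{splitting function} $\theta$ (the Dwork exponential): a $p$-adically convergent analytic function whose value at a Teichm\"uller representative interpolates $\Psi$ and which satisfies the cocycle-type identity $\Psi_s(a) = \prod_{i=0}^{s-1} \theta(\hat a^{q^i})$ for the Teichm\"uller lift $\hat a$ of $a$. Substituting the multiplicative structure of $\theta$ into $S_s$ lets one express $S_s$, up to explicit elementary factors coming from the $t$-variable, as a sum of traces $\mathrm{Tr}(\alpha^s)$ of powers of a single operator $\alpha = \psi \circ F$ acting on a suitable $p$-adic Banach space of convergent power series, where $F$ is multiplication by a power series built from $\theta(t f(x))$ and $\psi$ is a Frobenius-type contraction. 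The key technical input, and the crux of the whole argument, is that $\alpha$ is \emph{completely continuous} (nuclear) on this space, so that its Fredholm determinant $\det(1-\alpha T)$ is a $p$-adic entire function and Dwork's trace formula
$$\sum_{s\ge 1} \mathrm{Tr}(\alpha^s)\, T^s = -\,T\frac{d}{dT}\log\det(1-\alpha T)$$
holds.

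Assembling these pieces, $Z_q(H_f,T)$ is realized as an alternating product of Fredholm determinants of the operators obtained by organizing the $t$- and $x$-variables, hence as a ratio of $p$-adic entire functions; this gives $p$-adic meromorphy of $Z_q(H_f,T)$ on all of $\overline{\Q}_p$. Independently, the trivial bound $0 \le N_s \le q^{sn}$ shows that $Z_q(H_f,T)$, viewed over $\C$, has a positive archimedean radius of convergence bounded below by an explicit constant. I would then invoke the Borel--Dwork rationality criterion: a power series with coefficients in a number field that is $p$-adically meromorphic on a large disk and has positive archimedean radius of convergence (with the two radii satisfying the required inequality) is necessarily a rational function. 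This yields $Z_q(H_f,T) = P(T)/Q(T)$ with $P,Q \in \Q[T]$.

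Finally, to obtain integrality and the normalization, I would use the Euler product $Z_q(H_f,T) = \prod_{x}(1-T^{\deg x})^{-1}$ over the closed points of the hypersurface, which manifestly shows $Z_q(H_f,T) \in 1 + T\Z[[T]]$ with constant term $1$. Writing $P,Q$ coprime in $\Q[T]$ and normalized so that $P(0)=Q(0)=1$, the standard lemma on rational power series with integral coefficients (Fatou's lemma) forces $P,Q \in \Z[T]$, exactly as asserted. I expect the main obstacle to be the middle step: verifying complete continuity of $\alpha$, justifying the trace formula, and extracting the precise $p$-adic growth estimates on $\det(1-\alpha T)$ needed to meet the hypotheses of the Borel--Dwork criterion. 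Those $p$-adic analytic estimates are precisely Dwork's innovation; the reduction to exponential sums, the archimedean bound, and the integrality refinement are by comparison routine bookkeeping or citable general principles.
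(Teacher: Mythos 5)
The paper offers no proof of this statement---it is quoted as a known theorem of Dwork with a pointer to Koblitz's book---and your outline reproduces exactly the argument given in that reference: reduction of point counts to exponential sums via a nontrivial additive character, Dwork's $p$-adic splitting function and the trace formula for completely continuous operators on a $p$-adic Banach space, the Borel--Dwork rationality criterion combining $p$-adic meromorphy with the archimedean convergence bound, and the Euler product plus Fatou's lemma for integrality and the normalization $P(0)=Q(0)=1$. Your sketch is correct in substance, the only quibble being the cocycle identity for $\theta$, which for $q=p^e$ should run over the $p$-power conjugates $\hat a^{p^i}$, $0\le i\le es-1$ (or equivalently over $q$-power conjugates of the packaged function $\theta_0(x)=\prod_{i=0}^{e-1}\theta(x^{p^i})$), rather than over $\theta(\hat a^{q^i})$ for $0\le i\le s-1$.
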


For example, if $C$ is a smooth projective  irreducible genus $g$ curve defined over $\F_q$, then Weil showed that $$Z_q(C,T)=\frac{P(T)}{(1-T)(1-qT)}$$ where $P(T)\in \Z[T]$ has degree $2g$ with all roots of absolute value $1/\sqrt{q}$. See \cite{IR} for more details.

In this perspective, Proposition \ref{point-count} implies that the finite field period functions $_{n+1}\mathbb P_n$ are related to the local zeta functions of  hypergeometric varieties given by algebraic equations of the form \eqref{eq:hyp-var}. In the following discussion, we focus on the case $n=1$.
Along this line, we relate  Theorem \ref{thm:WIN3a} to some result concerning twisted exponential sums.
These involve the finite field period functions $_2\mathbb P_1$ whose parameters are characters on finite extensions of $\F_q$. \bk Given $A,B,C\in \widehat{\F_q^\times}$, by our notation
$$\phgq {A}{B}{C}{\l;q}=\sum_{y\in \F_q} B(y)\overline {B}C(1-y)\overline A(1-\l y).$$
As mentioned in \S \ref{ss:mthpower}
any character $A$ on $\F_q^\times$ can be extended to a multiplicative character $A_r$ on the finite extension $\F_{q^r}$ using the norm map $\text{N}_{\F_q}^{\F_{q^r}}$, i.e.  for $x\in \F_{q^r}$, $$A_r(x)=A(\text{N}_{\F_q}^{\F_{q^r}}(x)).$$See (\ref{eq:mth-conjugate}).
For instance, if $x\in \F_q$, then
 \begin{equation}\label{eq:A_r}A_r(x)=A(x\cdot x^q\cdots x^{q^{r-1}})=A(x\cdot x\cdots x)=A(x^r)=\(A(x)\)^r.\end{equation}
Thus we can define $_{2}{\mathbb P}_{1} \left[ \begin{smallmatrix}A_r &B_r \\&C_r \end{smallmatrix}; \l; q^r \right]$ accordingly. Using the perspective of twisted Jacobi sums \cite{AS1,AS2}, the generating function
\begin{equation}\label{eq:gen-2P1}
Z[A,B;C;\l;q;T]:=\text{exp}\( \sum_{r\ge 1} \(\phgq {A_r}{B_r}{C_r}{\l;q^r} \)\cdot \frac {T^r}r\)
\end{equation} is also a rational function, which is originally due to Dwork.

In view of Theorem \ref{thm:WIN3a} and the third \bk
Weil conjecture (proved by Deligne) for local zeta functions of smooth curves (cf. \cite[\S11.3]{IR}),  we know for $a,b,c\in \Q$ and $a,b,c-a,c-b\notin \Z$,  the corresponding Galois representation $\sigma_{\l,\ell}$  is pure and at each unramified prime ideal $\fp$ and $\l\neq 0,1 \pmod{\fp}$, the function $$Z[\iota_\fp(a),\iota_\fp(b);\iota_\fp(c);\l;q(\fp);T]=H_\fp\(\frac 1T\)$$ where $H_\fp(T)$ is the characteristic polynomial of the Frobenius $\text{Frob}_\fp$ under $\sigma_{\l,\ell}$, as in \eqref{eq:Hp}. Thus it is a polynomial with two roots of the same absolute value $1/\sqrt{q}$.

To give an example, we first recall  another Hasse-Davenport relation (see  Section 11.5 in \cite{Berndt-Evans-Williams} or Section 11.4 of \cite{IR})  which
relates  the Gauss sums $g(A)$ over $\F_q$ and  $g(A_r)$ over $\F_{q^r}$ by
\begin{equation}\label{eq:H-D2}\index{Hasse-Davenport relation}
  g(A_r)=(-1)^{r-1}g(A)^r.
\end{equation}
\begin{Example}By \eqref{eq:H-D2} and the finite field analogue of Kummer's evaluation \eqref{eq:FF-Kummer}, one has when $C=D^2$, and $B^2\neq C$, that
\begin{equation*}
  Z(B,C;C\ol{B};-1;q;T)=(1+J(D,\ol B)T)(1+J(D\phi, \ol B)T).
\end{equation*}
In comparison, the finite field analogue of the Gauss evaluation formula \eqref{eq:Gauss-FF} implies
$$
 Z[A,B;C;1;q;T]={1+J(B,\ol{AB}C)T},\quad  \mbox{ if } A\neq C.
 $$
\end{Example}

As a corollary of Theorem \ref{thm:WIN3a}, one can describe the determinants of the 2-dimensional Galois representation $\sigma_{\l,\ell}$ of  Theorem \ref{thm:WIN3a} at Frobenius elements explicitly by a simple computation of eigenvalues of Frobenius elements.
\begin{Corollary}\label{cor:det}Let the notation be as in Theorem \ref{thm:WIN3a}. Then
\begin{equation}\label{eq:det}
\text{det}\, \sigma_{\l,\ell} (\text{Frob}_\fp)=\frac{1}2\( \(_{2}{\mathbb P}_{1} \left[ \begin{smallmatrix}\iota_\fp(a) &\iota_\fp(b)\\&\iota_\fp(c)\end{smallmatrix}; \l; q(\fp) \right]\)^2+\, _{2}{\mathbb P}_{1} \left[ \begin{smallmatrix}(\iota_\fp(a))_2 &(\iota_\fp(b))_2\\&(\iota_\fp(c))_2\end{smallmatrix}; \l; q(\fp)^2 \right] \),
\end{equation}
{utilizing the notation described in \eqref{eq:A_r}.}
Together, we are able to compute the L-function $L(\sigma_{\l,\ell}, s)$ of $\sigma_{\l,\ell}$ if we omit the ramified prime factors. It will take the form of
$$\prod_{\fp \text{ good prime}} \(1+\pPPq{2}{1}{\iota_\fp(a)&\iota_\fp(b)}{&\iota_\fp(c)}{\l;q(\fp)}q(\fp)^{-s}+ \text{det}\, \sigma_{\l,\ell} (\text{Frob}_\fp)\cdot q(\fp)^{-2s}\)^{-1}. $$
\end{Corollary}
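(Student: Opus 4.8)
The plan is to exploit the fact that $\sigma_{\l,\ell}$ is two-dimensional, so that $\det\sigma_{\l,\ell}(\text{Frob}_\fp)$ is the second elementary symmetric function of its two eigenvalues $\alpha,\beta$ and hence is determined by the two power sums $\alpha+\beta$ and $\alpha^2+\beta^2$ through Newton's identity
$$\det\sigma_{\l,\ell}(\text{Frob}_\fp)=\alpha\beta=\tfrac12\big((\alpha+\beta)^2-(\alpha^2+\beta^2)\big)=\tfrac12\big((\text{Tr}\,\sigma_{\l,\ell}(\text{Frob}_\fp))^2-\text{Tr}\,\sigma_{\l,\ell}(\text{Frob}_\fp^2)\big).$$
Thus the entire task reduces to writing the two traces $\text{Tr}\,\sigma_{\l,\ell}(\text{Frob}_\fp)$ and $\text{Tr}\,\sigma_{\l,\ell}(\text{Frob}_\fp^2)$ as period functions. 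The first power sum is immediate from Theorem \ref{thm:WIN3a}, which gives $\text{Tr}\,\sigma_{\l,\ell}(\text{Frob}_\fp)=-\,{}_2\mathbb P_1[\ldots;q(\fp)]$; squaring this produces the first summand $\left({}_2\mathbb P_1[\ldots;q(\fp)]\right)^2$ appearing in \eqref{eq:det}.

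The second power sum is the crux. Since $\sigma_{\l,\ell}$ arises as a compatible system from the Tate module of $J^{\text{prim}}_{\l}$, its eigenvalues at $\fp$ are the Frobenius eigenvalues, and $\text{Tr}\,\sigma_{\l,\ell}(\text{Frob}_\fp^2)=\alpha^2+\beta^2$ is the trace at the Frobenius of the degree-two residue extension $\F_{q(\fp)^2}/\F_{q(\fp)}$. I would then rerun the point-counting computation underlying Theorem \ref{thm:WIN3a} over $\F_{q(\fp)^2}$: the relevant order-$N$ character of $\F_{q(\fp)^2}$ is precisely the norm-lift $(\iota_\fp(\cdot))_2=\iota_\fp(\cdot)\circ N_{\F_{q^2}/\F_q}$, by the compatibility of the $m$th power residue symbol with the norm map recorded in \eqref{eq:mth-conjugate} and \eqref{eq:A_r}. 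This identifies
$$\text{Tr}\,\sigma_{\l,\ell}(\text{Frob}_\fp^2)=-\,{}_2\mathbb P_1\big[(\iota_\fp(a))_2,(\iota_\fp(b))_2;(\iota_\fp(c))_2;\l;q(\fp)^2\big].$$
Substituting both power sums into Newton's identity and tracking the two minus signs converts the subtraction into the plus sign displayed in \eqref{eq:det}.

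The main obstacle is making this second step rigorous: one must verify that the character sum over $\F_{q(\fp)^2}$ genuinely computes $\text{Tr}\,\sigma_{\l,\ell}(\text{Frob}_\fp^2)$, and that the norm-lifted characters $(\iota_\fp(\cdot))_2$ entering the definition of the $\F_{q^2}$-period coincide with the natural residue-symbol characters of $\F_{q^2}$. The first is the general principle, implicit in the zeta-function discussion of \S\ref{ss:zeta} and \eqref{eq:gen-2P1}, that the $r$th power sum of Frobenius eigenvalues equals the point count over $\F_{q^r}$ with norm-lifted characters; the second is exactly \eqref{eq:mth-conjugate}. Once these are in place the identification is forced, since $\sigma_{\l,\ell}(\text{Frob}_\fp^2)=\sigma_{\l,\ell}(\text{Frob}_\fp)^2$ has trace $\alpha^2+\beta^2$.

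Finally, the L-function assertion follows formally. For the two-dimensional $\sigma_{\l,\ell}$ the local Euler factor at a good prime is
$$\det(1-\sigma_{\l,\ell}(\text{Frob}_\fp)\,q(\fp)^{-s})^{-1}=\big(1-\text{Tr}\,\sigma_{\l,\ell}(\text{Frob}_\fp)\,q(\fp)^{-s}+\det\sigma_{\l,\ell}(\text{Frob}_\fp)\,q(\fp)^{-2s}\big)^{-1},$$
and inserting the trace formula of Theorem \ref{thm:WIN3a} together with the determinant formula just established gives the displayed product over good primes.
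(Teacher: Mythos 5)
Your proposal is correct and follows essentially the same route as the paper: the paper offers no written proof beyond the remark that the corollary is ``a simple computation of eigenvalues of Frobenius elements,'' relying on the identification in \S\ref{ss:zeta} of the period function over $\F_{q(\fp)^r}$ (with norm-lifted characters) with the negative of the $r$th power sum of Frobenius eigenvalues, which is exactly the content you supply via Newton's identity. Your sign-tracking and the derivation of the Euler factor match what the paper intends.
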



\subsection{Summary}
To us, the Galois representation interpretation, i.e. that  the $_{n+1}\mathbb P_n$ or  $_{n+1}\F_n$ functions are, up to sign, the character values of  Galois representations at corresponding Frobenius conjugacy classes,  provides a useful guideline when one translates classical results to the finite field setting. A few principles are summarized as follows.

First, let $\l\in  \Q$, and $N$ the least common  multiple  of the orders of the $A_i$ and $B_j$. Assume $q \equiv 1 \pmod N$ and that $\l$ can be embedded into  $\F_q$. Then:
\begin{itemize}
\item
The function $\pFFq{n+1}{n}{A_1& A_2 &\cdots&A_{n+1}}{&B_1&\cdots&B_{n}}{\l;q}$ corresponds, up to a sign, to the trace of a degree at most $n+1$
Galois representation of $G_K$ at the Frobenius conjugacy class at a prime ideal with residue of size $q$. {When $\l=0,1$, the imprimitve cases, 
the representation is often degenerate.} \\

\item
When  $\l\neq 0,1$, the primitive $_2\mathbb P_1$  or $_2\mathbb F_1$     function corresponds to a $2$-dimensional Galois representation, which is pure.\\

\item
Products (resp. sums) of $_{n+1}{\mathbb P}_{n}$ or $_{n+1}\F_n$  functions correspond to tensor products (resp. direct sums) of the corresponding representations.

\end{itemize}

\begin{Remark}
In relation to the first item above, see \cite{AS1,AS2, Katz} and the paper on hypergeometric motives by Roberts and Rodriguez-Villegas \cite{RRW}, as well as a different approach to realize the hypergeometric motives defined over $\Q$ by Beukers,  Cohen, and Mellit in \cite{BCM} based on Katz's result in \cite{Katz}. See \cite{BCM,LTYZ} for some explicit examples in this approach.
\end{Remark}

\section{A finite field Clausen formula and an application}\label{Ramanujan}

In this section, we use the previous discussion of Galois representations from Sections \ref{Gal-background} and \ref{Gal} to discuss a finite field version of the classical Clausen formula, due to Evans and Greene, and how its geometric interpretation sheds light on Ramanujan type formulas for $1/\pi$.  Below we continue to use $\eps$ to denote the trivial character and  $\phi$ to denote the quadratic character. The discussion below is closely related to Weil's result  describing Jacobi sums as Gr\"ossencharacters, which was recalled in \S \ref{ss:Grossencharacter}.

\subsection{A finite field version of the  Clausen formula by Evans and Greene}\label{sec:ClausenFF}
One version of the Clausen formula \cite{Bailey}\index{Clausen formula} states that
\begin{equation}\label{eq:Clausen}
 \pFq{2}{1}{c-s-\frac 12&s}{&c}{\l}^2=\, \pFq{3}{2}{2c-2s-1&2s&c-\frac 12}{&2c-1&c}{\l}.
\end{equation}
In terms of differential equations, this means that the symmetric square of the 2-dimensional solution space of  $HDE(c-s-\frac 12,s;c;\l)$  (see \eqref{classicalHDE})   is the 3-dimensional solution space of the hypergeometric differential equation satisfied by the $_3F_2$ occurring on the right side of \eqref{eq:Clausen}. In \cite{Evans-Greene}, Evans and Greene obtained the following  analogue of \eqref{eq:Clausen} written here in our notation.
\begin{Theorem}[\cite{Evans-Greene}, Theorem 1.5]\label{thm:FF-Clausen}
    Let $C,S\in \widehat{\F_q^\times}$. Assume that $C\neq \phi$, \bk and $S^2\not\in \left\{\eps, C, C^2\right\}$. Then for $\l\neq 1$,
\begin{multline*}
      {\pFFq 21{C\overline S\phi& S}{&C}{\l}}^2= \pFFq{3}{2}{C^2\ol S^2&S^2&C\phi}{&C^2&C}{\l}\\
      +\phi(1-\l)\ol C(\l)\(\frac{J(\ol S^2 ,C^2)}{J(\ol C,\phi)}+\delta(C)(q-1)\).
\end{multline*}
 \end{Theorem}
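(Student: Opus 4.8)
The plan is to prove this as the finite field shadow of the classical Clausen formula \eqref{eq:Clausen}, whose standard \emph{coefficient-comparison} proof satisfies the ($\ast$) condition and so should translate through the dictionary of \S\ref{dictionary}. Classically, the identity \eqref{eq:Clausen} follows by expanding both sides as power series in $\lambda$ and checking that the coefficient of $\lambda^n$ on the left, a Cauchy convolution of two ${}_2F_1$ coefficients, equals the single product giving the $\lambda^n$-coefficient of the ${}_3F_2$; this convolution identity is a terminating balanced sum evaluable by Legendre duplication (Theorem \ref{thm:double}) together with Pfaff--Saalsch\"utz \eqref{eq:pf-s}. Both ingredients have finite field analogues—the Hasse--Davenport duplication \eqref{Hasse-Dav-special} and a finite field Pfaff--Saalsch\"utz identity obtained by the dictionary method—so the only genuinely new feature is the $\delta$-error term that the Gauss sum reflection formula \eqref{Gauss1} injects. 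I would first rewrite both sides via \eqref{eq:compMcCarthy} in McCarthy's Gauss-sum form, so that the left side becomes the square of a single sum $\frac1{q-1}\sum_\chi(\cdots)\chi(\lambda)$ and the right side a single sum $\frac1{q-1}\sum_\tau(\cdots)\tau(\lambda)$.

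Next I would expand the square on the left as a double sum over characters $\chi,\psi$ and reindex by setting $\tau=\chi\psi$, so that the combined argument $\chi(\lambda)\psi(\lambda)$ becomes $\tau(\lambda)$ and matches the right side. For each fixed $\tau$ this leaves an inner sum over one free character, and by the uniqueness of the character expansion \eqref{eq:f-f_chi} it suffices to identify this inner sum with the $\tau$-coefficient of the ${}_3\mathbb F_2$ plus the matching coefficient of the error term. After using reflection to rewrite the Gauss-sum factors carrying $\psi=\tau\overline\chi$, the inner sum takes the shape of a finite field Saalsch\"utzian character sum evaluable at $1$; its value is then a product of Gauss sums which the duplication relation \eqref{Hasse-Dav-special}, $g(A)g(\phi A)=g(A^2)g(\phi)\overline A(4)$, recombines into the doubled-parameter factors $g(C^2\overline S^2\tau)$, $g(S^2\tau)$ seen on the right. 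This is exactly the mechanism by which the upper entries $C\overline S\phi,S$ and lower entry $C$ on the left become $C^2\overline S^2,S^2,C^2$ on the right.

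The main obstacle will be the error-term bookkeeping. Classically there is no error term, but over $\F_q$ each application of $g(A)g(\overline A)=qA(-1)-(q-1)\delta(A)$ and each occurrence of $g(\eps)=-1$ contributes a $\delta$-correction whenever a character combination degenerates to $\eps$. I would isolate precisely those degenerate indices and show their total contribution assembles into $\phi(1-\lambda)\overline C(\lambda)\big(J(\overline S^2,C^2)/J(\overline C,\phi)+\delta(C)(q-1)\big)$; the hypotheses $C\neq\phi$ and $S^2\notin\{\eps,C,C^2\}$ are exactly what guarantees that every \emph{other} Gauss sum in sight is nonvanishing, so no spurious corrections intrude.

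As a conceptual guide and independent check, the Galois picture of \S\ref{Gal} predicts the shape of the answer. Writing $\tilde\sigma_{\lambda,\ell}$ for the normalized $2$-dimensional representation attached to the left-hand ${}_2\mathbb F_1$, one has $\tilde\sigma\otimes\tilde\sigma\cong\operatorname{Sym}^2\tilde\sigma\oplus\wedge^2\tilde\sigma$, so the square of its trace splits as the trace of the $3$-dimensional $\operatorname{Sym}^2$ (the ${}_3\mathbb F_2$) plus $\det\tilde\sigma$. The error term must therefore \emph{be} the determinant, necessarily a product of character values times a unit-weight Jacobi-sum ratio; this is consistent with the explicit form above, and Corollary \ref{cor:det} supplies an independent route to $\det\tilde\sigma$ confirming the normalization $J(\overline S^2,C^2)/J(\overline C,\phi)$.
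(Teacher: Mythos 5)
First, a point of order: the paper does not prove Theorem \ref{thm:FF-Clausen} at all. It is quoted, after a change of notation, from Evans and Greene \cite{Evans-Greene}, and the surrounding discussion only supplies the representation-theoretic reading (tensor square equals symmetric square plus a linear piece). So there is no in-paper proof to compare yours against; your proposal has to stand on its own, and as written it has a gap at its central step.

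The architecture (square the McCarthy-form sum, reindex by $\tau=\chi\psi$, evaluate the inner sum over the free character, collect $\delta$-terms) is the right shape, but the key claim that the inner sum is ``a finite field Saalsch\"utzian character sum evaluable at $1$'' by Pfaff--Saalsch\"utz is not correct. Already classically, the Cauchy convolution of two ${}_2F_1(a,b;a+b+\tfrac12;\cdot)$ coefficients is, after the inversion $(x)_{n-k}=(-1)^k(x)_n/(1-x-n)_k$, the terminating series
$$
{}_4F_3\left[\begin{matrix}-n, & a, & b, & \tfrac12-a-b-n\\ & a+b+\tfrac12, & 1-a-n, & 1-b-n\end{matrix}\;;\;1\right],
$$
which is balanced and well-poised but genuinely a ${}_4F_3$; its evaluation requires Dougall/Whipple-type machinery, not a single application of \eqref{eq:pf-s} plus duplication. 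Correspondingly, over $\F_q$ the inner sum you produce carries eight Gauss-sum factors in the free character, i.e.\ it is a ${}_4\mathbb F_3$-type sum at $1$, and neither \eqref{eq:pf-s-FF} nor Corollary \ref{cor:quad-pf-s} evaluates it. To repair the argument you would either have to first prove the finite field analogue of the well-poised balanced ${}_4F_3(1)$ summation (itself of comparable difficulty to the theorem), or reroute through a quadratic transformation such as Theorem \ref{thm:quad-2F1} so that only Pfaff--Saalsch\"utz-type evaluations are needed --- which is essentially the path Evans and Greene take. Two smaller points: Gauss sums never vanish, so the hypotheses $C\neq\phi$ and $S^2\notin\{\eps,C,C^2\}$ serve to prevent character combinations from degenerating to $\eps$ (which is what injects $(q-1)\delta$ corrections via \eqref{Gauss1} and \eqref{JacobiGaussrelation}), not to keep Gauss sums nonzero; and the determinant heuristic at the end is a reasonable sanity check consistent with the paper's own remark following the theorem, but Corollary \ref{cor:det} concerns the unnormalized $\sigma_{\l,\ell}$, so it does not by itself pin down the constant $J(\ol S^2,C^2)/J(\ol C,\phi)$.
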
\index{finite field analogues!Clausen formula}

 %

This theorem  captures the well-known fact from representation theory that the tensor square of a  2-dimensional representation (associated with the $_2\mathbb F_1$ on the left) equals  its symmetric square (3-dimensional representation, associated with the $_3\mathbb F_2$ on the right) plus an additional linear representation from the exterior power.  The  expression here using our notation is closer to the complex setting than the version of the statement given in Theorem 1.5 of \cite{Evans-Greene} which is in terms of period functions.
We remark that when $\l=1$, by Theorem 4.38-(i) in \cite{Greene}  we have
\begin{multline}\label{special-claussen@1}
\pFFq{3}{2}{C^2\ol S^2&S^2&C\phi}{&C^2&C}{1}=
\sum_{D \in \{S,S\phi\}}
\frac{\phi(-1)J(D,C\ol S^2)J(\ol C S^2,\phi\ol D)}{J(\phi ,\phi C)J(S^2,C^2\ol S^2)}\\
=\sum_{ D\in \{S,S\phi\}}
\frac{J(D,C\ol S^2)J(\ol C S^2,\phi\ol D)}{J(\phi S,\ol C)J(S,\ol C)},
\end{multline}
which corresponds to a 2-dimensional Galois representation  that can be described by Gr\"ossencharacters,   while Gauss' evaluation theorem says that
$$
\pFFq 21{C\overline S\phi& S}{&C}1^2=\frac{J(\phi, S)^2}{J(S,\ol C)^2}.
$$

A nice example which realizes the Clausen formula geometrically is given by Ahlgren, Ono and Penniston \cite{AOP}.  In their work, they consider the $K3$ surfaces defined by
$$
 X_\l :\, s^2=xy(1+x)(1+y)(x+\l y), \quad \l \neq 0, -1.
$$
In particular, the  point counting on $X_\l$ over $\mathbb{F}_q$ is related to a $_3{\mathbb P}_2$ by the following equation,
\begin{equation}\label{eqn:counting}
\sum_{x,y\in\mathbb{F}_q} \phi(xy(1+x)(1+y)(x+\l y))= 
 \left[
   \begin{matrix}
    \phi & \phi & \phi \\
         & \eps & \eps \\
\end{matrix};-\l \right].
\end{equation}
One establishes the above equality by expanding the right hand side via \eqref{n+1Pn} and employing the change of variables $(x,y) \mapsto (-x,-y)$ followed by $x \mapsto 1/x$.

The point counting on the following elliptic curve 
$$
  E_\l:\, y^2=(x-1)(x^2-1/(1+\l)), \quad \l \neq 0,-1,
$$
over $\mathbb{F}_q$ is given by
$$
  a(\l,q):=-\sum_{x\in\mathbb{F}_q}\phi(x-1)\phi(x^2-1/(1+\l)).
$$
Then we have the equality (\cite[Theorem 2.1]{AOP})
\begin{equation}\label{eq:3F2-ap-square}
   {}_3\mathbb P_2
 \left[
   \begin{matrix}
    \phi & \phi & \phi \\
         & \eps & \eps \\
\end{matrix};-\l \right]  
  =\phi(1+\l)( a(\l,q)^2-q).
\end{equation}
A geometric interpretation of the relation \eqref{eq:3F2-ap-square} between the K3 surfaces $X_\lambda$ and the elliptic curve $E_\l$ in terms of the so-called Shioda-Inose structure has been given in \cite{Long04} by Long.
When $q\equiv 1\pmod{4}$,
$a(\l,q)$ is essentially  $\phgq{\eta_4}{\eta_4} {\eps}{-\l}$ where $\eta_4$ is an order 4 character.
\bk To be more precise, if $1+\l=b^2$ is a square in the finite field $\F_q$, then \bk
$$
  a(\l,q)=\phi(b-1)\phgq{\eta_4}{\eta_4}{\eps}{-\l}.
$$
Thus {Theorem 1.1} of \cite{AOP} is equivalent to the Clausen formula over the finite field $\mathbb{F}_q$ with $S=\eta_4$ and $C=\eps$.

For special choices of $\l\in \Q$ such as $1$, $8$, $1/8$, $-4$, $-1/4$, the corresponding elliptic curve $E_\l$ has complex multiplication (CM). For these $\l$ values, the period functions $_2\mathbb P_1$ can be written in terms of Jacobi sums.
\begin{Example}\label{eg:7}When $\l=1/8$,  the elliptic curve $E_{1/8}$ is
 $\Q$-isogenous to the curve $y^2=x^3-9x$  which has CM and is of conductor $288$  \cite{lmfdb:288.d3}. From this information and the explicit description of  Gr\"ossencharacter $\mathcal J_{(\frac 12,\frac 14)}$, which is given in Example \ref{eg:Cond-64}, one has

 \begin{align*}
  -a(1/8,q)
      =&\phi(3) \cdot
      \begin{cases}
      0,& \mbox{ if } q\equiv -1 \pmod{4},\\
      J(\phi, \eta_4)+J(\phi, \ol\eta_4), & \mbox{ if } q\equiv 1 \pmod{4}.
      \end{cases}
\end{align*}
\end{Example}

\begin{Example}\label{eg:8}
When $\l=-1/4$, the elliptic curve $E_{-1/4}$ is $\Q$-isogenous to the curve $ y^2=x^3-1$, which has complex multiplication and  conductor $144$ \cite{lmfdb:144.a3}.  It follows that
$$-a(-1/4,q)=\phi(-1)\cdot
  \begin{cases}
      0,& \mbox{ if } q\equiv -1 \pmod{3}\\
      J(\phi, \eta_3)+J(\phi, \ol{\eta}_3), & \mbox{ if } q\equiv 1\pmod{3},
      \end{cases}
$$
where
$-\phi(-1)J(\phi,\eta_3)$ is a  Hecke character
with   conductor $\frak m \bk =(4\sqrt{-3})$ over $\Q(\zeta_6)$ and it can be given as follows. Suppose that a prime ideal $\fp$ of $\Z(\zeta_6)$ is generated by $\alpha\in\Z[\zeta_6]$.  Then we have
  $$
    \mathcal J_{(\frac 12, \frac 13)}(\fp)=\chi(\alpha)\cdot \alpha,
  $$
 where $\chi(-1)=-1$, $\chi(5)=-1$, and $\chi(\zeta_6)=-\zeta_6$. Here, the unit group $\(\Z[\zeta_6]/\frak m\)^\ast$ has order $24$ and is generated by $-1$, $5$, and $\zeta_6$.
\end{Example}

\subsection{Analogues of Ramanujan type formulas for $1/\pi$}\label{ss:Ram}

In 1914, Ramanujan \cite{Ramanujan} gave a list of infinite series formulas for $1/\pi$, where the series are values of hypergeometric functions. One example is
\begin{equation}\label{eq:RR1/pi}
\sum_{k=0}^{\infty}(6k+1)\(\frac{(\frac 12)_k}{k!}\)^3\(\frac 14\)^k=\frac{4}{\pi}.
\end{equation}

Later in the 1980's, Borwein-Borwein \cite{BB} and Chudnovsky-Chudnovsky \cite{CC} proved these formulas using essentially the theory of elliptic curves with complex multiplication. The idea of their proof was recast in \cite{WIN2} and we will give some related discussion here. Roughly speaking, the family of elliptic curves $E_\l$ defined in \S \ref{sec:ClausenFF} has unique holomorphic differentials $\omega_\l$, up to  scalars, similar to the case for the Legendre curve $L_\l$ that we mentioned earlier in \S \ref{GLC}, see \eqref{eq:Llambda}. Integrating $\omega_\l$ along a suitably chosen path on $E_\l$ leads to
a period of the first kind on $E_\l$ given by  $$p(\l)=\G\left(\frac 14\right)\G\left(\frac 34\right)(1-\l)^{1/4}\,\pFq{2}{1}{\frac 14&\frac 14}{&1}{-\l}.$$
We know $\G(\frac 14)\G(\frac 34)=\sqrt{2}\pi$
by the reflection formula, Theorem \ref{thm:reflect}. One can compute  periods of the second kind on $E_\l$ from   $\frac{\text{d} }{\text{d}\l}p(\l)$. Together, differentials of the first and second kind for $E_\l$ form a 2-dimensional vector space $V(\l)$.  When $E_\l$ has CM, its endomorphism ring $R:=R(\l)$,  which induces an action  on the space $V(\l)$,   is larger than $\Z$.   In particular, $\omega_\l$ is an eigenfunction of $R$. Let $\eta_\l$ be a differential of the second kind that is also
 an eigenfunction of $R$.
By Chowla-Selberg,  for any cycle $\gamma_\l$ of $E_\l$, $$\int_{\gamma_\l} \omega_\l\cdot \int_{\gamma_\l} \eta_\l\sim \pi,$$
where $\sim$ means equality up to a multiple in $\ol{\Q}$. Picking  $\gamma_\l$ in the right way, one obtains a Ramanujan type formula for $1/\pi$ corresponding to the CM value $\l$. In this way, we obtain Ramanujan type formulas for $1/\pi$ using products of two distinct periods  that are determined by $R$.  In the spirit of the Clausen formula, the  product of two periods lies in the symmetric square of the period space for $E_\l$; more explicitly, it is a special element in the solution space of the $_3F_2$.

While periods of the second kind  obtained from derivation do not have exact finite field or Galois analogues, there are analogues of `eigenfunctions' of the endomorphism ring on the Galois side. For {{suitable}} $\l$ the elliptic curve $E_{\l}$ has CM by a field we denote $K_\l$.
Fix a prime $\ell$ and let $\rho_{\l,\ell}$ denote the family of 2-dimensional $\ell$-adic Galois representations  of $\text{Gal}(\ol \Q/\Q)$ constructed from the elliptic curve $E_\l$ tensoring with $\Q_\ell$. As $E_\l$ has CM,
$$\rho_{\l,\ell}|_{\text{Gal}(\ol \Q/K_\l)}\simeq\sigma_{1,\l}\oplus \sigma_{2,\l},$$
where   $\sigma_{1,\l}$ and $\sigma_{2,\l}$ are $1$-dimensional representations that are invariant under $R$.  Each of them corresponds to a Gr\"ossencharacter of $K_\l$, as we described explicitly for two cases above (Examples \ref{eg:7} and \ref{eg:8}). The product $\sigma_{1,\l}\sigma_{2,\l}$ lies in the  symmetric square of $\rho_{\l,\ell}|_{\text{Gal}(\ol \Q/K_\l)}$. The Clausen formula implies it is also a linear factor of the 3-dimensional Galois representation corresponding to $_3\mathbb P_2$. For the special cases of $\l$ (say $\l=-\frac 14, \frac 18$),  one can  describe the values of the $_3\mathbb P_2$ functions explicitly.\\

Using the above computation for $a(\l, q)$ and the Clausen formula, one has the following proposition based on Examples \ref{eg:7} and \ref{eg:8}.

\begin{Proposition}\label{prop:FF-Ramanujan}
 We have the following two period function evaluation formulas.
\begin{itemize}
\item[1.] For any prime power $q$ that is coprime to $6$, we have
$$
   {}_3\mathbb P_2
 \left[
   \begin{matrix}
    \phi & \phi & \phi \\
         & \eps & \eps \\
\end{matrix};1/4 \right]
   =\phi(-1)
   \begin{cases}
      q,& \mbox{ if } q\equiv -1 \pmod{3},\\
      J(\phi, \eta_3)^2+J(\phi, \ol\eta_3)^2+q, & \mbox{ if } q\equiv 1\pmod 3,
      \end{cases}
$$ 
where $\eta_3$ is a character of order 3.
\bigskip
\item[2.] For any prime power $q$ that is coprime to $4$, we have
$$
   {}_3\mathbb P_2
 \left[
   \begin{matrix}
    \phi & \phi & \phi \\
         & \eps & \eps \\
\end{matrix};-1/8 \right]
  =\phi(-2)
    \begin{cases}
      q,& \mbox{ if } q\equiv -1 \pmod4,\\
      J(\phi, \eta_4)^2+J(\phi, \ol\eta_4)^2+q, & \mbox{ if } q\equiv 1\pmod4,
      \end{cases}
$$
where $\eta_4$ is a character of order 4.
\medskip
\end{itemize}
\end{Proposition}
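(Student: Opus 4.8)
The plan is to read both evaluations as the statement that the $_3\mathbb P_2$ on the left is, up to sign and an explicit correction of size $q$, the trace of the symmetric square of the $2$-dimensional Galois representation attached to the CM elliptic curve $E_\l$ of \S\ref{sec:ClausenFF}. Concretely, I would specialize the finite field Clausen formula (Theorem \ref{thm:FF-Clausen}) to $S=\eta_4$ an order $4$ character and $C=\eps$, exactly the choice for which the text identifies Clausen with Theorem 1.1 of \cite{AOP}. One checks that the hypotheses ($C=\eps\neq\phi$ and $S^2=\phi\notin\{\eps\}$) hold, so that the left side is the square of $\pFFq 21{\eta_4&\eta_4}{&\eps}{z}$, the right side is $\pFFq 32{\phi&\phi&\phi}{&\eps&\eps}{z}$, and the error term evaluates to $\phi(1-z)q$ (using $J(\phi,\eps)=J(\eps,\phi)=-1$ and $\delta(\eps)=1$).

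Next I would pass between the normalized $\mathbb F$ functions and the period $\mathbb P$ functions. By \eqref{general_HGF} and \eqref{eq:compMcCarthy}, together with $J(\phi,\phi)=-\phi(-1)$ from \eqref{eq:J(A,Abar)}, the normalizing Jacobi factor for the $_3\mathbb F_2$ is $J(\phi,\phi)^2=1$, so $\pFFq 32{\phi&\phi&\phi}{&\eps&\eps}{z}=\phgthree\phi\phi\phi\eps\eps{z}$; likewise the order-$4$ normalizing factor on the left squares to $1$. Thus Clausen reduces to $\phgthree\phi\phi\phi\eps\eps{z}=\big(\phgq{\eta_4}{\eta_4}{\eps}{z}\big)^2-\phi(1-z)q$, with $z=1/4$ for part 1 and $z=-1/8$ for part 2.

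The third step is to evaluate the square of the $_2\mathbb P_1$ period. Using the quadratic transformation of Theorem \ref{thm:quad-2F1} (with $B=C=\phi$ and $b^2=1+\l$, as in \S\ref{sec:ClausenFF}) I would identify $\phgq{\eta_4}{\eta_4}{\eps}{z}$ with the Frobenius trace $a(\l,q)$ of $E_\l$, and then insert the explicit CM evaluations of Examples \ref{eg:8} (for $\l=-1/4$) and \ref{eg:7} (for $\l=1/8$). Squaring $a$ and using $\overline{J(\phi,\eta_3)}=J(\phi,\overline\eta_3)$ together with $J(\phi,\eta_3)J(\phi,\overline\eta_3)=|J(\phi,\eta_3)|^2=q$ (from \eqref{JacobiGaussrelation} and \eqref{Gauss1}, valid since $\phi\eta_3\neq\eps$), one gets $\big(J(\phi,\eta_3)+J(\phi,\overline\eta_3)\big)^2=J(\phi,\eta_3)^2+J(\phi,\overline\eta_3)^2+2q$ in the $q\equiv1\pmod3$ case, and analogously with $\eta_4$ in part 2. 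The cross term, combined with the Clausen correction $\phi(1-z)q$ and the twist factors entering the CM evaluation, is what collapses the doubled $q$ down to the single $+q$ in the statement. The supersingular cases ($q\equiv-1\pmod3$ and $q\equiv-1\pmod4$) are treated separately: there $a(\l,q)=0$, so the expression is just $-\phi(1-z)q$, which I would match to the stated $\phi(-1)q$ and $\phi(-2)q$ via $\phi(-3)=-1$ (resp. $\phi(-1)=-1$) in that residue class.

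The main obstacle is the bookkeeping of sign and twist characters. The period $\phgq{\eta_4}{\eta_4}{\eps}{z}$ and the trace $a(\l,q)$ agree only after the argument change supplied by the quadratic transformation, and the CM formulas of Examples \ref{eg:7}--\ref{eg:8} carry twist factors ($\phi(-1)$, $\phi(3)$, $\phi(-2)$) inherited from the isogenies to the CM curves of conductor $144$ and $288$. Reconciling these twists, the error term $\phi(1-z)q$, and the $\mathbb F$-to-$\mathbb P$ normalization so that $J(\phi,\eta_3)^2$, $J(\phi,\overline\eta_3)^2$, and the residual $q$ all acquire the correct common prefactor $\phi(-1)$ (resp. $\phi(-2)$), uniformly in both congruence classes of $q$, is the delicate part; once the factors are pinned down, everything else is direct substitution.
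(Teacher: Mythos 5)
Your architecture --- the finite field Clausen formula at $S=\eta_4$, $C=\eps$, conversion to period functions, then the CM evaluations of Examples \ref{eg:7} and \ref{eg:8} --- is exactly the route the paper intends, and your evaluation of the Clausen error term as $\phi(1-z)q$ and of the normalizing Jacobi factors as $1$ is correct. There are, however, two genuine gaps. First, the specialization $S=\eta_4$ of Theorem \ref{thm:FF-Clausen} exists only when $4\mid q-1$, so for $q\equiv 3\pmod 4$ the identity $\phgthree{\phi}{\phi}{\phi}{\eps}{\eps}{z}=\phgq{\eta_4}{\eta_4}{\eps}{z;q}^2-\phi(1-z)q$ is not even a statement over $\F_q$. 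Your case split is ordinary versus supersingular, which for part~1 is governed by $q\bmod 3$ and does not align with the existence of $\eta_4$: for example $q=7$ is ordinary ($7\equiv 1\pmod 3$) yet $\widehat{\F_7^\times}$ has no order-$4$ element, so neither your main argument nor your separate treatment reaches it. Numerically $\phgthree{\phi}{\phi}{\phi}{\eps}{\eps}{1/4}=-9$ over $\F_7$, whereas the naive continuation $a(-1/4,7)^2-\phi(3)\cdot 7=16+7=23$ of your formula is false. To cover $q\equiv 3\pmod 4$ one must either invoke Theorem 1.1 of \cite{AOP} directly, which is proved for all odd primes by other means, or descend from $\F_{q^2}$ using the Hasse--Davenport lifting relation \eqref{eq:H-D2}.

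Second, the substitution of $a(\l,q)^2$ for $\phgq{\eta_4}{\eta_4}{\eps}{z;q}^2$ is not valid as written: the period and the trace agree only up to a root of unity in $\Q(i)$ whose square is $\pm 1$, not always $+1$. Over $\F_5$ with $z=-1/8=3$ one computes directly $\phgq{\eta_4}{\eta_4}{\eps}{3;5}=-2i$ while $a(1/8,5)=2$; accordingly $(-2i)^2-\phi(-2)\cdot 5=-4+5=1$ is the correct value of the ${}_3\mathbb P_2$, whereas your substitution $(J+\ol J)^2=J^2+\ol J^2+2q$ yields $4+5=9$. So the step ``the cross term $2q$ collapses to $+q$'' is carried out with the wrong sign precisely in the residue classes where $\phi(2)=-1$ (resp.\ $\phi(3)=-1$); that sign must be extracted from the quadratic transformation of Theorem \ref{thm:quad-2F1} rather than from the $\pm1$ twists of Examples \ref{eg:7}--\ref{eg:8} (which square to $+1$ and cannot supply it), and it is the source of the global prefactors $\phi(-1)$ and $\phi(-2)$ in the statement. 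It therefore cannot be deferred as bookkeeping: without it the ordinary cases come out with the wrong sign.
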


 Here we remark that the values \bk $\phi(-1)$ and $\phi(-2)$ respectively appearing on the right hand sides of the above equalities can be seen from \eqref{eq:3F2-ap-square} and the corresponding CM structures. See also  Theorem 1 of \cite{WIN2}. Proposition \ref{prop:FF-Ramanujan}  yields the following corollary.

\begin{Corollary}When $\l=-1/4$ (resp. $\l=1/8$), $\sigma_{1,\l}\sigma_{2,\l}$ extends to a 1-dimensional representation of $G_\Q$. It  corresponds to the following Gr\"ossencharacter  of $\Q$:  $$\chi_{\frac 12,-1}\cdot \mathcal T \quad  \text{ (resp. } \,\, \chi_{\frac 12,-2}\cdot \mathcal T\text{)},$$ where $\mathcal T$ is defined in Example \ref{eg:Tate} and $\chi_{\frac 12,(\cdot)}$   is defined in \eqref{eq:chi,im}.
\end{Corollary}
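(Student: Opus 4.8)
The plan is to identify the claimed Gr\"ossencharacter as the unique $1$-dimensional constituent of the $3$-dimensional representation attached to ${}_3\mathbb P_2$, with its Frobenius values supplied by Proposition \ref{prop:FF-Ramanujan}. Write $\rho_{\l,\ell}=\text{Ind}_{G_{K_\l}}^{G_\Q}\psi$, where $\psi$ is the CM Gr\"ossencharacter of $K_\l$ attached to $E_\l$ (made explicit in Examples \ref{eg:7} and \ref{eg:8}), so that $\sigma_{1,\l}=\psi$ and $\sigma_{2,\l}=\psi^c$ for $c$ the nontrivial automorphism of $K_\l/\Q$. Then $\sigma_{1,\l}\sigma_{2,\l}=\psi\psi^c=\det\rho_{\l,\ell}|_{G_{K_\l}}=\chi_{\mathrm{cyc}}|_{G_{K_\l}}$ is $\text{Gal}(K_\l/\Q)$-invariant and hence extends to $G_\Q$; since $K_\l/\Q$ is quadratic there are precisely two such extensions, differing by $\chi_{K_\l/\Q}$, and the whole point is to select the correct one.

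First I would decompose the symmetric square. From $\rho_{\l,\ell}=\text{Ind}_{G_{K_\l}}^{G_\Q}\psi$ and the projection formula $\text{Ind}(\psi)\otimes\text{Ind}(\psi)=\text{Ind}(\psi^2)\oplus\text{Ind}(\psi\psi^c)$, together with the fact that $\wedge^2\rho_{\l,\ell}=\det\rho_{\l,\ell}=\chi_{\mathrm{cyc}}$ occupies the antisymmetric part, one obtains
\begin{equation*}
\text{Sym}^2\rho_{\l,\ell}=\text{Ind}(\psi^2)\oplus\bigl(\chi_{\mathrm{cyc}}\cdot\chi_{K_\l/\Q}\bigr).
\end{equation*}
Thus the $1$-dimensional constituent of $\text{Sym}^2\rho_{\l,\ell}$ is the extension of $\sigma_{1,\l}\sigma_{2,\l}$ that differs from $\det\rho_{\l,\ell}$ by $\chi_{K_\l/\Q}$, \emph{not} the determinant extension itself; this is the structural reason a quadratic twist is forced to appear, and it is the step most prone to error. (One verifies it through the inert traces: the constituent $\chi_{\mathrm{cyc}}\chi_{K_\l/\Q}$ contributes $-p$ at an inert $p$, matching $\text{tr}\,\text{Sym}^2\rho_{\l,\ell}(\text{Frob}_p)=-p$, whereas $\chi_{\mathrm{cyc}}$ would contribute $+p$.)

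Next I would invoke the Clausen formula. By Theorem \ref{thm:FF-Clausen} and its realization on the $K3$ surfaces $X_\l$, the representation with Frobenius trace $\phgthree{\phi}{\phi}{\phi}{\eps}{\eps}{-\l}$ equals $\text{Sym}^2\rho_{\l,\ell}$ twisted by a quadratic character $T_\l$, which I would pin down by matching Proposition \ref{prop:FF-Ramanujan}. The inert (supersingular) primes are decisive: for $p$ inert in $K_\l$ one has $a_p(E_\l)=0$, so $\text{Sym}^2\rho_{\l,\ell}(\text{Frob}_p)$ has trace $-p$ while $\text{Ind}(\psi^2)$ contributes $0$; comparing with the values $\phi(-1)q$ and $\phi(-2)q$ in Proposition \ref{prop:FF-Ramanujan} yields $T_{-1/4}=\chi_{\frac12,3}$ and $T_{1/8}=\chi_{\frac12,2}$. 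The $1$-dimensional constituent of the ${}_3\mathbb P_2$ representation is therefore $\chi_{\mathrm{cyc}}\cdot\chi_{K_\l/\Q}\cdot T_\l$, which, using $\chi_{K_{-1/4}/\Q}=\chi_{\frac12,-3}$ and $\chi_{K_{1/8}/\Q}=\chi_{\frac12,-1}$, simplifies to $\mathcal T\cdot\chi_{\frac12,-1}$ when $\l=-1/4$ and to $\mathcal T\cdot\chi_{\frac12,-2}$ when $\l=1/8$, with $\mathcal T\leftrightarrow\chi_{\mathrm{cyc}}$ as in Example \ref{eg:Tate}.

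Equivalently, I could bypass the explicit computation of $T_\l$ and read the trace of the $1$-dimensional piece directly from Proposition \ref{prop:FF-Ramanujan}: since the $2$-dimensional constituent $\text{Ind}(\psi^2)\otimes T_\l$ has trace $0$ at inert primes and accounts for the non-constant Jacobi-sum terms at split primes, the constant term $\phi(-1)q(\fp)$ (resp. $\phi(-2)q(\fp)$) is exactly the Frobenius trace of the $1$-dimensional piece at every good $\text{Frob}_\fp$. Recognizing $\phi(-1)=\chi_{\frac12,-1}(\text{Frob}_\fp)$ and $\phi(-2)=\chi_{\frac12,-2}(\text{Frob}_\fp)$ through \eqref{eq:chi,im}, and $q(\fp)=\mathcal T(\text{Frob}_\fp)$, the sought character agrees with $\chi_{\frac12,-1}\cdot\mathcal T$ (resp. $\chi_{\frac12,-2}\cdot\mathcal T$) on a set of Frobenius classes of density one, so Chebotarev finishes the identification. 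The main obstacle throughout is the quadratic twist: one must ensure that the symmetric square supplies the extension of $\sigma_{1,\l}\sigma_{2,\l}$ twisted by $\chi_{K_\l/\Q}$ rather than $\det\rho_{\l,\ell}$ itself, and that $T_\l$ is correctly computed from the inert primes, since an error in either factor shifts the final answer by the quadratic character $\chi_{K_\l/\Q}$.
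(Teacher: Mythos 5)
Your proposal is correct, and it in fact contains two complete arguments. Your second route --- observing that the two-dimensional induced constituent has trace zero at inert primes and accounts for the Jacobi-sum terms at split primes, so that the trace of the linear constituent is exactly the term $\phi(-1)q(\fp)$ (resp.\ $\phi(-2)q(\fp)$) of Proposition \ref{prop:FF-Ramanujan}, then identifying $\phi(-1)=\chi_{\frac 12,-1}(\text{Frob}_p)$, $\phi(-2)=\chi_{\frac 12,-2}(\text{Frob}_p)$ and $q=\mathcal T((p))$ and invoking Chebotarev --- is precisely the paper's argument; the paper gives no proof beyond ``this yields the following corollary,'' so the intended derivation is exactly that direct reading. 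Your first route, via $\rho_{\l,\ell}=\text{Ind}_{G_{K_\l}}^{G_\Q}\psi$ and the decomposition $\text{Sym}^2\rho_{\l,\ell}=\text{Ind}(\psi^2)\oplus\chi_{\mathrm{cyc}}\chi_{K_\l/\Q}$, is left entirely implicit in the paper; what it buys is a structural account of the two quadratic twists involved (the $\chi_{K_\l/\Q}$ forced by the symmetric square of an induced representation, and the twist $T_\l$ relating $\text{Sym}^2\rho_{\l,\ell}$ to the ${}_3\mathbb P_2$ representation), and your values $T_{-1/4}=\chi_{\frac12,3}$, $T_{1/8}=\chi_{\frac12,2}$, $\chi_{K_{-1/4}/\Q}=\chi_{\frac12,-3}$, $\chi_{K_{1/8}/\Q}=\chi_{\frac12,-1}$ do multiply out to the stated answers and are consistent with Proposition \ref{prop:FF-Ramanujan} and Examples \ref{eg:7} and \ref{eg:8}. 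One point your analysis makes visible, and which you could state explicitly: since $T_\l$ is nontrivial on $G_{K_\l}$, the character $\chi_{\frac12,-1}\cdot\mathcal T$ (resp.\ $\chi_{\frac12,-2}\cdot\mathcal T$) restricts on $G_{K_\l}$ to a quadratic twist of $\sigma_{1,\l}\sigma_{2,\l}=\chi_{\mathrm{cyc}}|_{G_{K_\l}}$ rather than to $\sigma_{1,\l}\sigma_{2,\l}$ itself, so ``extends'' in the statement is to be read as ``appears as the linear constituent of the ${}_3\mathbb P_2$ representation''; your proof identifies the correct object either way.
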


This is compatible with the $p$-adic analogues of Ramanujan-type formulas for $1/\pi$.  For instance, the following supercongruence related to \eqref{eq:RR1/pi}  was conjectured by van Hamme \cite{vanHamme} and proved by  the second author in \cite{Long}.  For any prime $p>3$,

\begin{equation*}
\sum_{k=0}^{\frac{p-1}2}(6k+1)\(\frac{(\frac 12)_k}{k!}\)^3\(\frac 14\)^k\equiv \(\frac{-1}p\) \pmod{p^4}.
\end{equation*}

See \cite{WIN2} for a general result, and \cite{Long-Ramakrishna}  by the second and third authors, \cite{Swisher} by the fourth author, \cite{OZ} by Osburn and Zudilin  for some recent developments on Ramanujan-type supercongruences.
\bk

\section{Translation of Some Classical Results}\label{translations}

In this section, we use our primary method to translate several classical results to their finite field analogues, while also making note of information gleaned from the corresponding Galois interpretations as appropriate.  The results we translate include Kummer's 24 relations and the well-known Pfaff-Saalsch\"utz formula, which were considered previously by Greene \cite{Greene}. \color{black}  We additionally use our method to obtain a few finite field analogues of algebraic hypergeometric identities.

\subsection{Kummer's $24$ Relations}\label{ss:Kummer24}

In this section we address the relations between the period ${}_2\mathbb P_1$ hypergeometric functions corresponding to independent solutions of the hypergeometric differential equation, and Kummer's 24 relations.   Statements below without proof or further references are due to Greene, especially Theorem 4.4 in \cite{Greene}. See also \cite{Greene93} by Greene for an interpretation in terms of representations of $\text{SL}(2,q)$.

The following proposition corresponds to  the discussion in  \S \ref{ss:Kummer} for the classical setting.

\begin{Proposition}\label{prop: Greene's independent solutions}
For any characters $A$, $B$, $C \in \widehat{\F_q^\times}$, and $\l\in \F_q$, we have
\begin{eqnarray*}
\phgq ABC\l
    &=&ABC(-1) \overline C(\l)\, \phgq {\overline CB}{\overline CA}{\overline C}\l+\delta(\l)J(B,C\overline B),\\
    &=&ABC(-1)\overline A(\l )\, \phgq {A}{\overline CA}{\overline BA}{1/\l}+\delta(\l)J(B,C\overline B),\\
   & =& B(-1)\, \phgq AB{AB\overline C}{1-\l}.
\end{eqnarray*}
\end{Proposition}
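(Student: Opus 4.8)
The three identities in Proposition \ref{prop: Greene's independent solutions} are the finite field analogues of the Euler/Pfaff-type transformations \eqref{eq:Euler-transf} and \eqref{eq:Pfaff-transf}, together with the $z \mapsto 1/z$ relation. My plan is to prove each by starting from the defining sum \eqref{Eq:30}
$$
\phgq ABC\l=\sum_{y\in\fq} B(y)\overline {B}C(1-y)\overline A(1-\l y),
$$
and applying an explicit change of variables in $y$, keeping careful track of the boundary term that arises because $\eps(0)=0$ in our convention (this is exactly the source of the $\delta(\l)J(B,C\overline B)$ correction terms). I will focus on the \emph{third} identity first, since it corresponds to the cleanest substitution and has no delta correction, then indicate how the first two follow by similar substitutions.

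**Key steps for the third identity.**

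For the relation
$$
\phgq ABC\l = B(-1)\, \phgq AB{AB\overline C}{1-\l},
$$
I would substitute $y \mapsto 1-y$ in the right-hand sum. Explicitly, the right side is
$$
B(-1)\sum_{y\in\fq} B(y)\,\overline{B}\,AB\overline C(1-y)\,\overline A\bigl(1-(1-\l)y\bigr).
$$
Replacing $y$ by $1-y$ turns $B(y)$ into $B(1-y)$, turns $\overline{B}AB\overline C(1-y)=A\overline C(1-y)$ into $A\overline C(y)$, and turns the argument $1-(1-\l)y$ into $1-(1-\l)(1-y)=\l + (1-\l)y=\l\bigl(1+\tfrac{1-\l}{\l}y\bigr)$ — so I must regroup carefully. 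The cleaner route is to manipulate $\overline A(1-\l y)$ directly: using $1-\l y = (1-\l)\bigl(1 + \tfrac{\l}{1-\l}(\,\cdot\,)\bigr)$-type factorizations and the multiplicativity of characters. The guiding principle, stated in the Remark after Lemma \ref{lem:1}, is that a well-chosen change of variables makes these proofs compact; I would select the substitution so that the three character factors $B(\cdot),\ \overline BC(\cdot),\ \overline A(\cdot)$ are permuted into the three factors on the other side, using $C(-1)$ or $B(-1)$ sign factors to absorb the $(-1)$'s that multiplicative characters produce under $y\mapsto -y$ or $y \mapsto 1-y$.

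**The first two identities and the main obstacle.**

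For the first two identities I would use the substitutions $y \mapsto 1/y$ (up to rescaling by $\l$) and the Pfaff-type reflection, which send the sum over $\fq$ to another sum of the same shape but with the roles of $A$ and $C$ (respectively the argument $\l$ and $1/\l$) interchanged. The point requiring genuine care — and what I expect to be the main obstacle — is the bookkeeping of the excluded value $y=0$ and the poles of the substituted variable. Because we set $\eps(0)=0$, every time a change of variables such as $y \mapsto 1/y$ or $y \mapsto \l y$ moves the "missing" point $y=0$ to a different location, one must add back or subtract the corresponding single term, and these are precisely the terms that assemble into $\delta(\l)J(B,C\overline B)$ in the first two identities. Concretely, I expect the delta correction to emerge by isolating the $y$-value at which one of the character arguments vanishes, evaluating the leftover single summand via the Jacobi-sum identity \eqref{eq:J(A,Abar)} and the definition $J(A,B)=\sum_x A(x)B(1-x)$, and checking that it is nonzero only when $\l=0$. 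An alternative, fully systematic approach would be to pass through the Jacobi-sum expression \eqref{eq:19} and apply Lemma \ref{lem:1} together with the basic Gauss-sum relations \eqref{Gauss1a}--\eqref{Gauss2a}; this trades the change-of-variables bookkeeping for delta-term bookkeeping in character space, but either way the crux is verifying that the error terms match exactly. I would present the change-of-variables proof as primary and note the Jacobi-sum route as a cross-check.
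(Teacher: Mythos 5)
The paper itself does not prove this proposition: the opening sentence of \S\ref{ss:Kummer24} attributes it to Greene (Theorem 4.4 of \cite{Greene}), and only the Remark that follows explains the provenance of the delta terms. So your proposal has to stand on its own, and its overall strategy --- a change of variables in the defining sum \eqref{Eq:30}, in the spirit of the Remark after Lemma \ref{lem:1} --- is the right one. Two points, however, need repair. First, for the third identity you never actually pin down the substitution, and the one you begin with, $y\mapsto 1-y$, does not produce this identity but rather a Pfaff-type relation (the first identity of Proposition \ref{prop: Greene's Euler and Pfaff}). What works is to note that for $y\neq 1$ one has $1-(1-\l)y=(1-y)\bigl(1-\l\tfrac{y}{y-1}\bigr)$, so that $\phgq AB{AB\overline C}{1-\l}$ becomes the sum of $B(y)\,\overline C(1-y)\,\overline A\bigl(1-\l\tfrac{y}{y-1}\bigr)$ over $y\neq 1$, and then to apply the involution $u=y/(y-1)$ of $\F_q\setminus\{1\}$: the factor $\overline B(u-1)=\overline B(-1)\,\overline B(1-u)$ supplies exactly the sign $B(-1)$, and the excluded points contribute nothing because every character vanishes at $0$.

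Second, and more seriously, your account of where $\delta(\l)J(B,C\overline B)$ comes from is not correct. It is not a ``leftover single summand'' created by the substitution relocating the missing point $y=0$: for $\l\neq 0$ the substitution $y\mapsto 1/(\l y)$ (which proves the first identity; composing with $y\mapsto \l y$ gives the second) is a bijection of $\F_q^\times$, and the omitted points contribute $0$ to both sides since some character factor vanishes there, so no correction term arises at all. The correction is needed only at $\l=0$, where the substitution is not even defined; there the entire main term on the right vanishes because of the prefactor $\overline C(0)=0$ (respectively $\overline A(0)=0$), while the left-hand side equals $\sum_y B(y)\overline BC(1-y)=J(B,C\overline B)$. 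In other words the delta term restores the whole Jacobi sum, not one summand --- which is precisely what the Remark following the proposition says. With the substitution for the third identity supplied and the $\l=0$ case treated separately in this way, your proof goes through.
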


\begin{Remark}In  the first two equalities, the extra delta term on the right is necessary as when $\l=0$ the left hand side is a non-zero Jacobi sum by definition, while the first term on the right hand side has value 0 by our convention. 
In the last entry, by  replacing characters $A$, $B$, and $C$ with characters $AD$, $BD$, and $\ol C D$, repectively,  one gets $$\phgq {AD}{BD}{\ol C D}{1-\l}  = BD(-1)\, \phgq {AD}{BD}{ABCD}{\l}.$$
Taking  $\l=0$ and applying \eqref{eq:for6.7-1} to both sides we obtain
$$
  \frac{BD(-1)}{q-1}\sum_{\chi}J(AD\chi,\ol\chi)J(BD\chi,C\ol {D\chi})=BD(-1)J(BD,AC).
$$
By a variable change $\chi \mapsto \ol D\chi$ and the identity between the Gauss sums and Jacobi sums (\ref{JacobiGaussrelation}), we can deduce the identity of Helversen-Pasotto \cite{Helversen-Pasotto}, which says
that for any multiplicative characters $A,B,C,D$ of $\F_q$
 \begin{multline}\index{Helversen-Pasotto formula}
 \frac1{q-1}\sum_{\chi} g(A\chi)g(B\chi)g(C\ol \chi)g(D\ol \chi)=\\
 \frac{g(AC)g(AD)g(BC)g(BD)}{g(ABCD)}+q(q-1)AB(-1)\delta(ABCD).
 \end{multline}    A representation theoretic proof of this formula is given in \cite{L-SA} by Li and Soto-Andrade. Using the representation theoretic approach, Li further obtained the $p$-adic field version of this identity in \cite{Li-Barnes-II}.
\end{Remark}

\begin{Remark}\label{rem:6}Let $\l\in  \Q\setminus \{0,1\}$, and $a,b,c\in \Q$ such that $a$, $b$, $a-c$, $b-c$, $a+b-c \notin  \Z$.  Use $\sigma_{\l,1}$ (resp. $\sigma_{1-\l,2}$) to denote a  2-dimensional $\ell$-adic Galois representation corresponding to $\pPq{2}{1}{a&b}{&c}{\l}$ (resp. $\pPq{2}{1}{a&b}{&a+b-c}{1-\l}$) via Theorem \ref{thm:WIN3a}. The last equality in Proposition \ref{prop: Greene's independent solutions}  implies that if we use the  $\chi_{(\cdot),(\cdot)}$  notation in \eqref{eq:chi,im}, then
$$\phgq {\iota_\fp(a)}{\iota_\fp(b)}{\iota_\fp(c)}{\l; q(\fp)}
    =  \chi_{b,-1}(\fp)\cdot  \, \phgq  {\iota_\fp(a)}{\iota_\fp(b)}{\iota_\fp(a+b-c)}{1-\l;q(\fp)}.$$This means $ \sigma_{\l,1}$ is isomorphic to $\chi_{b,-1}\otimes \sigma_{1-\l,2}$ up to semisimplification.  When $\l=0$ or $1$, both  representations $\sigma_{\l,1}$ and $\sigma_{1-\l,2}$ have degree 1 instead of 2. Thus no extra delta terms will be needed in this case.  Many other formulas in this section have similar Galois interpretations and most of them require additional delta terms.

\end{Remark}

The next proposition corresponds to the Pfaff and Euler transformations   in \S \ref{ss:Kummer}.

\begin{Proposition}\label{prop: Greene's Euler and Pfaff}
For any characters $A$, $B$, $C \in \widehat{\F_q^\times}$, and $\l\in \F_q$, we have
\begin{align*}
\phgq ABC\l
&=\overline A(1-\l)\, \phgq A{\overline BC}C{\frac \l{\l-1}}+\delta(1-\l)J(B, C\ol{AB}), \\
&=\overline B(1-\l)\, \phgq { C\overline A}BC{\frac \l{\l-1}}
+\delta(1-\l)J(B, C\ol{AB}),\\
&=\overline {AB}C(1-\l)\, \phgq {\overline AC}{\overline BC}C{\l}+\delta(1-\l)J(B, C\ol{AB}).
\end{align*}
\end{Proposition}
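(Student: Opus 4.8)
The plan is to recognize all three identities as the finite field shadows of the classical Pfaff and Euler transformations \eqref{eq:Pfaff-transf}, \eqref{eq:Euler-transf}, which over $\C$ are proved by changes of variable in the Euler integral \eqref{Pintegral}. Since each satisfies the ($\ast$) condition, I expect the same substitutions to work at the level of the character-sum definition \eqref{Eq:30}, namely $\phgq ABC\l=\sum_{y\in\fq}B(y)\ol BC(1-y)\ol A(1-\l y)$, once the finite field bookkeeping (values of characters at $0$ and the point at infinity) is handled. Concretely I would prove the two Pfaff relations directly by substitution and then obtain Euler by composing them. Throughout, the term $\delta(1-\l)J(B,C\ol{AB})$ is accounted for by treating $\l=1$ as a degenerate case: when $\l=1$ the defining sum collapses to $\phgq ABC1=\sum_y B(y)\ol{AB}C(1-y)=J(B,\ol{AB}C)$, while each leading character factor $\ol A(1-\l)$, $\ol B(1-\l)$, $\ol{AB}C(1-\l)$ vanishes because $\chiup(0)=0$, so the identities reduce to $J(B,C\ol{AB})=J(B,C\ol{AB})$.

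\textbf{First Pfaff relation.} I would substitute $y\mapsto 1-y$, a bijection of $\fq$, obtaining $\sum_y B(1-y)\ol BC(y)\ol A(1-\l(1-y))$. Writing $\mu=\frac{\l}{\l-1}$ and using the factorization $1-\l(1-y)=(1-\l)(1-\mu y)$, multiplicativity of $\ol A$ (valid since $1-\l\neq 0$ when $\l\neq 1$, and harmless when $1-\mu y=0$) splits off the constant $\ol A(1-\l)$, and the remaining sum is exactly $\phgq A{\ol BC}C\mu$ after matching $A_2=\ol BC$, $B_1=C$, $A_1=A$ against \eqref{Eq:30}. This is the desired identity for $\l\neq 1$.

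\textbf{Second Pfaff relation.} Here I would use the finite field analogue of the classical substitution $t=\frac{s}{1-z(1-s)}$, i.e. the M\"obius map $y=\frac{u}{1-\l(1-u)}$. The key computations are $1-y=\frac{(1-\l)(1-u)}{1-\l+\l u}$, $\;1-\l y=\frac{1-\l}{1-\l+\l u}$, and $1-\l+\l u=(1-\l)(1-\mu u)$; feeding these into the summand and collecting the $(1-\l)$ powers leaves $\ol B(1-\l)$ times $B(u)\ol BC(1-u)\,\ol{C\ol A}(1-\mu u)$, which is $\ol B(1-\l)\,\phgq{C\ol A}BC\mu$. The one genuinely finite field point is that this substitution is only a bijection of $\mathbb P^1(\fq)$: the value $y=1/\l$ (the image of $u=\infty$) carries $\ol A(0)=0$ and so drops from the left sum, while the preimage $u=\frac{\l-1}{\l}$ of $y=\infty$ makes $1-\mu u=0$, hence carries $\ol{C\ol A}(0)=0$ and drops from the transformed sum; thus both restricted sums extend to full sums over $\fq$ with no correction, and the identity holds for $\l\neq 1$.

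\textbf{Euler relation and the main obstacle.} Finally I would obtain Euler by composing the two Pfaff relations, mirroring the classical derivation in \S\ref{ss:Kummer}. Applying the second Pfaff relation to $\phgq A{\ol BC}C\mu$ (argument $\mu=\frac{\l}{\l-1}$) produces argument $\frac{\mu}{\mu-1}=\l$ and, using $1-\mu=\frac{1}{1-\l}$, leading factor $\ol{\ol BC}(1-\mu)=\ol BC(1-\l)$, so that the first Pfaff relation yields $\phgq ABC\l=\ol A(1-\l)\,\ol BC(1-\l)\,\phgq{\ol AC}{\ol BC}C\l=\ol{AB}C(1-\l)\,\phgq{\ol AC}{\ol BC}C\l$, which is Euler. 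I expect the main obstacle to be exactly the boundary and vanishing bookkeeping in the M\"obius substitution of the second relation: one must verify that the points $0$, $1/\l$, $\infty$ and the zeros of the relevant characters line up so that passing between the $\mathbb P^1$ bijection and the affine sums introduces no stray Jacobi-sum term, and that the $\l=1$ degeneration is matched precisely by the single $\delta(1-\l)$ correction. This is the error-term analysis flagged in the introduction; here it is benign but must be checked with care.
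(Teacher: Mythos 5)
Your proof is correct. Note, however, that the paper itself supplies no proof of this proposition: it is stated with the blanket attribution to Greene (``Statements below without proof or further references are due to Greene, especially Theorem 4.4 in \cite{Greene}''), and Greene's own derivation runs through his binomial-coefficient expansions of the hypergeometric sum. Your route --- proving the two Pfaff relations directly by the substitutions $y\mapsto 1-y$ and $y=\frac{u}{1-\l+\l u}$ in the defining sum \eqref{Eq:30} and then composing them to get Euler --- is exactly the ``change of variables'' technique the paper advocates in the Remark following Lemma \ref{lem:1}, and it is more self-contained than citing Greene. The bookkeeping you flag is handled correctly: the factorizations $1-\l+\l y=(1-\l)(1-\mu y)$ and $1-\l+\l u=(1-\l)(1-\mu u)$ with $\mu=\frac{\l}{\l-1}$ are valid for $\l\neq 1$ and multiplicativity of characters survives a zero factor under the convention $\chiup(0)=0$; the two exceptional points $y=1/\l$ and $u=\frac{\l-1}{\l}$ of the M\"obius map each carry a character evaluated at $0$ and so drop out of both sums with no stray Jacobi-sum correction; $\delta(1-\mu)=0$ identically since $\mu=1$ is impossible, so the composition producing Euler is clean; and at $\l=1$ the leading terms vanish (because $\eps(0)=0$ as well) while $\phgq ABC1=J(B,C\ol{AB})$ accounts exactly for the $\delta(1-\l)$ term. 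The only item worth adding for completeness is the degenerate value $\l=0$ in the second relation, where your M\"obius map is the identity and both sides reduce to $J(B,\ol BC)$ by inspection.
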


The following relations between the normalized ${}_2\mathbb F_1$-hypergeometric functions follow immediately from  Propositions \ref{prop: Greene's independent solutions} and \ref{prop: Greene's Euler and Pfaff}.

\begin{Proposition}\label{prop: normalized independent solutions} For any characters $A$, $B$, $C \in \widehat{\F_q^\times}$, and $\l\in \F_q$, we have
\begin{eqnarray*}
\pFFq {2}{1} {A&B}{&C}{\l} &=& ABC(-1) \overline C(\l) \frac{J(\ol CA,\ol A)}{J(B,C\ol B)} \pFFq {2}{1} {\overline CB&\overline CA}{&\overline C}\l+\delta(\l), \\
   & =& ABC(-1)\overline A(\l) \frac{J(\ol CA,\ol BC)}{J(B,C\ol B)}\pFFq 21{A&\overline CA} {&\overline BA}{1/\l}+\delta(\l),\\
  &  =&\frac{J(B, C\ol{AB})}{J(B, C\ol B)}\pFFq 21{A&B}{&AB\overline C}{1-\l},\\
   & =&\overline A(1-\l)\pFFq 21 {A&\overline BC}{&C}{\frac \l{\l-1}}+\delta(1-\l)\frac{J(B, C\ol{AB})}{J(B, C\ol B)},\\
    &=&\overline B(1-\l)\pFFq 21{\overline AC&B}{&C}{\frac \l{\l-1}}+\delta(1-\l)\frac{J(B, C\ol{AB})}{J(B, C\ol B)},\\
    &=&\overline {AB}C(1-\l)\pFFq 21{\overline AC &\overline BC}{&C}{\l}+\delta(1-\l)\frac{J(B, C\ol{AB})}{J(B, C\ol B)}.
\end{eqnarray*}
\end{Proposition}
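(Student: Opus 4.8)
The plan is to recognize all six identities as the ``renormalized'' versions of the six period-function relations in Propositions \ref{prop: Greene's independent solutions} and \ref{prop: Greene's Euler and Pfaff}, obtained by dividing through by the Jacobi-sum normalizing factor and then re-expressing every surviving period function in its normalized form via \eqref{normalized2FF1}. Recall that by definition
$$
\pFFq{2}{1}{A&B}{&C}{\l}=\frac{1}{J(B,C\ol B)}\,\phgq{A}{B}{C}{\l},
$$
so the left-hand side of each claimed identity is exactly $J(B,C\ol B)^{-1}$ times the left-hand side of the corresponding ${}_2\mathbb P_1$ relation. First I would therefore take each relation in Propositions \ref{prop: Greene's independent solutions} and \ref{prop: Greene's Euler and Pfaff} and divide both sides by $J(B,C\ol B)$.

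The next step is purely mechanical: on the right-hand side each term is a period ${}_2\mathbb P_1$ with some permuted or shifted characters, and I would rewrite it as its own Jacobi-sum normalizing factor times the associated ${}_2\mathbb F_1$. For instance, in the first relation the period $\phgq{\ol CB}{\ol CA}{\ol C}{\l}$ has normalizing factor $J(\ol CA,\ol A)$ (computed from \eqref{normalized2FF1} with $A_2=\ol CA$, $B_1=\ol C$), producing the stated ratio $J(\ol CA,\ol A)/J(B,C\ol B)$; similarly the second relation yields the factor $J(\ol CA,\ol BC)/J(B,C\ol B)$. For each term I would read off the normalizing factor from \eqref{normalized2FF1}, and the delta term $\delta(\l)J(B,C\ol B)$ (resp. $\delta(1-\l)J(B,C\ol{AB})$) becomes $\delta(\l)$ (resp. $\delta(1-\l)\,J(B,C\ol{AB})/J(B,C\ol B)$) after the division, matching the printed forms.

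The only genuine computation is to verify that the resulting Jacobi-sum ratios collapse to the factors displayed in the statement. For relations four, five and six (from Proposition \ref{prop: Greene's Euler and Pfaff}) the normalizing factor of the right-hand period turns out to equal $J(B,C\ol B)$ itself, either directly, as for the period $\phgq{C\ol A}{B}{C}{\frac{\l}{\l-1}}$, or after the symmetry $J(X,Y)=J(Y,X)$, as in $J(\ol BC,B)=J(B,C\ol B)$, so those ratios are $1$ and no Jacobi factor remains in front of the main term. The one place requiring Lemma \ref{lem:1} is the third relation: the third relation of Proposition \ref{prop: Greene's independent solutions} has prefactor $B(-1)$, while the period $\phgq{A}{B}{AB\ol C}{1-\l}$ carries normalizing factor $J(B,A\ol C)=J(B,\ol{\ol A C})$; applying Lemma \ref{lem:1} (with its $A\mapsto B$, $B\mapsto \ol A C$) gives $J(B,\ol{\ol A C})=B(-1)J(B,C\ol{AB})$, so together with $B(-1)^2=1$ the whole prefactor becomes $J(B,C\ol{AB})$, exactly the numerator in the stated ratio.

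I expect no conceptual obstacle; the entire content is bookkeeping of character arithmetic, and the main risk is simply an algebra slip in tracking the normalizing factors $J(A_2,B_1\ol{A_2})$ through the six parameter substitutions. The safeguard is that each substitution is forced by \eqref{normalized2FF1}, and every ratio must reduce either to $1$, to a monomial in the listed Jacobi sums, or (through Lemma \ref{lem:1} and $J(X,Y)=J(Y,X)$) to the printed expression, so the computations are self-checking.
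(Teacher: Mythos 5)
Your proposal is correct and is exactly the route the paper takes: the paper derives this proposition by declaring that it ``follows immediately'' from Propositions \ref{prop: Greene's independent solutions} and \ref{prop: Greene's Euler and Pfaff} upon dividing by the normalizing Jacobi sums, and your bookkeeping of the factors $J(A_2,B_1\ol{A_2})$ (including the use of Lemma \ref{lem:1} to convert $B(-1)J(B,A\ol C)$ into $J(B,C\ol{AB})$ in the third identity, and the symmetry $J(X,Y)=J(Y,X)$ in the last three) checks out.
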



Using Proposition \ref{prop: 2F1-imprimitive} and Lemma \ref{lem:1}, we obtain the following proposition, which gives the evaluations in the imprimitive  (see Definition \ref{def:3}) cases.

\begin{Proposition}\label{prop: 2FF1-imprimitive} Suppose $\l \neq 0$, and $A$, $B$, $C$ are nontrivial. Then,
\begin{eqnarray*}
\pFFq21 {\eps &B}{&C}{\l} &=&   1-\frac{\overline C(\l) \overline BC(\l-1)}{J (B, \overline BC)},\\
\pFFq21 {A&B}{&B}{\l}&=&\overline A(1-\l)-\overline B(\l)J(B,\overline A),    \\
\pFFq21 {A&B}{&A}{\l}&=&\overline B(1-\l)-\frac{\overline A(\l)}{J(\ol A,B)},\\
\pFFq21 {A&\eps}{&C}{\l}&=&
     1-\overline C(-\l)\overline AC(1-\l)J(C,\overline A)
     -\delta(1-\l)\delta(\overline AC)(q-1).
\end{eqnarray*}
\end{Proposition}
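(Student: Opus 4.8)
The plan is to obtain all four identities directly from the definition \eqref{normalized2FF1} of the normalized hypergeometric function together with the period evaluations already recorded in Proposition \ref{prop: 2F1-imprimitive}. Recall that, for upper entries $A_1,A_2$ and lower entry $B_1$,
\begin{equation*}
\pFFq21{A_1&A_2}{&B_1}{\l}=\frac{1}{J(A_2,B_1\ol{A_2})}\,\phgq{A_1}{A_2}{B_1}{\l},
\end{equation*}
so in each line I would read off the normalizing factor $J(A_2,B_1\ol{A_2})$, substitute the matching formula for $\phgq{A_1}{A_2}{B_1}{\l}$ from Proposition \ref{prop: 2F1-imprimitive}, and simplify.

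For the first line, with $(A_1,A_2,B_1)=(\eps,B,C)$, the normalizer is $J(B,C\ol B)=J(B,\ol BC)$, which is precisely the leading term of $\phgq{\eps}{B}{C}{\l}$; dividing yields $1$ minus the remaining term, as stated. For the second and fourth lines the normalizer degenerates: when $(A_1,A_2,B_1)=(A,B,B)$ it is $J(B,\eps)$, and when $(A_1,A_2,B_1)=(A,\eps,C)$ it is $J(\eps,C)$. Since $B$ and $C$ are assumed nontrivial, each of these Jacobi sums equals $-1$, so dividing merely reverses the sign of the period formula and produces the asserted right-hand sides; the nontriviality hypotheses on $B$ and on $\ol AC$ also force the $\delta$-contributions in these lines to behave as claimed.

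The main obstacle will be the third line, $(A_1,A_2,B_1)=(A,B,A)$, where the normalizer $J(B,A\ol B)$ does not collapse directly. Here I would apply Lemma \ref{lem:1} in the form $J(B,A\ol B)=B(-1)J(B,\ol A)$ and use the symmetry $J(B,\ol A)=J(\ol A,B)$ to rewrite the normalizer as $B(-1)J(\ol A,B)$. Substituting $\phgq{A}{B}{A}{\l}=\ol B(\l-1)J(B,\ol A)-B(-1)\ol A(\l)$ (the $\delta$-term drops because $B\neq\eps$) and dividing, the first term simplifies via $J(B,\ol A)=J(\ol A,B)$ and $B(-1)^{-1}=\ol B(-1)$ to $\ol B(\l-1)\ol B(-1)=\ol B(1-\l)$, while the second term becomes $-\ol A(\l)/J(\ol A,B)$; together these give the stated evaluation. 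The only delicate point throughout is the bookkeeping of the factor $B(-1)$ and the character identity $\ol B(\l-1)\ol B(-1)=\ol B(1-\l)$, everything else being routine substitution into \eqref{normalized2FF1}.
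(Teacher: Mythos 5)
Your proposal is correct and is exactly the paper's route: the paper derives this proposition by dividing the period evaluations of Proposition \ref{prop: 2F1-imprimitive} by the normalizing Jacobi sum $J(A_2,B_1\overline{A_2})$, using $J(\chiup,\eps)=-1$ for nontrivial $\chiup$ in the degenerate lines and Lemma \ref{lem:1} to rewrite $J(B,A\ol B)=B(-1)J(B,\ol A)$ in the third line, just as you do. The only minor imprecision is your remark about $\ol AC$: there is no nontriviality hypothesis on $\ol AC$, and in the fourth line the $\delta(1-\l)\delta(\ol AC)(q-1)$ term is simply carried through with its sign reversed rather than eliminated, which is what the stated formula records.
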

Next, we consider the primitive  (see Definition \ref{def:3}) cases.

\begin{Proposition}\label{prop: commute and conjugation}
If $A, B, C \in \widehat{\F_q^\times}$,  $A,B\neq \eps$ and $A,B\neq C$,  then  we have the following.
\begin{enumerate}
\item[(1)]  In general, we have
\begin{align*}
J(A,\overline AC)\cdot \phgq ABC\lambda &= J(B,\overline B C)\cdot \phgq BAC\lambda, \\
\pFFq{2}{1}{A&B}{&C}\lambda &=
\pFFq 21{B&A}{&C}\lambda;
\end{align*}
\item[(2)] For $\l\neq 0$, $1$, we have

\begin{align*}
\phgq ABC\lambda &=
\overline C(\lambda)C\overline{AB}(\lambda-1)\frac{J(B,C\ol B )}{J(A, C\ol A)}\phgq {\overline A}{\overline B}{\overline C}\lambda, \\
 \pFFq{2}{1}{A&B}{&C}\lambda &=
\overline C(\lambda)C\overline{AB}(\lambda-1)\frac{J(\ol B,\ol C B)}{J( A, C\ol A)}\pFFq{2}{1} {\overline A &\overline B}{&\overline C}\lambda.
\end{align*}

\end{enumerate}
\end{Proposition}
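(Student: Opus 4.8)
The plan is to first observe that the two displayed identities are equivalent, then prove the slicker (normalized) one. By the definition \eqref{normalized2FF1}, $\pFFq{2}{1}{A&B}{&C}{\l}=\frac{1}{J(B,C\ol B)}\phgq ABC\l$ and $\pFFq{2}{1}{B&A}{&C}{\l}=\frac{1}{J(A,C\ol A)}\phgq BAC\l$, and in the primitive range ($A,B\neq\eps$, $A,B\neq C$) the normalizers $J(B,C\ol B)$ and $J(A,C\ol A)$ are nonzero (via \eqref{JacobiGaussrelation} and \eqref{eq:J(A,Abar)}); hence clearing these factors turns the second identity into the first. So it suffices to prove the normalized symmetry. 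For that I would invoke \eqref{eq:compMcCarthy}: since the function is primitive, $\pFFq{2}{1}{A&B}{&C}{\l}$ equals McCarthy's ${}_2\F_1(A,B;C;\l)^M+\delta(\l)$. McCarthy's function is symmetric in its numerator characters by its very definition, and $\delta(\l)$ does not involve the characters, so the right-hand side is invariant under $A\leftrightarrow B$; this gives part (1) for all $\l$.

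\textbf{Part (2).} Here the plan is to realize the parameter change $(A,B,C)\mapsto(\ol A,\ol B,\ol C)$ (with argument $\l$ fixed) as a composition of two of Greene's Kummer relations followed by the symmetry from part (1). Applying the first identity of Proposition \ref{prop: Greene's independent solutions} sends $\phgq ABC\l$ to $ABC(-1)\ol C(\l)\phgq{\ol CB}{\ol CA}{\ol C}{\l}$ (the $\delta(\l)$ term drops since $\l\neq0$); then applying the third (Euler) identity of Proposition \ref{prop: Greene's Euler and Pfaff} to $\phgq{\ol CB}{\ol CA}{\ol C}{\l}$ produces $C\ol{AB}(1-\l)\phgq{\ol B}{\ol A}{\ol C}{\l}$ (the $\delta(1-\l)$ term drops since $\l\neq1$). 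Multiplying the two prefactors and using $ABC\cdot C\ol{AB}=C^2$ with $C^2(-1)=1$, together with $C\ol{AB}(1-\l)=C\ol{AB}(-1)\,C\ol{AB}(\l-1)$, collapses the scalar to $\ol C(\l)\,C\ol{AB}(\l-1)$, so $\phgq ABC\l=\ol C(\l)C\ol{AB}(\l-1)\,\phgq{\ol B}{\ol A}{\ol C}{\l}$. Finally I would apply part (1) to $\phgq{\ol B}{\ol A}{\ol C}{\l}$ to swap its numerator characters, which introduces the ratio $J(\ol A,\ol C A)/J(\ol B,\ol C B)$.

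It then remains to identify this ratio with the asserted $J(B,C\ol B)/J(A,C\ol A)$, which is the one genuinely computational point. The plan is to show $J(A,C\ol A)J(\ol A,\ol C A)=J(B,C\ol B)J(\ol B,\ol C B)$ by evaluating each product via \eqref{JacobiGaussrelation} and the reflection formula \eqref{Gauss1}: for $C\neq\eps$ every factor becomes $g(X)g(\ol X)=qX(-1)$, and each product telescopes to $q$ with the dependence on $A$ (resp. $B$) cancelling through $X(-1)\ol X(-1)=1$, while for $C=\eps$ both reduce via \eqref{eq:J(A,Abar)} to $1$; either way they agree, so $J(\ol A,\ol C A)/J(\ol B,\ol C B)=(q/J(A,C\ol A))/(q/J(B,C\ol B))=J(B,C\ol B)/J(A,C\ol A)$. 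This yields the unnormalized identity of part (2), and dividing through by the appropriate normalizers exactly as in part (1) (noting that the normalizer of $\pFFq{2}{1}{\ol A&\ol B}{&\ol C}{\l}$ is $J(\ol B,\ol C B)$) gives the normalized version. The only obstacle is bookkeeping: correctly composing the two character substitutions and matching the prefactor and Jacobi-sum factors to the asserted form. There is no conceptual difficulty, since the delta corrections are harmless throughout the range $\l\neq0,1$.
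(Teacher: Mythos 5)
Your proof is correct, and for part (2) it follows the paper's strategy: compose Greene's Kummer relations to reach $\phgq ABC\l=\ol C(\l)C\ol{AB}(\l-1)\phgq{\ol B}{\ol A}{\ol C}\l$ and then invoke part (1) to swap the upper characters. Your chain is in fact shorter (first identity of Proposition \ref{prop: Greene's independent solutions} followed by the Euler identity of Proposition \ref{prop: Greene's Euler and Pfaff}, versus the paper's four-step composition), and you make explicit a point the paper leaves implicit, namely that $\frac{J(\ol A,\ol CA)}{J(\ol B,\ol CB)}=\frac{J(B,C\ol B)}{J(A,C\ol A)}$ via $J(X,C\ol X)J(\ol X,\ol CX)=q$ for $C\neq\eps$ (and $=1$ for $C=\eps$); that is a genuine improvement in completeness. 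The one place you diverge substantively is part (1): rather than manipulating Gauss and Jacobi sums directly from the definition (the paper uses $J(A,C\ol A)=A(-1)J(A,\ol C)$ and $g(\ol CA)J(A,\ol C)=g(A)g(\ol C)$ to exhibit the symmetric form), you appeal to the identification \eqref{eq:compMcCarthy} with McCarthy's manifestly symmetric function. This is legitimate --- the paper itself remarks that commutativity ``can be seen from \eqref{eq:compMcCarthy}'' --- but note that \eqref{eq:compMcCarthy} is stated in the paper without a detailed proof, and verifying it in the primitive case amounts to exactly the same Gauss-sum and delta-term bookkeeping that the paper's direct argument for part (1) carries out; so your route relocates rather than eliminates that computation.
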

\begin{proof}
Since $A,B$ are not equal to $\eps, C$, part (1) follows from the definition of the $_2\mathbb P_1$ function (see \eqref{Eq:30}) \bk and the relations
$J(A,C\ol A)=A(-1)J(A,\ol C)$, and $g(\ol CA)J(A,\ol C)=g(A)g(\ol C)$.

To prove part (2), we first use the Kummer relations stated in \bk  Propositions \ref{prop: Greene's independent solutions}  and \ref{prop: Greene's Euler and Pfaff}. More precisely,  for $\lambda\neq0$, $1$,  we have
  \begin{align*}
     \phgq ABC\lambda&
     \overset{\text{Prop.} \ref{prop: Greene's independent solutions}, \text{part 3}}{=}
     B(-1)\phgq AB{AB\overline C}{1-\lambda}\\
             &
             \overset{\text{Prop.} \ref{prop: Greene's Euler and Pfaff}, \text{part 3}}{=}
             B(-1)\overline C(\lambda)\phgq {B\overline C}{A\overline C}{AB\overline C}{1-\lambda}\\
             &
     \overset{\text{Prop.} \ref{prop: Greene's independent solutions}, \text{part 3}}{=}
             \overline C(\lambda)ABC(-1)\phgq {B\overline C}{A\overline C}{\overline C}{ \l }\\
             &
     \overset{\text{Prop.} \ref{prop: Greene's Euler and Pfaff}, \text{part 3}}{=}
             \overline C(\lambda)\overline{AB}C(\lambda-1)\phgq{\overline B}{\overline A}{\overline C}{\lambda}.
  \end{align*}
Next by part (1), one has
$$
   \phgq ABC\lambda= \overline C(\lambda)\overline{AB}C(\lambda-1)\frac{J(\ol A,\ol C A)}{J(\ol B, \ol CB)}\phgq{\overline A}{\overline B}{\overline C}{\lambda}.
$$
\end{proof}
\begin{Remark} Using the Galois perspective,
one can interpret the above equalities  as in Remark \ref{rem:6}.  {Let $a,b,c\in \Q$, with $a$, $b$, $a-c$, $b-c \notin \Z$ and $\l\in  \Q$. Let $N$ be the least common denominator of $a,b$ and $c$.}
Then for each prime $\ell$,  by part (2) of the above proposition, the 2-dimensional $\ell$-adic Galois representation $\sigma_{\l,\ell}$ of 
$G_{\Q(\zeta_N)}$ associated with $\pPq{2}{1}{a&b}{&c}{\l}$ via Theorem \ref{thm:WIN3a} satisfies the following property: up to semisimplification,  $$\sigma_{\l,\ell} \cong \psi \otimes \ol \sigma_{\l,\ell},$$ where $\ol \sigma_{\l,\ell}$ is the complex conjugation of $\sigma_{\l,\ell}$ and $\psi$ is the linear representation of $G_K$ associated with the Gr\"ossencharacter $\chi_{-c,\l}\cdot \chi_{c-a-b, \l-1}\cdot \mathcal J_{(b,c-b)}/\mathcal J_{(a,c-a)}$ of $K$; see \eqref{eq:chi,im} and \eqref{J->mathcal J} for $\chi_{(\cdot),(\cdot)}$ and $\mathcal J_{(\cdot,\cdot)}$.
\end{Remark}

The next result corresponds to  the classical equation \eqref{eq:z->1/z} in \S \ref{ss:Kummer}.
\begin{Corollary}
Suppose  $A,B\neq \eps$, and $A, B\neq C$.
If $\l\neq 0$, we have
\begin{multline*}
2\cdot \phgq ABC{\l}= ABC(-1)\overline A(\l)\phgq A{A\overline C}{A\overline B}{\frac 1\l}\\
+ABC(-1)\overline B(\l)\frac{J(B,\overline BC)}{J(A,\overline AC)}\phgq B{B\overline C}{B\overline A}{\frac 1\l},
\end{multline*}
\begin{multline*}
2\cdot \pFFq{2}{1} {A&B}{&C}{\l}= ABC(-1)\overline A(\l)\frac{J(\ol CA,\ol BC)}{J(B,C\ol B)}\pFFq{2}1 {A& A\overline C}{&A\overline B}{\frac 1\l}\\
+ABC(-1)\overline B(\l)\frac{J(C\ol A,\overline CB)}{J(A,\overline AC)}\pFFq 21 {B&B\overline C}{&B\overline A}{\frac 1\l}.
\end{multline*}
\end{Corollary}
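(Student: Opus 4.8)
The plan is to obtain both terms on the right-hand side from a single identity---the second relation of Proposition \ref{prop: Greene's independent solutions}---applied twice: once to $\phgq ABC\l$ directly, and once after interchanging the top entries $A$ and $B$ via Proposition \ref{prop: commute and conjugation}. Averaging the two resulting expressions for $\phgq ABC\l$ is what produces the factor of $2$.

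First I would apply the second relation of Proposition \ref{prop: Greene's independent solutions},
$$\phgq ABC\l = ABC(-1)\,\overline A(\l)\,\phgq {A}{A\overline C}{A\overline B}{1/\l}+\delta(\l)\,J(B,C\overline B).$$
Since $\l\neq 0$ the delta term vanishes, and what remains is precisely the first summand of the claimed formula.

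Next I would produce the second summand by first commuting $A$ and $B$. Part (1) of Proposition \ref{prop: commute and conjugation} gives
$$\phgq ABC\l=\frac{J(B,\overline BC)}{J(A,\overline AC)}\,\phgq BAC\l,$$
and this is exactly the step where the hypotheses $A,B\neq\eps$ and $A,B\neq C$ are needed. Applying the same second relation of Proposition \ref{prop: Greene's independent solutions} to $\phgq BAC\l$, with the roles of $A$ and $B$ swapped and again using $\l\neq 0$ to discard the delta term, yields
$$\phgq BAC\l = ABC(-1)\,\overline B(\l)\,\phgq {B}{B\overline C}{B\overline A}{1/\l},$$
where I use $BAC(-1)=ABC(-1)$. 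Substituting this back reproduces the second summand. Adding the two expressions for $\phgq ABC\l$ then gives the period statement.

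Finally, for the normalized ${}_2\mathbb F_1$ form I would divide the period identity by $J(B,C\overline B)$ and rewrite each shifted ${}_2\mathbb P_1$ on the right as its normalized counterpart times the Jacobi-sum factor dictated by \eqref{normalized2FF1}; for example $\phgq {A}{A\overline C}{A\overline B}{1/\l}$ carries the factor $J(A\overline C,C\overline B)=J(\overline C A,\overline B C)$. Using the symmetry $J(X,Y)=J(Y,X)$, these factors recombine into the coefficients $J(\overline C A,\overline B C)/J(B,C\overline B)$ and $J(C\overline A,\overline C B)/J(A,\overline AC)$ displayed in the statement. The argument is otherwise mechanical; I expect the only point demanding care to be this last bookkeeping step, namely correctly identifying the Jacobi-sum normalization attached to each shifted period, rather than any conceptual difficulty.
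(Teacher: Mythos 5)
Your proposal is correct and follows essentially the same route as the paper's proof: apply the second identity of Proposition \ref{prop: Greene's independent solutions} once directly and once after commuting $A$ and $B$ via part (1) of Proposition \ref{prop: commute and conjugation}, then add the two expressions and normalize. The Jacobi-sum bookkeeping in your final step also matches the stated coefficients.
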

Note that the appearance of a factor of 2 on the left hand sides corresponds to the fact that the right hand  sides are the  traces of 4-dimensional Galois representations at Frobenius elements.
\begin{proof}
Using Proposition \ref{prop: commute and conjugation} part (1)
and then Proposition \ref{prop: Greene's independent solutions} part (2)
we obtain that if $\l\neq 0$,
\begin{multline*}
\phgq ABC\lambda=\frac{J(B,\overline B C)}{J(A,C\ol A)}\phgq BAC\lambda\\
= ABC(-1)\overline B(\l)\frac{J(B,\overline B C)}{J(A,C\ol A)}\phgq {B}{\overline CB}{\overline AB}{\frac 1\l}.
\end{multline*}
Combining this with the result obtained by using the second part of Proposition \ref{prop: Greene's independent solutions} directly,
we get the  desired relations.
\end{proof}

\subsection{A  Pfaff-Saalsch\"utz  evaluation formula}

In this section we review an analogue of the Pfaff-Saalsch\"utz formula obtained by Greene in \cite{Greene}, and provide geometric interpretations in terms of Galois representations.

To review how Greene obtained his analogue, we first relabel the Pfaff-Saalsch\"utz formula \eqref{eq:pf-s} as
\begin{equation}\label{PSrelabeled}
\pFq{3}{2}{a&b&-n}{&d&1+a+b-n-d}{1}=\frac{(d-a)_n(d-b)_n}{(d)_n( d -a-b)_n}.
\end{equation}
Next, take $A,B$ and $C$ to be $\ol A D,\ol B D$ and $D$ in the third identity of Prop. \ref{prop: Greene's Euler and Pfaff} to obtain
\begin{equation}\label{first_step}
{AB}\ol D(1-\l)\, \phgq ABD\l
= \phgq {\overline AD}{\overline BD}D{\l}
-\delta(1-\l)J(D\ol B,\ol A)
\end{equation}
where we use Lemma \ref{lem:1} to get the $\delta$-term.

{Now, expand the left side of \eqref{first_step} using the defining formula \eqref{n+1Pn}, invoking the rewritten third identity of Prop. \ref{prop: Greene's Euler and Pfaff} above, then expand the $_2{\mathbb P}_1$ on the right via its definition \eqref{Eq:30} to get a double sum. The $\delta$-term
becomes $BD(-1)J(D\ol B,\ol A)$ and upon interchanging the order of summation one realizes the main term as
$J(B,\overline{CB} D) J(C,A \ol D)$. Finally, an application of Lemma \ref{lem:1} to $J(B,\overline{CB} D)$ establishes the following analogue to the Pfaff-Saalsch\"utz formula \eqref{PSrelabeled},}

\begin{equation}\label{eq:pf-s-FF}\index{finite field analogues!Pfaff-Saalsch\"utz  evaluation formula}\index{Pfaff-Saalsch\"utz  evaluation formula!in terms of $_3\mathbb P_2$}
   {}_3\mathbb P_2
 \left[
   \begin{matrix}
    A & B & C \\
         & D & ABC\overline D \\
\end{matrix};1\right] =B(-1)J(C,A\overline D)J(B, C\overline D)-BD(-1)J(D\overline B,\overline A).
\end{equation} 
Equivalently,  for $a,b,c,d\in \Q$ with least common denominator $N$ and any unramified prime ideal $\fp$ of $\mathcal O_{\Q(\zeta_N)}$, \bk using $\mathcal J_{(a,b)}(\fp)=-\iota_\fp(a)\iota_\fp(b)(-1)J(\iota_\fp(a),\iota_\fp(b))$  as \eqref{J->mathcal J} in \S \ref{ss:Jacobi-Grosse} (See Definition \ref{def:iota} for  $\iota_\fp(\cdot)$), one can rewrite {the analogue \eqref{eq:pf-s-FF} as}
\begin{multline*}
   {}_3\mathbb P_2
 \left[
   \begin{matrix}
    \iota_\fp(a) & \iota_\fp(b) & \iota_\fp(c) \\
         & \iota_\fp(d) & \iota_\fp(a+b+c-d) \\
\end{matrix};1\right] \\ =\chi_{a,-1}(\fp)\left [ \mathcal J_{(c,a-d)}(\fp)\mathcal J_{(b,c-d)}(\fp)+\mathcal J_{(d-b,-a)}(\fp)\right ].
\end{multline*}

In terms of Jacobi sums, \eqref{eq:pf-s-FF} can be written as \index{finite field analogues!Pfaff-Saalsch\"utz  evaluation formula}\index{Pfaff-Saalsch\"utz  evaluation formula!in terms of Jacobi sums}
\begin{multline}\label{eq:P.S.-FF}
\frac1{q-1}\sum_{\chi \in \widehat{\F_q^\times}} C\chiup(-1)J(A\chiup,\overline \chiup)J(B\chiup,\overline{D\chiup})J(C\chiup,D\overline{ABC\chiup})\\=J(C,A\overline D)J(B,C\overline D)-D(-1)J(D\overline B,\overline A).
\end{multline}
\begin{Remark}\label{rem:3}  We note that  from the Galois perspective, the $_3\mathbb P_2$ in \eqref{eq:pf-s-FF}  corresponds to a 2-dimensional Galois representation that can be described using two Gr\"ossencharacters.  Furthermore,  the additional term $-BD(-1)J(D\ol B, \ol A)$ is due to the additional term involving $\delta(1-\l)$ appearing in the third identity in Proposition \ref{prop: Greene's Euler and Pfaff}, together with formula \eqref{eq:delta}. \end{Remark}
For further examples of evaluation formulas over finite fields, see \cite{Evans-Greene2, Greene, Greene-Stanton, McCarthy}.

\subsection{A few analogues of algebraic hypergeometric formulas}\label{Sec:analogues}
In this section, we will give finite field analogues of \eqref{eq:Slater} and the following identity \cite[(1.5.19)]{Slater}
\begin{equation}\label{eval2}
\pFq{2}{1}{a&a+\frac 12}{&\frac12}z=\frac{1}2\( (1+\sqrt{z})^{-2a}+(1-\sqrt{z})^{-2a} \),
\end{equation}
which generalize a few recent results of Tu and Yang in \cite{Tu-Yang2} that are proved using  quotients of Fermat curves.

In the complex setting, to equate two formal power series, it suffices to compare  the coefficients of each. For a proof of  the identity \eqref{eval2} we note that both sides are  functions of $z$. The coefficient of $z^n$ on the right hand side is $\binom{-2a}{2n}$,  while on the left it is
 $$\frac{(a)_n(a+1/2)_n}{(1/2)_n(1)_n}\overset{\text{Thm. \ref{thm:double}}}=\frac{(2a)_{2n}}{(1)_{2n}}=\binom{-2a}{2n}.$$
The evaluation identity  \eqref{eq:Slater}  mentioned in the introduction can be proved similarly.

In the finite field setting, one can obtain identities in a similar way  using \eqref{eq:f-f_chi} which we recall says that  any function $f(x):\F_q \rightarrow \C$ can be written uniquely in the form

$$
 f(x)=\delta(x)f(0)+\sum_{\chi \in \widehat{\F_q^\times}}f_\chi\cdot \chiup(x).
$$
We now state our finite field analogues of  \eqref{eval2} and \eqref{eq:Slater}.

\begin{Theorem}\label{thm:FF-Dihedral}
Let $q$ be an odd prime power, $z\in \F_q^\times$, and $A\in \widehat{\F_q^\times}$ have order larger than 2. Then
\begin{eqnarray*}
\pFFq{2}{1}{A&A\phi}{&\phi}{z}
&=& \( \frac{1+\phi(z)}2\) \(\overline A^2(1+\sqrt{z})+\overline A^2(1-\sqrt{z})\), \\
\pFFq{2}1{A&A\phi}{&A^2}{z}&=&\( \frac{1+\phi(1-z)}2\) \(\overline A^2\(\frac{1+\sqrt{1-z}}2\)+\overline A^2\(\frac{1-\sqrt{1-z}}2\)\).
\end{eqnarray*}
\end{Theorem}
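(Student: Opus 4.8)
Both sides of each identity are functions of $z\in\F_q$, so by the uniqueness of the expansion \eqref{eq:f-f_chi} it suffices to show that the two sides share the same coefficient $f_\chi$ for every $\chi\in\fqhat$; since the theorem only asserts equality for $z\neq 0$, any discrepancy in the $\delta(z)$-term is harmless. The two displayed identities are moreover linked: substituting $A\mapsto A,\ B\mapsto A\phi,\ C\mapsto A^2$ into the $\lambda\leftrightarrow 1-\lambda$ relation of Proposition \ref{prop: normalized independent solutions} gives $AB\overline C=\phi$, so that $\pFFq{2}{1}{A&A\phi}{&A^2}{z}=A(4)\,\pFFq{2}{1}{A&A\phi}{&\phi}{1-z}$, the Jacobi-sum prefactor collapsing to $A(4)$ via \eqref{eq:J(A,Abar)}-type evaluations. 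Thus I plan to prove the second (Slater-type) identity in full and then deduce the first, checking only that the elementary factor $\overline A^2(1/2)=A^2(2)=A(4)$ converts the algebraic right-hand side of one into that of the other.

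\textbf{The two coefficients.} The key observation is that the ``well-defined'' right-hand side of the second identity is a fibre sum: writing $t=\tfrac{1+\sqrt{1-z}}2$ one has $4t(1-t)=z$ and the two square roots of $1-z$ correspond to $t$ and $1-t$, whence for every $z$
$$\Bigl(\tfrac{1+\phi(1-z)}2\Bigr)\Bigl(\overline A^2\bigl(\tfrac{1+\sqrt{1-z}}2\bigr)+\overline A^2\bigl(\tfrac{1-\sqrt{1-z}}2\bigr)\Bigr)=\sum_{\substack{t\in\F_q\\ 4t(1-t)=z}}\overline A^2(t).$$
Multiplying by $\overline\chi(z)$ and summing over $z$ (only $z\neq 0$ contributes, as $\overline\chi(0)=0$), I reindex by $t$ and extend the range to all of $\F_q$ at no cost, obtaining the $\chi$-coefficient $\tfrac1{q-1}\overline\chi(4)\,J(\overline A^2\overline\chi,\overline\chi)$. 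On the left, the normalization \eqref{normalized2FF1} together with \eqref{eq:19} yields the $\chi$-coefficient $\tfrac1{(q-1)J(A\phi,A\phi)}\,J(\overline A,\overline\chi)\,J(A\phi\chi,A\phi)$.

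\textbf{Matching coefficients.} Everything thus reduces to the single character identity
$$J(\overline A,\overline\chi)\,J(A\phi\chi,A\phi)=\overline\chi(4)\,J(A\phi,A\phi)\,J(\overline A^2\overline\chi,\overline\chi),\qquad \chi\in\fqhat.$$
In the generic case (all intervening characters nontrivial) I convert each Jacobi sum to Gauss sums by \eqref{JacobiGaussrelation}, use Euler reflection \eqref{Gauss1} to clear the conjugate characters, and then apply the Hasse--Davenport duplication relation \eqref{Hasse-Dav-special} twice, to the pairs $g(A)g(A\phi)$ and $g(A\chi)g(A\phi\chi)$. This is precisely the finite field incarnation of the classical step (the duplication formula, Theorem \ref{thm:double}) that proves \eqref{eval2} and \eqref{eq:Slater} by collapsing $(a)_n(a+\tfrac12)_n$ into $(2a)_{2n}$; after the two substitutions both sides reduce to the common Gauss-sum expression $\overline\chi(4)\,g(A^2\chi^2)$, establishing the identity.

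\textbf{Main obstacle.} The genuine work is the error-term analysis: the Gauss-sum manipulation above is invalid for the finitely many degenerate characters $\chi\in\{\overline A,\ \overline A\phi,\ \overline A^2\}$, at which one of the conversions carries a $\delta$-term from \eqref{JacobiGaussrelation} (equivalently, where reflection \eqref{Gauss1} degenerates). For each of these three characters I would instead evaluate both sides of the character identity directly using the special values \eqref{eq:J(A,Abar)}, relying on $\mathrm{ord}(A)>2$ to ensure $A,\ A^2,\ A\phi\neq\eps$ so that the normalizing factor $J(A\phi,A\phi)$ is itself non-degenerate. This bounded case-check, together with verifying that the $A(4)=\overline A^2(1/2)$ rescaling in Proposition \ref{prop: normalized independent solutions} carries the algebraic right-hand side of the second identity to that of the first, is the only delicate point; the remainder is the routine ``dictionary'' translation guaranteed by condition $(\ast)$.
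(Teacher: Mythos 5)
Your proposal is correct and rests on the same two ingredients as the paper's own proof: the Hasse--Davenport duplication relation \eqref{Hasse-Dav-special} applied to the character sum defining the ${}_2\F_1$, and the Kummer relation (the third identity of Proposition \ref{prop: normalized independent solutions}, whose Jacobi-sum prefactor collapses to $A(4)$ by \eqref{eq:double-Jac}) linking the two displayed identities. The only difference is organizational: the paper proves the first identity directly --- vanishing on nonsquares via Proposition \ref{prop: Greene's independent solutions}, then collapsing the sum at $z=a^2$ by duplication and orthogonality --- and deduces the second, whereas you prove the second by matching the coefficients $f_\chi$ (your fibre-sum computation of the algebraic side is exactly the inverse of the paper's final orthogonality step) and then deduce the first; your generic Gauss-sum verification and the degenerate set $\{\overline A,\overline A\phi,\overline A^2\}$ both check out.
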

\begin{Remark}We first note that the above formulas are well-defined. When $z\neq 0$ does not have a square root in $\F_q$,  then  $\(\frac{1+\phi(z)}2\)=\frac{1-1}2=0$. When $z\neq 0$ has a square root,  we have  $\( \frac{1+\phi(z)}2\)=1$ and the right hand side is a sum of two characters.  
\end{Remark}
\begin{Remark}
We now interpret the first identity in terms of  a global Galois perspective. Assume $z\in  \Q$ and $a=\frac mN$ for some integer $m$ coprime to $N$. Let $\sigma_{z,\ell}$ be the 2-dimensional Galois representation  associated with $\pFq{2}{1}{a&a+\frac12}{&\frac 12}{z}$  as in Theorem \ref{thm:WIN3a}.
If $\sqrt z\notin \Q(\zeta_N)$, then for any unramified prime ideal $\fp$
of ${\mathcal O}_{\Q(\zeta_N)}$ that is inert in $\Q({\sqrt z},\zeta_N)$,  $\text{Tr}\sigma_{z,\ell} (\text{Frob}_\fp)=0$. This means  $\sigma_{z,\ell}$ is  induced from a finite order character of $G_{\Q(\sqrt z, \zeta_N)}$ which is an index-2 subgroup of $G_{\Q(\zeta_N)}$.  There is a similar interpretation for the second result.
\end{Remark}
\begin{proof}
By the first equality of Proposition \ref{prop: normalized independent solutions}, we have
\begin{equation*}
\pFFq{2}{1}{A&A\phi}{&\phi}{z}=\phi(z)\pFFq{2}{1}{A&A\phi}{&\phi}{z}+\delta(z).
\end{equation*}
Hence, if $z$ is not a square in $\F_q^\times$, the evaluation is equal to zero.

  We now suppose $z=a^2$ for some $a \in \F_q^\times$. Then one has
  \begin{align*}
    \pFFq{2}{1}{A&A\phi}{&\phi}{z}
    \overset{\eqref{eq:for6.7-1}}{=}
    & \frac{A\phi(-1)}{(q-1)J(A\phi,\overline A)}\sum_{\chi\in \fqhat} J(A\chiup,\overline \chiup)J(A\phi\chiup,\phi\overline \chiup)\chiup(a^2)\\
    \overset{\eqref{JacobiGaussrelation}}{=}
    &\frac{\phi A(-1)}{(q-1)J(A\phi,\overline A)}\sum_\chi\frac{g(A\chiup)g(\overline \chiup)}{g(A)}\frac{g(A\phi\chiup)g(\phi\overline{\chiup})}{g(A)} \chiup(a^2)\\
     \overset{\eqref{Hasse-Dav-special}}{=}
     &\frac{\phi(-1)}{q-1}\sum_\chi \frac{\overline{A\chiup}(4)g(A^2\chiup^2)g(\phi)^2g(\overline\chiup^2)\chiup(4)}{q \overline A(4)g(A^2)}\chiup(a^2)\\
     \overset{\eqref{Gauss1} \text{ and } \eqref{JacobiGaussrelation}}{=}
     &\frac1{q-1}\sum_\chi\(J(A^2\chiup^2,\overline\chiup^2)-(q-1)\delta(A^2)\)\chiup(a^2)\\
    =&\overline A^2(1+a)+\overline A^2(1-a).
  \end{align*}
   We can use {Proposition \ref{prop: normalized independent solutions}} and the duplication formula \eqref{Hasse-Dav-special} to deduce that
\begin{align*}
     \pFFq{2}1{A&A\phi}{&A^2}{z}&=\frac{J(\phi,\phi  A)}{J(\phi A,\phi A)}\pFFq{2}{1}{A&A\phi}{&\phi}{1-z}\\&=A(4)\pFFq{2}{1}{A&A\phi}{&\phi}{1-z},
\end{align*}
which leads to the desired second statement.
\end{proof}

Now  we recast some of the  previous  discussion in terms of representations. Recall equation \eqref{eq:Slater} mentioned in the introduction, which says
$$
\pFq{2}{1}{a-\frac 12&a}{&2a}z=\( \frac{1+\sqrt{1-z}}2\)^{1-2a}.
$$
Writing  the rational number $1-2a$ in reduced form, let $N$ be its denominator. We first notice that when $z$ is viewed as an indeterminate, the Galois group $G$ of  the Galois closure of  $\ol \Q\(\( \frac{1+\sqrt{1-z}}2\)^{1-2a}\)$ over $\ol \Q(z)$ is a Dihedral group. This is compatible with the corresponding monodromy group, which can be computed from the method explained in \S \ref{monodromy}.

As a consequence of the second part of Theorem \ref{thm:FF-Dihedral}, we have the following corollary.
\begin{Corollary}\label{Cor: product of 2F1}
Let $A,B \in \widehat {\F_q^\times}$  such that $A,B, AB, A\ol B$ have orders larger than 2. Then,
\begin{multline*}
  \pFFq{2}{1}{A&\phi{A}}{& A^2}{z} \pFFq{2}{1}{B&\phi B}{&B^2}{z}\\
  =\pFFq{2}{1}{AB& \phi AB}{&(AB)^2}{z} +\ol B^2\(\frac z4\)\pFFq{2}{1}{A\ol B & \phi A\ol B}{&(A\ol B)^2}{z}-\delta(1-z)AB(4),
\end{multline*}
\begin{multline*}
  \pFFq{2}{1}{A&\phi{A}}{& \phi}{z} \pFFq{2}{1}{B& \phi B}{&\phi}{z}\\
  =\pFFq{2}{1}{AB& \phi AB}{&\phi}{z} +\ol B^2\(1-z\)\pFFq{2}{1}{A\ol B & \phi A\ol B}{&\phi}{z}-\delta(z).
\end{multline*}
\end{Corollary}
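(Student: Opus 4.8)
The plan is to deduce both identities directly from the closed-form evaluations of Theorem~\ref{thm:FF-Dihedral}. The hypothesis that $A$, $B$, $AB$, and $A\ol B$ all have order larger than $2$ is exactly what makes that theorem applicable to, respectively, the two factors on the left and the two genuine hypergeometric terms on the right. I would first prove the second identity (lower entry $\phi$) from the form $\pFFq{2}{1}{A&A\phi}{&\phi}{z}=\(\tfrac{1+\phi(z)}{2}\)\(\ol A^2(1+\sqrt z)+\ol A^2(1-\sqrt z)\)$, and then deduce the first identity (lower entry $A^2$) by transport along the relation $\pFFq{2}{1}{A&A\phi}{&A^2}{z}=A(4)\,\pFFq{2}{1}{A&A\phi}{&\phi}{1-z}$ established in the proof of Theorem~\ref{thm:FF-Dihedral} (which holds for every $z$ with no $\delta$-term, coming from the third relation of Proposition~\ref{prop: normalized independent solutions}).

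For the second identity I would argue by cases on $z$. If $z$ is not a square then the prefactor $\tfrac{1+\phi(z)}{2}$ vanishes, so both factors on the left are $0$; on the right the $AB$- and $A\ol B$-terms vanish for the same reason and $\delta(z)=0$, so both sides are $0$. If $z=s^2$ with $s\in\F_q^\times$, each factor on the left is a genuine two-term sum and multiplying gives four products. The two diagonal products satisfy $\ol A^2(1\pm s)\,\ol B^2(1\pm s)=\ol{AB}^2(1\pm s)$ and reassemble into $\pFFq{2}{1}{AB&AB\phi}{&\phi}{z}$. For the cross products I would use $1-z=(1-s)(1+s)$, hence $\ol B^2(1-z)=\ol B^2(1-s)\,\ol B^2(1+s)$; multiplying this into $\pFFq{2}{1}{A\ol B&\phi A\ol B}{&\phi}{z}=\ol A^2B^2(1+s)+\ol A^2B^2(1-s)$ and collapsing $\ol B^2B^2=\eps$ recovers exactly $\ol A^2(1+s)\ol B^2(1-s)+\ol A^2(1-s)\ol B^2(1+s)$. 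Thus the identity holds with no correction for every nonzero square, including $z=1$, where the surviving factor $\ol B^2(1-s)=\ol B^2(0)=0$ kills both cross contributions. Finally for $z=0$ I would use that each hypergeometric factor equals $1$ at $0$: the left side is $1$, while on the right the diagonal term equals $1$ and the cross term equals $\ol B^2(1)\cdot1=1$, so $-\delta(z)$ is precisely the correction removing the double count.

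To obtain the first identity I would substitute the transformation relation into all four hypergeometric functions. The left side becomes $A(4)B(4)=AB(4)$ times $\pFFq{2}{1}{A&A\phi}{&\phi}{1-z}\pFFq{2}{1}{B&B\phi}{&\phi}{1-z}$, to which the second identity (at argument $1-z$) applies; rewriting the resulting $\phi$-terms back through the transformation produces the first term $\pFFq{2}{1}{AB&AB\phi}{&(AB)^2}{z}$ and the $A\ol B$-term, and the delta $-\delta(1-z)$ of the second identity is carried along as $-AB(4)\,\delta(1-z)$, matching the stated correction. The only genuine computation is the scalar reconciliation on the cross term, namely that $AB(4)\,\ol B^2(z)$ equals the prefactor $\ol B^2(z/4)\,A\ol B(4)$ produced by transforming $\pFFq{2}{1}{A\ol B&\phi A\ol B}{&(A\ol B)^2}{z}$; this reduces to the elementary equalities $\ol B^2(\tfrac14)=B^2(4)$ and $\phi(4)=1$.

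The main obstacle is not the interior term-matching, which is a routine expansion, but the bookkeeping at the degenerate arguments forced by the convention $\chi(0)=0$ (including $\eps(0)=0$). The $\delta$-corrections live exactly at the branch points of the closed forms: $z=0$ for the $\sqrt z$ in the second identity and $z=1$ for the $\sqrt{1-z}$ in the first. At such a point the two branches coincide, the prefactor $\tfrac{1+\phi(\cdot)}{2}$ equals $\tfrac12$ rather than $1$, and the right-hand cross term survives because its scalar ($\ol B^2(1-z)$, resp.\ $\ol B^2(z/4)$) is nonzero; the resulting over-count is the common value of the closed form there, which is $1$ for the second identity and $\ol{AB}^2(\tfrac12)=AB(4)$ for the first. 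Verifying that these over-counts agree with the stated $-\delta(z)$ and $-\delta(1-z)AB(4)$, consistently with the sign conventions, is where the real care is required.
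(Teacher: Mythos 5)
Your proof is correct and follows essentially the same route as the paper's: both rest on multiplying the two-term dihedral closed forms from Theorem \ref{thm:FF-Dihedral}, splitting the four products into diagonal and cross terms (the cross terms reassembling into the $A\ol B$-series with the extra character prefactor), and identifying the over-count at the branch point of the square root as the $\delta$-correction. The only organizational difference is that the paper computes the $A^2$-version directly, using $\left(\frac{1+\sqrt{1-z}}2\right)\left(\frac{1-\sqrt{1-z}}2\right)=\frac z4$ together with a separate check at $z=1$, whereas you prove the $\phi$-version first and transport it via $\pFFq{2}{1}{A&A\phi}{&A^2}{z}=A(4)\,\pFFq{2}{1}{A&A\phi}{&\phi}{1-z}$ --- an immaterial difference.
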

{Geometrically, this means the tensor product of two 2-dimensional representations of a dihedral group corresponding to the left side above is isomorphic to the direct sum of the two 2-dimensional representations corresponding to the terms on the right side of the above identities.}
In other words, it describes a fusion rule for 2-dimensional representations of Dihedral groups.

\begin{proof}
 Note that  $\(\frac{1+\sqrt{1-z}}2\)\( \frac{1-\sqrt{1-z}}2 \)=\frac{z}4$ and
 $$
   \(\frac{1+\phi(1-z)}2\)^2=\(1-\frac{\delta(1-z)}2\)\(\frac{1+\phi(1-z)}2\).
 $$
For any element $z\neq 1 \in \F_q$, we have
  \begin{multline*}
  \pFFq{2}{1}{A&\phi{A}}{& A^2}{z}\pFFq{2}{1}{B&\phi B}{&B^2}{z}
  =\\ \(\frac{1+\phi(1-z)}2\) \cdot\(\ol{A}^2\(\frac{1+\sqrt{1-z}}2\)+\ol{A}^2\(\frac{1-\sqrt{1-z}}2\)\)
  \\ \cdot\(\ol{B}^2\(\frac{1+\sqrt{1-z}}2\)+\ol B^2\(\frac{1-\sqrt{1-z}}2\)\)\\
  =\(\frac{1+\phi(1-z)}2\)\(\ol A^2 \ol B^2\(\frac{1+\sqrt{1-z}}2\)+\ol A^2 \ol B^2\(\frac{1-\sqrt{1-z}}2\)\)\\
  +\ol B^2\(\frac z4\)\(\frac{1+\phi(1-z)}2\)\(\ol A^2 B^2\(\frac{1+\sqrt{1-z}}2\)+\ol A^2 B^2\(\frac{1-\sqrt{1-z}}2\)\)\\
  =\pFFq{2}{1}{AB& \phi AB}{&(AB)^2}{z} +\ol B^2\(\frac z4\)\pFFq{2}{1}{A\ol B& \phi A\ol B}{&(A\ol B)^2}{z}.
\end{multline*}
{When $z=1$, 
$$ \pFFq{2}{1}{\chi&\phi{\chi}}{& \chi^2}{1}=\frac12\cdot 2 \ol\chi^2\(\frac12\)=\chi(4),$$
 for any character $\chi$. Therefore, 
 \begin{multline*}
  \pFFq{2}{1}{A&\phi{A}}{& A^2}{1}\pFFq{2}{1}{B&\phi B}{&B^2}{1}=A(4)B(4)\\
  =\frac 12\(\pFFq{2}{1}{AB& \phi AB}{&(AB)^2}{1} +\ol B^2\(\frac 14\)\pFFq{2}{1}{A\ol B& \phi A\ol B}{&(A\ol B)^2}{1}\).
\end{multline*}
The first statement follows. 

The second statement follows in a way similar to the first,
using the first part of Theorem \ref{thm:FF-Dihedral} or the transformation 
$$
  \pFFq{2}{1}{A& \phi A}{&A^2}{z}=A(4)\pFFq{2}{1}{A& \phi AB}{&\phi}{1-z}
$$
used in the proof of Theorem \ref{thm:FF-Dihedral}.
}
\end{proof}

Additionally, consider Slater's equation (1.5.21) in \cite{Slater},
\begin{equation*}\label{eq:Slater-1.5.21}
\pFq{2}{1}{2a&a+1}{&a}{z}= \frac{1+z}{(1-z)^{2a+1}}.
\end{equation*}
We have the following   finite field analogue: for $A\neq \eps$,
$$
\pFFq{2}{1}{A^2& A}{&A}{z}={\overline A^2(1-z)}-\overline A(z)J(A,\overline A^2).
$$
This follows from the second part of Proposition \ref{prop: 2FF1-imprimitive}.

\begin{Remark}The  two formulas stated in Theorem \ref{thm:FF-Dihedral} are direct consequences of the duplication formula  in Theorem \ref{thm:double}. By Theorem \ref{thm:multiplication}, we can generalize \eqref{eval2} using the same argument  to get the following formula representing the $m$th multiplication formula
\begin{equation}\label{eq:mth}
\pFq{m}{m-1}{a&a+\frac 1m&\cdots& a+\frac{m-1}m}{&\frac 1m&\cdots&\frac{m-1}m}{z}=\frac{1}m \(\sum_{i=1}^m (1-\zeta_m^i \sqrt[m]{z})^{-ma}\).
\end{equation}   Geometrically, the monodromy group for the degree-$m$ differential equation satisfied by the function on the left is a finite group (see the criterion in \cite{BH}), which is isomorphic to the Galois group of $\ol \Q\( (1+\sqrt[m]{z})^{-ma}\)$ over the function field $\ol \Q(z)$. When $m=2$ it is a Dihedral group. In general, it is a cyclic group extended by $\Z/m\Z$. Its finite field analogue is of the following form: let $q\equiv 1\pmod m$ and $\eta_m$ be a primitive order $m$ character, then for $A\in \widehat{\F_q^\times}$ such that $A^m\neq \eps$,
\begin{multline}\label{eq:mth-mul}
\pFFq{m}{m-1}{A&A\eta_m&\cdots& A\eta_m^{m-1}}{&\eta_m&\cdots&\eta_m^{m-1}}{z} \\
=\frac 1m \(1+\sum_{i=1}^{m-1} \eta_m(z)\) \sum_{i=1}^m \ol{A}^m\(1-\zeta_m^i \sqrt[m]{z}\).
\end{multline}
Equation (\eqref{eq:mth-mul} follows from the definition of $_m\F_{m-1}$ and the multiplication formula for Jacobi sums \eqref{eq:mul-Jacobi}.  \end{Remark}

We conclude this section by continuing a discussion about $g(t)^2=\pFq{2}{1}{\frac 14&\frac 34}{&\frac 23}{t} ^2$ in Example \ref{eg:(2,3,3)}.

\begin{Example}\label{eg:18}
By the Clausen formula \eqref{eq:Clausen},
\begin{eqnarray*}
g(t)^2&\overset{\eqref{eq:Pfaff-transf}}=& (1-t)^{-\frac 12} \pFq{2}{1}{\frac 14&-\frac 1{12}}{&\frac 23}{\frac t{t-1}}^2\\
&\overset{\eqref{eq:Clausen}}=& (1-t)^{-\frac 12}\pFq{3}{2}{-\frac 16&\frac 16&\frac 12}{&\frac 13&\frac 23}{\frac t{t-1}}\\
&\overset{\eqref{eq:mth}}=& \frac{(1-t)^{-\frac 12}}{3}\(\sum_{i=1}^3\sqrt{1- \zeta_3^i \sqrt[3]{\frac t{t-1}}}\).
\end{eqnarray*}
For the relation between $f(t)^2=\pFq{2}{1}{\frac 14&\frac 34}{&\frac 43}{t} ^2$ and $g(t)^2$, if we let  $x:=x(t)=\sqrt{3+6g(t)^2}$, then
  \begin{align*}
  f(t)^2 & =-8-\frac{4x(x+1)(x-3)}{3}\frac{t-1}t\\
   & = 8g(t)^2(2-\sqrt{3+6g(t)^2})\frac{t-1}t-\frac 8t.
 \end{align*}




Now we consider the $\F_q$ analogue with argument $t=\frac{\l}{\l-1}$ so that the results are easier to state. Let $q\equiv 1\pmod {12}$ be a prime power and $\eta$ be a primitive order 12 character on $\F_q^\times$. Then the finite field versions of  $f\(\frac{\l}{\l-1}\)^2$ and $g\(\frac{\l}{\l-1}\)^2$ can be stated as follows: for $\l\neq0$, $1$,
\begin{multline}\label{FF-g^2}
\eta^8(\l) \pFFq 21{\eta^3&\ol\eta^3}{&\eta^4}{\frac \l{\l-1}}^2=\pFFq 21{\eta^3&\ol\eta^3}{&\ol\eta^4}{\frac \l{\l-1}}^2\\
  =
  \begin{cases}
     1+\displaystyle\sum_{0\leq i<j\leq 2} \eta^6\((1-\zeta_3^ia)(1-\zeta_3^ja)\), &\,\mbox{ if } \l =a^3\\
     \eta^4(\l), &\,\mbox{ otherwise}.
  \end{cases}
\end{multline}

This is obtained by using the following finite field Pfaff transformation given in item 4 of Proposition \ref{prop: normalized independent solutions}, 

$$
  \pFFq 21{\eta^3&\ol\eta^3}{&\ol\eta^4}{\frac \l{\l-1}}^2
  =\eta^6(1-\l)\pFFq 21{\eta^3&\ol\eta}{&\ol\eta^4}{\l}^2,
$$the finite field Clausen formula (Theorem \ref{thm:FF-Clausen} with $S=\eta^3,C=\ol \eta^4$)

$$
  \pFFq 21{\eta^3&\ol\eta}{&\ol\eta^4}{\l}^2
  =\pFFq 32{\ol\eta^2&\eta^2&\eta^6}{&\eta^4&\ol\eta^4}{\l}+\eta^6(1-\l)\eta^4(\l), \quad \l\neq 1,
$$ equation \eqref{eq:mth-mul} which gives that

$$
  \pFFq 32{\ol\eta^2&\eta^2&\eta^6}{&\eta^4&\ol\eta^4}{\l}
  =\frac 13\(1+\eta^4(\l)+\ol\eta^4(\l)\)\(\sum_{i=0}^2\eta^6\(1-\zeta_3^i\sqrt[3]\l\)\),
$$
the last identity of Proposition \ref{prop: commute and conjugation},

$$
  \pFFq 21{\eta^3&\ol\eta^3}{&\ol\eta^4}{\frac \l{\l-1}}
  = \eta^4(\l)\frac{J(\eta^5,\eta^3)}{J(\eta,\eta^3)}\pFFq 21{\eta^3&\ol\eta^3}{&\eta^4}{\frac \l{\l-1}},
$$
and the fact $J(\eta^5,\eta^3)=J(\eta,\eta^3)$ when $\eta$ has order $12$.
\end{Example}

\section{Quadratic or Higher Transformation Formulas}\label{higher}

In this section we {first consider} finite field analogues of some higher degree hypergeometric transformation formulas { which are} related to elliptic curves. { These} highlight the role geometric correspondences like isogeny and isomorphism play in some transformation formulas.

Next, we obtain several finite field analogues of classical formulas satisfying the ($\ast$) condition { given on page 1} by using our main technique.  Our first example is a quadratic formula, which demonstrates that our technique has the capacity to produce analogues that are satisfied by all values in $\F_q$.  We prove analogues of the Bailey cubic $_3F_2$ formulas and an analogue of a formula by Andrews and Stanton.

We then use a different approach to obtain a finite field analogue of a cubic formula of Gessel and Stanton. { As a corollary of this cubic formula, Gessel and Stanton obtained an evaluation formula, with a proof that cannot be translated directly.  However, using the Galois perspective we can predict a finite field analogue which we then prove using a different approach.}  \bk This application illustrates that the Galois perspective is helpful in a greater context. \bk


\subsection{Some results related to elliptic curves}
In \S \ref{ss:Kummer24}, we discussed the finite field analogues of Kummer's 24 relations, which are  between $_2\mathbb P_1$ (resp. $_2\F_1$) functions linked via linear fractional transformations. These are the cases that can be deduced from Greene's finite field version of the Lagrange theorem {stated in Theorem \ref{FFlagrange}}.  However, Greene's  theorem does not apply to  higher degree transformation formulas.

Borwein and Borwein (\cite{Borweins})
 proved
that for real $z\in (0,1)$,
\begin{equation*}
\pFq{2}{1}{\frac 13&\frac 23}{&1}{1-z^3}=\frac{3}{1+2z} \, \pFq{2}{1}{\frac 13&\frac 23}{&1}{\(\frac{1-z}{1+2z}\)^3}.
\end{equation*}Geometrically, this corresponds to the fact that the two elliptic curves
$$
   y^2+xy+\frac{1-z^3}{27}y=x^3,\qquad
   y^2+xy+\frac 1{27}\frac{(1-z)^3}{(1+2z)^3}y=x^3
$$
are 3-isogenous over $\Q(z,\zeta_3)$, which can be verified  using the degree-3 modular equation satisfied by their corresponding $j$-invariants \cite{drew}
\begin{multline*}
X^4+Y^4+36864000(X^3+Y^3)+452984832000000(X^2+Y^2) \\ 
+1855425871872000000000(X+Y) -X^3Y^3+2587918086X^2Y^2 \\
-770845966336000000XY + 2332(X^3Y^2+X^2Y^3)\\
-1069956(X^3Y+XY^3)+8900222976000(X^2Y+XY^2).
\end{multline*}   
Hence, when $q\equiv 1\pmod 3$ and $z$ can be embedded in $\F_q$, the local zeta functions of these curves over $\F_q$ are the same, i.e.  if $\eta_3$ is a primitive cubic character in $\widehat{\F_q^\times}$, then
\begin{equation}\label{eq:BB-FF}
\pPPq{2}{1}{\eta_3&\eta_3^2}{&\eps}{1-z^3}=\pPPq{2}{1}{\eta_3&\eta_3^2}{&\eps}{\(\frac{1-z}{1+2z}\)^3}.
\end{equation}When one replaces $_2\mathbb P_1$ by $_2\F_1$, the above formula still holds {by our definition of $_2\F_1$ \eqref{normalized2FF1}}  as the normalizing Jacobi sums are the same.

Similarly, we consider the relation between the universal elliptic curves
$$E_t:\, y^2=4x^3-\frac{27x}{1-t}-\frac{27}{1-t},$$
and the Legendre curves
$$L_\l:\, y^2=x(1-x)(1-\l x).$$
 The $j$-invariant of $E_t$ is $\frac{1728}t$ and  the $j$-invariant  of $L_\l$ is $\frac{256(\l^2-\l+1)^3}{\l^2(\l-1)^2}$.
 Stienstra and Beukers
(\cite[Theorem 1.5]{S-B}) proved
\begin{equation}\label{eqn:S-B}
   \pFq 21{\frac 1{12}&\frac5{12}}{&1}{\frac{27\l^2(\l-1)^2}{4(\l^2-\l+1)^3}}=(1-\l+\l^2)^{1/4} \pFq 21{\frac 12&\frac12}{&1}{\l}.
\end{equation}
{Geometrically,  
$L_\l$ is the pullback of the universal elliptic curve along the natural map of modular curves $Y(2) \rightarrow Y(1)$.}

To see the finite field analogue of \eqref{eqn:S-B}, we have the following relation
by the work of the first author \cite[Theorem 1.2]{Fuselier}.
\begin{Theorem}
For $p\equiv 1 \pmod{12}$, $\l\in\F_p$ with $\l\neq - 1$, $2$, $1/2$ and $\l^2-\l+1\neq 0$, we have
$$
  \pPPq21{\eta_{12}&\eta_{12}^5}{&\eps}{\frac{27\l^2(\l-1)^2}{4(\l^2-\l+1)^3}}=\eta_{12}(-1)\eta_{12}^3(1-\l+\l^2) \pPPq21{\phi&\phi}{&\eps}{\l},
$$
where $\eta_{12}$ is a primitive character of order $12$ in $\widehat{\F_p^\times}$.    To write it in terms of $_2\F_1$,
\begin{equation}\label{eq:2F1forEC}
  \pFFq21{\eta_{12}&\eta_{12}^5}{&\eps}{\frac{27\l^2(\l^2-1)^2}{4(\l^2-\l+1)^3}}=\eta_{12}^3(1-\l+\l^2) \pFFq21{\phi&\phi}{&\eps}{\l}.
\end{equation}
\end{Theorem}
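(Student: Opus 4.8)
The plan is to realize both sides as character multiples of (negative) traces of arithmetic Frobenius on elliptic curves that share a $j$-invariant and are therefore twists of one another over $\F_p$. For the right-hand side this is immediate from Ono's theorem (Theorem~\ref{thm:Ono}): for $\l\neq 0,1$ one has
$$\pPPq21{\phi&\phi}{&\eps}{\l}=\#L_\l(\F_p)-(p+1)=-a_p(L_\l),$$
the negative trace of Frobenius on the Legendre curve $L_\l\colon y^2=x(1-x)(1-\l x)$, whose $j$-invariant is $256(\l^2-\l+1)^3/(\l^2(\l-1)^2)$.

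For the left-hand side I would count points directly on the universal curve $E_t\colon y^2=4x^3-\tfrac{27x}{1-t}-\tfrac{27}{1-t}$, with $t=27\l^2(\l-1)^2/(4(\l^2-\l+1)^3)$ and $j$-invariant $1728/t$. Writing $\#E_t(\F_p)=p+\sum_x(1+\phi(\text{cubic}))$ and expanding $\phi$ of the cubic through the Hasse--Davenport relation (Theorem~\ref{Hasse Davenport}) for $m=2,3,6$ to expose the sixth- and twelfth-power structure, I would reorganize the resulting product of Gauss and Jacobi sums into the period form. The target identity is
$$\pPPq21{\eta_{12}&\eta_{12}^5}{&\eps}{t}=-\,\eta_{12}(-1)\,\eta_{12}^3(1-\l+\l^2)\,a_p(E_t),$$
where the order-$4$ factor $\eta_{12}^3(1-\l+\l^2)$ encodes the discriminant datum that separates the $\F_p$-isomorphism class of $E_t$ from that of $L_\l$, and $\eta_{12}(-1)$ is the Jacobi-sum normalization appearing when one passes to the $_2\F_1$ form in \eqref{eq:2F1forEC}.

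The two sides are then linked geometrically. A direct computation gives $j(E_t)=1728/t=j(L_\l)$, so $E_t$ and $L_\l$ become isomorphic over $\overline{\F_p}$; their Frobenius traces therefore agree up to the character of the twist relating them over $\F_p$. Combining this with the two point-count identities above produces exactly the claimed relation, the twist character assembling into $\eta_{12}^3(1-\l+\l^2)$. The excluded values $\l\in\{-1,\tfrac12,2\}$ are precisely those with $t=1$, where $E_t$ acquires a pole and degenerates, so they must be removed; the degenerate Legendre cases $\l=0,1$ are likewise handled separately.

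The main obstacle is the middle step: proving the exact point-count formula for $E_t$ and, above all, pinning down the twisting character. Because $\eta_{12}^3$ has order $4$ rather than $2$, this is not a plain quadratic twist, and one must check that the order-$4$ factor arises from combining the genuine quadratic twist $\phi=\eta_{12}^6$ between $E_t$ and $L_\l$ with the character normalization intrinsic to the definition of $\pPPq21{\eta_{12}&\eta_{12}^5}{&\eps}{\cdot}$. Carrying the Hasse--Davenport bookkeeping for the cube and square roots cleanly through to the single factor $\eta_{12}^3(1-\l+\l^2)$ is the delicate part, and it is what makes a direct term-by-term translation via the dictionary of \S\ref{dictionary} unavailable here, since the relation between the two arguments is the degree-six $j$-map rather than a linear fractional transformation.
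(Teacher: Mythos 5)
Your overall strategy is the one the paper uses: interpret $\pPPq21{\phi&\phi}{&\eps}{\l}$ as $-a_p(L_\l)$ via Ono's theorem, interpret the left side as a character multiple of $-a_p(E_t)$ with $t=\frac{27\l^2(\l-1)^2}{4(\l^2-\l+1)^3}$, observe that $j(E_t)=1728/t=j(L_\l)$ so the curves are quadratic twists, and assemble the twist character. You also correctly identify the excluded set $\{-1,2,\tfrac12\}$ as $\{t=1\}$. However, there are two concrete problems with the middle of your plan. First, your ``target identity''
$$\pPPq21{\eta_{12}&\eta_{12}^5}{&\eps}{t}=-\,\eta_{12}(-1)\,\eta_{12}^3(1-\l+\l^2)\,a_p(E_t)$$
cannot be correct as stated: both $\pPPq21{\eta_{12}&\eta_{12}^5}{&\eps}{t}$ and $a_p(E_t)$ are functions of $t$ alone, whereas $\eta_{12}^3(1-\l+\l^2)$ is not — under $\l\mapsto 1/\l$, which fixes $t$, it changes by $\phi(\l)$. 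The correct intermediate statement, which the paper simply cites from Fuselier \cite[Theorem 1.2]{Fuselier}, is
$$\pPPq21{\eta_{12}&\eta_{12}^5}{&\eps}{t}=-\phi(2)\eta_{12}(-1)\ol\eta_{12}^3(1-t)\,a_p(t),$$
with the order-$4$ character evaluated at $1-t$ (a function of $t$); the $\l$-dependent factor $\eta_{12}^3(1-\l+\l^2)$ only emerges after multiplying by the quadratic twist character comparing $E_t$ with $L_\l$, using $1-t=\frac{(\l-2)^2(\l+1)^2(2\l-1)^2}{4(\l^2-\l+1)^3}$. By folding the twist into the point-count formula you have conflated two separate ingredients, and the resulting statement is internally inconsistent.

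Second, the two computations you flag as ``the main obstacle'' are precisely the content of the proof, and neither is carried out. The point-count formula for $E_t$ in terms of $\pPPq21{\eta_{12}&\eta_{12}^5}{&\eps}{t}$ is a nontrivial theorem (an entire paper of Fuselier's); re-deriving it by Hasse--Davenport bookkeeping is not something one can wave at. And the twist character is not obtained in the paper by an abstract ``same $j$-invariant, hence twists'' argument: the paper writes down an explicit $\Q$-isomorphism factoring the cubic defining $E_t$ into three linear forms and changing variables to reach $Y^2=-\frac{\l^2-\l+1}{(\l-2)(\l+1)(2\l-1)}X(X-1)(X-(1-\l))$, which exhibits the twist character concretely as $\phi\big((1-\l+\l^2)(\l-2)(2\l-1)\big)$. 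Your worry that ``this is not a plain quadratic twist'' because $\eta_{12}^3$ has order $4$ is a symptom of the first problem: the geometric twist \emph{is} quadratic; the order-$4$ factor comes from $\ol\eta_{12}^3(1-t)$ in Fuselier's normalization combining with that quadratic character. To complete the proof you should cite (or prove) Fuselier's formula in its correct form, exhibit the isomorphism, and then do the character bookkeeping; finally the $_2\F_1$ version follows from $J(\eta_{12}^5,\eta_{12}^7)=-\eta_{12}^5(-1)$ and $J(\phi,\phi)=-\phi(-1)$, a step you should also make explicit.
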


\begin{proof}
For any fixed $\l\in\Q\setminus \{0,1\}$, the elliptic curve $L_\l$ is isomorphic to $E_t$ with $t=\frac{27\l^2(\l-1)^2}{4(\l^2-\l+1)^3}$ over $\Q$. To be more explicit, when $t=\frac{27\l^2(\l-1)^2}{4(\l^2-\l+1)^3}$, the elliptic curve $E_t$ can be written as
\begin{multline*}
  \frac{(\l-2)^2(2\l-1)^2(\l+1)^2}4y^2=\\
  \((2\l^2+\l-1)x+3(\l^2-\l+1)\)\((\l^2-\l-2)x-3(\l^2-\l+1)\) \\
  \cdot\((2\l^2-5\l+2)x+3(\l^2-\l+1)\).
\end{multline*}
It is $\Q$-isomorphic to
$$
  Y^2=-\frac{\l^2-\l+1}{(\l-2)(\l+1)(2\l-1)}X(X-1)(X-(1-\l))
$$
by  the change of variables
$$
  (X,Y)=\(-\frac{3(\l^2-\l+1)}{2\l^2+\l-1}\(\frac {3x}{\l-2}+1\), \frac {(\l-2)(2\l-1)(\l+1)}{2^4(\l^2-\l+1)}y\).
$$

For any prime $p\equiv 1 \pmod{12}$, let $a_p(t)$ denote the trace of the Frobenius endomorphism on $E_t$.  According to the work of  the first author \cite{Fuselier}, when $t\in\F_p^\times$ and $t\neq 1$, i.e. $\l\in\F_p^\times$ with $\l\neq \pm 1$, $2$, $1/2$,  we have
$$
  \pPPq21{\eta_{12}&\eta_{12}^5}{&\eps}{t}=-\phi(2)\eta_{12}(-1)\ol\eta_{12}^3(1-t)a_p(t).
$$
By the above discussion
$$
a_p(t)=-\phi((1-\l+\l^2)(\l-2)(2\l-1))\pPPq{2}{1}{\phi&\phi}{&\eps}{\l}.
$$
It follows that when $\l\neq 0$, $\pm 1$, $2$, $1/2$ in $\F_p$,
$$
\pPPq21{\eta_{12}&\eta_{12}^5}{&\eps}{\frac{27\l^2(\l-1)^2}{4(\l^2-\l+1)^3}}=\eta_{12}(-1)\eta_{12}^3(1-\l+\l^2) \pPPq21{\phi&\phi}{&\eps}{\l}.
$$
One can check this identity also holds for $\l=0$, $1$.
We  deduce \eqref{eq:2F1forEC} by
noting that  $$J(\eta_{12}^5,\eta_{12}^7)=\sum_{x\in \F_q} \eta_{12}^5\(x\) \eta_{12}^7(1-x)=\sum_{x\in \F_q \backslash \{1\}} \eta_{12}^5\(\frac{x}{1-x}\)=-\eta_{12}^5(-1)$$ as stated in \eqref{eq:J(A,Abar)}. Similarly, $J(\phi,\phi)=-\phi(-1)$.
\end{proof}


We wish to keep in mind that geometrically, transformation formulas are sometimes related to `correspondences' like isogenies or isomorphisms, however in general the underlying geometric objects are complicated. More abstractly, {these transformation formulas} describe `correspondences' between two hypergeometric motives. {From this perspective,} finite field analogues of these transformation formulas describe relations between the Galois representations associated with the motives.  The two examples above demonstrate that when the involved monodromy groups are arithmetic and hence have moduli interpretations, one may obtain $\F_q$ transformation formulas geometrically. The techniques used later in this section are mainly based on character sums and they can handle general cases regardless of whether there exist moduli interpretations.

\subsection{A Kummer quadratic transformation formula}

In this section, we illustrate our dictionary method with a finite field analogue of Kummer's quadratic formula.  Recall from \S \ref{ss:3.2.6} the  Kummer quadratic transformation formula
\begin{equation}\label{eq:quadratic}
 (1-x)^{-c}\, \pFq{2}{1}{\frac{1+c}2-b&\frac{c}2}{&c-b+1}{\frac{-4x}{(1-x)^2}}=\pFq{2}{1}{b&c}{&c-b+1}{x}.
\end{equation}
 We now outline a proof  using the multiplication and reflection formulas, along with the Pfaff-Saalsch\"utz formula.  Observe that our proof is different from the one in \cite[pp. 125-126]{AAR}. We first recall an inversion formula \cite[(2.2.3.1)]{Slater}.
 \begin{equation}\label{eq:Slater-Inversion}
 (a)_{n-r}=(-1)^r\frac{(a)_n}{(1-a-n)_r}.
 \end{equation}
\begin{proof}
To begin, note that
\begin{multline}\label{Eq:40}\binom{-c-2k}{n-k}=(-1)^{n-k}\frac{(c+2k)_{n-k}}{(1)_{n-k}}\overset{\eqref{eq:Slater-Inversion}}=(-1)^{n-k}\frac{(c+2k)_n(-n)_k}{(1-c-2k-n)_k(1)_n}\\=(-1)^{n-k}\frac{\G(c+2k+n)\G(1-c-2k-n)}{\G(c+2k)\G(1-c-k-n)}\frac{(-n)_k}{n!}\\
\overset{\text{reflection}}=(-1)^n\frac{\G(c+2k+n)\G(c+k+n)}{\G(c+2k)\G(c+2k+n)}\frac{(-n)_k}{n!}=(-1)^n\frac{(c)_{n+k}(-n)_k}{(c)_{2k}n!} \\
=(-1)^n\frac{(c)_n(c+n)_k(-n)_k}{(c)_{2k}n!}\overset{\eqref{eq:double-rising}}=(-1)^n\frac{(c)_n(c+n)_k(-n)_k}{4^k(\frac c2)_{k}(\frac {c+1}2)_kn!}.
\end{multline}
The left hand side  of \eqref{eq:quadratic} can be expanded as
\begin{eqnarray*}
&&\sum_{k\ge 0}\frac{(\frac{1+c}2-b)_k(\frac{c}2)_k}{k!(c-b+1)_k}(-4x)^k(1-x)^{-c-2k}\\
&=&\sum_{k\ge 0}\frac{(\frac{1+c}2-b)_k(\frac{c}2)_k}{k!(c-b+1)_k}(-4x)^k\sum_{i\ge 0}\binom{-c-2k}{i}(-x)^i\\
&\overset{{n=k+i}}=&\sum_{k,n\ge 0}\frac{(\frac{1+c}2-b)_k(\frac{c}2)_k}{k!(c-b+1)_k}4^k\binom{-c-2k}{n-k}(-x)^n\\
&\overset{\eqref{Eq:40}}=&\sum_{k,n\ge 0}\frac{(\frac{1+c}2-b)_k(\frac{c}2)_k}{k!(c-b+1)_k}4^k\frac{(c)_n(c+n)_k(-n)_k}{4^k(\frac c2)_{k}(\frac {c+1}2)_kn!}x^n\\
&=&\sum_{n\ge 0}\frac{(c)_n}{n!}\, \pFq{3}{2}{\frac{1+c}2-b&c+n&-n}{&\frac{c+1}2&c-b+1}{1}x^n.
\end{eqnarray*}
By the Pfaff-Saalsch\"utz formula \eqref{eq:pf-s}, the above equals
$$
 \sum_{n\ge 0}\frac{(c)_n(b)_n(\frac{1-c}2-n)_n}{n!(\frac{1+c}2)_n(b-c-n)_n}x^n
\overset{\text{reflection}}=\sum_{n\ge 0}\frac{(b)_n(c)_n}{n!(c-b+1)_n}x^n.
$$
\end{proof}

Now we obtain  the corresponding  finite field analogue of \eqref{eq:quadratic},  starting with an  analogue of \eqref{Eq:40}. We {recall} that $\phi$ denotes the quadratic character,  and note that under the  assumptions below, $\frac{c}{2}$ in the classical setting corresponds to a character $D$ below.
\begin{Lemma}\label{lem:quad}
Let $D,K, \chiup \in \widehat{\F_q^\times}$ and set $C=D^2$.  Then,
\begin{multline}\label{eq:double}
J(\overline{CK^2},\overline\chiup K)=\chiup D\phi (-1) \frac{g(\overline \chiup K)g(CK\chiup)g(D)g(D\phi)}{K(4)g(DK)g(C)}\frac{g(\overline {DK}\phi) }q\\+\frac{(q-1)^2}q\delta(D \chiup\phi)\delta(DK\phi)+(q-1)\delta(DK).
\end{multline}
\end{Lemma}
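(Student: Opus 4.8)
The plan is to prove \eqref{eq:double} by the ``keep track of the delta terms'' strategy illustrated in the proof of Lemma \ref{lem:1}: rewrite the Jacobi sum on the left entirely in terms of Gauss sums, apply the duplication and reflection formulas, and carefully collect the resulting error terms. Writing $C=D^2$ throughout, I first note that $\overline{CK^2}=\overline{DK}^2$, so the left side is $J(\overline{DK}^2,\overline\chiup K)$. Applying the Gauss--Jacobi relation \eqref{JacobiGaussrelation} gives
$$J(\overline{DK}^2,\overline\chiup K)=\frac{g(\overline{DK}^2)\,g(\overline\chiup K)}{g(\overline{CK\chiup})}+(q-1)\chiup K(-1)\,\delta(\overline{CK\chiup}),$$
where I will call the last summand $(II)$.

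Next I would massage the main term by three standard moves: the duplication formula \eqref{Hasse-Dav-special} applied to $g(\overline{DK}^2)$ to extract the factor $g(\phi\overline{DK})$ and a power of $4$; the reflection formula \eqref{Gauss1a} to replace the numerator $g(\overline{DK})$ by $qDK(-1)/g(DK)+(q-1)\delta(DK)$; and the reciprocal reflection formula \eqref{Gauss2a} to replace $1/g(\overline{CK\chiup})$ by $CK\chiup(-1)g(CK\chiup)/q-\tfrac{q-1}{q}\delta(CK\chiup)$. Using $g(\phi)^2=q\phi(-1)$ (the $A=\phi$ case of \eqref{Gauss1}) together with the identity $g(D)g(D\phi)/g(C)=g(\phi)\overline D(4)$ (again \eqref{Hasse-Dav-special}), the generic piece collapses to the first term on the right of \eqref{eq:double}. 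This computation is routine once the duplication is set up correctly; the only subtleties are the cancellation of the powers of $4$ and the sign simplification $DK(-1)\,CK\chiup(-1)=D\chiup(-1)$ (which uses $C=D^2$ and $K^2(-1)=1$).

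The hard part will be the error-term bookkeeping, which I would organize as a $2\times2$ expansion: the product of the two reflection substitutions produces four terms $RP_iQ_j$ with $R$ a common Gauss-sum factor, to which one adds $(II)$. On the support of each surviving delta I would simplify the accompanying Gauss sums via \eqref{Gauss1}: on $\delta(DK)$ one uses $g(D\chiup)g(\overline{D\chiup})=qD\chiup(-1)-(q-1)\delta(D\chiup)$, and on $\delta(CK\chiup)$ one re-applies the duplication formula to $g(\overline\chiup K)=g((DK)^2)$ and then \eqref{Gauss1} to $g(\phi DK)g(\phi\overline{DK})$. The delicate point is that the constraints recombine: the indicator $\delta(\phi DK)\delta(CK\chiup)$ equals $\delta(DK\phi)\delta(D\chiup\phi)$ because $CK\chiup=D^2K\chiup$, while the spurious products $\delta(D\chiup)\delta(DK)$ arising in $RP_2Q_1$ and $RP_2Q_2$ carry opposite signs and cancel. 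After this collection, $RP_2Q_1$ contributes $(q-1)\delta(DK)$, the combination $-RP_1Q_2+(II)$ contributes $\tfrac{(q-1)^2}{q}\delta(D\chiup\phi)\delta(DK\phi)$, and $-RP_2Q_2$ cancels the leftover, yielding exactly the right-hand side of \eqref{eq:double}.
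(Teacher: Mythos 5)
Your proof is correct, and I have checked the delta-term bookkeeping: the generic piece $RP_1Q_1$ does collapse to the main term via $g(\phi)^2=q\phi(-1)$ and the sign identity $DK(-1)CK\chiup(-1)=D\chiup(-1)$; the combination $-RP_1Q_2+(II)$ does yield $\tfrac{(q-1)^2}{q}\delta(DK\phi)\delta(D\chiup\phi)$ (after re-applying duplication to $g(\ol\chiup K)=g((DK)^2)$ on the support of $\delta(CK\chiup)$); and the two copies of $\delta(D\chiup)\delta(DK)$ from $RP_2Q_1$ and $RP_2Q_2$ do cancel, leaving $(q-1)\delta(DK)$. However, your route differs from the paper's in one structural respect. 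The paper's first move is to apply Lemma \ref{lem:1} to flip the Jacobi sum, $J(\ol{CK^2},\ol\chiup K)=\chiup K(-1)J(\ol\chiup K, C\chiup K)$, \emph{before} invoking \eqref{JacobiGaussrelation}. This places the two Gauss sums $g(\ol\chiup K)$ and $g(C\chiup K)$ that appear in the final answer directly in the numerator, so only the single denominator $g(CK^2)$ has to be processed (duplication followed by one application of \eqref{Gauss2a}), and the Gauss--Jacobi delta lands on $CK^2=\eps$, which splits as $\delta(DK)+\delta(DK\phi)$ and hands you the $(q-1)\delta(DK)$ term for free. Your direct route instead produces a delta supported on $CK\chiup=\eps$ and forces two reflection substitutions (one for $g(\ol{DK})$ in the numerator, one for $1/g(\ol{CK\chiup})$ in the denominator), hence the $2\times 2$ expansion. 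The two computations assign the final delta terms to different sources --- in the paper $(q-1)\delta(DK)$ comes from the Gauss--Jacobi delta, in yours from the numerator reflection --- which is a nice concrete illustration of the ``path dependence'' of the error terms discussed in \S\ref{ss:quad-summary}. The paper's version is shorter; yours is more mechanical and requires no clever first step, at the cost of roughly twice the bookkeeping.
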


\begin{proof}
\begin{align*}
J(\ol{CK^2},\ol \chiup K)
&\overset{\text{Lem.} \ref{lem:1}}=\chiup K(-1)J(\ol \chiup K, C\chiup K)\\
&=\chiup K(-1)\frac{g(\ol \chiup K)g(C\chiup K)}{g(CK^2)}\cdot \frac{g(D^2)}{g(C)}+(q-1)\delta(CK^2)\\
\overset{\text{duplication}}=& \chiup K(-1)\frac{g(\ol \chiup K)g(C\chiup K)g(D)g(D\phi)}{K(4)g(DK)g(DK\phi)g(C)}+(q-1)\delta(CK^2)\\
&\overset{\eqref{Gauss1a} }=\chiup K(-1)\frac{g(\ol \chiup K)g(C\chiup K)g(D)g(D\phi)}{K(4)g(DK)g(C)} \\
&\cdot \( \frac{DK\phi(-1)}{q}g(\ol{DK\phi})-\frac{q-1}{q}\delta(DK\phi)\) +(q-1)\delta(CK^2).
\end{align*}
Breaking this into three terms, the middle term is
\begin{multline*}
-\chiup K(-1)\frac{g(\ol \chiup K)g(C\chiup K)g(D)g(D\phi)}{K(4)g(DK)g(C)}\frac{(q-1)}q\delta(DK\phi)\\
\overset{\text{duplication}}=-\chiup K(-1)\frac{(q-1)}q g(\ol{\chiup D}\phi) g(\chiup D \phi)\delta(D\phi K)\\
\overset{\eqref{Gauss1}}=-(q-1)\delta(DK\phi)+\frac{(q-1)^2}q\delta(D\phi K)\delta(D\phi \chiup).
\end{multline*}  Recombining the three terms,
using the easily established identity
$\delta(R^2) - \delta(R\phi) =\delta(R)$,
gives the Lemma.
\end{proof}

\begin{Remark}\label{rem:4}Among the three terms on the right hand side of \eqref{eq:double}, the first  is the major term predicted by the classical case. The other two  correspond to
degenerate cases.
\end{Remark}

\begin{Theorem}\label{thm:quad-2F1}\index{finite field analogues!Kummer quadratic transformation}
 Let $B, D \in \widehat{\F_q^\times}$, and set  $C=D^2$.  When $D\neq \phi$ and $B\neq D$, we have,  for all $x\in\fq $
\begin{multline*}
\overline C(1-x) \, \pFFq{2}{1}{D\phi\overline B&D}{&C\overline B}{\frac{-4x}{(1-x)^2}} \\
= \pFFq{2}{1}{B&C}{&C\ol{B}}{x} -\delta(1-x)\frac{J(C,\overline B^2)}{J(C,\overline B)}-\delta(1+x) \frac{J(\ol B,D\phi)}{J(C,\ol B)}.
\end{multline*}
\end{Theorem}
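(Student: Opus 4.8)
The plan is to translate the classical proof of \eqref{eq:quadratic} given just above, step by step, into the finite field setting, using the dictionary of \S\ref{dictionary} together with Greene-style error-term analysis. Under the correspondences $C\leftrightarrow c$, $D\leftrightarrow c/2$, $B\leftrightarrow b$, $\phi\leftrightarrow \tfrac12$, each ingredient of the complex argument has a finite field counterpart: the binomial identity \eqref{Eq:40} is replaced by Lemma \ref{lem:quad}, the binomial theorem by the character decomposition \eqref{eq:f-f_chi}, and the Pfaff-Saalsch\"utz formula \eqref{eq:pf-s} by its finite field analogue \eqref{eq:pf-s-FF}. Throughout I would work with the period functions $_2\mathbb P_1$ and divide by the normalizing Jacobi sums only at the very end.

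\textbf{Key steps.} First I would expand the period function on the left. Writing $z=\frac{-4x}{(1-x)^2}$ and using the Jacobi-sum form \eqref{eq:19} of $_2\mathbb P_1$, the expression $\overline C(1-x)\,{}_2\mathbb P_1[\ldots;z]$ becomes a sum over a character $K$ (the analogue of the index $k$) of Jacobi sums times $K(-4)K(x)\overline{CK^2}(1-x)$. Next I would expand $\overline{CK^2}(1-x)$ into characters of $x$ via \eqref{eq:f-f_chi}; combined with $K(x)$ its coefficient of $\chi(x)$ is $\tfrac1{q-1}J(\overline{CK^2},\overline\chi K)$, together with a $\delta(x)$ contribution, producing a double sum over $K$ and $\chi$ that exactly parallels the classical double sum over $k$ and $n$. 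Then I would invoke Lemma \ref{lem:quad} to rewrite $J(\overline{CK^2},\overline\chi K)$; its main term, after the duplication formula \eqref{Hasse-Dav-special} is absorbed, converts the $K$-dependence into Gauss sums whose $K$-sum is recognizable (up to normalizing Jacobi sums) as the value of $_3\mathbb P_2\left[\begin{smallmatrix} D\phi\overline B & C\chi & \overline\chi \\ & D\phi & C\overline B\end{smallmatrix};1\right]$.

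One checks the balancing condition of \eqref{eq:pf-s-FF} holds, since $(D\phi\overline B)(C\chi)(\overline\chi)\overline{(D\phi)}=C\overline B$, so the finite field Pfaff-Saalsch\"utz formula applies and collapses the inner $K$-sum to a product of two Jacobi sums plus a delta correction. Resumming the resulting expression against $\chi(x)$ reconstitutes $_2\mathbb P_1\left[\begin{smallmatrix} B & C \\ & C\overline B\end{smallmatrix};x\right]$, and dividing by the normalizing Jacobi sums $J(D,D\overline B)$ and $J(C,\overline B)$ on the two sides yields the claimed identity between the normalized $_2\mathbb F_1$ functions.

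\textbf{Main obstacle.} I expect the technical heart to be the bookkeeping of the delta terms. The corrections $-\delta(1-x)\frac{J(C,\overline B^2)}{J(C,\overline B)}$ and $-\delta(1+x)\frac{J(\overline B,D\phi)}{J(C,\overline B)}$ must be assembled from several sources: the two delta terms in Lemma \ref{lem:quad}, the $\delta(x)$ term in the decomposition of $\overline{CK^2}(1-x)$, and the correction $-BD(-1)J(D\overline B,\overline A)$ in \eqref{eq:pf-s-FF}. Geometrically these should concentrate precisely where the quadratic substitution $z=\frac{-4x}{(1-x)^2}$ degenerates, namely at $x=1$ (where $z\to\infty$) and at $x=-1$ (where $z=1$), while the spurious $\delta(x)$ contributions from $x=0$ (where $z=0$) must cancel, consistent with both sides equalling $1$ there. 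Verifying that the surviving contributions combine to exactly the stated corrections, using \eqref{eq:J(A,Abar)} and \eqref{Gauss1}--\eqref{Gauss2a} to evaluate the trivial-character cases and the hypotheses $D\neq\phi$, $B\neq D$ to keep them controlled, is where the argument, though routine in structure, becomes delicate.
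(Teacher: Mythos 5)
Your proposal follows essentially the same route as the paper's proof: expand the left-hand side as a double character sum over $K$ and $\chi$, apply Lemma \ref{lem:quad} to $J(\overline{CK^2},\overline\chi K)$, collapse the inner $K$-sum with the finite field Pfaff--Saalsch\"utz formula \eqref{eq:P.S.-FF}, and track the delta terms, which do concentrate at $x=\pm1$ exactly as you predict. The only part you leave implicit is the separate treatment of the degenerate cases ($B=C$, $D=\eps$, $B^2=C$), which the paper handles at the end via Proposition \ref{prop: 2FF1-imprimitive}.
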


\begin{Remark}\label{rem:denom0}
In the above statement, we see the character $\overline{C}(1-x)$ multiplied by a hypergometric function being evaluated at a rational expression which is undefined when $x=1$. In this instance (and others similar to it), we take the convention that the value of the product is $0$ when $x=1$.
\end{Remark}

 We will use the following identities, which can be checked directly,  in the proof of Theorem \ref{thm:quad-2F1}.   When $B\neq C$, and $\phi D\ol B\neq \eps$,
\begin{equation}\label{eq:44}
  \overline D(4)\frac{J(\phi \ol{B},D)}{J(D,D\ol B)}=\frac{J(C,\overline B^2)}{J(C,\overline B)};\quad
  \frac{J(\ol B,D\phi)}{J(C,\ol B)}=\frac{J(C\ol B,D\phi)}{J(C,D\ol B \phi)}.
\end{equation}

\begin{proof}
When $x=0$, both sides are equal to 1 by  definition.  When $x=1$, both sides take value 0 as well.  Thus, we assume $x\neq 0,1$.

We first consider the case when $C=D^2\neq \eps$, and $B^2\neq C$.  We deal with this case by considering separately when $B=C$ and when $B\neq C$.  Suppose  $B \neq C$.
Taking  $A_1=D\phi \ol B$, $A_2=D$, $B_1=C\ol B$ and $K=\chiup$ in \eqref{eq:for6.7-1},
\begin{multline*}
\overline C(1-x) \, \pFFq{2}{1}{D\phi\overline B&D}{&C\overline B}{\frac{-4x}{(1-x)^2}}\\
=\frac1{(q-1)J(D,D\overline B)}\sum_{K\in \fqhat}D(-1)J(D\phi \ol{B}K,\ol{K})J(DK,B\ol{CK}) K(-4x)\ol{CK^2}(1-x)\\
=\frac1{(q-1)^2J(D,D\overline B)}\sum_{K\in \fqhat}D(-1)J(D\phi \ol{B}K,\ol{K})J(DK,B\ol{CK})\\
\cdot K(-4x) \sum_{\varphi\in \fqhat}J(\ol{CK^2},\ol{\varphi})\varphi(x).
\end{multline*}
Letting $\chiup:=K\varphi$, the above equals
\begin{multline}\label{Eq:56}
\frac1{(q-1)^2J(D,D\overline B)} \\
\cdot \sum_{K,\chiup\in \fqhat}D(-1)J(D\phi \ol{B}K,\ol{K})J(DK,B\ol{CK})K(-4)J(\ol{CK^2},\ol{\chiup}K)\chiup(x).
\end{multline}
By Lemma \ref{lem:quad}  and \eqref{eq:44}  we can rewrite this as
\begin{equation}\label{eqn:S-breakdown}
S_1+S_2-\ol{D}(4)\frac{J(\phi \ol{B},D)}{J(D,D\ol B)}\delta(1-x)
=S_1+S_2-\delta(1-x)\frac{J(C,\overline B^2)}{J(C,\overline B)},
\end{equation}
where the $S_i$  are obtained by applying \eqref{eq:double}.     Explicitly,
\begin{multline*}
S_1=\frac1{(q-1)^2J(D,D\overline B)}\sum_{K,\chiup\in \fqhat}D(-1)J(D\phi \ol{B}K,\ol{K})J(DK,B\ol{CK})K(-4) \\
\cdot \chiup D\phi (-1) \frac{g(\overline \chiup K)g(CK\chiup)g(D)g(D\phi)}{K(4)g(DK)g(C)}\frac{g(\overline {DK}\phi) }q\chiup(x);
\end{multline*} and
\begin{align*}
S_2=\frac{\ol D(4)J(\phi,\phi B\ol D)}{qJ(D,D\ol B)}\phi(-1)J(\ol B,\phi D)\phi\ol D(x).
\end{align*}

 We first analyze $S_1$.
\begin{multline*}
S_1=\frac{g(C\ol{B})}{(q-1)^2 g(D)g(D\overline B)}\sum_{K,\chiup\in \fqhat}\chiup \phi(-1) J(D\phi \ol{B}K,\ol{K})\frac{g(DK)g(B\ol{CK})}{g(B\ol{D})} \\
\cdot K(-4)
 \frac{g(\overline \chiup K)g(CK\chiup)g(D)g(D\phi)}{K(4)g(DK)g(C)}\frac{g(\overline {DK}\phi) }q\chiup(x)\\
\overset{\text{combine Gauss sums}}=\frac{BD(-1)g(C\ol{B})g(D\phi)}{(q-1)^2q^2g(C)}\sum_\chi g(B\ol{C\chiup})g(D\phi \chiup)\\
\cdot \(\sum_{K} K\chiup \phi(-1)J(D\phi \ol{B}K,\ol{K})\frac{g(B\ol{CK})g(\overline \chiup K)}{g(B\ol{C\chiup})}\frac{g(CK\chiup)g(\overline {DK}\phi)}{g(D\phi \chiup)}\)\chiup(x).
\end{multline*}
Letting $\Omega:=\frac{BD(-1)g(C\ol{B})g(D\phi)}{q^2g(C)}$,  we have
\begin{multline*}
S_1=\frac{\Omega}{(q-1)^2}\sum_{\chi\in \fqhat} g(B\ol{C\chiup})g(D\phi \chiup)\\
\cdot \(\sum_{K\in \fqhat} K\chiup \phi(-1)J(D\phi \ol{B}K,\ol{K})\frac{g(B\ol{CK})g(\overline \chiup K)}{g(B\ol{C\chiup})}\frac{g(CK\chiup)g(\overline {DK}\phi)}{g(D\phi \chiup)}\)\chiup(x)\\
\overset{\text{Jacobi sums}}=\frac{\Omega}{(q-1)^2}\sum_\chi\phi(-1)g(B\ol{C\chiup})g(D\phi \chiup)\chiup(x)\sum_{K} K\chiup (-1)J(D\phi \ol{B}K,\ol{K})\\
\cdot \left [J(B\ol{CK},\ol{\chiup}K)-(q-1)BCK(-1)\delta(B\ol{C\chiup})\right ] \\
\cdot \left [J(CK\chiup,\ol{DK}\phi)-(q-1)DK\phi(-1)\delta(D\phi \chiup)\right].
\end{multline*}
To continue we separate the delta terms,
\begin{multline*}
S_1=\frac{\Omega}{(q-1)}\sum_\chi\phi(-1)g(B\ol{C\chiup})g(D\phi \chiup)\chiup(x)\cdot\\
\sum_{K}\frac{ K\chiup (-1)}{q-1}J(D\phi \ol{B}K,\ol{K})
 J(B\ol{CK},\ol{\chiup}K) J(CK\chiup,\ol{DK}\phi)+ E_1+E_2\\
\overset{\eqref{eq:P.S.-FF}}=\frac{\Omega}{(q-1)}\sum_\chi  \phi(-1)g(B\ol{C\chiup})g(D\phi \chiup)\chiup(x)\\
\cdot (J(C\chiup,\ol{D}\phi)J(\overline \chiup,B\chiup)
-B(-1)J(C\ol{B}\chiup,\ol{D}\phi B))+ E_1 +E_2,
\end{multline*}
where
\begin{multline*}
E_1:=-\sum_{\chi\in \fqhat}\frac{\Omega\phi(-1)}{(q-1)}g(\eps)g(B\ol{D} \phi)\sum_{K\in \fqhat} J(D\phi \ol{B}K,\ol{K})J(BK,\ol{DK}\phi)\cdot \delta(B\ol{C\chiup}) \chiup(x)\\
\overset{\text{Gauss eval.}}=\sum_\chi\Omega\phi(-1)g(B\ol{D} \phi)J(B,\ol{B})\delta(B\ol{C\chiup}) \chiup(x)\\
= -\phi B(-1)\Omega g(B\ol{D} \phi){B\ol{C}}(x),
\end{multline*}
\begin{multline*}
E_2:=-\sum_{\chi\in \fqhat}\frac{\Omega\phi(-1)}{(q-1)}g(\eps)g(B\ol{D} \phi)\sum_{K\in \fqhat}J(D\phi \ol{B}K,\ol{K})J(B\overline {CK}, D\phi K )\delta(D\phi\chiup)\chiup(x)\\
\overset{\text{Gauss eval.}}=\sum_\chi\frac{g(C\ol{B})g(D\phi)}{q^2g(C)}\phi(-1)g(B\ol{D} \phi)J(D\phi, \ol{D}\phi)\delta(D\phi\chiup)\chiup(x)\\
=-D(-1) \Omega g(B\ol{D} \phi)\ol{D}\phi(x).
\end{multline*}

Now we continue our analysis of $S_1$.  We separate  $S_1-(E_1+E_2)$  into two terms.
The first term is
\begin{multline*}
\frac{\Omega}{(q-1)}\sum_{\chi\in \fqhat}\phi(-1)g(B\ol{C\chiup})g(D\phi \chiup)J(C\chiup,\ol{D}\phi)J(\overline \chiup,B\chiup)\chiup(x) \\
=\frac{\Omega}{(q-1)}\sum_\chi\phi(-1)g(B\ol{C\chiup})g(D\phi \chiup)\\
\cdot \(\frac{g(C\chiup)g(\ol{D}\phi)}{g(D\phi\chiup)}+(q-1)D\phi(-1)\delta(D\phi\chiup)\)J(\overline \chiup,B\chiup)\chiup(x).
\end{multline*}
By definition, this equals
\begin{multline*}
\pFFq{2}{1}{B&C}{&C\ol{B}}{x}-\sum_{\chi\in \widehat{\F_q^\times}}\Omega \, g(B\ol{D} \phi)D(-1)J(D\phi, B\ol{D}\phi)\delta(D\phi \chiup)\chiup(x)\\
=\pFFq{2}{1}{B&C}{&C\ol{B}}{x}-\Omega \,g(B\ol{D} \phi)\phi(-1)J(D\phi, \ol B)\ol{D}\phi (x)\\= \pFFq{2}{1}{B&C}{&C\ol{B}}{x}-S_2.
\end{multline*}

The  second  term {of $S_1-(E_1+E_2)$ } is
\begin{multline*}
-\frac{\Omega}{(q-1)}\sum_{\chi\in \fqhat}\phi(-1)g(B\ol{C\chiup})g(D\phi \chiup)B(-1)J(C\ol{B}\chiup,\ol{D}\phi B)\chiup(x)\\
=-\frac{\Omega}{(q-1)}\sum_\chi\phi(-1)g(B\ol{C\chiup})g(D\phi \chiup)B(-1)CB\chiup(-1)J(C\ol{B}\chiup,\ol{D\chiup}\phi )\chiup(x)\\
=-\frac{\Omega}{(q-1)}\sum_\chi\phi\chiup(-1)g(B\ol{C\chiup})g(D\phi \chiup)\frac{g(C\ol{B}\chiup)g(\ol{D\chiup}\phi )}{g(D\ol{B}\phi)}\chiup(x)\\
=-\frac{\Omega}{(q-1)}\sum_\chi\frac{\phi \chiup(-1)}{g(D\ol{B}\phi)}g(B\ol{C\chiup})g(C\ol{B}\chiup)g(D\phi \chiup){g(\ol{D\chiup}\phi )}\chiup(x).
\end{multline*}
Applying the finite field reflection formula \eqref{Gauss1}, the above becomes
\begin{multline*}
-\frac{\Omega}{(q-1)}\sum_{\chi\in \fqhat}\frac{\phi\chiup(-1)}{g(D\ol{B}\phi)}\(BC\chiup(-1)q-(q-1)\delta(B\ol{C\chiup})\)\\
\cdot(D\phi\chiup(-1)q-(q-1)\delta(D\phi \chiup))\chiup(x)\\
=-\frac{q^2\Omega}{(q-1)g(D\ol{B}\phi)}DB(-1)\sum_\chi\chiup(-1)\chiup(x)-E_1-E_2\\
\overset{\eqref{eq:delta}}=-\frac{q^2\Omega}{g(D\ol{B}\phi)}DB(-1)\delta(1+x)-E_1-E_2\\
=-\delta(1+x)\frac{J(\ol B,D\phi)}{J(C,\ol B)}-E_1-E_2.
\end{multline*}
Thus we have that
$$S_1 = \pFFq{2}{1}{B&C}{&C\ol{B}}{x}  -\delta(1+x)\frac{J(\ol B,D\phi)}{J(C,\ol B)}-  S_2. $$

Thus by \eqref{eqn:S-breakdown}, putting these all together gives that when $B\neq C$,
\begin{multline*}
\overline C(1-x) \, \pFFq{2}{1}{D\phi\overline B&D}{&C\overline B}{\frac{-4x}{(1-x)^2}} =  \\
\pFFq{2}{1}{B&C}{&C\ol{B}}{x} -\delta(1-x)\frac{J(C,\overline B^2)}{J(C,\overline B)} -\delta(1+x)\frac{J(\ol B,D\phi)}{J(C,\ol B)},
\end{multline*}
as desired.

When $B=C$, we can obtain by similar means that
\begin{multline*}
\overline C(1-x) \, \pFFq{2}{1}{\phi\overline D&D}{&\eps}{\frac{-4x}{(1-x)^2}} \\
= \pFFq{2}{1}{C&C}{&\eps}{x} -\delta(1-x)\frac{J(C,\overline C^2)}{J(C,\overline C)} -\delta(1+x) \frac{J(\ol C,D\phi)}{J(C,\ol C)}.
\end{multline*}

We next consider the case when $D^2 = \eps$.  Since we know $D\neq \phi$ from our hypothesis, we have $D=\eps$. Note that under the assumption of $B\neq D$, we also have $B\neq \eps$.  In this case, by Proposition \ref{prop: 2FF1-imprimitive}, the left hand side {of the equation in Theorem \ref{thm:quad-2F1}} is
\begin{align*}
\eps(1-x) \, &   \pFFq{2}{1}{\phi\overline B&\eps}{&\overline B}{\frac{-4x}{(1-x)^2}}\\
=& -\(B\(\frac{4x}{(1-x)^2}\)\phi\(\frac{(1+x)^2}{(1-x)^2}\)J(\ol B, B\phi)-\eps(1-x)\)\\
=& -B\(\frac{4x}{(1-x)^2}\)J(\ol B, B\phi)(1-\delta(1+x))+1\\
=& -B\(\frac{4x}{(1-x)^2}\)J(\ol B, B\phi)+1+B(-1)J(\ol B, B\phi)\delta(1+x)\\
=& -B\(\frac{4x}{(1-x)^2}\)J(\ol B, B\phi)+1+J(\ol B, \phi)\delta(1+x)\\
=& -B\(\frac{4x}{(1-x)^2}\)J(\ol B, B\phi)+1-\frac{J(\ol B, \phi)}{J(\eps, B)}\delta(1+x).\\
\end{align*}
Moreover, by Proposition \ref{prop: 2FF1-imprimitive}, and Lemma \ref{lem:1} we have that
\begin{multline*}
\pFFq{2}{1}{ B&\eps}{&\overline B}{x} =-\(B(-x)\ol{B^2}(1-x)J(\ol B, \ol B)-1\)\\
\overset{\eqref{eq:double-Jac}}=-B\(\frac{-4x}{(1-x)^2}\)J(\ol B, \phi)+1 =-B\(\frac{4x}{(1-x)^2}\)J( \ol B, B\phi)+1.
\end{multline*}
 Recalling that $\delta(1-x)=0$ since $x\neq1$, this gives the desired result that
\begin{multline*}
\eps(1-x)  \pFFq{2}{1}{\phi\overline B&\eps}{&\overline B}{\frac{-4x}{(1-x)^2}} \\
= \pFFq{2}{1}{ B&\eps}{&\overline B}{x} -\delta(1-x)\frac{J(\eps,\overline B^2)}{J(\eps,\overline B)}-\delta(1+x)\frac{J(\ol B,\phi)}{J(\eps,\ol B)}.
\end{multline*}

We now consider the case when  $B^2=C$, and thus  $B=D\phi$. The left hand side becomes
\begin{multline*}
\ol C(1-x) \pFFq{2}{1}{\eps&D}{&D\phi} {\frac{-4x}{(1-x)^2}}= \ol C(1-x)-\frac{\ol {D\phi}(-4x)}{J(D,\phi)}\\=\ol C(x-1)-\frac{\ol {D\phi}(-4x)}{J(D,\phi)} (1-\delta(1+x)),
\end{multline*}
while the right hand side is
 $$\ol C(x-1)-\frac{\ol D \phi(x)}{J(C, \ol D\phi)}=\ol C(x-1)-\frac{\ol D \phi(x)}{J(\ol D\phi, \ol D\phi)}.$$
Using the duplication formula, we obtain the result.
\end{proof}

\subsubsection{A summary of the  the proof of Theorem \ref{thm:quad-2F1}}\label{ss:quad-summary}
In the previous proof of the quadratic formula,  the delta terms produce two kinds of byproducts:  the extra terms like $\delta(1\pm x)$, which are due to the degeneracy of the corresponding Galois representations at the corresponding $x$ values, and the extra characters like the $E_i$ terms described above. While the first kind of extra terms are unavoidable, but can be {hidden by placing extra assumptions on the values of $x$}, the second kind of extra terms might be eliminated.  In particular, some of them are `path dependent'  in the following sense.  In the primitive case  by Definition \eqref{normalized2FF1} and Proposition \ref{prop: commute and conjugation}, the  $_2\F_1$ function can be computed from two $_2\mathbb P_1$ functions whose upper parameters are  related by  a permutation.   For instance, in  this proof, if we swap  the upper characters of the left hand side, then  the delta terms do not involve $B\ol{C}(x)$.

The above approach can be adopted to give finite field analogues of many classical quadratic or higher degree formulas relating two primitive $_{n+1}F_{n}$ functions with rational parameters and arguments of the form $c_1x$ and  $c_2x^{n_1}(1-c_3x)^{n_2}$, respectively, that satisfies the $(\ast)$ condition, where $n_1,n_2\in \Z, n_1>0$  and $c_1,c_2, c_3\in \Q^\times$. See \cite{Bailey2} for a collection of such formulas.  As we assume the classical formula satisfies the ($\ast$) condition, we will similarly mimic the proof of the classical formula to get the finite field analogues. We  expand one side (typically  the more complicated side) of the formula as a double summation involving two characters $K$ and $\chi$, as in \eqref{Eq:56}. Then  we reduce the double sum to  obtain the other  side of the formula. The dictionary described in \S \ref{dictionary} predicts that the major terms for the finite field analogues, i.e. the $_{n+1}\F_{n}$ terms   predicted on both sides,  agree.  Though extra delta terms are technical  to compute in general, they correspond to the discrepancy between two finite dimensional Galois representations.  There is one subtlety we should be aware of in the finite field translation: we might need to extend the current finite field $\F_q$ in order to use an evaluation formula to bridge both sides.


\subsection{The quadratic formula in connection with the Kummer relations}
In this section, we give another finite field version of Kummer's quadratic formula in which the arguments on both sides of the transformation are rational functions of $z$ of degree two.

Together with the Kummer relations discussed in  \S \ref{ss:Kummer24}, one can also obtain many equivalent versions of quadratic formulas over  finite fields which are useful in many ways. A different approach to eliminate the extra characters of the second kind  as described in \S \ref{ss:quad-summary}  is as follows.
Recall $D$ is chosen so that $D^2=C$. Applying a Pfaff transformation (the fifth identity of Proposition \ref{prop: normalized independent solutions}) to the right hand side of the quadratic formula in Theorem \ref{thm:quad-2F1}, we obtain
\begin{multline}
\overline C(1-x) \, \pFFq{2}{1}{D\phi\overline B&D}{&C\overline B}{\frac{-4x}{(1-x)^2}} \\
= \overline C(1-x) \pFFq{2}{1}{C\ol{B}^2&C}{&C\ol{B}}{\frac{x}{x-1}}-\delta(1+x) \frac{J(\ol B,D\phi)}{J(C,\ol B)}.
\end{multline}
By letting $z=\frac{x}{x-1}$ and hence $1-z=\frac{1}{1-x}$, when $z\neq \frac 12,1$, we get
\begin{equation}
 \pFFq{2}{1}{D\phi\overline B&D}{&C\overline B}{4z(1-z)}=   \pFFq{2}{1}{C\ol{B}^2&C}{&C\ol{B}}{z},
\end{equation}
or equivalently when also $A^2$, $B^2 \neq \eps$, then
\begin{equation}
 \pFFq{2}{1}{A&B}{&AB\phi}{4z(1-z)}=   \pFFq{2}{1}{A^2&B^2}{&AB\phi}{z}.
\end{equation}
For example, it is easy to see that the terms do not differ by extra characters as both sides are invariant under the involution $z\mapsto 1-z$.


 Below is the $\F_q$ version of another Kummer quadratic transformation formula.

\begin{Theorem}\label{thm: kummer's quad trans}\index{finite field analogues!Kummer quadratic transformation}
Let $A$, $B\in \fqhat$  such that 
$B$, $B^2\ol A$, $A\ol B\phi \neq \eps$. Then we have
$$
 \pFFq21{A&B}{&B^2}{\frac{4z}{(1+z)^2}}=A^2(1+z)\pFFq21{A&A\phi\ol B}{&B\phi}{z^2},
$$
when $z\neq \pm1$.
\end{Theorem}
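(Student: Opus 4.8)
The classical identity behind this statement, obtained from the dictionary of \S\ref{dictionary}, is the Kummer quadratic transformation
\[
\pFq{2}{1}{a,b}{,2b}{\frac{4z}{(1+z)^2}}=(1+z)^{2a}\,\pFq{2}{1}{a,a-b+\tfrac12}{,b+\tfrac12}{z^2},
\]
which satisfies the ($\ast$) condition: it can be proved by comparing coefficients of $z^n$ on both sides, using only the binomial theorem, the duplication formula (Theorem \ref{thm:double}), the reflection formula, and the Pfaff--Saalsch\"utz evaluation. I emphasize that this is a genuinely different quadratic transformation from the one in Theorem \ref{thm:quad-2F1}: there the lower parameter $c-b+1$ is tied to \emph{both} upper parameters, whereas here the lower parameter $2b$ is twice a \emph{single} upper parameter. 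Matching parameters shows that no substitution $x\mapsto -z$ into \eqref{eq:quadratic}, nor an Euler/Pfaff adjustment of it (those leave the lower parameter fixed), produces the present formula; so the plan is to reprove it from scratch in the finite field setting, mimicking the classical coefficient comparison.

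The plan is as follows. First dispose of the degenerate arguments: at $z=0$ both sides equal $1$ by the normalization of $_2\F_1$, so assume $z\neq 0,\pm1$. Then I would expand the left-hand side. Using \eqref{eq:19} for the normalized function and writing the argument as $\chiup(w)=\chiup(4z)\,\overline{\chiup^2}(1+z)$ where $w=\frac{4z}{(1+z)^2}$, the factor $\overline{\chiup^2}(1+z)=\overline{\chiup^2}(1-(-z))$ can be re-expanded in characters of $z$ via \eqref{eq:delta} and \eqref{eq:f-f_chi}, which introduces Jacobi sums and turns the left-hand side into a double character sum over two characters (say $\chiup$ and an auxiliary $\varphi$, combined into a single $\chiup$ as in the proof of Theorem \ref{thm:quad-2F1}). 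On the right-hand side, the square argument $z^2$ and the quadratic characters appearing in $A\phi\overline B$ and $B\phi$ are precisely the fingerprints of the duplication formula, so I would bring in the finite field duplication relation \eqref{Hasse-Dav-special} (equivalently \eqref{eq:double-Jac}) to rewrite the $z^2$-series as a series in $z$ with the same double-sum shape. The inner character sum is then collapsed by the finite field Pfaff--Saalsch\"utz evaluation \eqref{eq:P.S.-FF}/\eqref{eq:pf-s-FF}, exactly as the combination ``duplication $+$ Pfaff--Saalsch\"utz'' is used classically (compare \eqref{Eq:40} and the reduction following it). The hypotheses $B\neq\eps$, $B^2\overline A\neq\eps$, and $A\overline B\phi\neq\eps$ guarantee that the Jacobi-sum denominators in the two normalizations are nonzero and keep the Pfaff--Saalsch\"utz step in its non-degenerate range.

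The main obstacle, as always with this method, is the bookkeeping of the $\delta$-terms that appear whenever an intermediate character such as $\chiup$, $\chiup^2$, or $\overline{\chiup^2}$ becomes trivial, together with the field-extension subtlety noted in \S\ref{ss:quad-summary} when invoking Pfaff--Saalsch\"utz (which can be sidestepped by using the quadratic Pfaff--Saalsch\"utz formula, Corollary \ref{cor:quad-pf-s}). I expect, in analogy with the analysis in \S\ref{ss:quad-summary}, that all such error terms are supported at the two excluded values $z=\pm1$ and cancel identically for every other $z$, so that after restricting to $z\neq\pm1$ the clean equality remains with no residual $\delta$-terms. Two structural checks help confirm the bookkeeping: the argument $\frac{4z}{(1+z)^2}$ is invariant under $z\mapsto 1/z$, and the right-hand side must therefore be as well once the factor $A^2(1+z)$ is accounted for, reflecting the $z\mapsto 1/z$ symmetry of the underlying (Dihedral-type) monodromy; and one may cross-check the leading coefficients in $z$ against the classical expansion. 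Alternatively, one can apply a Pfaff transformation (Proposition \ref{prop: normalized independent solutions}) to one side first, which, as in \S\ref{ss:quad-summary}, often removes the ``path-dependent'' extra characters before the Pfaff--Saalsch\"utz step and streamlines the delta-term analysis.
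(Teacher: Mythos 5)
Your plan is viable in principle---it is the general recipe of \S\ref{ss:quad-summary}---but it diverges from the paper's proof, and the premise on which you base that divergence is mistaken. You argue that the present transformation cannot be reached from Theorem \ref{thm:quad-2F1} because a single Euler/Pfaff adjustment leaves the lower parameter fixed, and you therefore propose to redo the entire double-character-sum expansion, duplication, and Pfaff--Saalsch\"utz reduction from scratch. The paper instead obtains the result \emph{from} Theorem \ref{thm:quad-2F1} by composing two of its Kummer-relation avatars. Applying the Pfaff transformation (Proposition \ref{prop: normalized independent solutions}, item 4) to Theorem \ref{thm:quad-2F1} gives
$$
\pFFq21{B&D^2}{&D^2\ol B}{z}={\ol D}^2(1+z)\pFFq21{\phi D& D}{&D^2\ol B}{\frac{4z}{(z+1)^2}},\qquad z\neq\pm1,
$$
and item 3 of the same proposition together with the duplication formula for Gauss sums converts this into a companion identity with argument $1-z$ on one side and $\bigl(\tfrac{z-1}{z+1}\bigr)^2$ on the other. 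Substituting $1-z\mapsto 4z/(1+z)^2$ in the companion identity and then applying the displayed identity at the argument $z^2$ (so that $4z^2/(1+z^2)^2=\bigl(\tfrac{2z}{1+z^2}\bigr)^2$) chains the two together and yields the stated formula for $A=E^2$, the primitivity hypotheses guaranteeing that no delta terms survive at any link; one then removes the restriction to square characters with some care. So while a single Pfaff/Euler step indeed fixes the lower parameter, a composition of two quadratic steps does not, and no new character-sum computation is needed. Your closing remark about first applying a Pfaff transformation gestures at this, but you leave it undeveloped.

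As for the from-scratch route you actually propose, the one thing that would make it a proof---the explicit bookkeeping showing that every delta term either cancels or is supported at $z=\pm1$---is exactly what you defer to an ``expectation by analogy.'' In the proof of Theorem \ref{thm:quad-2F1} that bookkeeping is the entire content: extra character terms (the $E_1$, $E_2$ there) appear mid-computation and must be shown to cancel against later contributions, and the paper warns in \S\ref{ss:quad-summary} that such terms are path dependent, so nothing guarantees a priori that the analogous terms here collapse to contributions supported only at $z=\pm1$ rather than surviving as genuine extra characters of $z$. Until that computation is carried out (or the argument is rerouted through Theorem \ref{thm:quad-2F1} as above), the proposal is a plausible outline rather than a proof.
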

This is the finite field analogue of the following quadratic transformation of Kummer (see \cite[(3.1.11)]{AAR})
$$
   \pFq21{a&b}{&2b}{\frac{4z}{(1+z)^2}}=(1+z)^{2a}\pFq21{a&a+1/2-b}{&b+1/2}{z^2}.
$$

\begin{proof}[Proof of Theorem \ref{thm: kummer's quad trans}]Following arguments similar to the proof of the classical case, one can derive Kummer's quadratic transformation from the quadratic formulas

\begin{equation}\label{eqn: quad with Pfaff}
   \pFFq21{B&D^2}{&D^2\ol B}{z}={\ol D}^2(1+z)\pFFq21{\phi D& D}{&D^2\ol B}{\frac{4z}{(z+1)^2}}, \quad z\neq \pm 1,
\end{equation}
provided $B\neq \eps$ and $D^2\neq B^2$,
and
\begin{equation}\label{eqn: quad with Kummer and duplication}
  \pFFq21{K&E^2}{& K^2}{1-z} = E^2(2){\ol E}^2(1+z)\pFFq21{\phi E& E}{&\phi K}{\frac{(z-1)^2}{(z+1)^2}}, \quad z\neq \pm 1,
\end{equation}
provided  $K\neq \eps$ and $E^2\neq K^2$.

The first equation is obtained by applying Pfaff's transformation (Proposition \ref{prop: normalized independent solutions} - item 4) to the quadratic formula in Theorem \ref{thm:quad-2F1}. Using Proposition \ref{prop: normalized independent solutions} - item 3 and the duplication formula on Gauss sums, one can observe the second identity from (\ref{eqn: quad with Pfaff}).

Replace $1-z$ with $4z/(1+z)^2$ in (\ref{eqn: quad with Kummer and duplication}) to derive the formula
$$
  \pFFq21{K&E^2}{& K^2}{\frac{4z}{(1+z)^2}} =
  {\ol E}^2(1+z^2)E^4(1+z)\pFFq21{\phi E& E}{&\phi K}{\(\frac{2z}{1+z^2}\)^2}.
$$

Replace the arguments $\frac{4z}{(1+z)^2}$ by $\(\frac{2z}{1+z^2}\)^2$ and set the characters $D=E$, $B={\phi E^2\ol K}$ in (\ref{eqn: quad with Pfaff}) to obtain the formula
$$
   \pFFq21{\phi E^2\ol K&E^2}{&\phi K}{z^2}=\ol E^2(1+z^2)\pFFq21{\phi E& E}{&\phi K}{\(\frac{2z}{1+z^2}\)^2}.
$$
Therefore, one has
$$
  \pFFq21{E^2&K}{& K^2}{\frac{4z}{(1+z)^2}}=E^4(1+z)\pFFq21{E^2&\phi E^2\ol K}{&\phi K}{z^2},
$$
since the ${}_2\mathbb F_1$-functions are primitive  under our assumptions.  One can remove the condition on the finite field $\F_q$ and replace the character $E^2$ by any character $A$.
\end{proof}

As an immediate corollary of  Theorem \ref{thm: kummer's quad trans},  we  obtain the finite field version of Theorem 3 in \cite{Tu-Yang1}, which says for real numbers $a$ and $b$ such that neither $b+3/4$ nor $2b+1/2$ is a non-positive integer, then in a neighborhood of $z=0$ one has
$$
 \pFq21{a+b&b+\frac14}{&2b+\frac12}{\frac{4z}{(1+z)^2}}= (1+z)^{2a+2b}\pFq21{a+b&a+\frac14}{&b+\frac34}{z^2}.
$$

\begin{Corollary}
 Let $\eta$ be a primitive character of order $4$  so $\phi=\eta^2$.  Let $A$, $B$ characters so that $A\eta$, $B\eta$, $A\ol B\phi \neq \eps$.
Then we have
$$
 \pFFq21{AB&B\eta}{&\phi B^2}{\frac{4z}{(1+z)^2}}= A^2B^2(1+z)\pFFq21{AB&A\eta}{&B\ol\eta}{z^2},
$$
when $z\neq 0$, $\pm1$.
\end{Corollary}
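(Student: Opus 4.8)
The plan is to obtain this corollary as a direct specialization of Theorem \ref{thm: kummer's quad trans}, so the argument will amount to careful bookkeeping of character arithmetic under the relations $\phi=\eta^2$ and $\eta^4=\eps$ (hence $\eta^3=\ol\eta$ and $\phi\ol\eta=\eta$). Concretely, I would apply Theorem \ref{thm: kummer's quad trans} with its two free characters---call them $\mathcal A$ and $\mathcal B$ to avoid a clash with the $A,B$ of the corollary---specialized to $\mathcal A=AB$ and $\mathcal B=B\eta$.

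First I would match the left-hand sides. Since $\mathcal B=B\eta$ gives $\mathcal B^2=B^2\eta^2=\phi B^2$, the lower parameter $\mathcal B^2$ of the theorem becomes $\phi B^2$, while the upper parameters $\mathcal A,\mathcal B$ become $AB,B\eta$ and the argument $4z/(1+z)^2$ is unchanged; this is exactly the left-hand side of the corollary.

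Next I would verify the right-hand side. The prefactor is $\mathcal A^2(1+z)=A^2B^2(1+z)$. The two upper parameters are $\mathcal A=AB$ and $\mathcal A\phi\ol{\mathcal B}=AB\cdot\phi\cdot\ol B\ol\eta=A\phi\ol\eta=A\eta$, and the lower parameter is $\mathcal B\phi=B\eta\cdot\eta^2=B\eta^3=B\ol\eta$; the argument is $z^2$. Thus the right-hand side becomes precisely $A^2B^2(1+z)\,\pFFq21{AB&A\eta}{&B\ol\eta}{z^2}$, matching the corollary.

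Finally I would confirm the hypotheses transfer correctly. Theorem \ref{thm: kummer's quad trans} assumes $\mathcal B$, $\mathcal B^2\ol{\mathcal A}$, $\mathcal A\ol{\mathcal B}\phi\neq\eps$. Under the substitution these become $B\eta\neq\eps$; $\mathcal B^2\ol{\mathcal A}=\phi B^2\ol A\ol B=\ol AB\phi$, which (as $\phi^2=\eps$) is trivial exactly when $A\ol B\phi=\eps$; and $\mathcal A\ol{\mathcal B}\phi=AB\ol B\ol\eta\phi=A\eta\neq\eps$. These are exactly the three conditions $A\eta$, $B\eta$, $A\ol B\phi\neq\eps$ of the corollary, and the restriction $z\neq\pm1$ is inherited verbatim; the extra exclusion $z\neq0$ only aligns the statement with the neighborhood-of-$0$ domain of the classical Tu--Yang transformation, since at $z=0$ both sides equal $1$ regardless. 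Because the entire derivation is a substitution, there is no genuine obstacle: the only points requiring care are the character computations above and the matching of the three non-triviality hypotheses.
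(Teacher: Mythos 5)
Your proof is correct and is exactly the paper's intended argument: the corollary is stated there as an immediate specialization of Theorem \ref{thm: kummer's quad trans}, and your substitution $\mathcal A=AB$, $\mathcal B=B\eta$ together with the identities $\phi=\eta^2$, $\eta^3=\ol\eta$ reproduces both sides and the three non-triviality hypotheses precisely. The observation that the identity also holds trivially at $z=0$ is a harmless extra remark.
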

See Appendix  \S \ref{ss:conj} for some numeric observations related to other  higher transformation formulas in \cite{Tu-Yang1}.

\subsubsection{Two other immediate corollaries of Theorem \ref{thm:quad-2F1}}
From a transformation formula, one can obtain evaluation formulas either by specifying values or by comparing coefficients on both sides. Here we will show how to obtain the finite field analogue of the Kummer evaluation formula \eqref{eq:Kummer}, which has been obtained by Greene in \cite{Greene} using a different approach.

\begin{Corollary}\label{cor: Kummer evaluation}\index{finite field analogues!Kummer evaluation formula}
Assume  $B,C\in \fqhat$ and set $C=D^2$,  then
  $$
   \pFFq{2}{1}{B&C}{&C\ol B}{-1}=\frac{J(D,\ol B)}{J(C,\overline B)}+\frac{J(\ol B,D\phi)}{J(C,\ol B)}.
  $$
\end{Corollary}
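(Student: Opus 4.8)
The plan is to specialize the Kummer quadratic transformation of Theorem \ref{thm:quad-2F1} to the single point $x=-1$ and then evaluate the resulting $_2\F_1$ at argument $1$ using the finite field Gauss summation \eqref{eq:Gauss-FF}. First I would record that $\frac{-4x}{(1-x)^2}$ equals $1$ when $x=-1$, so the inner hypergeometric function becomes a $_2\F_1$ at $1$. Meanwhile the two error terms collapse: in odd characteristic $\delta(1-x)=\delta(2)=0$ and $\delta(1+x)=\delta(0)=1$. Hence Theorem \ref{thm:quad-2F1} (for a choice of square root $D$ with $D\neq\phi$ and $B\neq D$) gives
\begin{equation*}
\pFFq{2}{1}{B&C}{&C\ol{B}}{-1}=\ol C(2)\,\pFFq{2}{1}{D\phi\ol B&D}{&C\ol B}{1}+\frac{J(\ol B,D\phi)}{J(C,\ol B)}.
\end{equation*}
Thus the whole problem reduces to showing that the first term on the right equals $J(D,\ol B)/J(C,\ol B)$, which would match the claimed formula exactly.

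To evaluate the $_2\F_1$ at $1$, I would unwind the normalization \eqref{normalized2FF1}, writing $\pFFq{2}{1}{D\phi\ol B&D}{&C\ol B}{1}=\phgq{D\phi\ol B}{D}{C\ol B}{1}/J(D,D\ol B)$, where I used $C\ol B\ol D=D\ol B$ since $C=D^2$. Applying the Gauss evaluation \eqref{eq:Gauss-FF} to the period function gives $\phgq{D\phi\ol B}{D}{C\ol B}{1}=J\!\left(D,\ol{(D\phi\ol B)D}\,C\ol B\right)=J(D,\phi)$, because $\ol{D^2}C=\eps$ and $B\ol B=\eps$. So the term to analyze is $\ol C(2)\,J(D,\phi)/J(D,D\ol B)$. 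Rewriting every Jacobi sum through \eqref{JacobiGaussrelation} as a ratio of Gauss sums and cancelling the common factor $g(C\ol B)/g(D\ol B)$, the desired identity $\ol C(2)J(D,\phi)/J(D,D\ol B)=J(D,\ol B)/J(C,\ol B)$ reduces to the single relation $\ol C(2)\,g(\phi)g(C)=g(D)g(D\phi)$. This is precisely the Hasse--Davenport duplication formula \eqref{Hasse-Dav-special} with $A=D$, once one notes $\ol D(4)=\ol D(2)^2=\ol{D^2}(2)=\ol C(2)$.

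The main obstacle is bookkeeping for degenerate characters rather than the computation itself: Theorem \ref{thm:quad-2F1} and the Gauss-sum-to-Jacobi-sum conversions carry $\delta$-corrections when some character becomes trivial (for instance $D=\phi$, $B=D$, or $B=C$), so the primitive manipulations above are only valid away from these cases. I would handle this by exploiting the symmetry of the target expression under $D\mapsto D\phi$: since $J(D,\ol B)+J(\ol B,D\phi)$ is invariant under replacing $D$ by the other square root $D\phi$ of $C$ (the two summands simply interchange via $J(\ol B,D)=J(D,\ol B)$), one is free to select the square root for which $D\neq\phi$ and $B\neq D$ hold, reducing to the generic computation in all but a few fully degenerate configurations, which can then be checked directly from the definitions and the identity \eqref{eq:double-Jac}.
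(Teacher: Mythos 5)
Your proposal is correct and follows essentially the same route as the paper: specialize Theorem \ref{thm:quad-2F1} at $x=-1$ (where the argument becomes $1$ and only the $\delta(1+x)$ term survives), evaluate the resulting $_2\mathbb F_1$ at $1$ via the Gauss summation, and reduce the remaining Jacobi-sum identity to the Hasse--Davenport duplication formula. The only cosmetic difference is in handling degeneracies -- you exploit the $D\mapsto D\phi$ symmetry of the target to pick a good square root, whereas the paper splits into the cases $C\neq B$, $C=B$, and $C=\eps$ -- but the substance is identical.
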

\begin{proof}
   By Theorem \ref{thm:quad-2F1}, if $C=D^2\neq \eps$,  $B^2\neq C$,  $x=-1$, one has
   $$
     \overline C(2) \, \pFFq{2}{1}{D\phi\overline B&D}{&C\overline B}{1}= \pFFq{2}{1}{B&C}{&C\ol{B}}{-1}-\frac{J(\ol B,D\phi)}{J(C,\ol B)}.
   $$
Hence,
\begin{multline*}
\pFFq{2}{1}{B&C}{&C\ol{B}}{-1} =  \overline C(2) \, \pFFq{2}{1}{D\phi\overline B&D}{&C\overline B}{1}+\frac{J(\ol B,D\phi)}{J(C,\ol B)} \\
= \ol D(4)\frac{J(\phi, D)}{J(D,D\ol B)}+\frac{J(\ol B,D\phi)}{J(C,\ol B)},
\end{multline*}
where if $C\neq B$,
$$
\ol D(4)\frac{J(\phi, D)}{J(D,D\ol B)}=\frac{\ol D(4)g(\phi)g(D)g(C\ol B)}{g(\phi D)g(D)g(D\ol B)}=\frac{J(\ol B,D)}{J(C,\ol B)},
$$
and if $C= B$,
$$
\ol D(4)\frac{J(\phi, D)}{J(D,D\ol B)}=\frac{\ol D(4)J(\phi, D)}{J(D,\ol D)}=\frac{J(D,D)}{J(D,\ol D)}=\frac{J(\ol C,D)}{J(C,\ol C)},
$$
since $J(\phi, \chiup)=\chiup(4)J(\chiup,\chiup)$ if $\chiup\neq \eps$.

When $C=\eps$, the claim follows   Proposition \ref{prop: 2F1-imprimitive}.
\end{proof}

Next we will get a `quadratic' version of the finite field Pfaff-Saalsch\"utz formula \eqref{eq:P.S.-FF} to be used later. This version includes cases to which \eqref{eq:P.S.-FF} is not applicable. Switching $x$ and $1-x$, we first rewrite  the formula in Theorem \ref{thm:quad-2F1} as

\begin{multline}\label{eqn:87}
\overline C(x) \, \pFFq{2}{1}{D\phi\overline B&D}{&C\overline B}{\frac{-4(1-x)}{x^2}}= \pFFq{2}{1}{B&C}{&C\ol{B}}{1-x}\\-\delta(x)\frac{J(C,\overline B^2)}{J(C,\overline B)}-\delta(2-x) \frac{J(\ol B,D\phi)}{J(C,\ol B)}.
\end{multline}

We first use one of the Kummer relations to write the first term on the right in terms of $x$ and assume $x\neq 0$. Now we look at the left hand side which is
\begin{align*}
\overline C&(x) \, \pFFq{2}{1}{D\phi\overline B&D}{&C\overline B}{\frac{-4(1-x)}{x^2}}\\
\overset{\eqref{eq:for6.7}}{=}
&\frac{D(-1)}{(q-1)J(D,D\ol{B})}\sum_{K\in \fqhat}J(D\phi \ol{B}K,\ol K)J(DK,B\ol{CK})K(-4) \ol{CK^2}(x)K(1-x)\\
&+\ol C(x)\delta\(\frac{-4(1-x)}{x^2}\)\\
=&\frac{D(-1)}{(q-1)^2J(D,D\ol{B})}\sum_{K}J(D\phi \ol{B}K,\ol K)J(DK,B\ol{CK})K(-4) \ol{CK^2}(x) \\
&\cdot \sum_{\varphi\in \fqhat}J(K,\overline \varphi)\varphi(x) + \ol C(x)\delta\(1-x\)\\
\overset{\chi=\overline{CK^2}\varphi}=&\frac{D(-1)}{(q-1)^2J(D,D\ol{B})} \sum_{\chi \in \fqhat}\sum_{K\in \fqhat}J(D\phi \ol{B}K,\ol K)J(DK,B\ol{CK})K(-4) J(K,\overline {C\chi K^2})\chi(x)\\
&+\sum_{\chi\in \fqhat}\frac{1}{q-1}\chi(x),
\end{align*}\bk where in the last step we use \eqref{eq:delta} to expand $\delta(1-x)$. \bk

Since $C$ is a square, $C(-1)=1$. Under the assumption that $x\neq 0$, the right hand side  of \eqref{eqn:87} is
\begin{eqnarray*}
&&\pFFq{2}{1}{B&C}{&C\ol{B}}{1-x}-\delta(2-x) \frac{J(\ol B,D\phi)}{J(C,\ol B)}\\
&=&\frac{J(C,\ol{B}^2)}{J(C,\ol B)}\pFFq{2}{1}{B&C}{&B^2}{x}-\delta\(x-2\)  \frac{J(\ol B,D\phi)}{J(C,\ol B)}\\
&\overset{\eqref{eq:delta}}=&\frac 1{(q-1)J(C,\ol B)}\(\sum_{\chi\in \fqhat}J(B\chi,\ol \chi)J(C\chi,\ol{B^2\chi})\chi(x)
-J(\ol B,D\phi)\sum_{\chi\in \fqhat}\ol{\chi}(2)\chi(x)\).
\end{eqnarray*}

By equating the coefficient of $\chi$ on both sides and assuming $B^2\neq C$ we get

\begin{align*}
 & \left[  \(J(B\chi,\ol\chi)J(C\chi,\ol{B^2\chi})-J(\ol B, D\phi)\ol\chi(2)\)/J(C,\ol B) \right] -1\\
=&\frac{D(-1)}{(q-1)J(D,D\ol{B})}\sum_{K\in \fqhat}J(D\phi \ol{B}K,\ol K)J(DK,B\ol{CK}) J(K,\overline {C\chi  K^2})K(-4)\\
=&\frac{D\chi(-1)}{(q-1)J(D,D\ol{B})}\sum_{K}J(D\phi \ol{B}K,\ol K)J(DK,B\ol{CK}) {J(\overline {C\chi  K^2}, CK\chi)}K(-4)\\
=&\frac{D\chi(-1)}{(q-1)J(D,D\ol{B})}\sum_{K}\frac{g(D\phi \ol{B}K)g(\ol K)g(DK)g(B\ol{CK})}{g(D\phi \ol B)g(B\ol D)}\\
&\cdot \(\frac{{g(\overline {C\chi K^2})g( CK\chi)}}{g(\ol K)}+(q-1) \chi(-1) \delta(K)\) K(-4)\\
=&\left[\frac{D\chi(-1)}{(q-1)J(D,D\ol{B})}\sum_{K}\frac{g(D\phi \ol{B}K)g(DK)g(B\ol{CK})g(\overline {C\chi K^2})g( CK\chi)}{g(D\phi \ol B)g(B\ol D)}K(-4)\right]-1.
\end{align*}

Assume $C=D^2$, $D\neq \phi$ and $B^2\neq C$. Then
\begin{multline}\label{PS-quad}\sum_{K\in \fqhat} g(D\phi \ol{B}K)g(DK)g(B\ol{CK}){g(\overline {C\chi K^2})g( CK\chi)}K(-4)=\\
(q-1)\frac{D\chi(-1)g(D\phi \ol B)g(B\ol D)J(D,D\ol B)}{J(C,\ol B)}\(J(B\chi,\ol\chi)J(C\chi,\ol{B^2\chi})-J(\ol B, D\phi)\ol\chi(2)\).
\end{multline}

In conclusion, we obtained the following quadratic version of Pfaff-SaalSch\"utz formula over $\F_q$.
\begin{Corollary}\label{cor:quad-pf-s}\index{Pfaff-Saalsch\"utz  evaluation formula! quadratic version}\index{finite field analogues!Pfaff-Saalsch\"utz  evaluation formula}
   Let $A$, $B$, $C\in \fqhat$  not simultaneously trivial, such that $CA, CB\neq \phi$.   Then
  \begin{multline*}
     \frac{AB(-1)C(4)J(\phi AC,\ol{ABC^2})}{q(q-1)g(\phi)}\sum_{\chi\in \fqhat} g(A\chi^2)g(B \chi)g(C\ol \chi)g(\phi \ol{ABC\chi}) g(\ol \chi)\ol\chi(-4)\\
     =J(\ol{ABC^2}, AC^2)J(A,B^2C^2)-J(\ol{ABC^2},\phi AC){AC^2}(2).
   \end{multline*}

If  $CB=\phi$  (or $CA=\eps$), then
\begin{multline*}
     \frac 1{(q-1)g(\phi)}\sum_{\chi\in \fqhat} g(A\chi^2)g(B \chi)g(C\ol \chi)g(\phi \ol{ABC\chi}) g(\ol \chi)\ol\chi(-4)\\
     =qB(4)AB(-1)J(B, B\ol A)-qAB(-1)A(2)-\delta(A)(q-1)B(-1)J(B,\phi);
   \end{multline*}
If $CA=\phi$  (or $CB=\eps$), then
\begin{multline*}
     \frac 1{(q-1)g(\phi)}\sum_{\chi\in \fqhat} g(A\chi^2)g(B \chi)g(\phi \ol A\ol \chi)g(\ol{B\chi}) g(\ol \chi)\ol\chi(-4)\\
     =-B(4)J(\phi, B)J(A\ol B^2,\phi B\ol A)-qAB(-1)A(2)\\
     +\delta(A)q(q-1)B(-1)-\delta(\phi B)(q-1)AB(-1).
   \end{multline*}
\end{Corollary}

\begin{proof}{The first case can be obtained from replacing the character $K$ by $K\ol{C\chi}$ in equation (\ref{PS-quad}) and relabeling the characters.}  The other cases are straightforward verifications from the relation between Gauss sums and Jacobi sums.
\end{proof}

\begin{Remark}When $A$ is a square of another character, the left hand side  of the formulas in the above Corollary can be evaluated using the finite field Pfaff-SaalSch\"utz formula \eqref{eq:P.S.-FF}. However, one does not need this assumption in the above Corollary.
\end{Remark}

\subsection{A finite field analogue of a theorem of Andrews and Stanton}

In this section, we will prove a finite field analogue of the following transformation formula given by Andrews and Stanton \cite{Andrews-Stanton}:

\begin{Theorem}[Theorem 5 of \cite{Andrews-Stanton}]\index{Andrews-Stanton cubic formula}\label{thm:AS}For $a,b,c, x\in \C$ such that both sides converge,
\begin{multline*}
(1-x)^{a+1}\pFq{3}{2}{a+1&b+1&a-b+\frac12}{&2b+2&2a-2b+1}{{4x}(1-x)}\\=(1-x)^{-a-1}\pFq{3}{2}{a+1&b+1&a-b+\frac12}{&2b+2&2a-2b+1}{\frac{-4x}{(1-x)^2}}.
\end{multline*}
\end{Theorem}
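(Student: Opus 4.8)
The organizing observation is that the two sides are related by a single involution. Let $\iota(x):=x/(x-1)$, so that $\iota$ is an involution fixing $0$ and interchanging $1$ and $\infty$. A direct computation gives $1-\iota(x)=1/(1-x)$ and $4\,\iota(x)\bigl(1-\iota(x)\bigr)=-4x/(1-x)^2$. Hence, writing
$$F(x):=(1-x)^{a+1}\;{}_{3}F_{2}\!\left[\begin{matrix} a+1,\; b+1,\; a-b+\tfrac12 \\ 2b+2,\; 2a-2b+1\end{matrix};\, 4x(1-x)\right],$$
the right-hand side of the theorem is exactly $F(\iota(x))$. Thus the claimed identity is \emph{equivalent} to the assertion that $F$ is invariant under $\iota$, i.e. $F=F\circ\iota$. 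This reformulation does not by itself prove anything, but it isolates the content and exhibits the $\pm$ signs and the reciprocal prefactor as the two branches of a quadratic change of variable.

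To establish $F=F\circ\iota$ I would compare Taylor coefficients at $x=0$ (both functions are holomorphic there with value $1$, since the critical value $y=4x(1-x)=0$ is attained transversally). First I would substitute the defining series of the ${}_3F_2$ into $F$ and into $F\circ\iota$, and then expand the prefactors $(1-x)^{a+1+2k}$ (respectively $(1-x)^{-(a+1)-2k}$) by the binomial theorem \eqref{binomial}. Collecting the coefficient of $x^n$ on each side produces a \emph{terminating} inner sum over the series index $k$. The decisive structural feature is the parameter pattern of this particular ${}_3F_2$: the two lower parameters are exactly twice two of the upper parameters, $2b+2=2(b+1)$ and $2a-2b+1=2(a-b+\tfrac12)$, while the remaining upper parameter satisfies $a+1=(b+1)+(a-b+\tfrac12)-\tfrac12$. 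Using the duplication formula in the form \eqref{eq:double-rising} (together with Theorem \ref{thm:double}) I would absorb the resulting factors of $4^k$ and the doubled index $2k$ arising from the binomial coefficients, recasting each inner sum as a balanced (Saalsch\"utzian) terminating ${}_3F_2$ evaluated at $1$.

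With the inner sums in Saalsch\"utzian form, the Pfaff--Saalsch\"utz evaluation \eqref{eq:pf-s} applies and yields, for every $n$, a closed product of gamma factors for the coefficient of $x^n$ on each side. It then remains to check that the two products agree; I expect this to follow after clearing the half-integer shifts with Euler's reflection formula (Theorem \ref{thm:reflect}), using precisely the relation $a+1=(b+1)+(a-b+\tfrac12)-\tfrac12$ that guarantees the balancing condition. Equality of all coefficients gives $F=F\circ\iota$ as an identity of power series near $x=0$, and hence, on the common domain of convergence (or by analytic continuation), the functional identity of the theorem.

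The main obstacle is the middle step: reducing the coefficient sums to \emph{balanced} ${}_3F_2$'s at $1$. The bookkeeping of the factors $4^k$ and of the index $2k$ inside $\binom{k+a+1}{n-k}$ via the duplication formula is the delicate part, since a careless application introduces spurious half-shifted parameters; the content of the proof is that, thanks to the special relation among $a,b$, these shifts conspire so that the Saalsch\"utz condition is met and a single application of \eqref{eq:pf-s} closes the computation. A useful sanity check, and an alternative route, is that this ${}_3F_2$ is of the quadratic-transformable type and the identity can also be deduced from the (classical) Bailey cubic/quadratic ${}_3F_2$ transformations, which is the same mechanism that yields the finite field analogue in Theorem \ref{thm:A-S-FF}.
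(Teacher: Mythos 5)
Your opening reformulation is correct and is in fact the same symmetry the paper exploits: with $\iota(x)=x/(x-1)$ one has $4\,\iota(x)(1-\iota(x))=-4x/(1-x)^2$, and the proof of Theorem \ref{thm:A-S-FF} runs on exactly this invariance (of $y^2/(4(y-1))$ under $y\mapsto y/(y-1)$). The gap is in your middle step. Carrying out your own recipe on the right-hand side, $\bigl(\tfrac{-4x}{(1-x)^2}\bigr)^k$ contributes $(1-x)^{-a-1-2k}$, and \eqref{Eq:40} with $c=a+1$ converts $\binom{-a-1-2k}{n-k}$ into $(-1)^n\frac{(a+1)_n(a+1+n)_k(-n)_k}{4^k(\frac{a+1}{2})_k(\frac{a+2}{2})_k\,n!}$; the coefficient of $x^n$ is therefore $\frac{(a+1)_n}{n!}$ times a \emph{terminating ${}_5F_4(1)$} with upper parameters $a+1,\,b+1,\,a-b+\tfrac12,\,a+1+n,\,-n$ and lower parameters $2b+2,\,2a-2b+1,\,\tfrac{a+1}{2},\,\tfrac{a+2}{2}$, not a ${}_3F_2$. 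The half-shifted parameters produced by duplication cancel against nothing: in the proof of \eqref{eq:quadratic} the analogous cancellation succeeds only because the ${}_2F_1$ there carries the upper parameter $\tfrac c2$, whereas here neither $\tfrac{a+1}2$ nor $\tfrac{a+2}2$ occurs among $a+1,b+1,a-b+\tfrac12$. Moreover the series is $2$-balanced (denominator sum exceeds numerator sum by $2$), not Saalsch\"utzian, so \eqref{eq:pf-s} does not apply. The left-hand side behaves the same way, producing a $2$-balanced terminating ${}_5F_4(1)$ with the $n$-dependent lower parameters $\tfrac{a+2-n}2,\tfrac{a+3-n}2$. The relation $a+1=(b+1)+(a-b+\tfrac12)-\tfrac12$ does not repair any of this, so the central claim that "a single application of \eqref{eq:pf-s} closes the computation" fails.

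What you relegate to a parenthetical sanity check is the actual proof. Andrews and Stanton compose the two Bailey cubic transformations \eqref{Bailey2} and \eqref{Bailey}, whose left-hand sides are the same function $(1-x)^{-a}\,{}_3F_2$ with parameters $\tfrac a3,\tfrac{a+1}3,\tfrac{a+2}3$ over $a-b+\tfrac32,\,b$ evaluated at cubic rational arguments: substituting $x=y(1-y)$ in one and $x=y^2/(y-1)$ in the other makes both cubic arguments equal to $\tfrac{27y^2(1-y)^2}{4(y^2-y+1)^3}$, so the two right-hand sides agree; since one of the resulting arguments, $\tfrac{y^2}{4(y-1)}$, is invariant under $y\mapsto y/(y-1)$ while the other becomes $\tfrac{-4y}{(1-y)^2}$, the theorem follows. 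This is precisely the route mirrored over $\F_q$ in the proof of Theorem \ref{thm:A-S-FF} via Theorems \ref{Thm:Bailey-FF} and \ref{Thm: Bailey FF2}. If you insist on a coefficient-by-coefficient argument, you would need a two-term transformation between the two $2$-balanced ${}_5F_4(1)$'s above, which is a substantially deeper identity than Pfaff--Saalsch\"utz.
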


To prove this result, Andrews and Stanton used two Bailey cubic $_3F_2$ transformation formulas, both of which satisfy  the $(\ast)$ condition. Consequently, our dictionary method allows us to translate the proofs to obtain the following result. \bk

\begin{Theorem}\label{thm:A-S-FF}\index{finite field analogues!Andrews-Stanton cubic formula} For a given finite field $\F_q$  of odd characteristic,  let $A$, $B \in \fqhat$  such that none of $A$, $B$, $A\ol B^2$, $A^2\ol B^3$, $\phi A\ol B$, $\phi A\ol B^3$ is trivial  and $x$  an  element in $\F_q$ with $x\neq \pm 1$, $\frac 12$. We have
$$
  \pFFq 32 {A & B& A\phi\ol B }{&B^2&A^2\ol B^2}{4x(1-x)}
  =\ol A^2(1-x) \pFFq 32 {A & B& A\phi\ol B }{ &B^2&A^2\ol B^2}{\frac{-4x}{(1-x)^2}}.
$$
\end{Theorem}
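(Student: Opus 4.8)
The plan is to deduce Theorem \ref{thm:A-S-FF} from the two finite field Bailey cubic $_3\F_2$ transformation formulas (Theorems \ref{Thm:Bailey-FF} and \ref{Thm: Bailey FF2}), mirroring the classical argument of Andrews and Stanton, for whom Theorem \ref{thm:AS} is a formal consequence of two Bailey cubic transformations. The geometric observation that organizes the whole calculation is that the two arguments are exchanged by a Pfaff-type involution: setting $y=\tfrac{x}{x-1}$ one has $1-y=\tfrac{1}{1-x}$ and hence $4y(1-y)=\tfrac{-4x}{(1-x)^2}$, so both sides of the desired identity are the single function $t\mapsto{}_3\F_2[\,\cdots;4t(1-t)]$ evaluated at $t=x$ and at $t=\tfrac{x}{x-1}$. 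Thus it suffices to produce, from the two Bailey cubic formulas, a common intermediate $_3\F_2$ to which both ${}_3\F_2[\,\cdots;4x(1-x)]$ and ${}_3\F_2[\,\cdots;\tfrac{-4x}{(1-x)^2}]$ reduce, and then to compare the monomial character prefactors.

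Concretely, first I would substitute the character data $\{A,B,A\phi\bar B\}$ (upper) and $\{B^2,A^2\bar B^2\}$ (lower) into Theorem \ref{Thm:Bailey-FF}, which rewrites the left member ${}_3\F_2[\,\cdots;4x(1-x)]$ as a monomial character factor times a fixed $_3\F_2$, up to finitely many $\delta$-correction terms. Then I would apply Theorem \ref{Thm: Bailey FF2} to the right member ${}_3\F_2[\,\cdots;\tfrac{-4x}{(1-x)^2}]$, with character assignments chosen so that its target $_3\F_2$ coincides with the one produced by the first step. Equating the two expressions and cancelling the common $_3\F_2$, the claimed ratio $\bar A^2(1-x)$ should emerge from the surviving prefactors after simplification via the Hasse--Davenport duplication formula \eqref{Hasse-Dav-special}, the reflection formula \eqref{Gauss1}, and the Gauss--Jacobi relation \eqref{JacobiGaussrelation}; the nontriviality hypotheses on $A$, $B$, $B\bar A^2$, $B^3\bar A^2$, and $B^3\phi\bar A$ are exactly what guarantee primitivity of the intermediate functions and invertibility of the Jacobi sum normalizing factors that appear in the passage between $_3\mathbb P_2$ and $_3\F_2$.

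The main obstacle I anticipate is the bookkeeping of the $\delta$-error terms, as emphasized in the summary of \S \ref{ss:quad-summary}. Each application of a finite field Bailey formula contributes delta terms supported at the arguments where $4t(1-t)$ or $\tfrac{-4t}{(1-t)^2}$ hits $0$, $1$, or $\infty$; since $4x(1-x)=1$ at $x=\tfrac12$, $\tfrac{-4x}{(1-x)^2}=1$ at $x=-1$, and the map degenerates at $x=1$, all such terms are supported on $x\in\{\pm1,\tfrac12\}$ and therefore vanish under the stated restriction $x\neq\pm1,\tfrac12$. A further subtlety flagged in \S \ref{ss:quad-summary} is that bridging the two sides to a single common $_3\F_2$ may, as already occurs in the proof of Theorem \ref{Thm:Bailey-FF}, require passing temporarily to the quadratic extension $\F_{q^2}$ (or invoking the quadratic Pfaff--Saalsch\"utz evaluation of Corollary \ref{cor:quad-pf-s}) before descending back to $\F_q$; verifying that the two target functions genuinely agree after this step, rather than merely agreeing up to an extra character twist, is the point I expect to demand the most care.
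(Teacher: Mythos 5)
Your proposal matches the paper's proof in essentially every respect: the paper likewise follows Andrews--Stanton by substituting $x=y(1-y)$ into Theorem \ref{Thm:Bailey-FF} and $x=y^2/(y-1)$ into Theorem \ref{Thm: Bailey FF2} to obtain a common intermediate function $F(y)$, and then exploits exactly the involution $y\mapsto y/(y-1)$ (under which $y^2/(4(y-1))$ and the cubic argument are invariant) to deduce the identity, with the excluded values $x\neq \pm 1,\tfrac12$ absorbing the $\delta$-terms. No field extension to $\F_{q^2}$ is needed at this stage, since the deduction from the two finite field Bailey cubics is purely formal.
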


 We note that the conditions in Theorem \ref{thm:A-S-FF} are used in order to avoid the need for delta terms.  The result could be stated for a less restricted class of characters, but would not appear as clean.  Before proving Theorem \ref{thm:A-S-FF}, we require some additional results.  We start by using our approach to obtain a finite field analogue of the Bailey cubic transformations.

\subsubsection{Finite field analogues of Bailey cubic transformations}
There are two Bailey cubic transforms (see \cite[(4.06)]{Bailey2}):
\begin{multline}\label{Bailey2}\index{Bailey cubic transformation}
\(1-x\)^{-a} \pFq{3}{2}{\frac a3&\frac{a+1}3&\frac{a+2}3}{ & b& a-b+\frac 32 }{\frac{27x^2}{4(1-x)^3}}=\\ \pFq{3}{2}{a&b-\frac12&a+1-b}{& 2b-1 &2a+2-2b}{4x},
\end{multline} 
and also (see \cite[(4.05)]{Bailey2})
\begin{multline}\label{Bailey}
(1-x)^{-a} \pFq{3}{2}{\frac a3&\frac{a+1}3&\frac{a+2}3}{&a-b+\frac 32& b}{\frac{-27x}{4(1-x)^3}}=\\ \pFq{3}{2}{a&2b-a-1&a+2-2b}{&b&a+\frac32 -b}{\frac x4}.
\end{multline}
We note that  both \eqref{Bailey2} and \eqref{Bailey} satisfy the ($\ast$) condition (see Appendix \S \ref{appendix:Bailey} for more details).

We first consider the $\F_q$ version of \eqref{Bailey2}. To do this, we apply the dictionary from \S \ref{dictionary}. We use $B$ for the $\F_q$ analogue of $b-\frac 12$ and $A$ for the analogue of $a/3$.
\begin{Theorem}\label{Thm:Bailey-FF}\index{finite field analogues!Bailey cubic transformation}
  Let  $q\equiv 1 \pmod{6}$ be a prime power. Let $ \eta_3 $ be a primitive character of order $3$ on $\F_q$, and $A$, $B\in \widehat{\F_q^\times}$ satisfying $A^3$, $B$, $A^3\ol B^2$, $A^6\ol B^3$, $\phi A^3\ol B$, $\phi A^3\ol B^3\neq\eps$. Then we have
\begin{multline*}
 \ol A^3(1-x) \pFFq 32 {A &A \eta_3  &A\ol{ \eta_3}  }{&\phi B&A^3\ol B}{\frac{27x^2}{4(1-x)^3}}=
\pFFq 32{A^3& B& \phi A^3 \ol B}{& B^2 & A^6\ol B^2}{4x}
\\ -\delta(x+2)\frac {g(\phi)g(\ol A^3)}{g(\phi \ol B)g(\ol A^3 B)}-\delta(1-x)\sum_{\substack{\chi\in \widehat{\F_q^\times}\\ \chi^3=\ol A^3}}\frac{g(\ol\chi) g(\phi \ol {B\chi})g(B\ol{A^3\chi})}{g(\phi \ol B)g(\ol A^3 B)g(A^3)}\ol\chi\(-4\).
\end{multline*}
\end{Theorem}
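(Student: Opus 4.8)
The plan is to follow the ``calculus-style'' recipe summarized in \S\ref{ss:quad-summary}, mimicking the classical proof of \eqref{Bailey2} step by step while tracking the delta terms. Since \eqref{Bailey2} satisfies the ($\ast$) condition (it is provable from the binomial theorem together with the reflection, multiplication, and Pfaff--Saalsch\"utz formulas), the dictionary in \S\ref{dictionary} predicts that the two $_3\mathbb F_2$ ``major terms'' on each side will match, and the only work is to identify the discrepancy terms. Concretely, I would expand the more complicated left-hand side
$$
\ol A^3(1-x)\,\pFFq 32 {A &A\eta &A\ol\eta}{&\phi B&A^3\ol B}{\frac{27x^2}{4(1-x)^3}}
$$
by inserting the character-sum definition \eqref{general_HGF}/\eqref{n+1Pn} of $_3\mathbb P_2$, expanding the argument $\ol A^3((1-x)^3)^{-1}$ via \eqref{eq:delta} to introduce a second summation character, and thereby writing the whole expression as a double sum over two characters $K$ and $\chi$, exactly as in \eqref{Eq:56}. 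The multiplication formula for Jacobi sums \eqref{eq:mul-Jacobi} (the $m=3$ Hasse--Davenport relation, Theorem \ref{Hasse Davenport}) is the finite field substitute for the classical cubic multiplication identity $(a)_{3n}$-splitting of Theorem \ref{thm:multiplication} that collapses the three upper parameters $A,A\eta,A\ol\eta$ into $A^3$; I would apply it to combine the three relevant Gauss sums into a Gauss sum indexed by $A^3$.

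After this reduction, the inner sum over $K$ should be recognizable as a terminating $_3\mathbb F_2$ evaluated at $1$, at which point I would invoke the finite field Pfaff--Saalsch\"utz formula \eqref{eq:P.S.-FF} (or, to avoid an auxiliary field extension, the quadratic version in Corollary \ref{cor:quad-pf-s}) to evaluate it. This is precisely the ``bridge'' evaluation mentioned in \S\ref{ss:quad-summary}. Collecting the surviving character in $\chi$ and re-assembling via \eqref{general_HGF} should produce the main term
$$
\pFFq 32{A^3& B& \phi A^3\ol B}{& B^2 & A^6\ol B^2}{4x},
$$
which is the finite field translation of the right-hand $_3F_2$ of \eqref{Bailey2} after matching parameters through the dictionary (the classical $a,b-\tfrac12,a+1-b$ with lower $b,2a+2-2b$ correspond to $A^3, B, \phi A^3\ol B$ with lower $B^2, A^6\ol B^2$). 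The hypotheses $A^3,B,A^3\ol B^2,A^6\ol B^3,\phi A^3\ol B,\phi A^3\ol B^3\neq\eps$ are exactly the non-triviality conditions needed so that each Gauss sum inverted via \eqref{Gauss2a} and each Jacobi-to-Gauss conversion \eqref{JacobiGaussrelation} contributes no spurious $\delta$-term in the generic range.

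The main obstacle will be, as always in this framework, the careful accounting of the two $\delta$ terms $\delta(x+2)$ and $\delta(1-x)$ that appear on the right. These arise from the ``error terms'' in the Gauss-sum manipulations: the $\delta(1-x)$ term comes from the degeneracy of the expansion \eqref{eq:delta} when the argument $\tfrac{27x^2}{4(1-x)^3}$ of the left side forces a trivial character contribution (i.e.\ when $1-x$ collapses), and the $\delta(x+2)$ term tracks the boundary case of the Pfaff--Saalsch\"utz evaluation where the terminating series degenerates; the explicit Gauss-sum coefficients $\tfrac{g(\phi)g(\ol A^3)}{g(\phi\ol B)g(\ol A^3 B)}$ and the sum $\sum_{\chi^3=\ol A^3}\tfrac{g(\ol\chi)g(\phi\ol{B\chi})g(B\ol{A^3\chi})}{g(\phi\ol B)g(\ol A^3 B)g(A^3)}\ol\chi(-4)$ must be read off from precisely these degenerate contributions. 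I would isolate these by the same bookkeeping device used in the proof of Theorem \ref{thm:quad-2F1}: separate the delta part of \eqref{eq:delta}, of each \eqref{JacobiGaussrelation}, and of each inverted Gauss sum \eqref{Gauss2a}, carry the resulting finitely many exceptional summands along, and simplify them using the reflection formula \eqref{Gauss1} and the cubic constraint $\chi^3=\ol A^3$. Verifying that these exceptional summands collapse to exactly the stated coefficients supported at $x=-2$ and $x=1$ is the delicate, computation-heavy heart of the argument, while everything else is dictated by the dictionary and proceeds mechanically.
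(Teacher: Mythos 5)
Your proposal follows essentially the same route as the paper's proof: expand the left side as a double character sum, collapse the three upper parameters via the $m=3$ Hasse--Davenport relation, evaluate the resulting inner character sum with the quadratic Pfaff--Saalsch\"utz formula of Corollary \ref{cor:quad-pf-s} (exactly the tool the paper invokes, via Lemma \ref{Lem:forBaileyCubic}), and read off $\delta(1-x)$ from the degenerate $(A\chi)^3=\eps$ contribution and $\delta(x+2)$ from the secondary term in that evaluation. Aside from a harmless relabeling of which character indexes the inner versus outer sum, this is the paper's argument.
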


 This is a special case of the following identity
\begin{Theorem}\label{Thm:Bailey-FF-char}
Let $q$ be an odd prime power and let $A$, $B\in \widehat{\F_q^\times}$ be such that $A$, $B$, $A \ol B^2$, $A^2\ol B^3$, $\phi A \ol B$, $\phi A \ol  B^3\neq\eps$. Then we have
\begin{align*}
&\pFFq 32{A& B& \phi A \ol B}{& B^2 & A^2\ol B^2}{4x}\\
=&\frac{1}{(q-1)^2}\frac{\ol A(1-x)}{g(\phi\ol B)g(\ol AB)}\sum_{\chi\in \widehat{\F_q^\times}}\(A\)_{\chi^3}g(\ol\chi)g(\phi\ol{B\chi})g(B\ol{A\chi})\cdot\chi\(\frac{x^2}{4(x-1)^3}\)\\
&+\delta(x) +\delta(x+2)\frac {g(\phi)g(\ol A)}{g(\phi \ol B)g(\ol A B)}+\delta(1-x)\sum_{\substack{\chi\in \widehat{\F_q^\times}\\ \chi^3=\ol A}}\frac{g(\ol\chi) g(\phi \ol {B\chi})g(B\ol{A\chi})}{g(\phi \ol B)g(\ol A B)g(A)}\ol\chi\(-4\).
\end{align*}
\end{Theorem}

  As an immediate consequence, one has the following evaluation formula by letting $x=1$.
\begin{Corollary}\label{cor:cubic-eva&4}Given the assumptions as in Theorem \ref{Thm:Bailey-FF-char},
$$\pFFq 32{A& B& \phi A \ol B}{& B^2 & A^2\ol B^2}{4}=\sum_{\substack{\chi\in \widehat{\F_q^\times}\\ \chi^3=\ol A}}\frac{g(\ol\chi) g(\phi \ol {B\chi})g(B\ol{A\chi})}{g(\phi \ol B)g(\ol A B)g(A)}\ol\chi\(-4\).$$
\end{Corollary}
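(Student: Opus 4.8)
The plan is to derive Corollary \ref{cor:cubic-eva&4} as an immediate specialization of Theorem \ref{Thm:Bailey-FF} by setting $x=1$. The key observation is that the left-hand side of the identity in Theorem \ref{Thm:Bailey-FF} carries the factor $\ol A^3(1-x)$, and since we extend each character by the convention $\chiup(0)=0$, evaluating at $x=1$ gives $\ol A^3(1-1)=\ol A^3(0)=0$. Provided $A^3\neq\eps$ (which holds under the hypotheses of Theorem \ref{Thm:Bailey-FF}), this kills the entire left-hand side regardless of the value of the ${}_3\mathbb F_2$ factor, so the left-hand side vanishes at $x=1$.

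Next I would examine the delta terms on the right-hand side at $x=1$. The term $\delta(x+2)=\delta(3)$ vanishes (since the characteristic is at least $5$ when $q\equiv 1\pmod 6$, so $3\neq 0$ in $\F_q$), hence only the $\delta(1-x)$ term survives, and at $x=1$ we have $\delta(1-1)=\delta(0)=1$. Thus setting $x=1$ in the identity of Theorem \ref{Thm:Bailey-FF} yields
$$0=\pFFq 32{A^3& B& \phi A^3 \ol B}{& B^2 & A^6\ol B^2}{4}-\sum_{\substack{\chi\in \widehat{\F_q^\times}\\ \chi^3=\ol A^3}}\frac{g(\ol\chi) g(\phi \ol {B\chi})g(B\ol{A^3\chi})}{g(\phi \ol B)g(\ol A^3 B)g(A^3)}\ol\chi\(-4\),$$
and rearranging gives precisely the claimed evaluation formula.

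The only subtlety I would check carefully is that the argument $4x(1-x)$ appearing inside the ${}_3\mathbb F_2$ on the right, which becomes $4\cdot 1\cdot 0=0$ under a naive reading, is in fact $4x$ evaluated at $x=1$, namely $4$; that is, the argument of the surviving hypergeometric function is $4x$ and not $4x(1-x)$, matching the statement of Theorem \ref{Thm:Bailey-FF}. I would also confirm that the hypotheses of the Corollary are exactly those of Theorem \ref{Thm:Bailey-FF}, so that the primitivity conditions guaranteeing the absence of additional delta terms are already in force. Since everything follows by direct substitution, there is no genuine obstacle here; the main point to get right is the bookkeeping of which delta term survives and the correct reading of the hypergeometric argument at $x=1$.
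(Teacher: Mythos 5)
Your proposal is correct and is exactly the paper's argument: the paper obtains Corollary \ref{cor:cubic-eva&4} by setting $x=1$ in Theorem \ref{Thm:Bailey-FF}, noting that the factor $\ol A^3(1-x)=\ol A^3(0)=0$ annihilates the left-hand side while $\delta(x+2)=\delta(3)=0$ (the characteristic is prime to $6$) and $\delta(1-x)=1$. Your bookkeeping of the surviving delta term and of the argument $4x=4$ matches the paper's (very terse) justification.
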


The proof of Theorem \ref{Thm:Bailey-FF-char} is mainly based on Corollary \ref{cor:quad-pf-s}, the quadratic version of Pfaff-SaalSch\"utz
formula. Before we begin the proof,   we introduce some notation and prove two short lemmas which will assist in the body of the proof.  If $A,B,K \in\fqhat$, define
\begin{equation}\label{eq:I_K}
  I_{A,B}(K):=\frac{1}{q-1}\sum_{\chi\in \fqhat}g(A K \chi)g(\phi \ol{B \chi})g(\ol A B\ol \chi)g(\ol K\chi^2) g(\ol \chi)\ol\chi(-4),
\end{equation}
and
\begin{equation*}\label{eq:J_K}
 J_{A,B}(K):=K(4)J(AK,\ol K)J(BK,\ol{B^2K})J(\phi A\ol B K, B^2\ol{A^2 K}).
\end{equation*}

With this notation in place, we state and prove two lemmas. The first  evaluates the above character sums for special choices of $K$.

\begin{Lemma}\label{lemma: I-K-sp}
 Under the same assumptions on $A,B$  as Theorem  \ref{Thm:Bailey-FF-char},  for any characters $K$ and a fixed finite field $\fq$, the following identities hold.
\begin{enumerate}
 \item If $K=\ol B$,
 \begin{align*}
  I_{A,B}(\ol B)=&-g(\phi)qA(-1) B(2)-g(\phi)A\ol B(4)J(\phi,A\ol B)J( B^3\ol A^2, \phi \ol B^2 A)\\
   J_{A,B}(\ol B)=&-\ol B(4)J( B, A\ol B)J(\phi \ol B^2 A, B^3\ol{A^2}).
 \end{align*}
\item
If  $K= \ol A B$, or $\phi \ol B$, then
\begin{align*}
   I_{A,B}(K)=&g(\phi)qAK(4)A(-1)J(AK,AK^2)-qA(-1)\ol K(2)g(\phi)\\
      &-\delta(K)AK(-1)(q-1)g(\phi)J(\phi, AK);\\
  J_{A,B}(\ol A B) =&B\ol A(4)A(-1)J( B, A\ol B)J( B^2\ol A,B)J(\phi, B\ol A),\\
  J_{A,B}(\phi \ol B)=&\ol B(4)J( \phi A\ol B,\phi B)J(\phi,\phi \ol B)J( \ol B^2A,\phi B^3\ol A).
\end{align*}
\end{enumerate}
\end{Lemma}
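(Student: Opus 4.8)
The plan is to treat the two families of identities separately, dealing with the $J_{A,B}(K)$ evaluations first since they are purely formal. For each special value $K\in\{\ol B,\,B\ol A^3,\,\phi\ol B\}$ I would substitute directly into
$$J_{A,B}(K)=K(4)J(A^3K,\ol K)J(BK,\ol{B^2K})J(\phi A^3\ol B K, B^2\ol{A^6 K}),$$
and observe that exactly one of the three Jacobi factors degenerates: at $K=\ol B$ the middle factor collapses to $J(\eps,\ol B)=-1$, while at $K=B\ol A^3$ and $K=\phi\ol B$ the last factor collapses to a factor of the form $J(\phi,\cdot)$. After this collapse the remaining two Jacobi sums are rearranged using the symmetry $J(U,V)=J(V,U)$, Lemma \ref{lem:1} in the form $J(U,\ol V)=U(-1)J(U,V\ol U)$, and the signs $U^2(-1)=1$ and $A^3(-1)=A(-1)$ to match the stated products. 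I expect each of these three computations to be a short, mechanical verification.

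For the $I_{A,B}(K)$ identities the essential difficulty is the factor $g(\ol K\chi^2)$ in
$$I_{A,B}(K)=\tfrac1{q-1}\sum_{\chi}g(A^3K\chi)g(\phi\ol{B\chi})g(\ol A^3 B\ol\chi)g(\ol K\chi^2)g(\ol\chi)\ol\chi(-4),$$
whose squared character blocks a direct reduction to Jacobi sums. My strategy would be to first coalesce the two conjugate linear factors: by the Gauss--Jacobi relation \eqref{JacobiGaussrelation},
$$g(\phi\ol{B\chi})g(\ol A^3 B\ol\chi)=J(\phi\ol B\ol\chi,\ol A^3 B\ol\chi)\,g(\phi\ol A^3\ol\chi^2)+(q-1)(\ol A^3 B\ol\chi)(-1)\delta(\phi\ol A^3\ol\chi^2),$$
and then to multiply the resulting $g(\phi\ol A^3\ol\chi^2)$ against $g(\ol K\chi^2)$; applying \eqref{JacobiGaussrelation} once more, their product is the fixed Gauss sum $g(\phi\ol A^3\ol K)$ times a Jacobi sum of two mutually complementary characters, so that no squared character survives inside any Gauss sum and the residual $\chi$-dependence is confined to Jacobi sums.

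At this stage the sum over $\chi$ is a balanced convolution of Jacobi sums against a power of $\chi$, which I would evaluate with the finite field Pfaff--Saalsch\"utz identity \eqref{eq:P.S.-FF} (expanding any remaining $\chi^2$-Jacobi sum through the $m=2$ multiplication formula \eqref{eq:mul-Jacobi} and the duplication relation \eqref{Hasse-Dav-special} where needed). Specialising $K$ then produces the two shapes of main term in the statement: the product of two Jacobi sums for $K=\ol B$, and the single factor $J(A^3K,A^3K^2)$ for $K=B\ol A^3$ and $K=\phi\ol B$. Throughout I would invoke the reflection formula \eqref{Gauss1} to collapse the contributions in which a summation character becomes trivial; these degenerate contributions are precisely what assemble into the universal boundary term $-qA(-1)\ol K(2)g(\phi)$, with the extra $\delta(K)$ term in case (2) recording the possibility $K=\eps$.

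The main obstacle will be the bookkeeping of delta terms rather than any single hard identity: every application of \eqref{JacobiGaussrelation} and of \eqref{Gauss1} spawns a boundary contribution, and the real content of the lemma is that, once the squared character has been removed, these contributions combine cleanly into the stated closed forms. Organising the three values of $K$ in parallel---since they differ only through $\ol K(2)$ and through which Jacobi factor degenerates---keeps this accounting manageable, and the $J_{A,B}(K)$ formulas computed first serve as an independent consistency check on the main terms extracted from $I_{A,B}(K)$.
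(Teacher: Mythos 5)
Your treatment of the three $J_{A,B}(K)$ values is correct and is exactly what the paper does: substitute into the definition, note which Jacobi factor degenerates, and rearrange with $J(U,\ol V)=U(-1)J(U,V\ol U)$. The divergence, and the gap, is in $I_{A,B}(K)$. The paper does not rederive these sums from the ordinary Pfaff--Saalsch\"utz identity \eqref{eq:P.S.-FF}; it reads them off from the two supplementary cases of Corollary \ref{cor:quad-pf-s} (the quadratic Pfaff--Saalsch\"utz), under the same substitution used in Lemma \ref{Lem:forBaileyCubic} (the corollary's $A,B,C$ become $\ol K$, $A^3K$, $\phi\ol B$), for which the degeneracy conditions $CA=\phi$, $CB=\phi$, $CA=\eps$ are exactly $K=\ol B$, $K=B\ol A^3$, $K=\phi\ol B$. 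That corollary was established earlier by comparing coefficients in the quadratic transformation of Theorem \ref{thm:quad-2F1}, precisely so that no extra hypothesis on the characters is needed.

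Your proposed route needs such a hypothesis. After merging $g(\phi\ol{B\chi})g(\ol A^3B\ol\chi)$ and then pairing the resulting $g(\phi\ol A^3\ol\chi^2)$ with $g(\ol K\chi^2)$, you are left with a Jacobi sum whose argument involves $\chi^2$, and to bring the convolution into the shape of \eqref{eq:P.S.-FF} you propose to split that $\chi^2$ using the $m=2$ multiplication formula \eqref{eq:mul-Jacobi} or the duplication relation \eqref{Hasse-Dav-special}. Both of these express $(A^2)_{\psi^2}$, respectively $g(\psi^2)$, in terms of $\psi$; to apply them to $(\ol K)_{\chi^2}$ you must write $\ol K=D^2$ for some $D\in\fqhat$, and for $K\in\{\ol B,\,B\ol A^3,\,\phi\ol B\}$ there is no reason for this to hold, since $B$ is subject only to the nontriviality conditions of Theorem \ref{Thm:Bailey-FF}. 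This is exactly the obstruction the paper flags in the remark after Corollary \ref{cor:quad-pf-s}: the sum can be evaluated via \eqref{eq:P.S.-FF} \emph{when} the relevant character is a square, but the corollary exists because that assumption is not available. The repair is not finer bookkeeping of delta terms but a different input: either quote the two supplementary formulas of Corollary \ref{cor:quad-pf-s} directly (as the paper does), or reprove them for these particular $K$ by exploiting the degeneracy itself, which is a genuinely different computation from the one you outline.
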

\begin{proof}
Following the special cases of the Corollary \ref{cor:quad-pf-s}, we can get $I_{A,B}(K)$ immediately. In particular,
\begin{multline*}
  \frac{I_{A,B}(\ol B)}{g(\phi)}=-qA(-1)B(2)-A\ol B(4)J(\phi,A\ol B)J(\ol A^2B^3,\phi A\ol B^2)\\
           -\delta(B)q(q-1)AB(-1)-\delta(\phi A\ol B)A(-1)(q-1)\\
           =-qA(-1)B(2)-A\ol B(4)J(\phi,A\ol B)J(\ol A^2B^3,\phi A\ol B^2),
\end{multline*}
since $B$, $\phi A\ol B$ are non-trivial characters.

When $K=\ol A B$,
\begin{align*}
  J_{A,B}(K)=&B\ol A(4)J( B, A\ol B)J( B^2\ol A,A\ol{B}^3)J(\phi, B\ol A)\\
     =&B\ol A(4)A(-1)J( B, A\ol B)J( B^2\ol A,B)J(\phi, B\ol A);
\end{align*} the other two can be computed from the definition of $J_{A,B}(K)$.
\end{proof}

The next lemma relates $I_{A,B}(K)$ and $J_{A,B}(K)$ under special assumptions. \bk
\begin{Lemma}\label{Lem:forBaileyCubic}Under the same assumptions on $A,B$  as Theorem \ref{Thm:Bailey-FF-char},  for any characters $K$ of a fixed finite field $\fq$ one has
$$ I_{A,B}(K)={A(-4)}g(A)g(\ol B)g(\phi B\ol A)\cdot J_{A,B}(K) -A(-1)\ol K(2)qg(\phi).$$
Consequently,\begin{align*}
  \frac{1}{q-1}&\sum_{K\in \fqhat} K(-x)\cdot I_{A,B}(K)=-\delta(x+2)A(-1)qg(\phi)\\
   &+\frac{ A(4)\phi(-1) }qg(A)g(\ol B)g(\phi B\ol A) \pPPq 32{A& B& \phi A \ol B}{& B^2 & A^2\ol B^2}{4x}.
\end{align*}
\end{Lemma}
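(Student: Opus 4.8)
The plan is to prove the key identity $I_{A,B}(K)=\frac{A^3(-4)}qg(A^3)g(\ol B)g(\phi B\ol A^3)\cdot J_{A,B}(K) -A(-1)\ol K(2)qg(\phi)$ for \emph{generic} $K$ first, and then check the degenerate values of $K$ separately using Lemma \ref{lemma: I-K-sp}. The quantity $I_{A,B}(K)$ defined in \eqref{eq:I_K} is a sum $\frac{1}{q-1}\sum_\chi g(A^3K\chi)g(\phi\ol{B\chi})g(\ol A^3 B\ol\chi)g(\ol K\chi^2)g(\ol\chi)\ol\chi(-4)$ of exactly the shape handled by the quadratic Pfaff--Saalsch\"utz evaluation in Corollary \ref{cor:quad-pf-s}. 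First I would match the parameters: the character $\chi^2$ appears in one Gauss factor and $\chi$ in the others, so I would set the ``$A$''-slot of Corollary \ref{cor:quad-pf-s} equal to $A^3K$, the ``$B$''-slot to a shift of $\phi\ol B$, and the ``$C$''-slot accordingly, taking care that the product constraints ($CA\neq\phi$, $CB\neq\phi$) translate into the nontriviality hypotheses on $A,B$ imposed in Theorem \ref{Thm:Bailey-FF}. Applying the main (generic) case of Corollary \ref{cor:quad-pf-s} then rewrites $I_{A,B}(K)$ as a difference of two Jacobi-sum products, and the bookkeeping identity
$$\frac{A^3(-4)}q\frac{g(\ol B)g(\phi B\ol A^3)}{g(\phi\ol B)g(\ol A^3 B)}J(B,\ol B^2)J(\phi A^3\ol B,\ol A^6B^2)=1$$
quoted at the end of the proof of Theorem \ref{Thm:Bailey-FF} lets me repackage the leading product as $\frac{A^3(-4)}qg(A^3)g(\ol B)g(\phi B\ol A^3)J_{A,B}(K)$, while the second Jacobi-sum product collapses (via \eqref{Gauss1} and $\ol\chi(2)$-type evaluations) to the error term $-A(-1)\ol K(2)qg(\phi)$.

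Next I would handle the special $K$-values. Corollary \ref{cor:quad-pf-s} has branch cases precisely when $CA=\eps$ or $CB=\eps$ (equivalently $CB=\phi$ or $CA=\phi$ after the parameter match), and these correspond to $K\in\{\ol B,\,B\ol A^3,\,\phi\ol B\}$. For each of these Lemma \ref{lemma: I-K-sp} already records the exact values of both $I_{A,B}(K)$ and $J_{A,B}(K)$; I would substitute those in and verify the claimed linear relation holds on the nose, absorbing the extra $\delta$-contributions that appear in the branch formulas of Corollary \ref{cor:quad-pf-s} (these vanish under the nontriviality hypotheses on $A,B$, which is exactly why those hypotheses were imposed). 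This reduces the first assertion of Lemma \ref{Lem:forBaileyCubic} to a finite check plus the generic computation.

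For the second assertion, I would multiply the now-established identity $I_{A,B}(K)=\frac{A^3(-4)}qg(A^3)g(\ol B)g(\phi B\ol A^3)J_{A,B}(K)-A(-1)\ol K(2)qg(\phi)$ by $K(-x)$ and sum over $K\in\fqhat$, dividing by $q-1$. The first piece gives $\frac{A^3(-4)}qg(A^3)g(\ol B)g(\phi B\ol A^3)\cdot\frac1{q-1}\sum_K J_{A,B}(K)K(-x)$; by the definition of $J_{A,B}(K)$ in terms of the three Jacobi sums and the definition \eqref{Eq:30}--\eqref{eq:19} of the period function, this sum is exactly $\pPPq 32{A^3& B& \phi A^3 \ol B}{& B^2 & A^6\ol B^2}{4x}$ up to the sign $A^3(-4)$ already factored out, producing the stated $\frac{A^3(4)\phi(-1)}q g(A^3)g(\ol B)g(\phi B\ol A^3)$ coefficient after combining signs (noting $A^3(-4)A^3(4)=A^6(-1)=1$ since $A^3(-1)^2=1$, and $\phi(-1)$ tracks the remaining sign). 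The second piece, $-A(-1)qg(\phi)\cdot\frac1{q-1}\sum_K\ol K(2)K(-x)$, is evaluated by orthogonality \eqref{eq:delta}: $\frac1{q-1}\sum_K K(-x/2)=\delta(-x/2-1)=\delta(x+2)$, yielding the error term $-\delta(x+2)A(-1)qg(\phi)$.

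I expect the main obstacle to be the parameter-matching in the generic step: the quadratic Pfaff--Saalsch\"utz formula in Corollary \ref{cor:quad-pf-s} is stated with a specific normalization and specific argument $\ol\chi(-4)$, and aligning its $A,B,C$ slots with the four Gauss factors of $I_{A,B}(K)$ while correctly propagating the $K(4)$ twist hidden inside $J_{A,B}(K)$ is where sign and character errors are most likely to creep in. The degenerate-$K$ verifications are mechanical once Lemma \ref{lemma: I-K-sp} is in hand, and the final orthogonality sum is routine; the real care is in the bookkeeping identity linking $g(\phi\ol B)g(\ol A^3 B)$ in the denominator of $I_{A,B}(K)$ to the Jacobi sums appearing in $J_{A,B}(K)$, which must be checked to be consistent with the normalizing constant already used in the proof of Theorem \ref{Thm:Bailey-FF}.
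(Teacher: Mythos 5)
Your proposal follows the paper's proof in all essentials: the generic case of the identity is obtained by specializing Corollary \ref{cor:quad-pf-s}, the three exceptional values $K\in\{\ol B,\,B\ol A^3,\,\phi\ol B\}$ are disposed of by Lemma \ref{lemma: I-K-sp}, and the ``consequently'' clause follows by summing against $K(-x)$, recognizing $\frac1{q-1}\sum_K J_{A,B}(K)K(-x)$ as $\phi(-1)A^3(-1)$ times the $_3\mathbb P_2$ period function and evaluating $\frac1{q-1}\sum_K K(-x/2)=\delta(x+2)$ by orthogonality. The one correction to your parameter matching: since the $\chi^2$ factor in $I_{A,B}(K)$ is $g(\ol K\chi^2)$, the ``$A$''-slot of Corollary \ref{cor:quad-pf-s} must be $\ol K$ (with $A^3K$ in the ``$B$''-slot and $\phi\ol B$ in the ``$C$''-slot), not $A^3K$ as you wrote, and the reconciliation of signs at the end uses $A^3(-4)\cdot A^3(-1)=A^3(4)$ rather than $A^3(-4)A^3(4)=1$.
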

\begin{proof}When $K^2\neq \ol B^2$, $K\neq \ol AB$, by letting  $A=\ol K$, $B=AK$, and $C=\phi\ol B$ in Corollary \ref{cor:quad-pf-s}, one has
\begin{multline*}
I_{A,B}(K) = A(-1)B(4)g(\phi){J({BK}, A\ol B^2)}J(\ol AB^2,\ol{B^2 K})J(\ol K, A^2\ol B^2 K^2)\\
-A(-1)\ol K(2)qg(\phi).
\end{multline*}
Using the next equality, obtained from the duplication and reflection formulas of Gauss sums, \bk
$$
J(\ol K, A^2\ol B^2 K^2)=A\ol BK(4)J(A\ol BK,A\ol B)J(\phi A\ol B K,\ol K)/J(\phi, A\ol B),
$$
one can derive
\begin{align*}
I_{A,B}(K)=& A(-1)B(4)g(\phi)\cdot AK(-1)J(BK,\ol AB\ol K)J(\ol {B^2K},{AK})\\
  &\cdot \frac{A\ol BK(4)}{J(\phi,A\ol B)}J(A\ol B K,B^2\ol{A^2K})J(\phi A\ol BK,\ol K)-A(-1)\ol K(2)qg(\phi)\\
  =& A(-1)  B(4)  g(\phi)\(\frac{AB(-1)A\ol B(4)}{J(\phi, A\ol B)} \frac{g(A)g(\ol B)g(\phi B\ol A)}{g(B\ol A)g(\phi A\ol B)}-\delta(B\ol A)A(-1)(q-1)\)\\
  &\cdot J_{A,B}(K)   -A(-1)\ol K(2)qg(\phi).
\end{align*}
The claim follows from straightforward computation. For the other case, the claim follows from the  Lemma \ref{lemma: I-K-sp}.
\end{proof}

We are now ready to prove Theorem \ref{Thm:Bailey-FF-char}. \bk
\begin{proof}[Proof of Theorem \ref{Thm:Bailey-FF-char}]  When $x=0$, both sides of the desired equation have value 1. \bk
Assume $x\neq 0$. The character sum
$$
 \frac{1}{(q-1)^2}\frac{\ol A(1-x)}{g(\phi\ol B)g(\ol AB)}\sum_{\chi\in \widehat{\F_q^\times}}\(A\)_{\chi^3}g(\ol\chi)g(\phi\ol{B\chi}g(B\ol{A\chi})\chi\(\frac{x^2}{4(x-1)^3}\)
$$
can be written as
\begin{align*}
\frac{1}{(q-1)^2}&\frac{\ol A(1-x)}{g(\phi\ol B)g(\ol AB)}\sum_{\chi\in \widehat{\F_q^\times}}\(A\)_{\chi^3}g(\ol\chi)g(\phi\ol{B\chi}g(B\ol{A\chi})\chi\(\frac{x^2}{4(x-1)^3}\)\\
 =&\frac {1}{(q-1)^2}\frac 1{g(\phi \ol B)g(\ol A B)} \\
 &\cdot \sum_{\chi,\varphi \in \widehat{\F_q^\times}} (A)_{\chi^3} g(\ol\chi)g(\phi\ol{B\chi}g(B\ol{A\chi})\ol\chi\(4\)J(A\chi^3\varphi,\ol\varphi)\cdot \varphi\chi^2(-x)\\
  =&\frac {1}{(q-1)^2}\frac 1{g(\phi \ol B)g(\ol A B)}\\
  &\cdot \sum_{\chi,\varphi} (A)_{\chi^3} g(\ol\chi)g(\phi \ol {B\chi})g(B\ol{A\chi})\ol \chi\(-4\)(A\chi^3)_{\varphi} g(\ol\varphi)\cdot \varphi\chi^2(-x)\\
  &+\frac {1}{(q-1)}\frac 1{g(\phi \ol B)g(\ol A B)}\\
  &\cdot \sum_{\chi}\delta(A\chi^3) (A)_{\chi^3}g(\ol\chi) g(\phi \ol {B\chi})g(B\ol{A\chi})\ol\chi\(-4\)\chi^2(x)\sum_{\varphi}\varphi(x).
\end{align*}
By using \eqref{eq:22}, which says $(A)_{\chi_1\chi_2}=(A)_{\chi_1}(A\chi_1)_{\chi_2}$, \bk the above equals
\begin{multline*}
\frac {1}{(q-1)^2}\frac 1{g(\phi \ol B)g(\ol A B)}\sum_{\chi,\varphi \in \fqhat} (A)_{\chi^3\varphi} g(\ol\chi) g(\phi \ol {B\chi})g(B\ol{A\chi})\ol\chi\(-4\) g(\ol\varphi)\cdot \varphi\chi^2(-x)\\
 -\delta(1-x)\frac 1{g(\phi \ol B)g(\ol A B)g(A)}\sum_{\chi}\delta(A\chi^3)g(\ol\chi) g(\phi \ol {B\chi})g(B\ol{A\chi})\ol\chi\(-4\)\\
 \overset{K:=\varphi\chi^2}=\frac {1}{(q-1)^2}\frac 1{g(\phi \ol B)g(\ol A B)}\sum_{\chi,K\in \widehat{\F_q^\times}} (A)_{\chi K} g(\ol\chi)g(\phi \ol {B\chi})g(B\ol{A\chi})\ol\chi\(-4\) g(\ol K\chi^2)\cdot K(-x)\\
 -\delta(1-x)\sum_{\chi^3=\ol A}\frac{g(\ol\chi) g(\phi \ol {B\chi})g(B\ol{A\chi})}{g(\phi \ol B)g(\ol A B)g(A)}\ol\chi\(-4\).
 \end{multline*}
We write the first term above as
$$\displaystyle \frac {1}{(q-1)}\frac 1{g(\phi \ol B)g(\ol A B)g(A)}\sum_{K\in \fqhat} I_{A,B}(K)\cdot K(-x),$$
where
  $I_{A,B}(K)$ is defined in \eqref{eq:I_K}. By applying Lemma \ref{Lem:forBaileyCubic}, we have
\begin{align*}
\frac{1}{(q-1)^2}&\frac{\ol A(1-x)}{g(\phi\ol B)g(\ol AB)}\sum_{\chi\in \widehat{\F_q^\times}}\(A\)_{\chi^3}g(\ol\chi)g(\phi\ol{B\chi}g(B\ol{A\chi})\chi\(\frac{x^2}{4(x-1)^3}\)\\
 =&\pFFq 32{A& B& \phi A \ol B}{& B^2 & A^2\ol B^2}{4x}-\delta(x)\\
 &-\delta(x+2)\frac {g(\phi)g(\ol A)}{g(\phi \ol B)g(\ol A B)}-\delta(1-x)\sum_{\chi^3=\ol A}\frac{g(\ol\chi) g(\phi \ol {B\chi})g(B\ol{A\chi})}{g(\phi \ol B)g(\ol A B)g(A)}\ol\chi\(-4\),
\end{align*}
since
\begin{align*}
   &\frac {A(4)\phi(-1)}{q}\frac {g(\ol B)g(\phi B\ol A)}{g(\phi \ol B)g(\ol A B)}J(B, B)J(\phi A\ol B,\phi A \ol B)\\
   =&\frac {A(-4)}{q}\frac {g(\ol B)g(\phi B\ol A)}{g(\phi \ol B)g(\ol A B)}J(B,\ol B^2)J(\phi A\ol B,\ol A^2B^2)\\
   =&1.
\end{align*}
\end{proof}

{
\begin{proof}[Proof of Theorem \ref{Thm:Bailey-FF}]
When $x=0$, both sides take value $1$. Thus, assume $x\neq 0$.
From the definition, we derive that
\begin{align*}
 \ol A^3(1-x)& \pFFq 32 {A &A \eta_3  &A\ol{ \eta_3 } }{&\phi B&A^3\ol B}{\frac{27x^2}{4(1-x)^3}}\\
 =&\frac{ \ol A^3(1-x) }{(q-1)^2J(A \eta_3 ,\phi \ol B)J(A\ol{ \eta_3 }, \ol A^3 B)}\\
 &\sum_{\chi,\varphi\in \fqhat}J(A\chi,\ol\chi)J(A \eta_3 \chi,\phi\ol{B\chi})J(A\ol{ \eta_3}\chi,B\ol{A^3\chi})\chi(27)\chi\(\frac{x^2}{4(x-1)^3}\)\\
 =&\frac {1}{(q-1)^2}\frac {\ol A^3(1-x) }{g(\phi \ol B)g(\ol A^3 B)}\\
  &\cdot \sum_{\chi,\varphi} (A)_\chi (A \eta_3 )_\chi (A\ol{ \eta_3 })_\chi g(\ol\chi)g(\phi \ol {B\chi})g(B\ol{A^3\chi})\chi\(27\)
  \chi\(\frac{x^2}{4(x-1)^3}\)\\
  =&\frac {1}{(q-1)^2}\frac {\ol A^3(1-x) }{g(\phi \ol B)g(\ol A^3 B)}\cdot \sum_{\chi,\varphi} (A^3)_{\chi^3} g(\ol\chi)g(\phi \ol {B\chi})g(B\ol{A^3\chi})\ol \chi\(\frac{x^2}{4(x-1)^3}\).
\end{align*}
The theorem follows from Theorem \ref{Thm:Bailey-FF-char} by replacing $A$ with $A^3$.
\end{proof}

Theorem \ref{Thm:Bailey-FF-char} provides a finite field analogue for \eqref{Bailey2}, and we will now use it to obtain a finite field analogue for \eqref{Bailey}.

\begin{Theorem}\label{Thm:Bailey-FF2-char}
  Let $q$ be a power of odd prime, and $A$, $B\in \widehat{\F_q^\times}$ satisfying $A$, $B$, $A\ol B^2$, $A^2\ol B^3$, $\phi A\ol B$, $\phi A\ol B^3\neq\eps$. Then we have
\begin{align*}
& \pFFq 32{A &A\ol B^2 &\ol A B^2 }{&A\ol B &\phi B}{\frac x4}\\
=&\frac{1}{(q-1)^2}\frac{\ol A\(1-x\)}{g(\phi\ol B)g(\ol AB)}\sum_{\chi\in \widehat{\F_q^\times}}\(A\)_{\chi^3}g(\ol\chi)g(\phi\ol{B\chi})g(B\ol{A\chi})\cdot\chi\(\frac{x}{4(1-x)^3}\)\\
&+\delta(x) -\delta(x+1/2)A(2)\frac {g(\phi)g(\ol A)}{g(\phi \ol B)g(\ol A B)}-\delta(1-x)\sum_{\substack{\chi\in \widehat{\F_q^\times}\\ \chi^3=\ol A}}\frac{g(\ol\chi) g(\phi \ol {B\chi})g(B\ol{A\chi})}{g(\phi \ol B)g(\ol A B)g(A)}\ol\chi\(4\).
\end{align*}
\end{Theorem}
As an immediate consequence, one has the following evaluation formula by letting $x=1$, comparable to Corollary \ref{cor:cubic-eva&4}. \bk
\begin{Corollary}\label{cor:cubic-3F2}Given the assumptions as in Theorem \ref{Thm:Bailey-FF2-char}, we have
$$\pFFq 32{A &A\ol B^2 &\ol A B^2 }{&A\ol B &\phi B}{\frac 14}=\sum_{\substack{\chi\in \widehat{\F_q^\times}\\ \chi^3=\ol A}}\frac{g(\ol\chi) g(\phi \ol {B\chi})g(B\ol{A\chi})}{g(\phi \ol B)g(\ol A B)g(A)}\ol\chi\(4\).$$
\end{Corollary}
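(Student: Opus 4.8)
The plan is to derive Corollary \ref{cor:cubic-3F2} directly from Theorem \ref{Thm: Bailey FF2} by specializing the transformation at $x=1$, exactly as is done for Corollary \ref{cor:cubic-eva&4} and its parent Theorem \ref{Thm:Bailey-FF}. The key observation, already noted in the sentence preceding the corollary, is that the left-hand side of the Bailey cubic transformation in Theorem \ref{Thm: Bailey FF2} vanishes at $x=1$. This is because the prefactor $\ol A^3(1-x)$ carries the character $\ol{A^3}$ evaluated at $1-x$, and by our convention that every multiplicative character (including $\eps$) takes the value $0$ at $0$, we get $\ol{A^3}(1-1) = \ol{A^3}(0) = 0$. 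Since $A^3 \neq \eps$ by hypothesis, there is no ambiguity, and the entire left-hand side collapses to zero.

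The next step is to examine the three terms on the right-hand side of the identity in Theorem \ref{Thm: Bailey FF2} when $x=1$. First I would evaluate the $\,_3\mathbb F_2$ main term at $x/4 = 1/4$, which produces precisely the left-hand side of the claimed evaluation formula, namely $\pFFq 32{A^3 &A^3\ol B^2 &\ol A^3 B^2 }{&A^3\ol B &\phi B}{\frac 14}$. Then I would check the two delta corrections. The term $\delta(x+1/2)$ becomes $\delta(3/2)$, which is $0$ since the residue characteristic is at least $3$ (so $3/2 \neq 0$ in $\F_q$); hence this term drops out entirely. The remaining term is $\delta(1-x)$ evaluated at $x=1$, giving $\delta(0)=1$, so the full sum $\sum_{\chi^3 = \ol A^3} \frac{g(\ol\chi)g(\phi\ol{B\chi})g(B\ol{A^3\chi})}{g(\phi\ol B)g(\ol A^3 B)g(A^3)}\ol\chi(4)$ survives with coefficient $1$.

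Assembling these evaluations, the identity at $x=1$ reads $0 = \pFFq 32{A^3 &A^3\ol B^2 &\ol A^3 B^2 }{&A^3\ol B &\phi B}{\frac 14} - \sum_{\chi^3 = \ol A^3}\frac{g(\ol\chi)g(\phi\ol{B\chi})g(B\ol{A^3\chi})}{g(\phi\ol B)g(\ol A^3 B)g(A^3)}\ol\chi(4)$, which rearranges immediately into the statement of the corollary. I should take care to confirm that all the non-triviality conditions on $A$ and $B$ imposed in Theorem \ref{Thm: Bailey FF2} are exactly the same as those assumed in the corollary, so that the specialization is legitimate and no hidden delta terms or degeneracies are introduced; this is a routine matching of hypotheses.

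The bulk of the genuine mathematical work has already been carried out in establishing Theorem \ref{Thm: Bailey FF2} itself (which in turn rests on Theorem \ref{Thm:Bailey-FF} and the cubic transformation machinery), so the only real point requiring attention here is the careful bookkeeping of the two delta terms at $x=1$. In particular, the main subtlety is verifying that $\delta(x+1/2) = 0$ genuinely requires the hypothesis that the characteristic is at least $3$: were the characteristic equal to $3$ one would need to separately account for whether $3/2$ lands at $0$, but since $q \equiv 1 \pmod 6$ forces the characteristic to exceed $3$, we have $3/2 \neq 0$ and the term safely vanishes. With that confirmed, the proof reduces to the one-line substitution described above.
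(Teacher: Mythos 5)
Your proposal is correct and is exactly the argument the paper intends: the sentence immediately preceding the corollary ("We note that when $x=1$, the left hand side has value 0. As an immediate consequence...") is the paper's entire proof, namely setting $x=1$ in Theorem \ref{Thm: Bailey FF2}, using $\ol A^3(0)=0$ on the left, $\delta(3/2)=0$ (valid since $q\equiv 1\pmod 6$ forces the characteristic to avoid $2$ and $3$), and $\delta(0)=1$ to isolate the sum. Your additional bookkeeping of the delta terms and the hypothesis matching is exactly the right level of care, and nothing further is needed.
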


\begin{proof}[Proof of Theorem \ref{Thm:Bailey-FF2-char}]
If we do a variable change $\chi\mapsto \ol{A\chi}$ in the {character sum corresponding to the} $_3\mathbb P_2$-period function on  the left  hand side of the identity in Theorem \ref{Thm:Bailey-FF-char}, we obtain that when $x\neq 0$,
\begin{equation}\label{eq:Bailey-FF2}
\pPPq 32{A& B& A\phi\ol B }{& B^2&A^2\ol B^2}{4x} =
\phi(-1)\ol A(4x) \pPPq 32  {A& A\ol B^2& \ol AB^2 }{&A\ol B&\phi B}{\frac1{4x}}.
\end{equation}

By letting $x\mapsto \frac 1x$ on the right hand sides of Theorem  \ref{Thm:Bailey-FF-char} and \eqref{eq:Bailey-FF2} we see that ignoring the delta terms gives
\begin{align*}
& \pFFq 32{A &A\ol B^2 &A\ol B^2 }{&A\ol B &\phi B}{\frac x4} \\
=& \frac{J(B,B)J(A\phi\ol B,A\phi\ol B)}{J(A\ol B^2,B)J(\ol AB^2,\phi A\ol B)}\cdot \phi(-1) A\(\frac 4x\)\\
&\cdot \frac{1}{(q-1)^2}\frac{\ol A\(\frac{x-1}x\)}{g(\phi\ol B)g(\ol AB)}\sum_{\chi\in \widehat{\F_q^\times}}\(A\)_{\chi^3}g(\ol\chi)g(\phi\ol{B\chi})g(B\ol{A\chi})\cdot\chi\(\frac{x}{4(1-x)^3}\)\\
=& \phi A(-1)\ol A(4) \phi(-1) A\(\frac 4x\)\\
&\cdot \frac{1}{(q-1)^2}\frac{\ol A\(\frac{x-1}x\)}{g(\phi\ol B)g(\ol AB)}\sum_{\chi\in \widehat{\F_q^\times}}\(A\)_{\chi^3}g(\ol\chi)g(\phi\ol{B\chi})g(B\ol{A\chi})\cdot\chi\(\frac{x}{4(1-x)^3}\)\\
=&  \frac{1}{(q-1)^2}\frac{\ol A\(1-x\)}{g(\phi\ol B)g(\ol AB)}\sum_{\chi\in \widehat{\F_q^\times}}\(A\)_{\chi^3}g(\ol\chi)g(\phi\ol{B\chi})g(B\ol{A\chi})\cdot\chi\(\frac{x}{4(1-x)^3}\)
\end{align*}
\end{proof}

As a corollary of Theorem \ref{Thm:Bailey-FF2-char}, we can obtain  a finite field analogue for Equation \eqref{Bailey}.
}
\begin{Theorem}\label{Thm: Bailey FF2}\index{finite field analogues!Bailey cubic transformation}
Let  $q\equiv 1 \pmod6$ be a prine power. Let $\eta_3 $ be a primitive character of order $3$ on $\F_q$, and $A$, $B$ be   characters satisfying $A^3$, $B$, $A^3\ol B^2$, $A^6\ol B^3$, $\phi A^3\ol B$, $\phi A^3\ol B^3\neq\eps$. Then we have
\begin{multline*}
\ol A^3\({1-x}\)\pFFq 32 {A &A\eta_3  &A\ol{\eta_3} }{&\phi B&A^3\ol B}{\frac{-27x}{4(1-x)^3}}= \pFFq 32{A^3 &A^3\ol B^2 &\ol A^3 B^2 }{&A^3\ol B &\phi B}{\frac x4}\\
 -\delta(x+1/2)A^3(2)\frac {g(\phi)g(\ol A^3)}{g(\phi \ol B)g(\ol A^3 B)}-\delta(1-x)\sum_{\substack{\chi\in \widehat{\F_q^\times}\\ \chi^3=\ol A^3}}\frac{g(\ol\chi) g(\phi \ol {B\chi})g(B\ol{A^3\chi})}{g(\phi \ol B)g(\ol A^3 B)g(A^3)}\ol\chi\(4\).
\end{multline*}
\end{Theorem}

We are now in the position to prove  Theorem \ref{thm:A-S-FF}, the finite field analogue of  Theorem \ref{thm:AS}. We follow the ideas given by Andrews and Stanton in \cite{Andrews-Stanton}.

\begin{proof}[Proof of Theorem \ref{thm:A-S-FF}]
Following the ideas of { the proof of Theorem \ref{thm:AS} by  Andrews and Stanton} \cite{Andrews-Stanton}, let $x=y^2/(y-1)$ in Theorem \ref{Thm:Bailey-FF2-char} and $x=y(1-y)$ in Theorem \ref{Thm:Bailey-FF-char}. Notice that the function \bk
\begin{align*}
  F(y):=&\pFFq 32{A &A\ol B^2 &\ol A B^2 }{&A\ol B &\phi B}{\frac{y^2}{4(y-1)}}\\
  =&\frac{1}{(q-1)^2}\frac{\ol A\(1-\frac {y^2}{y-1}\)}{g(\phi\ol B)g(\ol AB)}\sum_{\chi\in \widehat{\F_q^\times}}\(A\)_{\chi^3}g(\ol\chi)g(\phi\ol{B\chi})g(B\ol{A\chi})\cdot\chi\(\frac{y^2(y-1)^2}{4(y-1-y^2)^3}\)
\end{align*}
is equal to
$$
  A(1-y)\pFFq 32{A &B &A\phi\ol B }{&B^2 &A^2\ol B^2}{4y(1-y)},
$$
if $y\neq 0$, $\pm 1$, $\frac 12$.
As the rational function $\frac{y^2}{4(y-1)}$ is invariant  under the change of variable $y\mapsto y/(y-1)$, i.e. $F(y)=F(y/(y-1))$, we have
$$
 \pFFq 32{A &B &A\phi\ol B }{&B^2 &A^2\ol B^2}{4y(1-y)}= \ol A((1-y)^2)\pFFq 32{A &B &A\phi\ol B }{&B^2 &A^2\ol B^2}{\frac{-4y}{(1-y)^2}},
$$
for any element $y\neq 0$, $\pm 1$, $\frac 12$.
\end{proof}

\subsection{Another application of Bailey cubic transformations}

In this section, we consider a degenerate case of \eqref{Bailey} corresponding to the triangle group (see \eqref{tri}) $(2,3,4)$ and prove a finite field version of this identity. We close the section by providing conjectures, based on numerical evidence, for finite field versions of identities related to the triangle group $(2,3,3)$.

Letting $b=\frac{a+2}3$ in \eqref{Bailey} gives the following degenerate case of the Bailey cubic transformation formula:
\begin{equation}\label{eq:cl-cubic}\index{degenerate Bailey cubic transformation}
\pFq{2}{1}{a&\frac{1-a}3}{&\frac{4a+5}6}x=(1-4x)^{-a} \, \pFq{2}{1}{\frac a3&\frac {a+1}3}{& \frac{4a+5}6}{\frac{-27x}{(1-4x)^3}},
\end{equation}
which we note satisfies condition ($\ast$) as it  is a degenerate  (from $_3F_2$ to $_2F_1$)  case of   \eqref{Bailey} (see Appendix \S \ref{appendix:Bailey}).
Further specifying $a=-\frac18$ in \eqref{eq:cl-cubic}, gives
$$
\pFq{2}{1}{-\frac1{24}&\frac{7}{24}}{&\frac 34}{\frac{-27 x }{(1-4x)^3}}= (1-4x)^{-\frac18} \pFq{2}{1}{-\frac18& \frac38,}{&\frac34}x,
$$
and then using \eqref{eq:Slater} with $a=-\frac 38$ to evaluate the right hand side gives the following \bk algebraic identity corresponding to the triangle group $(2,3,4)$:\bk
\begin{equation}\label{eq:2nd-cubic}
\pFq{2}{1}{-\frac1{24}&\frac{7}{24}}{&\frac 34}{\frac{-27 x }{(1-4x)^3}} = \left(\frac{\left(1+\sqrt{1-x}\right)^{2}}{4(1-4x)}\right)^{\frac 18}.
\end{equation}
Our goal here is to derive the finite field analogue of \eqref{eq:2nd-cubic}.

Recall that in \S \ref{ss:quad-summary} we outlined how our approach can be used to prove finite field versions of classical formulas satisfying the ($\ast$) condition. We first apply these ideas to obtain the following finite field version of  \eqref{eq:cl-cubic}. Here, the dictionary in \S \ref{dictionary} translates $\frac{a}{3}$ to $E$. By taking $B=\phi A\ol\eta_3$ in Theorem \ref{Thm: Bailey FF2}, we obtain the following:
\begin{Corollary}\label{remark13}\index{finite field analogues!degenerate Bailey cubic transformation}
Let  $q\equiv 1\pmod 3$ be a prime power, $E\in \fqhat$ such that $E^6\neq \eps$. Let  $\eta_3$ be a primitive cubic character. Then for $x\neq 0,1,\frac 14,-\frac 18$, \begin{equation*}
\pFFq{2}{1}{E^3&\eta_3 \ol{E}}{&E^2\phi\eta_3}x = \quad \ol{E^3}(1-4x)\, \pFFq{2}{1}{E&E\eta_3}{& E^2\phi\eta_3}{\frac{-27x}{(1-4x)^3}}.\end{equation*} \end{Corollary}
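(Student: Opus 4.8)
The plan is to derive Corollary \ref{remark13} as the degenerate case $B=A^2\eta$ of the second finite field Bailey cubic transformation, Theorem \ref{Thm: Bailey FF2}, in exact parallel with the classical derivation of \eqref{eq:cl-cubic} from \eqref{Bailey} by setting $b=\frac{a+2}3$. In the dictionary of \S \ref{dictionary}, the classical parameter $a$ corresponds to $E^3$ (so that $A=E$ with $A^3=E^3$), and the specialization $b=\frac{a+2}3$, which reads $3b=a+2$ multiplicatively, corresponds to choosing the character $B$ in Theorem \ref{Thm: Bailey FF2} so that $B^3 = A^3\eta_3^{?}$ matches $A^3$ times the cube of an order-$3$ character; concretely I would set $B=A^2\eta_3$ where $\eta_3$ is the fixed primitive cubic character, so that $\phi B = \phi A^2\eta_3$ becomes the lower entry $E^2\phi\eta_3$ and $A^3\ol B = A\ol\eta_3 = E\ol\eta_3$ becomes the other lower entry. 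First I would substitute this choice of $B$ into the top and bottom rows of both $_3\mathbb F_2$'s appearing in Theorem \ref{Thm: Bailey FF2} and check that two of the upper parameters collide with lower parameters, causing the $_3\mathbb F_2$ on each side to collapse to a $_2\mathbb F_1$.

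The key computation is the cancellation. On the left-hand side of Theorem \ref{Thm: Bailey FF2} the $_3\mathbb F_2$ has upper row $A,A\eta_3,A\ol\eta_3$ and lower row $\phi B, A^3\ol B$; with $B=A^2\eta_3$ the lower row is $\phi A^2\eta_3,\, A\ol\eta_3$, and the upper entry $A\ol\eta_3$ matches the lower entry $A\ol\eta_3$, so that factor cancels (in the sense of \eqref{eq:22} applied to the corresponding Jacobi sums) leaving a $_2\mathbb F_1$ with parameters $\left[\begin{smallmatrix}A & A\eta_3 \\ & \phi A^2\eta_3\end{smallmatrix}\right]=\left[\begin{smallmatrix}E & E\eta_3 \\ & E^2\phi\eta_3\end{smallmatrix}\right]$, which is precisely the right-hand side of Corollary \ref{remark13}. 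On the right-hand side of Theorem \ref{Thm: Bailey FF2} the $_3\mathbb F_2$ has upper row $A^3, A^3\ol B^2, \ol A^3 B^2$ and lower row $A^3\ol B, \phi B$; with $B=A^2\eta_3$ one gets $A^3\ol B^2 = \ol A\ol\eta_3^2=\ol A\eta_3$ and $\ol A^3 B^2 = A\eta_3^2=A\ol\eta_3$, while the lower entry $A^3\ol B=A\ol\eta_3$ coincides with the upper entry $\ol A^3 B^2=A\ol\eta_3$, so again a factor cancels, leaving a $_2\mathbb F_1$ with parameters $\left[\begin{smallmatrix}A^3 & \ol A\eta_3 \\ & \phi A^2\eta_3\end{smallmatrix}\right]=\left[\begin{smallmatrix}E^3 & \eta_3\ol E \\ & E^2\phi\eta_3\end{smallmatrix}\right]$, matching the left-hand side of the Corollary. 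I would then track the argument and the prefactor: the right-hand argument $\frac x4$ of Theorem \ref{Thm: Bailey FF2} and the left-hand argument $\frac{27x}{4(x-1)^3}$ must be reconciled with the arguments $x$ and $\frac{-27x}{(1-4x)^3}$ of the Corollary by a rescaling $x\mapsto$ (a constant multiple), and the character prefactor $\ol A^3(1-x)$ must be matched to $\ol{E^3}(1-4x)$ up to the explicit Jacobi-sum normalization constants relating the $_3\mathbb P_2$ and $_3\mathbb F_2$ functions via \eqref{general_HGF}.

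The main obstacle I anticipate is bookkeeping of the two delta terms in Theorem \ref{Thm: Bailey FF2} and confirming they vanish under the stated restrictions on $x$. The delta terms $\delta(x+1/2)$ and $\delta(1-x)$ (after the argument rescaling) translate into the excluded values listed in the Corollary, namely $x\neq 1$ and $x\neq\frac14$ (from the vanishing of the relevant denominators $1-4x$ and the reciprocal-argument singularities), with $x\neq 0,-\frac18$ arising from the requirement that both arguments be well-defined and nonzero. I would therefore verify that for $x\notin\{0,1,\tfrac14,-\tfrac18\}$ the rescaled delta arguments never vanish, so the specialized identity is delta-free; the genericity hypotheses $A,B,B\ol A^2,B^3\ol A^2,B^3\phi\ol A$ nontrivial of Theorem \ref{Thm: Bailey FF2}, under $B=A^2\eta_3$ and $A=E$, reduce to the single condition $E^6\neq\eps$ stated in the Corollary, which I would confirm case by case. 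Finally I would, as the proof of Theorem \ref{thm:A-S-FF} does, remove the congruence restriction $q\equiv 1\pmod 6$ needed to have an order-$3$ character in the intermediate step by noting that the resulting identity between $_2\mathbb F_1$ functions involves only $E,\eta_3$ with $q\equiv 1\pmod 3$, and extend the character $E$ to an arbitrary character with care, exactly the descent argument used at the end of the Andrews--Stanton proof.
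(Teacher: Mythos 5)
Your instinct matches the paper's own remark---Corollary \ref{remark13} is indeed the degenerate Bailey cubic, and your choice $B=A^2\eta_3$ produces literally the same parameter sets as the paper's stated specialization $B=\phi A\ol\eta_3$---but as written the specialization does not go through, for a reason you explicitly (and incorrectly) claim to have checked. With $A=E$ and $B=A^2\eta_3$, the hypothesis $A^6\ol B^3\neq\eps$ of Theorem \ref{Thm: Bailey FF2} becomes $E^6\ol E^6\ol\eta_3^3=\eps\neq\eps$, which fails identically (with $B=\phi A\ol\eta_3$ it is instead $\phi A^3\ol B^3\neq\eps$ that fails identically). So the degenerate case lies exactly on the excluded locus of the theorem, and your assertion that the genericity hypotheses ``reduce to the single condition $E^6\neq\eps$'' is false: five of the six do, but the sixth is never satisfied. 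This is not an artifact of over-cautious hypotheses; $A^6\ol B^3\neq\eps$ is precisely the primitivity condition $B\neq A^6\ol B^2$ for the right-hand ${}_3\F_2$ of Theorem \ref{Thm:Bailey-FF}, and conditions of this type are used in Lemmas \ref{lemma: I-K-sp} and \ref{Lem:forBaileyCubic} to control which Jacobi sums degenerate.

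Even granting an extension of Theorem \ref{Thm: Bailey FF2} to this boundary, the ``collapse'' of a ${}_3\F_2$ to a ${}_2\F_1$ when an upper character coincides with a lower one is not the classical Pochhammer cancellation. In the character-sum expansion the coincident factor is $\binom{A\ol\eta_3\chi}{A\ol\eta_3\chi}=-A\ol\eta_3\chi(-1)J(A\ol\eta_3\chi,\ol{A\ol\eta_3\chi})$, which by \eqref{eq:J(A,Abar)} equals $1-(q-1)A\ol\eta_3\chi(-1)\delta(A\ol\eta_3\chi)$; the $\delta$ picks out $\chi=\ol A\eta_3$ and injects an extra term proportional to $\ol A\eta_3$ evaluated at the argument into each side. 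Since the two sides carry different arguments ($x/4$ versus $27x/4(x-1)^3$) and different prefactors, these extra character terms---which are of a different nature from the isolated $\delta(x+1/2)$ and $\delta(1-x)$ corrections you do discuss---must be computed and shown to cancel; your proposal treats the reduction as automatic. The paper in fact obtains the Corollary by the direct double-character-sum method of \S\ref{ss:quad-summary} applied to the degenerate parameters themselves (mirroring the classical proof of \eqref{eq:cl-cubic}); its remark that the Corollary ``corresponds to'' a special case of Theorem \ref{Thm: Bailey FF2} is a heuristic identification, not a derivation. To repair your argument you would either redo that computation directly, or reprove Theorem \ref{Thm: Bailey FF2} with the offending hypothesis removed and the resulting extra terms made explicit.
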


We now establish the finite field analogue of \eqref{eq:2nd-cubic}  using an idea that is parallel to the proof of \eqref{eq:2nd-cubic}  described above.

\begin{Theorem}\label{thm:15}
Let  $q\equiv 1\pmod{24}$  be a prime power,  and let  $x\neq  -\frac 18,\frac 14$  in $\F_q$.  Let $\eta$ be an order 24 character on  $\F_q^\times$. Then
\begin{multline*}
\pFFq{2}{1}{\overline \eta&{\eta^7}}{&{\eta^{18}}}{\frac{-27x}{(1-4x)^3}}= \\ \(\frac{1+\phi(1-x)}2\)\( \eta^3\left ( \frac{(1+\sqrt{1-x})^{2}}{2^{2}(1-4x)}\right)+ \eta^3\left ( \frac{(1-\sqrt{1-x})^{2}}{2^{2}(1-4x)}\right)\).
\end{multline*}
\end{Theorem}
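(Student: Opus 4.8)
The plan is to mimic exactly the derivation of the classical formula \eqref{eq:2nd-cubic}, translating each step into the finite field setting using the tools already assembled. Recall that classically \eqref{eq:2nd-cubic} arises by specializing the cubic transformation \eqref{eq:cl-cubic} at $a=-\tfrac18$ and then evaluating the resulting right-hand $_2F_1$ via the Dihedral/algebraic identity \eqref{eq:Slater}. Over $\F_q$ the analogue of \eqref{eq:cl-cubic} is Corollary \ref{remark13}, and the analogue of \eqref{eq:Slater} is the second identity in Theorem \ref{thm:FF-Dihedral}. So first I would apply Corollary \ref{remark13} with the character $E$ chosen to be the finite field analogue of $a=-\tfrac18$. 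Since $q\equiv 1\pmod{24}$ and $\eta$ has order $24$, the dictionary in \S\ref{dictionary} dictates $a=-\tfrac18=-\tfrac{3}{24}\leftrightarrow E^3=\ol\eta^3$, i.e.\ $E=\ol\eta$, and one checks $\eta_3=\eta^8$ is a primitive cubic character while $E\eta_3=\ol\eta\,\eta^8=\eta^7$ and $E^2\phi\eta_3=\eta^{-2}\eta^{12}\eta^8=\eta^{18}$. This identifies the left-hand side of Corollary \ref{remark13} as $\pFFq{2}{1}{\ol\eta^3&\eta_3\eta}{&\eta^{18}}{x}$ and its right-hand side as $\ol{\eta^3}^{-1}(1-4x)\cdot\pFFq{2}{1}{\ol\eta&\eta^7}{&\eta^{18}}{\frac{-27x}{(1-4x)^3}}$, up to keeping careful track of the character $\ol{E^3}(1-4x)=\eta^3(1-4x)$.

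Next I would evaluate the left-hand $_2\F_1$ using Theorem \ref{thm:FF-Dihedral}. The point is that $\pFFq{2}{1}{\ol\eta^3&\eta_3\eta}{&\eta^{18}}{x}$ should match the shape $\pFFq{2}1{A&A\phi}{&A^2}{x}$ for a suitable $A$. Indeed with $A=\ol\eta^3$ one has $A\phi=\ol\eta^3\eta^{12}=\eta^9$ and $A^2=\ol\eta^6=\eta^{18}$; I would verify that the upper parameter $\eta_3\eta=\eta^8\eta=\eta^9$ indeed equals $A\phi$, so the hypotheses line up exactly (and $A=\ol\eta^3$ has order $8>2$, as required). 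Then the second formula of Theorem \ref{thm:FF-Dihedral} gives
$$
\pFFq{2}1{\ol\eta^3&\eta^9}{&\eta^{18}}{x}=\(\frac{1+\phi(1-x)}2\)\(\ol A^2\(\tfrac{1+\sqrt{1-x}}2\)+\ol A^2\(\tfrac{1-\sqrt{1-x}}2\)\),
$$
where $\ol A^2=\eta^6=\eta^{18\cdot(-1)}\cdots$; more precisely $\ol A^2=\overline{\ol\eta^6}=\eta^6$. Substituting this back into Corollary \ref{remark13} and solving for the right-hand $_2\F_1$ yields the claimed formula, after absorbing the factor $\eta^3(1-4x)$ into the character arguments using multiplicativity: $\eta^3(1-4x)\cdot\eta^6\(\tfrac{1\pm\sqrt{1-x}}2\)=\eta^3\(\tfrac{(1\pm\sqrt{1-x})^2}{4(1-4x)}\)$, since $\eta^6=(\eta^3)^2$ and $\eta^3(y)^2=\eta^3(y^2)$.

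The main bookkeeping obstacle will be the exponents and the placement of the factor $\eta^3(1-4x)$: one must confirm that moving $\eta^3(1-4x)$ through the squared character $\ol A^2=\eta^6$ produces exactly $\eta^3$ applied to $(1\pm\sqrt{1-x})^2/(4(1-4x))$ rather than some conjugate or twist, and that the prefactor $\(\frac{1+\phi(1-x)}2\)$ survives unchanged (it does, being $\pm1$ or $0$-valued and independent of the outer character). I would also need to check that the excluded values are correctly tracked: Corollary \ref{remark13} requires $x\neq 0,1,\tfrac14,-\tfrac18$ and Theorem \ref{thm:FF-Dihedral} requires $x\neq0$ (handled by the $\delta$-term, which vanishes here since we may assume $x\neq0$), so the combined hypothesis $x\neq -\tfrac18,\tfrac14$ in the statement suffices once one verifies the $x=1$ and $x=0$ cases separately or notes they are degenerate. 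The genuinely delicate point is checking the character identity $E^2\phi\eta_3=\eta^{18}=A^2$ is consistent in both invocations so that no spurious delta terms or extra Jacobi-sum factors appear; given the clean alignment of orders this should go through, making the argument essentially a two-line substitution once the dictionary translation of $a=-\tfrac18$ is pinned down.
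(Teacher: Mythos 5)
Your proposal is correct and follows essentially the same route as the paper's proof: specialize Corollary \ref{remark13} at $E=\ol\eta$, $\eta_3=\eta^8$ to get $\pFFq{2}{1}{\ol\eta&\eta^7}{&\eta^{18}}{\frac{-27x}{(1-4x)^3}}=\ol\eta^3(1-4x)\,\pFFq{2}{1}{\ol\eta^3&\eta^9}{&\eta^{18}}{x}$, evaluate the latter by the second identity of Theorem \ref{thm:FF-Dihedral} with $A=\ol\eta^3$, and absorb $\ol\eta^3(1-4x)$ into $\eta^6\(\frac{1\pm\sqrt{1-x}}2\)$ via $\eta^6(y)=\eta^3(y^2)$. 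The paper handles the boundary cases exactly as you anticipate: $x=0$ is trivial and $x=1$ is checked directly via the Gauss evaluation formula.
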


\begin{proof}It is easy to see that when $x=0$, both sides equal to 1.

When $x=1$, using Gauss evaluation formula and  \cite[Theorem 3.6.6]{Berndt-Evans-Williams}, one has
$$
\pFFq{2}{1}{\eta^7&\ol{\eta}}{&\eta^{18}}1=\frac{J(\phi,\ol\eta)}{J(\ol \eta,\ol \eta^5)}=\eta^3\(-\frac 43\)=\ol\eta^3\(-3\cdot 2^{2}\),
$$
since $2$ is a square in $\F_q$. Hence the claim holds when $x=1$.

For $x\neq 0,1,\frac 14,-\frac 18$, by Corollary \ref{remark13}, we have
$$\pFFq{2}{1}{\ol{\eta}&\eta^7}{&\eta^{18}}{\frac{-27x}{(1-4x)^3}}
= \ol{\eta}^3(1-4x)\pFFq{2}{1}{\ol{\eta}^3&\eta^9}{&\eta^{18}}{x}.
$$

By Theorem \ref{thm:FF-Dihedral},
$$
\pFFq{2}{1}{\ol{\eta}^3&\eta^9}{&\eta^{18}}{x}=\(\frac{1+\phi(1-x)}2\)\(\eta^6\(\frac{1+\sqrt{1-x}}2\)+\eta^6\(\frac{1-\sqrt{1-x}}2\)\).
$$

Composing this  with the proceeding cubic formula, we have
\begin{multline*}
\pFFq{2}{1}{\ol{\eta}&\eta^7}{&\eta^{18}}{\frac{-27x}{(1-4x)^3}}
= \ol{\eta}^3(1-4x)\pFFq{2}{1}{\ol{\eta}^3&\eta^9}{&\eta^{18}}{x}\\
=\(\frac{1+\phi(1-x)}2\)\ol{\eta}^3(1-4x)\(\eta^6\(\frac{1+\sqrt{1-x}}2\)+\eta^6\(\frac{1-\sqrt{1-x}}2\)\)\\
={\(\frac{1+\phi(1-x)}2\)}\(\eta^3\(\frac{(1+\sqrt{1-x})^2}{2^2(1-4x)}\)+\eta^3\(\frac{(1-\sqrt{1-x})^2}{2^2(1-4x)}\)\),
\end{multline*}
which concludes the proof.
\end{proof}

\begin{Remark} We obtain another algebraic hypergeometric identity corresponding to the triangle group $(2,3,3)$  from the  following \bk transformation
$$
  \pFq 21{\frac {3a}2& \frac{3a-1}2}{& a+\frac 12}{-\frac{z^2}3}=(1+z)^{1-3a}\pFq 21{a-\frac 13& a}{& 2a}{\frac{2z(3+z^2)}{(1+z)^3}}
$$
\cite[pp. 114 (39)]{Erdelyi} (or see \cite{Goursat}) with $a=\frac 14$,  namely,
\begin{equation}\label{anotherhf}
\pFq{2}{1}{-\frac{1}{12}&\frac 14}{&\frac 12}{\frac{2z(3+z^2)}{(1+z)^3}}=\( \frac{1+\sqrt{1+z^2/3}}{2(1+z)} \)^{1/4}.
\end{equation}

Numerical evidence suggests that the finite field analogue of \eqref{anotherhf} is the following.

\begin{Conjecture}\label{lastconj}

Let $q\equiv 1\pmod{12}$ and $\eta$ be any order 12 character.   Then for  $z\in \F_q$ such that $z\neq 0$, $\pm 1$, and $z^2+3\neq 0$,
\begin{multline*}
\pFFq{2}{1}{\overline \eta&\eta^3}{&\phi}{\frac{2z(3+z^2)}{(1+z)^3}}\overset{?}=\\ \( \frac{1+\phi(1+z^2/3)}2\)\(\eta^3\( \frac{1+\sqrt{1+z^2/3}}{2(1+z)} \)+ \eta^3\( \frac{1-\sqrt{1+z^2/3}}{2(1+z)} \)\).
\end{multline*}

\end{Conjecture}

Related to  the triangle group $(2,3,3)$,  Vid{\=u}nas gives the following formula, see \cite[pp. 165]{Vidunas2},
\begin{equation}\label{vidunas}
     \pFq 21{\frac 14& -\frac 1{12}}{& \frac 23}{\frac{x(x+4)^3}{4(2x-1)^3}}=\(1-2x\)^{-1/4}.
\end{equation}
We observe numerically  the following $\F_q$ analogue:

\begin{Conjecture}\label{eq:Vidunas}
 For $q\equiv 1 \pmod{12}$ and $\eta$ any primitive order 12 character:
\begin{equation*}
    \pFFq{2}{1}{\overline \eta&\eta^3}{&\ol\eta^4}{\(\frac{4u(u^3+1)}{8u^3-1}\)^3}\overset{?}=2\cdot \ol\eta^3(1-8u^3),
\end{equation*}
   if $\({4u(u^3+1)}/(8u^3-1)\)^3\neq 0$, $1$, and $8u^3\neq 1$ in $\F_q$.

\end{Conjecture}
  Here, we replace  $x$ by $4u^3$ in \eqref{vidunas} so that the pattern for the $\F_q$ version is cleaner. In \cite{Vidunas2}, there are several other algebraic hypergeometric identities. It will be nice to obtain their $\F_q$ analogues.
We leave these to the interested reader.

\end{Remark}

\subsection{Another cubic $_2\F_1$ formula and a corollary}\label{cubic1}
In this section, we will use a different approach to derive a finite field analogue of the following cubic formula by Gessel and Stanton \cite[(5.18)]{Gessel-Stanton},
\begin{equation}\label{(5.18)Gessel-Stanton}\index{Gessel-Stanton cubic formula}
\pFq{2}{1}{a&-a}{&\frac 12}{\frac{27x(1-x)^2}4}=\, \pFq{2}{1}{3a&-3a}{&\frac 12}{\frac{3x}4},
\end{equation}which satisfies the ($\ast$) condition.
 The finite field analogue of this formula is stated as Theorem \ref{thm:cubic1} in {\color{black}the introduction.}
Then we will obtain a consequential $_3\F_2$ evaluation formula, namely Theorem \ref{thm4},  in \S\ref{subsub:evalapplication}.

\subsubsection{Another cubic $_2\F_1$ formula}
A proof of \eqref{(5.18)Gessel-Stanton} illustrating the ($\ast$) condition is discussed in  Appendix \S \ref{ss:13.3}.
Though  our dictionary method is applicable,  we choose to give an alternative proof below, which is applicable to all characters $A$.


\begin{Theorem}\label{thm:cubic11}\index{finite field analogues!Gessel-Stanton cubic formula}
Let $q$ be an odd prime power, and $A\in \fqhat$. Then for all $x\in \F_q$,
\begin{multline*}
\pFFq{2}{1}{A&\ol{A}}{&\phi}{\frac{27x(1-x)^2}4}=\\ \pFFq{2}{1}{A^3&\ol{A^3}}{&\phi}{\frac{3x}4}-\phi(-3)\delta(x-1)-\phi(-3)A(-1)\delta(x-{4}/3).
\end{multline*}
 \end{Theorem}

\begin{proof} It is straightforward to check that the formula holds when $x=0,1,\frac 43$. We thus assume $x\neq 0,1,\frac 43$.
 Let $\l:=\frac{27x(1-x)^2}4$, and $z:=\frac \l{\l-1}=\frac{-3x}{4-3x}\(\frac{3x-3}{3x-1}\)^2$, so that $1-z=\frac{1}{1-\l}$. For any multiplicative character $A$, we obtain from Proposition \ref{prop: normalized independent solutions} that
  \begin{multline}\label{eq: cubic-A-trans}
\pFFq{2}{1}{A&\ol{A}}{&\phi}{\frac{27x(1-x)^2}4}\\
=\ol A(1-\l)\pFFq{2}{1}{A&\phi A}{&\phi}z+\delta(1-\l)\frac{J(\phi,\ol A)}{J(\ol A,\phi A)}.\\
\end{multline}
Furthermore, if $A^2\neq\eps$, we have
$$
  \pFFq{2}{1}{A&\phi A}{&\phi}z=\(\frac{1+\phi(z)}2\)\(\ol A^2(1+\sqrt z)+\ol A^2(1-\sqrt z)\),
$$
which is given by Theorem \ref{thm:FF-Dihedral}.

 If $z$ is not a square, i.e. $\phi(z)=-1$, then the value $\pFFq{2}{1}{A&\phi A}{&\phi}z$ is $0$.
When $\phi(z)=1$, we write $\sqrt z=t\(\frac{t^2+3}{1+3t^2}\)$ with $T:=t^2=\frac{-3x}{4-3x}$. Then
$$
  1+\sqrt z= \frac{(1+t)^3}{3t^2+1}, \quad 1-\sqrt z= \frac{(1-t)^3}{3t^2+1},
$$
and thus
\begin{multline}\label{eq: cubic-A}
\ol A\(\frac 1{1-z}\)\pFFq{2}{1}{A&\phi A}{&\phi}z=\ol A^3\(\frac 1{1-t^2}\)\(\ol A^6(1+t)+\ol A^6(1-t )\)\\
=\ol A^3\(\frac 1{1-T}\)\(\ol A^6(1+\sqrt T)+\ol A^6(1-\sqrt T )\).
\end{multline}
Note that if $A$ is a primitive character of order $6$ and $\phi(z)=\pm 1$, then

\begin{multline*}
  \pFFq{2}{1}{A&\ol{A}}{&\phi}{\frac{27x(1-x)^2}4}=\(\frac{1+\phi(z)}2\)\cdot \phi\(\frac 1{1-T}\)\(\eps(1+\sqrt T)+\eps(1-\sqrt T)\)\\
  =\(\frac{1+\phi(z)}2\)\cdot \phi\(4-3x\) 2,
\end{multline*}
since $T$ won't be $1$. Moreover, when $\phi(z)\neq 0$, we can write
$$
   \phi(z)=\phi\(\frac{-3x}{4-3x}\(\frac{3x-3}{3x-1}\)^2\)=\phi(-3x)\phi(4-3x).
$$
Therefore,
\begin{align*}
  \pFFq{2}{1}{A&\ol{A}}{&\phi}{\frac{27x(1-x)^2}4} &=\(1+\phi(-3x)\phi(4-3x)\)\phi(4-3x)\\
  &=\phi(-3x)+\phi(4-3x),
\end{align*}
when $\phi(z)\neq 0$.

According to Proposition \ref{prop: 2FF1-imprimitive}, for each $x$, one has
$$
\pFFq{2}{1}{A^3&\ol{A^3}}{&\phi}{\frac{3x}4}=\pFFq{2}{1}{\phi &\phi}{&\phi}{\frac{3x}4}
=\phi(4-3x)+\phi(-3x),
$$
and thus we can obtain
\begin{multline*}
\pFFq{2}{1}{A&\ol{A}}{&\phi}{\frac{27x(1-x)^2}4}=\\ \pFFq{2}{1}{A^3&\ol{A^3}}{&\phi}{\frac{3x}4}-\phi(-3)\delta(x-1)-\phi(3)\delta(x-1/3).
\end{multline*}

Similarly, if $A$ is a primitive character of order $3$ and $\phi(z)=\pm 1$, then
\begin{multline*}
  \pFFq{2}{1}{A&\ol{A}}{&\phi}{\frac{27x(1-x)^2}4}=\(\frac{1+\phi(z)}2\)\cdot  2\eps(3x-4)\\
  =\(1+\phi(3x)\phi(3x-4)\)(1-\delta(3x-4))\\
  =1+\phi(3x)\phi(3x-4)-\delta(3x-4),
\end{multline*}
and
$$
\pFFq{2}{1}{A^3&\ol{A^3}}{&\phi}{\frac{3x}4}=\pFFq{2}{1}{\eps &\eps}{&\phi}{\frac{3x}4}
=1+\phi(3x)\phi(3x-4).
$$
This leads to the identity in the theorem.


{{\color{red}} For the case $A^2=\eps$, using the formulas of imprimitive ${}_2\mathbb F_1$-functions in  Proposition \ref{prop: 2FF1-imprimitive}, we can derive the desired identities.} For example, when $A=\eps$,
\begin{multline*}
  \pFFq{2}{1}{A&\ol{A}}{&\phi}{\frac{27x(1-x)^2}4}=1-\phi(3x)\phi(3x-4)\eps((1-x)(3x-1))\\
  =1-\phi(3x)\phi(3x-4)+\delta(1-x)\phi(-3)+\delta(3x-1)\phi(-3).
\end{multline*}

For the case $A^6\neq \eps$, we first observe that  $\phi(z)=1$ (resp. $-1$) if and only if  $\phi(T)=1$ (resp. $-1$). Hence, if $\phi(z)\neq 0$ (equivalently, $\phi(T)\neq 0$), we have from equation (\ref{eq: cubic-A}) and  Theorem \ref{thm:FF-Dihedral} that
 \begin{multline*}
\ol A\(\frac 1{1-z}\)\pFFq{2}{1}{A&\phi A}{&\phi}z \\
=\(\frac{1+\phi(T)}2\)\ol A^3\(\frac 1{1-T}\)\(\ol A^6(1+\sqrt T)+\ol A^6(1-\sqrt T )\)\\
= A^3\({1-T}\)\pFFq{2}{1}{A^3&\phi A^3}{&\phi}T.
  \end{multline*}
Thus, for all $x$, we can derive that
 \begin{multline*}
\ol A\(\frac 1{1-z}\)\pFFq{2}{1}{A&\phi A}{&\phi}z= A^3\({1-T}\)\pFFq{2}{1}{A^3&\phi A^3}{&\phi}T\\
+\delta(x-1)\(1-\(\frac{1+\phi(T)}2\)\ol A^3\(\frac 1{1-T}\)\(\ol A^6(1+\sqrt T)+\ol A^6(1-\sqrt T )\)\)\\
+\delta(3x-1)\(0-\(\frac{1+\phi(T)}2\)\ol A^3\(\frac 1{1-T}\)\(\ol A^6(1+\sqrt T)+\ol A^6(1-\sqrt T )\)\)\\
= A^3\({1-T}\)\pFFq{2}{1}{A^3&\phi A^3}{&\phi}T-\delta(x-1)\phi(-3)-\delta(3x-1)\(1+\phi(-3)\)A(-1),
  \end{multline*}
  since $(1\pm \sqrt{-3})^6=64$, and  $(1\pm\sqrt{-1/3})^6=-64/27$.

Using Proposition \ref{prop: normalized independent solutions}, we observe that
 \begin{align*}
&\pFFq{2}{1}{A&\ol{A}}{&\phi}{\frac{27x(1-x)^2}4}\\
&\hspace*{0.2in}=\delta((3x-1)(3x-4))A(-1)-\delta(x-1)\phi(-3)-\delta(3x-1)\(1+\phi(-3)\)A(-1)\\
&\hspace*{0.5in}+A^3\({1-T}\)\(\ol A^3(1-T)\pFFq{2}{1}{A^3&\ol A^3}{&\phi}{\frac T{T-1}}+\delta(1-T)\cdot \pFFq{2}{1}{A^3&\phi {A^3}}{&\phi}{1}\)\\
&\hspace*{0.2in}=\delta(3x-4)A(-1)-\delta(x-1)\phi(-3)-\delta(3x-1)\phi(-3)A(-1)\\
&\hspace*{0.5in}+\eps\({1-T}\)\pFFq{2}{1}{A^3&\ol A^3}{&\phi}{\frac{3x}4}\\
&\hspace*{0.2in}=\pFFq{2}{1}{A^3&\ol A^3}{&\phi}{\frac{3x}4}-\delta(x-1)\phi(-3)-\delta(3x-1)\phi(-3)A(-1).
\end{align*}
\end{proof}

\subsubsection{An application to an evaluation formula}\label{subsub:evalapplication}
In the next few pages we will obtain a finite field analogue of an evaluation formula by Gessel and Stanton. Our main point for the discussion is as follows. In addition to what can be done for the cases satisfying ($\ast$), there are classical results obtained by methods that have no direct translations to finite fields.  However, sometimes the Galois perspective allows us to make predictions which can be verified numerically. To prove them, we need different methods that might appear to be  ad hoc when compared with the  more systematic approaches we have used  so far.

As a corollary of \eqref{(5.18)Gessel-Stanton},  Gessel and Stanton showed  that for $n\in \mathbb N$, $a\in \C$
\begin{multline}\label{gessel_cor}\pFq{3}{2}{1+3a&1-3a&-n}{&\frac 32& -1-3n}{\frac 34}=\frac{(1+a)_n(1-a)_n}{(\frac 23)_n(\frac43)_n}\\=\frac{B(1+a+n,\frac 23)B(1-a+n,\frac 43)}{B(1+a,n+\frac 23)B(1-a,n+\frac 43)}.
\end{multline}
This result follows from applying Theorem 1 of \cite{Gessel-Stanton} to \eqref{(5.18)Gessel-Stanton} with an additional factor of $x^{-n-2}(1-x)^{-2n-3}(1-3x)$, instead of the factor $x^{-n-2}(1-x)^{-3n-2}(1-3x)$ which is mentioned on page 305 of \cite{Gessel-Stanton}.  Although Greene gave a Lagrange inversion formula over finite fields (see Theorem \ref{FFlagrange} for a special case), he pointed out it cannot be used to determine coefficients when the change of variable function is not one-to-one. This means we cannot use Theorem \ref{FFlagrange} to obtain a finite field analogue of \eqref{gessel_cor} directly.


Thus to look for a finite field analogue of  \eqref{gessel_cor}, we instead observe that the corresponding Galois representations are expected to be 3-dimensional. Also, if $3a$ corresponds to a character $A^3$, then there are 3 candidates for the character analogue of $a$,  these being  $A$, $A\eta_3$, and $A\eta_3^2$.  This was the language we used when we initially  phrased the statement of Theorem \ref{thm4} and then verified numerically.

 We now give a proof of Theorem \ref{thm4}, which  states that if $q$ is a prime power with $q\equiv 1 \pmod 6$, and $A$, $\chiup,\eta_3\in \widehat{\F_q^\times}$ such that $\eta_3$ has order 3, and none of $A^6, \chiup^6,(A\chiup)^3,(\ol{A}\chiup)^3$ are the trivial character, then
$$
\pFFq{3}{2}{A^3&\ol{A}^3& \ol{\chiup}}{&\phi& \ol{\chiup}^3}{\frac 34}=  \sum_{\substack{B\in \widehat{\F_q^\times}\\ B^3=A^3}}\frac{J(B\chiup, \eta_3)J(\ol{B}\chiup,\ol{\eta_3})}{J(B,\chiup \eta_3)J(\ol{B},\chiup\ol{\eta_3})}= A(-1)\sum_{B^3=A^3}\frac{J(B\chiup, \ol B\chiup)}{J(\eta_3 \chiup,\chiup \ol\eta_3)}.
$$

\begin{proof}[Proof of Theorem \ref{thm4}]
Let $\eta:=\eta_3$ and $\F:=\F_q$ for convenience. Multiplying $\ol{\chiup}(x(1-x)^2)$ by the right hand side of Theorem \ref{thm:cubic11} and taking the sum over all $x$, we see
 \begin{multline*}
 \sum_{x\in \F} \ol{\chiup}(x(1-x)^2) \pFFq{2}{1}{A^3&\ol{A^3}}{&\phi}{\frac {3x}4}-\ol{\chiup}\(\frac 4{27}\)\phi(-3)A(-1)\\
=\frac 1{J(\ol{A}^3, \phi A^3)}\, _3 \mathbb P_2 \left [\begin{array}{ccc}A^3&\ol{A}^3&\ol{\chiup}\\&\phi&\ol{\chiup}^3 \end{array}; \frac 34\right ]-\ol{\chiup}\(\frac 4{27}\)\phi(-3)A(-1).
 \end{multline*}
 We now let
\begin{multline*}
I:= \pFFq{3}{2}{A^3&\ol{A}^3& \ol{\chiup}}{&\phi& \ol{\chiup}^3}{\frac 34}-\frac{\ol{\chiup}(\frac 4{27})\phi(-3)A(-1)}{J(\ol{\chiup}, \ol{\chiup}^2)}\\
= \sum_{x\in \F} \frac{\ol{\chiup}(x(1-x)^2)}{J(\ol{\chiup}, \ol{\chiup}^2)} \pFFq{2}{1}{A^3&\ol{A^3}}{&\phi}{\frac {3x}4}-\frac{\ol{\chiup}(\frac 4{27})\phi(-3)A(-1)}{J(\ol{\chiup}, \ol{\chiup}^2)}.
\end{multline*}
By Theorem \ref{thm:cubic11},
\begin{align*}
I &=\sum_{x\in \F} \frac{\ol{\chiup}(x(1-x)^2)}{J(\ol{\chiup}, \ol{\chiup}^2)} \pFFq{2}{1}{A&\ol{A}}{&\phi}{\frac {27x(1-x)^2}{4}}\\
 &= \sum_{x\in \F} \frac{\ol{\chiup}(x(1-x)^2) }{J(\ol{A}, \phi A)J(\ol{\chiup}, \ol{\chiup}^2)} \phgq A{\ol A}{\phi}{\frac {27x(1-x)^2}4}\\
 &= \sum_{x\in \F} \frac{\ol{\chiup}(x(1-x)^2) }{J(\ol{A}, \phi A)J(\ol{\chiup}, \ol{\chiup}^2)} \frac{A(-1)}{q-1} \sum_{\varphi \in \fqhat} J(A\varphi, \ol{\varphi}) J(\ol A\varphi, \phi\ol\varphi)\varphi\(\frac {27}4\)\varphi (x(1-x)^2) \\
  &= \frac{A(-1)}{(q-1)J(\ol{A}, \phi A)J(\ol{\chiup}, \ol{\chiup}^2)}\sum_{\varphi \in \fqhat}   {J(A\varphi, \ol{\varphi}) } J(\ol A\varphi, \phi\ol\varphi) \varphi\(\frac {27}4\)J(\ol\chiup\varphi, \ol\chiup^2\varphi^2).
\end{align*}
 Using Theorem \ref{Hasse Davenport},
we observe that when $\chiup^6\neq \eps$,
\begin{multline*}
  \frac{ J(\ol{\chiup} \varphi, \ol{\chiup}^2 \varphi^2)}{J(\ol \chiup,\ol\chiup^2)}
  =\frac{g(\ol\chiup^3)}{g(\ol\chiup)g(\ol\chiup^2)}\frac{g(\varphi\ol\chiup)g(\varphi^2\ol\chiup^2)}{g(\varphi^3\ol\chiup^3)}
   +\delta\(\varphi^3\ol\chiup^3\)\frac{q-1}{J(\ol\chiup,\ol\chiup^2)} \\
  = \frac{g(\eta\ol\chiup)g(\ol\eta\ol\chiup)}{g(\phi\ol\chiup)g(\ol\chiup)}
     \frac{g(\phi\varphi\ol\chiup)g(\varphi\ol\chiup)}{g(\varphi\ol\chiup\eta)g(\varphi\ol\chiup\ol\eta)}
     \varphi\(\frac 4{27}\)
     +\delta\(\varphi^3\ol\chiup^3\)\frac{q-1}q \frac{g(\eta\ol\chiup)g(\ol\eta\ol\chiup)g(\phi)}{g(\phi\ol\chiup)g(\ol\chiup)}\ol\chiup\(\frac{27}4\) .
\end{multline*}
Thus the sum $I$ can be divided into two parts, $I_1$ and $I_2$, where
$$
  I_1=\frac1{J(\ol{A}, \phi A)}\frac{A(-1)}{q} \sum_{\overset{\varphi\in \fqhat}{\varphi^3=\chi^3}}J(A\varphi, \ol{\varphi}) J(\ol{A}\varphi,  \phi\ol\varphi)
   \frac{g(\eta\ol\chiup)g(\ol\eta\ol\chiup)g(\phi)}{g(\phi\ol\chiup)g(\ol\chiup)}\ol\chiup\varphi\(\frac {27}4\)
$$
and
$$
  I_2=\frac1{J(\ol{A}, \phi A)}\frac{A(-1)}{q-1} \sum_{\varphi\in \fqhat}
  J(A\varphi, \ol{\varphi}) J(\ol{A}\varphi,  \phi\ol\varphi)\frac{g(\eta\ol\chiup)g(\ol\eta\ol\chiup)}{g(\phi\ol\chiup)g(\ol\chiup)}
     \frac{g(\phi\varphi\ol\chiup)g(\varphi\ol\chiup)}{g(\varphi\ol\chiup\eta)g(\varphi\ol\chiup\ol\eta)}.
$$
Then under our assumptions, we have
  \begin{multline*}
I_1=\frac{A(-1)}{q^2} \sum_{\varphi^3=\chiup^3}g(\eta\ol\chiup)g(\ol\eta\ol\chiup)g(A\varphi)g(\ol A\varphi)\frac{g(\ol\varphi)g(\phi\ol\varphi)}{g(\phi\ol\chiup)g(\ol\chiup)}\ol\chiup\varphi\(\frac {27}4\)\\
=\frac{A(-1)}q\sum_{\varphi^3=\chiup^3} J(A\varphi,\ol A\varphi )J(\eta\ol\chiup, \ol{\eta\chiup})\ol\chiup\varphi\({27}\)
= A(-1)\sum_{\varphi^3=\chiup^3} \frac{J(A\varphi,\ol A\varphi )}{J(\ol\eta\chiup, {\eta\chiup})}.
  \end{multline*}
  For the sum $I_2$, we will divide it into 3 parts. First, we observe that
  \begin{multline*}
     I_2=\frac1{J(\ol{A}, \phi A)}\frac{A(-1)}{q-1} \frac{g(\eta\ol\chiup)g(\ol\eta\ol\chiup)}{g(\phi\ol\chiup)g(\ol\chiup)}\\ \cdot \sum_{\varphi \in \fqhat}
  J(A\varphi, \ol{\varphi}) J(\ol{A}\varphi,  \phi\ol\varphi){g(\phi\varphi\ol\chiup)g(\varphi\ol\chiup)}{g(\ol\varphi\chiup\ol\eta)g(\ol\varphi\chiup\eta)} \\
 \cdot    \(\frac{\varphi\chiup(-1)}q+\frac{q-1}q\delta\(\chiup\ol{\varphi\eta}\)\)\(\frac{\varphi\chiup(-1)}q+\frac{q-1}q\delta\(\chiup\ol\varphi\eta\)\) \\
= \frac{A(-1)}{q-1}\frac1{J(\ol{A}, \phi A)} \frac{g(\eta\ol\chiup)g(\ol\eta\ol\chiup)}{g(\phi\ol\chiup)g(\ol\chiup)} g(\eta)g(\phi\ol\eta)\\ \cdot \sum_{\varphi}
  J(A\varphi, \ol{\varphi}) J(\ol{A}\varphi,  \phi\ol\varphi)J(\phi\varphi\ol\chiup,\ol\eta\chiup\ol\varphi)J(\ol\chiup\varphi,\chiup\eta\ol\varphi)\\
\cdot  \(\frac 1{q^2}+\frac{q-1}{q^2}\delta\(\chiup\ol{\varphi\eta}\)+\frac{q-1}{q^2}\delta\(\chiup\ol\varphi\eta\)\).
  \end{multline*}
 The first delta term gives us
  \begin{multline*}
  -\frac{A(-1)}{q^2}\frac{J(\ol{\eta}, \ol\eta)}{J(\phi,\ol\eta)}g(\eta\ol\chiup)g(\ol\eta\ol\chiup)g(A\chiup\ol\eta)g(\ol A\chiup\ol\eta)\frac{g(\eta\ol\chiup)g(\phi\eta\ol\chiup)}{g(\phi\ol\chiup)g(\ol\chiup)}\\
    =-\frac{A(-1)}{q^2}\frac{\ol\eta(4)J(\ol{\eta}, \ol\eta)}{J(\phi,\ol\eta)}J(\eta\ol\chiup,\ol\eta\ol\chiup)J(A\chiup\ol\eta,\ol A\chiup\ol\eta)g(\eta\chiup^2)g(\ol\eta\ol\chiup^2)\\
    =-A(-1)\frac{J(A\chiup\ol\eta,\ol A\chiup\ol\eta)}{J(\eta\chiup,\ol\eta\chiup)},
  \end{multline*}
 using \eqref{eq:double-Jac}.   The second delta term contributes
  $$
     -A(-1)\frac{J( A\eta\chiup,\ol A\eta\chiup)}{J(\eta\chiup,\ol\eta\chiup)}.
  $$
  Therefore, we have
  \begin{multline*}
  I= A(-1)\frac{J(A\chiup,\ol A\chiup)}{J(\eta\chiup,\ol\eta\chiup)}\\
   +\frac{\phi(-1)}{q^2}\frac{g(\eta)g(\phi\ol\eta)}{J(\ol A,\phi A)}\frac{g(\eta\ol\chiup)g(\ol\eta\ol\chiup)}{g(\ol\chiup)g(\phi\ol\chiup)}
     {}_4\mathbb P_3 \[\begin{array}{cccc}A&\ol{A}&\ol{\chiup} \phi& \ol{\chiup}\\ & \phi& \ol{\chiup}\eta& \ol{\chiup\eta} \end{array}; 1\].
\end{multline*}

 We are now in the position to evaluate $ {}_4\mathbb P_3 \[\begin{array}{cccc}A&\ol{A}&\ol{\chiup} \phi& \ol{\chiup}\\ & \phi& \ol{\chiup}\eta& \ol{\chiup\eta} \end{array}; 1\]$.  For this purpose, we observe that for $x\neq 0$,
 \begin{align*}
     \phgq {A}{\ol A}\phi{\frac 1x} \phgq {\eta}{\ol{\eta}}{\phi}{\frac 1x}
    =\frac{\phi(-1)}{q-1}\sum_\chi{}_4\mathbb P_3 \[\begin{array}{cccc}A&\ol{A}&\ol{\chiup} \phi& \ol{\chiup}\\ & \phi& \ol{\chiup}\eta& \ol{\chiup\eta} \end{array}; 1\]\ol \chiup(x),
  \end{align*}
  and by Proposition \ref{prop: Greene's independent solutions} and Corollary \ref{Cor: product of 2F1}, the left hand side of the above equals
\begin{multline*}
 A\eta(x)J(\phi A,\phi A)J(\phi \eta, \phi\eta)\pFFq{2}{1}{A&\phi{A}}{& A^2}{x} \pFFq{2}{1}{\eta&\phi{\eta}}{&\ol\eta}{x}\\
   =A\eta(x)J(\phi A,\phi A)J(\phi \eta, \phi\eta)\\
 \cdot  \(\pFFq{2}{1}{A\eta& \phi A\eta}{&(A\eta)^2}{x} +\eta(2x)\pFFq{2}{1}{A\ol\eta& \phi A\ol\eta}{&(A\ol\eta)^2}{x}-\delta(1-x) A\eta(4)\).
 \end{multline*}

 From \eqref{eq:double-Jac}  and the relation
$$
J(\phi,\phi \chi)=\phi(-1)J(\ol\chi, \phi), \mbox{ if } \chi\neq \eps,
$$
we obtain
\begin{multline*}
  A\eta(x)J(\phi A,\phi A)J(\phi \eta, \phi\eta)\pFFq{2}{1}{A\eta& \phi A\eta}{&(A\eta)^2}{x}\\
  = A\eta(x)\phi(-1)\frac{J(\phi,\ol A)J(\phi,\ol\eta)}{J(\phi,\ol {A\eta})}\phgq{A\eta}{\phi A\eta}{A^2\ol\eta}x\\
  =\frac{J(\phi,\ol A)J(\phi,\ol\eta)}{J(\phi,\ol {A\eta})}\frac{A(-1)}{q-1}
  \sum_{K\in \fqhat} J(A\eta \ol K,K)J(\phi A\eta\ol K, \ol A^2\eta K)\ol K A\eta(x)\\
  =\frac{J(\phi,\ol A)J(\phi,\ol\eta)}{J(\phi,\ol {A\eta})}\frac{A(-1)}{q-1}
  \sum_{\chi \in \fqhat} J( \ol \chi, A\eta \chi)J(\phi \ol \chi, \ol A\ol\eta \chi)\ol \chi(x),
  \end{multline*}
  \begin{multline*}
  A\eta(x)J(\phi A,\phi A)J(\phi \eta, \phi\eta)\eta(2x)\pFFq{2}{1}{A\ol\eta& \phi A\ol\eta}{&(A\ol\eta)^2}{x}\\
  =\frac{J(\phi,\ol A)J(\phi,\ol\eta)}{J(\phi,\ol A\eta)}\frac{A(-1)}{q-1}
  \sum_{\chi\in \fqhat} J( \ol \chi, A\ol\eta \chi)J(\phi \ol \chi, \ol A\eta \chi)\ol \chi(x),
\end{multline*}
and
$$
   A\eta(x)J(\phi A,\phi A)J(\phi \eta, \phi\eta)A\eta(4)\delta(1-x)
   =\frac 1{q-1}J(\phi, \ol A)J(\phi, \ol\eta)\sum_{\chi\in \fqhat}\ol\chi(x).
$$

Therefore, by comparing the coefficients for $\ol{\chi}$, we get
  \begin{multline*}
  \phi(-1){}_4\mathbb P_3 \[\begin{array}{cccc}A&\ol{A}&\ol{\chi} \phi& \ol{\chi}\\ & \phi& \ol{\chi}\eta_3& \ol{\chi\eta_3} \end{array}; 1\]\\
    =A(-1)J(\phi,\ol A)J(\phi,\ol\eta)\( \frac{J( \ol \chi, A\eta \chi)J(\phi \ol \chi, \ol A\ol\eta \chi)}{J(\phi,\ol{A\eta})}+\frac{J( \ol \chi, A\ol\eta \chi)J(\phi \ol \chi, \ol A\eta \chi)}{J(\phi,\ol A\eta)}\)\\
    -J(\phi,\ol A)J(\phi,\ol\eta),
  \end{multline*}
  and when $\chi^2\neq \eps$, this is equal to
  $$
    J(\phi,\ol A)J(\phi,\ol\eta)\( \chi(4)J(A\eta\chi,\ol{A\eta}\chi)+\chi(4)J(A\ol\eta\chi,\ol A\eta\chi)-1\).
  $$

Our claim follows from these results and the fact that $\phi(-3)=1$ when $p\equiv 1 \pmod 6$.
\end{proof}

\begin{Remark}Gessel and Stanton obtained in \cite{Gessel-Stanton} many other interesting evaluation formulas. Many of them appear to have finite field analogues.  Interested readers are encouraged to take a look.
\end{Remark}

\section{An application to Hypergeometric Abelian Varieties}\label{ss:application}

In this section, we give an explicit application of the use of finite field formulas in computing the arithmetic invariants of hypergeometric varieties.  Specifically, we use the finite field quadratic transformation from Theorem \ref{thm:quad-2F1} to obtain the decomposition of a generically 4-dimensional abelian variety arising naturally from the generalized Legendre curve $y^{12}=x^9(1-x)^5(1-\l x)$.

In \cite{WIN3a},  based on  \cite{Wolfart} by Wolfart and \cite{Archinard} by Archinard, \bk the authors use generalized Legendre curves (see \S \ref{GLC}) to construct families of 2-dimensional abelian varieties with quaternionic multiplication (QM) that are parametrized by Shimura curves associated with arithmetic triangle groups with compact fundamental domains. For  example, the  primitive part of the Jacobian varieties of (the smooth models of) $y^6=x^4(1-x)^3(1-\l x)$ gives rise to a family of 2-dimensional abelian varieties parametrized by the Shimura curve associated with the arithmetic triangle group (3,6,6).   Another construction of  such a  family of abelian varieties has been obtained by Petkova-Shiga \cite{Petkova-Shiga} using Picard curves. In general the strategy in \cite{WIN3a}  yields  computable families of generalized Legendre curves giving rise to  hypergeometric abelian varieties of dimension larger than 2. For instance, the arithmetic triangle group (2,6,6) can be realized using the periods of the generalized Legendre curve $y^{12}=x^9(1-x)^5(1-\l x)$. (See Examples \ref{(a,6,6)} and \ref{(a,6,6)-2} for relations between the Legendre curves and the arithmetic triangle groups (3,6,6) and (2,6,6).)   By the discussion in \S \ref{GLC}, \bk for any $\l\in {\Q}\setminus\{0,1\}$, the \bk primitive part of the Jacobian variety of the smooth model of $y^{12}=x^9(1-x)^5(1-\l x)$ is of dimension $\varphi(12)=4$. It is natural to ask whether such a 4-dimensional abelian variety is simple or not. Meanwhile the two arithmetic groups (3,6,6) and (2,6,6) are commensurable,  as seen in  \cite{Takeuchi} by Takeuchi. Here, we \bk will use Theorem \ref{thm:quad-2F1}, which is a quadratic formula for $_2\F_1$, to prove the following theorem.

\begin{Theorem}\label{thm:isogeny}Let $\l \in  \Q$ such that $\l \neq 0,\pm 1$. Let $J_{\l,1}^{\text{prim}}$ (resp. $J_{\frac{-4\l}{(1-\l)^2},2}^{\text{prim}}$) be the primitive part of the Jacobian variety of $y^6=x^4(1-x)^3(1-\l x)$ (resp. $y^{12}=x^9(1-x)^5(1+\frac{4\l}{(1-\l)^2}x)$). Then $J_{\frac{-4\l}{(1-\l)^2},2}^{\text{prim}}$ is isogenous to $J_{\l,1}^{\text{prim}}\oplus J_{\l,1}^{\text{prim}}$ over some {cyclotomic} number field depending on $\l$.
\end{Theorem}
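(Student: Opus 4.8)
The plan is to prove the isogeny statement by comparing the Galois representations attached to the two primitive Jacobians through their Frobenius traces, using the quadratic transformation of Theorem \ref{thm:quad-2F1} as the bridge. By the discussion in \S\ref{GLC} and \S\ref{ss:6.3}, for a generalized Legendre curve $y^N = x^i(1-x)^j(1-\l x)^k$ the primitive part of its Jacobian decomposes, after restriction to $G_{K(\zeta_N)}$, into a direct sum of two-dimensional pieces $\sigma_{\l,m}$ indexed by $m \in (\Z/N\Z)^\times$, and each piece has a Frobenius trace given explicitly by a $_2\mathbb P_1$ period function via \eqref{eq:count-Jprime}. So the first step is to write out, for each of the two curves in the theorem, the explicit list of these $_2\mathbb{P}_1$ (equivalently $_2\mathbb{F}_1$) character sums governing the relevant two-dimensional constituents. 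For $y^6 = x^4(1-x)^3(1-\l x)$ the new part has dimension $\varphi(6)=2$, giving two-dimensional pieces indexed by $m \in \{1,5\}$; for $y^{12}=x^9(1-x)^5(1+\frac{4\l}{(1-\l)^2}x)$ the new part has dimension $\varphi(12)=4$, giving four two-dimensional pieces indexed by $m \in \{1,5,7,11\}$.

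Next I would match parameters. Using \eqref{eq:abc->Nijk}, translate the exponents $(N;i,j,k)$ of each curve into the triple $(a,b,c)$, so that the constituent $\sigma_{\l,m}$ corresponds to the primitive $_2\mathbb{F}_1$ with characters $\iota_\fp$ of the appropriate rational parameters. The key computation is to check that when one applies Theorem \ref{thm:quad-2F1} with the argument substitution $x \mapsto \frac{-4\l}{(1-\l)^2}$ and $\frac{-4x}{(1-x)^2}=\frac{-4\l}{(1-\l)^2}$ (so $x=\l$), the characters appearing in the quadratic formula line up exactly: the characters $\{B,C,C\ol B\}$ on the $_2\mathbb{F}_1$ over the base argument $\frac{-4\l}{(1-\l)^2}$ must coincide (up to the twisting factor $\ol C(1-x)$, which is a Gr\"ossencharacter) with the characters $\{D\phi\ol B, D, C\ol B\}$ governing the order-$12$ curve's constituents, while $\{B,C,C\ol B\}$ itself governs the order-$6$ curve's constituents evaluated at argument $\l$. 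Concretely, for $q\equiv 1\pmod{12}$ and $\eta_N$ a fixed order-$N$ character, one shows that each of the two two-dimensional pieces of $J_{\l,1}^{\text{prim}}$ appears \emph{twice} among the four pieces of $J_{\frac{-4\l}{(1-\l)^2},2}^{\text{prim}}$, accounting for the $\oplus$ in the statement. The twist by the Gr\"ossencharacter $\ol C(1-x)$ explains the phrase ``over some number field depending on $\l$''.

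With the trace identity in hand, I would invoke Faltings' isogeny theorem: two abelian varieties over a number field are isogenous (after base change to a field where the relevant characters are defined) if and only if the attached $\ell$-adic Galois representations have equal Frobenius traces at almost all primes. Since Theorem \ref{thm:quad-2F1} gives an identity of $_2\mathbb{F}_1$ functions valid for all $x\in\F_q$ away from the finitely many degenerate values (here the delta terms at $x=\pm 1$ correspond to the excluded $\l\neq 0,\pm 1$ and contribute only at finitely many $\fp$), the equality of traces holds at all good primes $\fp$, and the extra $\delta$-terms vanish generically. Comparing the two explicit $_2\mathbb P_1$ lists then yields
$$
\operatorname{Tr}\rho_{\frac{-4\l}{(1-\l)^2},2}^{\text{prim}}(\operatorname{Frob}_\fp) = 2\operatorname{Tr}\rho_{\l,1}^{\text{prim}}(\operatorname{Frob}_\fp)
$$
after the appropriate twist, which is the trace version of the claimed isogeny $J_{\frac{-4\l}{(1-\l)^2},2}^{\text{prim}}\sim J_{\l,1}^{\text{prim}}\oplus J_{\l,1}^{\text{prim}}$.

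The main obstacle I anticipate is the bookkeeping in the parameter-matching step: one must verify that under the quadratic transformation the \emph{entire} Galois orbit of constituents (all four values of $m$ for the degree-$12$ curve) pairs off correctly with the duplicated constituents of the degree-$6$ curve, keeping careful track of which order-$24$ or order-$12$ character represents each $\iota_\fp(a),\iota_\fp(b),\iota_\fp(c)$, and of the precise twisting Gr\"ossencharacters (the $\ol C(1-x)$ factor and the factor $\ol B^2(\frac{z}{4})$-type terms from the fusion in Corollary \ref{Cor: product of 2F1}). A secondary technical point is to confirm that the excluded delta terms in Theorem \ref{thm:quad-2F1} only affect the finitely many primes dividing $\l(\l-1)(\l+1)$, so that the trace equality indeed holds at almost all $\fp$ and Faltings applies; and to specify the number field of definition as the compositum fixing both $\zeta_{12}$ and the square root $\sqrt{\;\cdot\;}$ entering the quadratic transformation. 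I do not expect the application of Faltings to cause difficulty once the trace identity is established.
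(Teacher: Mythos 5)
Your overall strategy coincides with the paper's: express the Frobenius traces of both primitive Jacobians as sums of $_2\mathbb P_1$ period functions via \eqref{eq:count-Jprime} (over $m\in\{1,5\}$ for the degree-$6$ curve and $m\in\{1,5,7,11\}$ for the degree-$12$ curve), match them in pairs using Theorem \ref{thm:quad-2F1} with $D=\eta$, $C=\eta^2$, $B=\eta^4$ for $\eta=\iota_\fp(\frac m{12})$ of order $12$, absorb the twisting factors by passing to a finite extension, and conclude with Faltings. The parameter matching you flag as the main obstacle is indeed only bookkeeping, and your treatment of the delta terms and of the excluded values of $\l$ is correct.

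There is, however, one genuine gap. When you transport Theorem \ref{thm:quad-2F1} from the normalized $_2\mathbb F_1$'s back to the $_2\mathbb P_1$'s that actually compute the traces, the two sides carry different Jacobi-sum normalizations, so the identity you need reads
$$
\phgq {\eta^2}{\eta^4}{\ol\eta^2}{\l}\;=\;\ol\eta^2(1-\l)\,R(\eta)\,\phgq{\eta}{\eta^3}{\ol\eta^2}{\frac{-4\l}{(1-\l)^2}},\qquad R(\eta)=\frac{J(\eta^4,\phi)}{J(\eta^3,\ol\eta^5)}.
$$
The factor $\ol\eta^2(1-\l)$ is a power residue symbol, hence of finite order and harmless; but $R(\eta)$, as $\fp$ varies, is a quotient of Jacobi-sum Gr\"ossencharacters and is \emph{not} a priori of finite order --- and a twist by an infinite-order Gr\"ossencharacter cannot be trivialized by any finite base change, so the trace identity would only hold up to a nontrivial twist and Faltings could not be applied. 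The paper closes this by a Gauss-sum computation showing $R(\eta^5)=J(\eta^2,\eta^5)/J(\eta^3,\eta^4)$ and then invoking Yamamoto's result (Example \ref{eg:Yamamoto}) that the square of the Gr\"ossencharacter $\mathcal J_{(\frac 2{12},\frac 5{12})}/\mathcal J_{(\frac 3{12},\frac 4{12})}$ is a power residue symbol, so $R(\eta)$ is a root of unity and the associated character has finite order; only then can one choose $M$ and $K\supseteq\Q(\zeta_{12M},\l)$ so that all twisting factors become $1$ and $\text{Tr}\,\rho_{2,\ell}(\text{Frob}_\fp)=2\,\text{Tr}\,\rho_{1,\ell}(\text{Frob}_\fp)$ holds on the nose. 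You should add this finiteness verification; with it, the rest of your argument goes through as written.
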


The proof is based on the  following  famous  result of Faltings \cite{Faltings} (see \S 5, Kor. 1 to Satz 4).

\begin{Theorem}[Faltings \cite{Faltings}] \label{thm:Faltings}
Let $A$ and $B$ be abelian varieties over a number field $L$.
Suppose that the  corresponding $\ell$-adic representations  $\rho_{A,\ell}
\simeq \rho_{B,\ell}
$
as $\overline{\Q}_{\ell}[G_L]$-modules. Then $A$ is isogenous to $B$.
\end{Theorem}
It is well-known that  the $\rho_{A,\ell}$ and $\rho_{B,\ell}$ in Theorem \ref{thm:Faltings} are semisimple and that Frobenius elements for $\rho_{A,\ell}$ and $\rho_{B,\ell}$ form a dense set of $G_L$. \bk
 From this we have the following consequence.

\begin{Corollary}\label{cor:Faltings}  If $A$ and $B$ are two abelian varieties over a number field  $L$
such that their corresponding Galois representations $\rho_{A,\ell},\rho_{B,\ell}$  have the same trace for almost all Frobenius elements of $G_L$, then $A$ is isogenous to $B$ over $L$.
\end{Corollary}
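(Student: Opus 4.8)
The plan is to deduce the Corollary directly from Faltings' theorem by promoting the trace hypothesis into an honest isomorphism of $\Q_\ell[G_K]$-modules, using precisely the two facts recalled immediately above the statement: that $\rho_A$ and $\rho_B$ are semisimple, and that the Frobenius elements are dense in $G_K$.

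First I would note that each character $\text{Tr}\,\rho_\bullet\colon G_K \to \ol{\Q}_\ell$ is continuous, since $\rho_\bullet$ is a continuous homomorphism to $GL_d(\Q_\ell)$ and the matrix trace is a continuous function. By hypothesis these two continuous functions agree at $\text{Frob}_\fp$ for all but finitely many primes $\fp$. Deleting a finite set of primes does not destroy density: by the Chebotarev density theorem, for every open subset of $G_K$ there are infinitely many primes whose Frobenius lands in it, so the remaining Frobenius elements still form a dense set. Two continuous functions agreeing on a dense subset of $G_K$ agree everywhere, hence $\text{Tr}\,\rho_A(g)=\text{Tr}\,\rho_B(g)$ for all $g\in G_K$.

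Next I would invoke the standard characteristic-zero fact that a semisimple representation is determined up to isomorphism by its character: writing each $\rho_\bullet$ as a direct sum of irreducibles, the linear independence of distinct irreducible characters forces the multiplicities of each irreducible constituent to coincide once the full characters agree. Since both $\rho_A$ and $\rho_B$ are semisimple and have equal characters, this gives $\rho_A \simeq \rho_B$ as $\Q_\ell[G_K]$-modules, and Faltings' theorem then yields that $A$ is isogenous to $B$.

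The part requiring the most care is the role of semisimplicity: equality of characters alone only controls the semisimplifications, so without the semisimplicity of $\rho_A$ and $\rho_B$ one could not rule out two non-isomorphic modules with the same trace (differing by a nontrivial extension). The other subtle point is that passing from ``almost all'' Frobenius to ``all'' of $G_K$ genuinely uses both continuity of the trace and Chebotarev density; since both inputs are already supplied in the excerpt, the argument is essentially a matter of assembling them in the right order rather than any new computation.
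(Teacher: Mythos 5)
Your argument is correct and is precisely the reasoning the paper intends when it calls the Corollary an ``immediate consequence'' of Faltings' theorem together with the two recalled facts (semisimplicity and density of Frobenius elements): continuity of the trace plus Chebotarev density upgrades equality at almost all Frobenius elements to equality of characters on all of $G_K$, and linear independence of irreducible characters in characteristic zero then forces $\rho_A\simeq\rho_B$ for semisimple representations. The paper leaves these steps implicit, so your write-up simply supplies the standard details of the same route.
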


\begin{proof}[Proof of Theorem \ref{thm:isogeny}]
 For any fixed $\l\in \Q, \l \neq 0,\pm 1$, \bk use $\{\rho_{1,\ell}\}$ (resp. $\{\rho_{2,\ell}\}$) to denote the compatible family of 4-dimensional  (resp. 8-dimensional) Galois representations of $G_{\Q}$ arising from  $J_{\l,1}^{\text{prim}}$ (resp. $J_{\frac{-4\l}{(1-\l)^2},2}^{\text{prim}}$) respectively. By Corollary \ref{cor:Faltings}  we  only need to find a finite extension $L$ of $\Q$ such that  the traces of $\rho_{2,\ell}|_{G_L}$ and $\(\rho_{1,\ell}\oplus\rho_{1,\ell}\)|_{G_L}$ agree at almost all Frobenius elements of $G_L$.

Let $\fp$ be a good prime ideal of the ring of integers of $\Q(\zeta_{12})$ with residue field  of size  $q$. Note that $q\equiv 1\pmod{12}$. Then the trace of the  Frobenius element $\text{Frob}_\fp$ under $\{\rho_{1,\ell}\}$ and $\{\rho_{2,\ell}\}$ can be computed using \eqref{eq:count-Jprime}.
All the fractions on the right sides below are negative of those given by
\eqref{eq:count-Jprime}.
\color{black}As the sets we are summing over are stable
under negation, our formulas are correct.
\bk
\begin{equation}\label{eq: pt 366}
 \text{Tr} \rho_{1,\ell}\( \text{Frob}_\fp\)=-\sum_{m=1,5}\phgq{ \iota_{\fp} (\frac{m}6)} { \iota_{\fp} (\frac{2m}6)}{ \iota_{\fp} (\frac{-m}6)} {\l;q},
\end{equation}
and
\begin{equation}\label{eq: pt 266}
 \text{Tr} \rho_{2,\ell} \(\text{Frob}_\fp\)=-\sum_{m=1,5,7,11}\phgq{ \iota_{\fp} (\frac{m}{12})} { \iota_{\fp} (\frac{3m}{12})}{ \iota_{\fp} (\frac{-2m}{12})} {\frac{-4\l}{(1-\l)^2};  q},
\end{equation}respectively.
(For the notation $\iota_{\fp} (\frac ab)$, see Definition \ref{def:iota}.) \bk Assume $\l\neq 0,\pm 1$ in the residue field. We now let $\eta_{12}=\iota_\fp ( \frac m{12})$ with $(m,12)=1$ be any  primitive multiplicative character of order $12$.
Then
$\eta_{12}^6=\phi$ is the unique quadratic character.
 By applying Theorem \ref{thm:quad-2F1} with $D=\eta_{12}$, $C=\eta_{12}^2$ and $B=\eta_{12}^4$ in the $_2{\mathbb F}_1$ expression, invoking the commutativity { in \S \ref{ffdef},} and using the defining formula (\ref{Hasse-Dav-special})
 for $_2{\mathbb P}_1$, we see for $\l \neq 0,\pm 1$ that
$$
  \phgq {\eta_{12}^2}{\eta_{12}^4}{\ol\eta_{12}^2}{\l}= \ol\eta_{12}^2(1-\l)\frac{J(\eta_{12}^4,\phi)}{J(\eta_{12}^3,\ol\eta_{12}^5)}\phgq{\eta_{12}}{\eta_{12}^3}{\ol\eta_{12}^2}{\frac{-4\l}{(1-\l)^2}}.
$$
Replacing all characters by their $5$th powers gives
\begin{align*}
  \phgq {\ol\eta_{12}^2}{\ol\eta_{12}^4}{\eta_{12}^2}{\l}
 =&\eta_{12}^2(1-\l)\frac{J(\ol\eta_{12}^4,\phi)}{J(\eta_{12}^3,\ol\eta_{12})}\phgq{\eta_{12}^5}{\eta_{12}^3}{\eta_{12}^2}{\frac{-4\l}{(1-\l)^2}}.
\end{align*}

Let $R(\eta_{12}):=\frac{J(\eta_{12}^4,\phi)}{J(\eta_{12}^3,\ol\eta_{12}^5)}$. Next, we will argue that  $R(\eta_{12})$
is a root of unity. As $R(\eta_{12})$ and $R(\eta_{12}^5)$ are Galois conjugates in $\mathbb Q (\zeta_{12})$ it suffices to show $R(\eta_{12}^5)$ is a root of unity.

{
First we note that by the reflection formula \eqref{Gauss1}, we can show that  $$g(\eta_{12}^k)g(\eta_{12}^{12-k})=\eta_{12}(-1)^k q.$$  Thus further using \eqref{Gauss1} we see that

\begin{multline}
R(\eta_{12}^5)
=\frac{g(\eta_{12}^8)g(\phi)}{g(\eta_{12}^3)g(\eta_{12}^{11})}
=\frac{g(\phi)g(\eta_{12}^8)g(\eta_{12})}{\eta_{12}(-1)qg(\eta_{12}^3)} \\
=\eta_{12}(-1)\frac{g(\phi)g(\eta_{12})}{g(\eta_{12}^3)g(\eta_{12}^4)}
=\eta_{12}(-1)\frac{g(\phi)g(\eta_{12})}{g(\eta_{12}^3)g(\eta_{12}^4)}\frac{g(\eta_{12}^7)g(\eta_{12}^5)}{\eta_{12}(-1)q}.
\end{multline}

Now letting $A=\eta_{12}$ in the multiplication formula \eqref{Hasse-Dav-special} then using \eqref{Gauss1} and \eqref{JacobiGaussrelation}, we obtain
$$
\eta_{12}(4)R(\eta_{12}^5) = \frac{g(\phi)^2g(\eta_{12}^2)g(\eta_{12}^5)}{qg(\eta_{12}^3)g(\eta_{12}^4)}=
\frac{g(\eta_{12}^2)g(\eta_{12}^5)}{g(\eta_{12}^3)g(\eta_{12}^4)}=\frac{J(\eta_{12}^2,\eta_{12}^5)}{J(\eta_{12}^3,\eta_{12}^4)}.
$$
}

By Example \ref{eg:Yamamoto}, the value of $J(\eta_{12}^2,\eta_{12}^5)/J(\eta_{12}^3,\eta_{12}^4)$ is also a root of unity. From a more global view point following Weil's Theorem \ref{thm:Weil}, we know now the corresponding Gr\"ossencharacter $\mathcal J_{(\frac13,\frac 12)}/\mathcal J_{(\frac14,\frac 7{12})}$ has finite image. By class field theory, it corresponds to a finite order character $\psi$. Hence there is a natural number $M$, corresponding to the intersections of the kernels of $\( \frac{-(1-\l)^2}{\cdot} \)_{12}$ and $\psi$,  such that for each good prime $\fp$ whose   residue field  has cardinality $1$ modulo $M$, then  $\eta_{12}^{2}(1-\l)=1, \eta_{12}(-1)=1,$ and $\frac{J(\eta_{12}^4,\phi)}{J(\eta_{12}^3,\ol\eta_{12}^5)}=1$.

Set $L=\Q(\zeta_{12M})$.
\bk Then for each good prime ideal $\fp$ of $\mathcal O_L$, its residue field has size $q\equiv 1\pmod{12M}$. In particular, letting $\eta_{12}=\iota_\fp(\frac{1}{12})$ and $\eta_{12}=\iota_\fp(\frac{7}{12})$, respectively, in the above equations \eqref{eq: pt 366} and \eqref{eq: pt 266}, respectively,
yields $$\text{Tr} \rho_{2,\ell}|_{G_L} \(\text{Frob}_\fp\)=2\text{Tr} \rho_{1,\ell}|_{G_L} \(\text{Frob}_\fp\).$$  The claim then follows from Corollary \ref{cor:Faltings}.
\end{proof}


\section{Open Questions and Concluding Remarks}\label{ss:formulas}

We have seen now various ways in which the finite field hypergeometric series defined in \eqref{general_HGF} can be leveraged. By slightly changing the definitions given in \cite{Greene} by Greene and \cite{McCarthy} by McCarthy, we are able to accomplish our goals of aligning with the underlining geometry and matching the classical setting as closely as possible. This gives us a systematic method for converting many classical results to the finite field setting, with the benefit of predictions motivated by the Galois perspective.  \\

In particular, we have used our methods to prove finite field analogues of numerous classical formulas including 9 quadratic or higher transformation formulas, 11 evaluation formulas, and 3 algebraic identities, among other formulas.

In addition, we will discuss some numeric observations in this section.

\subsection{Numeric observations}\label{ss:conj}

 Numerically, we have observed finite field analogues of the algebraic transformations of $_2F_1$-hypergeometric series mentioned in work of  the fifth author and  Yang \cite{Tu-Yang1} using \texttt{Magma}. We have the following conjecture, for which we use the convention that a rational function $f(x)=p(x)/q(x)$ is said to take value $\infty$ at $x=a$ if $q(a)=0$ but $p(a)\neq0$.\bk

\begin{Conjecture}\label{conj:FFT-Y}
Let $p\equiv 1\pmod{24}$, {$\alpha$ a root of $x^2+3$ and $\beta$ a root of $x^2+2$ in $\F_p$,} and $\eta_{24}$ a primitive multiplicative character of order $24$.   Define
\begin{align*}
f(z) &:=\frac{12\alpha z(1-z)^2(1-9z^2)}{(1+\alpha z)^6},\\
g(z) &:=-\frac{4(1+\beta)^4z(1+(4\beta-7)z^2/3)^4}{(1+z)(1-3z)(1+(4+2\beta)z-(1+2\beta)z^2)^4},
\end{align*}
and assume that $z\in \F_p$ satisfies $f(z)$, $g(z)\neq 0$, $1$, $\infty$.  Then,
\begin{multline*}
  \eta_{24}^3((1+z)(1-3z)) \ol\eta_{24}^6(1+\alpha z)\pFFq21{\eta_{24}^5&\eta_{24}^9}{&\ol\eta_{24}^6}{f(z)}\\
  \overset{?}{=} \phi(1+(4+2\beta)z-(1+2\beta)z^2)\pFFq21{\eta_{24}^3&\eta_{24}^9}{&\ol\eta_{24}^6}{g(z)}.
\end{multline*}
\end{Conjecture}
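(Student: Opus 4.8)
The identity is an $\mathbb{F}_p$ analogue of a high-degree algebraic transformation of Tu and Yang \cite{Tu-Yang1}, in which the two $_2\mathbb{F}_1$ factors carry the rational parameters $(a,b;c)=(\tfrac{5}{24},\tfrac{9}{24};\tfrac34)$ and $(a',b';c')=(\tfrac{3}{24},\tfrac{9}{24};\tfrac34)$, with Schwarz data of triangle type $(4,6,6)$ and $(4,4,4)$ respectively (here $\ol\eta^6=\eta^{18}\leftrightarrow\tfrac34$ and $\phi=\eta^{12}$), and the arguments are pulled back along the degree-$6$ map $f$ and the degree-$10$ map $g$. Since $f$ and $g$ have high degree, this is not a direct $(\ast)$-type formula; the plan is instead to prove it through the Galois representation interpretation of \S\ref{Gal}. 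By Theorem \ref{thm:WIN3a} and the normalized representation $\tilde\sigma_{\l,\ell}$ introduced in \S\ref{Gal}, each normalized $_2\mathbb{F}_1$ occurring here is, up to a Jacobi-sum/Gr\"ossencharacter twist, the trace of the $2$-dimensional $\ell$-adic representation cut out by the order-$24$ automorphism from the primitive part of the Jacobian of the generalized Legendre curve $y^{24}=x^{i}(1-x)^{j}(1-\l x)^{k}$ of \S\ref{GLC}, with $N=24$ and $i,j,k$ given by \eqref{eq:abc->Nijk}, evaluated at $\l=f(z)$ on the left and $\l=g(z)$ on the right. The goal is thus reduced to exhibiting an isomorphism of such representations over a suitable number field.

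First I would take $K$ to be the Galois closure of $\Q(\zeta_{24},\alpha,\beta,z)$ (so that $x^3+3$ and $x^2+2$ split and $z$ becomes rational), and set up the two $2$-dimensional families $\sigma^{L}_{z,\ell}$ and $\sigma^{R}_{z,\ell}$ of $G_K$ attached to the two sides via \eqref{eq:count-Jprime}. Their determinants are computable by Corollary \ref{cor:det}, and matching them after the prefactor twist is a first necessary consistency check, which I would carry out explicitly with the reflection and duplication formulas. The heart of the argument is a geometric correspondence between the two families of curves: classically the Tu--Yang transformation reflects that, after pulling back along $f$ and $g$ to a common cover of the $z$-line, the two hypergeometric local systems become isomorphic (their Schwarz maps agree modulo the triangle-group actions). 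This is an identity of local systems over $\overline\Q(z)$ and can be checked by comparing the two pulled-back hypergeometric differential equations, exactly as in the proof of the underlying classical identity in \cite{Tu-Yang1}. Transporting it to the $\ell$-adic setting yields $\sigma^{L}_{z,\ell}\cong\psi\otimes\sigma^{R}_{z,\ell}$ up to semisimplification, where $\psi$ is the $1$-dimensional representation whose Frobenius values realize the prefactor ratio.

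It then remains to identify $\psi$ with the explicit character in the statement. The prefactors $\eta^3((1+z)(1-3z))\,\ol\eta^6(1+\alpha z)$ and $\phi(1+(4+2\beta)z-(1+2\beta)z^2)$ are finite-field specializations of the algebraic factors of the classical transformation; in the language of \S\ref{ss:Jacobi-Grosse} each is a product of power-residue characters $\chi_{\frac{i}{24},c}$ as in \eqref{eq:chi,im}, and the Jacobi-sum normalizing factors contribute Gr\"ossencharacters $\mathcal{J}_{\underline a}$ as in \eqref{J->mathcal J}. I would assemble $\psi$ from these pieces and verify the resulting identity of Gr\"ossencharacters of $K$ using Weil's Theorem \ref{thm:Weil}. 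With $\sigma^{L}_{z,\ell}\cong\psi\otimes\sigma^{R}_{z,\ell}$ in hand, equating traces at every good Frobenius $\mathrm{Frob}_\fp$ (via Chebotarev, or Faltings as in Theorem \ref{thm:isogeny}) gives the conjectured pointwise identity, while the excluded loci $f(z),g(z)\in\{0,1,\infty\}$ account for the places where delta corrections would otherwise appear.

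The main obstacle will be pinning down $\psi$ without sign ambiguity. Because the curves are cyclic $24$-covers and Weil's construction fixes the defining ideal of $\mathcal{J}_{\underline a}$ only up to $(m^2)$ rather than $(m)$, there is a genuine root-of-unity and quadratic-sign indeterminacy of the type illustrated by Yamamoto's computation (Example \ref{eg:Yamamoto}); the appearance of $\alpha$ (a cube root of $-3$) and $\beta$ (a square root of $-2$) means the correspondence and its twist live only over $K$, so the bookkeeping of which power-residue character actually occurs is delicate. An independent, more computational route that avoids the geometry would be to first confirm that the classical Tu--Yang identity satisfies $(\ast)$ and then mimic the proof of Theorem \ref{thm:quad-2F1}: expand the heavier $g$-side as a double character sum, reduce it with the Hasse--Davenport multiplication relation (Theorem \ref{Hasse Davenport}) and the finite-field Pfaff--Saalsch\"utz formula \eqref{eq:P.S.-FF}, after a finite base extension if needed as in \S\ref{ss:quad-summary}, and track the delta terms; but for maps of degrees $6$ and $10$ these sums are formidable, which is precisely why the representation-theoretic reduction is the preferable plan.
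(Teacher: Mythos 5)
The statement you are trying to prove is not proved in the paper at all: it appears in the appendix as Conjecture~\ref{conj:FFT-Y}, written with $\overset{?}{=}$ and supported only by numerical verification in \texttt{Magma}. So there is no paper proof to compare against, and your proposal has to be judged as a free-standing argument. As such it is a reasonable research program in the spirit of the paper's Galois-theoretic philosophy, but it is not a proof, and the gap is located exactly where you pass from the classical identity to the $\ell$-adic one. An identity of periods, or equivalently an isomorphism of the pulled-back hypergeometric local systems over $\overline\Q(z)$ in the Betti/de Rham realization, does not formally "transport to the $\ell$-adic setting." To get $\sigma^{L}_{z,\ell}\cong\psi\otimes\sigma^{R}_{z,\ell}$ you need an actual algebraic correspondence between the primitive parts of the Jacobians of the two generalized Legendre curves (an isogeny defined over a number field), or at minimum an appeal to Deligne's theorem that Hodge classes on abelian varieties are absolutely Hodge so that the Hodge-theoretic correspondence induces Galois-equivariant $\ell$-adic classes after a finite base extension. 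You state this step in one sentence as if it were automatic; it is the entire content of the problem. Note that the paper's own Theorem~\ref{thm:isogeny} runs the logic in the \emph{opposite} direction: it uses an already-proved finite field identity (Theorem~\ref{thm:quad-2F1}) plus Faltings to deduce an isogeny, precisely because deducing the isogeny directly from the complex identity is not available.

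Two further points. First, you propose to verify the classical local-system isomorphism "by comparing the two pulled-back hypergeometric differential equations, exactly as in the proof of the underlying classical identity in \cite{Tu-Yang1}," but Tu and Yang prove their transformation by interpreting the hypergeometric series as modular forms on Shimura curves for the commensurable triangle groups $(4,6,6)$ and $(4,4,4)$, not by an ODE comparison; and there is no evidence the identity satisfies the $(\ast)$ condition, so your computational fallback via Hasse--Davenport and Pfaff--Saalsch\"utz is also speculative rather than a safety net. Second, you correctly flag the root-of-unity and sign ambiguity in pinning down the twisting character $\psi$ (the Yamamoto phenomenon of Example~\ref{eg:Yamamoto}), but you leave it unresolved; since the conjectured prefactors involve $\eta^3$, $\ol\eta^6$, and $\phi$ evaluated at expressions in $\alpha$ and $\beta$, resolving this ambiguity is not bookkeeping one can defer --- it is where a wrong guess would make the final identity false by a $24$th root of unity. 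Until both the correspondence and the character $\psi$ are nailed down, what you have is a plausible strategy consistent with the numerics, which is essentially where the authors themselves left the matter.
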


 Conjecture \ref{conj:FFT-Y} is a finite field analogue of the following transformation\footnote{We note that the left hand side of the second formula in \cite[Theorem 1]{Tu-Yang1} contains a typo. We are stating the corrected version here.} due to  the fifth author and  Yang \cite{Tu-Yang1}.
\begin{Theorem}\cite[Theorem 1]{Tu-Yang1}\label{thm:Tu-Yang}
Let $\alpha$ be a root of $x^2+3$, $\beta$ a root of $x^2+2$, and define
 \begin{align*}
 f(z) & := \frac{12\alpha z(1-z)^2(1-9z^2)}{(1+\alpha z)^6}\\
 g(z)& := -\frac{4(1+\beta)^4z(1+(4\beta-7)z^2/3)^4}{(1+z)(1-3z)(1+(4+2\beta)z-(1+2\beta)z^2)^4}.
 \end{align*}
Then,
\begin{multline*}
  \frac{(1+z)^{1/8}(1-3z)^{1/8}}{(1+\alpha z)^{5/4}} \pFq21{\frac 5{24}&\frac38}{&\frac 34}{f(z)}\\
  =(1+(4+2\beta)z-(1+2\beta)z^2)^{-1/2}\pFq21{\frac 18&\frac38}{&\frac 34}{g(z)},
\end{multline*}
and
\begin{multline*}
  \frac{{(1-z)^{1/4}(1+3z)^{1/4}(1+z)^{5/8}(1-3z)^{5/8}}}{(1+\alpha z)^{11/4}}\pFq21{\frac {11}{24}&\frac58}{&\frac 54}{f(z)}\\
  =\frac{1+(4\beta-7)z^2/3}{(1+(4+2\beta)z-(1+2\beta)z^2)^{3/2}}\pFq21{\frac 38&\frac58}{&\frac 54}{g(z)}.
\end{multline*}
\end{Theorem}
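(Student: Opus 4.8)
The plan is to prove each of the two identities by exhibiting its two sides as one and the same normalized solution of a single second-order Fuchsian equation on the $z$-line. First I would record the monodromy data. The two functions on the left are solutions of $HDE\bigl(\tfrac{5}{24},\tfrac{3}{8};\tfrac{3}{4};w\bigr)$ and $HDE\bigl(\tfrac{11}{24},\tfrac{5}{8};\tfrac{5}{4};w\bigr)$, whose Schwarz triangles (Theorem \ref{schwarz}) both have angles $\tfrac{1}{4},\tfrac{1}{6},\tfrac{1}{6}$, i.e.\ projective monodromy the triangle group $(4,6,6)$; the two on the right solve $HDE\bigl(\tfrac{1}{8},\tfrac{3}{8};\tfrac{3}{4};w\bigr)$ and $HDE\bigl(\tfrac{3}{8},\tfrac{5}{8};\tfrac{5}{4};w\bigr)$, with Schwarz angles $\tfrac14,\tfrac14,\tfrac14$ and projective monodromy $(4,4,4)$. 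Since $f$ has degree $6$ and $g$ has degree $10$, the Riemann--Hurwitz areas match, $6\cdot\tfrac{5}{12}=10\cdot\tfrac14=\tfrac52$, which is exactly the consistency one expects if $w=f(z)$ and $w=g(z)$ pull the two hypergeometric local systems back to projectively equivalent rank-two systems on $\mathbb{P}^1_z$, with the algebraic prefactors in the statement furnishing the gauge transformation between them.

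The core step is the ramification analysis. Using the factorizations of $f$, $g$, $f-1$, $g-1$ and the orders at $z=\infty$, I would locate the preimages of the three singular points $0,1,\infty$ of each $w$-equation and tabulate the ramification index $e$ at every preimage. At a preimage of a point whose $HDE$ local exponent difference is $\theta_0$, the pulled-back equation has local exponent difference $e\,\theta_0$; for the pullback to be hypergeometric one needs each such value either to reproduce (at the three genuine singularities of the $z$-equation) an exponent difference compatible with the common base, or to be an integer, signaling an apparent singularity that the prefactor removes. As a sample check, $f$ is totally ramified of index $6$ over $w=\infty$ at $z=-1/\alpha$, where the exponents $\tfrac{5}{24},\tfrac{9}{24}$ become $\tfrac54,\tfrac94$ (difference $1$), and the factor $(1+\alpha z)^{-5/4}$ shifts these to $\{0,1\}$, exactly killing that apparent singularity. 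I would carry this out at every preimage for both $f$ and $g$ and verify, via the Fuchs relation, that the two tables yield the \emph{same} configuration of singular points and local exponents on $\mathbb{P}^1_z$; this identifies the common equation and simultaneously pins down the exponents of the required algebraic prefactors.

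With the common equation fixed, I would determine the prefactors $\theta_f(z)$ and $\theta_g(z)$ by demanding that $\theta_f(z)\,{}_2F_1(\dots;f(z))$ and $\theta_g(z)\,{}_2F_1(\dots;g(z))$ be holomorphic and nonvanishing at $z=0$ (where $f(0)=g(0)=0$) and normalized to the same leading value. Because the space of solutions of the common equation that are holomorphic at the regular point $z=0$ with a prescribed leading coefficient is one-dimensional, equality of the two sides then reduces to matching a single Taylor coefficient, a finite check using the series for ${}_2F_1$ together with the leading terms of $f,g,\theta_f,\theta_g$. The second identity follows by the identical scheme with $c=\tfrac54$ in place of $c=\tfrac34$ and the correspondingly shifted exponents.

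The main obstacle will be the ramification bookkeeping for these two intricate maps: factoring $f-1$ and $g-1$ over $\Q(\alpha)$ and $\Q(\beta)$ and confirming that \emph{every} non-essential singularity of each pullback is genuinely apparent (integer exponent difference) is delicate, and it is precisely here that the arithmetic choices $\alpha^3=-3$ and $\beta^2=-2$ must intervene. A cleaner route I would keep in reserve is to invoke Vid\={u}nas's classification of algebraic transformations of Gauss hypergeometric functions \cite{Vidunas1} (in the tradition of Goursat \cite{Goursat}): recognize $f$ and $g$ as the $(4,6,6)$- and $(4,4,4)$-pullbacks appearing in those tables, whose existence and essential uniqueness are already established, thereby replacing the hands-on computation by a normalization check. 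Geometrically this transformation is the analytic shadow of a correspondence between the Shimura curves attached to the two triangle-group orbifolds, in the same spirit as the isogeny obtained in Theorem \ref{thm:isogeny} and the constructions of \cite{WIN3a,Takeuchi}.
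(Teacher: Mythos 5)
This theorem is quoted from \cite{Tu-Yang1}; the paper's own account of the proof is the paragraph following the statement: both families of series are realized as modular forms on the Shimura curves for the commensurable arithmetic triangle groups $(4,6,6)$ and $(4,4,6)$... more precisely $(4,6,6)$ and $(4,4,4)$, and the identities are read off as relations among modular forms for their intersection group $\Gamma$. Your monodromy identifications are correct (the Schwarz data do give $(4,6,6)$ for the left-hand equations and $(4,4,4)$ for the right-hand ones), the degrees $6$ and $10$ match the indices $[\,(4,6,6):\Gamma\,]=6$ and $[\,(4,4,4):\Gamma\,]=10$, and your sample exponent computation at $z=-1/\alpha$ is right. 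So the overall strategy --- pull both hypergeometric local systems back to $\mathbb{P}^1_z$ and compare --- is a genuinely different, and in principle viable, route from the modular one.

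However, there is a real gap at the step where you claim that matching ``the same configuration of singular points and local exponents'' identifies the common equation. The common pulled-back equation is (projectively) the uniformizing equation of the orbifold $\mathbb{H}/\Gamma$, which has well more than three elliptic points (the paper records six generators of order $4$; correspondingly your own ramification table for $f$ already produces at least five genuine singular points over $w=0$ alone, since $f$ has simple zeros at $z=0,\pm\tfrac13,\infty$ and a double zero at $z=1$). A second-order Fuchsian equation with more than three regular singular points is \emph{not} determined by its Riemann scheme: accessory parameters remain, one for each singular point beyond the third. Until those are shown to agree, the two sides satisfy equations with the same local data but not necessarily the same equation, and your one-dimensionality argument at $z=0$ cannot be invoked. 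The cited proof closes exactly this gap by uniformization: both pullbacks are the uniformizing equation of the \emph{same} Shimura curve $X_\Gamma$, which pins down the accessory parameters. Within your framework you would instead have to compute both pulled-back differential operators explicitly and verify that they coincide --- feasible but much heavier than the bookkeeping you describe. Note also that your reserve plan of citing Vid\={u}nas's classification \cite{Vidunas2} does not apply here: that classification concerns pullbacks $\varphi$ for which ${}_2F_1(\ldots;\varphi(z))$ is again, up to a radical prefactor, a hypergeometric function of $z$ itself (three singular points), whereas Theorem \ref{thm:Tu-Yang} equates two nontrivial pullbacks neither of which is hypergeometric in $z$; such ``two-sided'' transformations between commensurable non-inclusion triangle groups fall outside those tables.
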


 By \S \ref{sec:classcical 2F1-hyper}, the projective monodromy groups of the differential equations satisfied by $\pFq21{\frac 5{24}&\frac38}{&\frac 34}{z}$ and  $\pFq21{\frac {11}{24}&\frac58}{&\frac 54}{z}$ are both triangle groups. In fact they are isomorphic to two commensurable arithmetic triangle groups $(4,6,6)$ and $(4,4,4)$ respectively.
 In \cite{Tu-Yang1}, the authors obtain the transformations by interpreting hypergeometric series as modular forms on Shimura curves. In the case of the above theorem, all the modular forms  for  the arithmetic triangle group $(4,6,6)$ can be expressed in terms of
$$
  \pFq21{\frac 5{24}&\frac38}{&\frac 34}{t} \quad\mbox{and}\quad t^{1/4}\pFq21{\frac {11}{24}&\frac58}{&\frac 54}{t},
$$
and  for  the Shimura curve associated to $(4,4,4)$ can be expressed in terms of
$$
  \pFq21{\frac 18&\frac38}{&\frac 34}{u}  \quad\mbox{and}\quad u^{1/4}\pFq21{\frac 38&\frac58}{&\frac 54}{u},
$$
 where  both $t$ and $u$  are  suitable meromorphic modular functions.
All of them can be regarded as modular forms for the intersection group $\Gamma$ of $(4,6,6)$ and $(4,4,4)$, which is an arithmetic group  generated by six elliptic elements of order $4$ with a single relation. The algebraic transformations between these hypergeometric series come from the identities among the modular forms with respect to $\Gamma$.

We  have also observed numerically a finite field version of the second formula in Theorem \ref{thm:Tu-Yang}.  Namely,
\begin{multline}\label{eqn:conj2}
  \eta_{24}^6((1-z)(1+3z)(1+\alpha z))\ol\eta_{24}^9((1+z)(1-3z)) \pFFq21{\eta_{24}^{11}&\ol\eta_{24}^9}{&\eta_{24}^6}{f(z)}\\
 \overset{?}{=}\phi(1+(4+2\beta)z-(1+2\beta)z^2)\pFFq21{\eta_{24}^9&\ol\eta_{24}^9}{&\eta_{24}^6}{g(z)}.
\end{multline}
Equation \eqref{eqn:conj2} is equivalent to  Conjecture \ref{conj:FFT-Y}, which can be seen  by applying the first item of Proposition \ref{prop: normalized independent solutions} and the  multiplication formula in Theorem \ref{Hasse Davenport}. In fact this is predicted by the Galois perspective as the finite field versions correspond to the traces of two dimensional Galois representations.

 Furthermore, we have the following conjectures, based on numerical evidence computed using \texttt{Magma}, for $\F_q$ analogues of the algebraic transformations stated in Theorems 2, 4, 5 of  \cite{Tu-Yang1}.

\begin{Conjecture}\label{eqn:1conj}
Let $\eta_{20}$ be a primitive character of order $20$ and define $$f(z):=\frac{64z(1-z-z^2)^5}{(1-z^2)(1+4z-z^2)^5}.$$   Then when $f(z)\neq 0,1,\infty$, 
\begin{multline*}\\
  \pFFq21{\eta_{20}&\eta_{20}^5}{&\ol\eta_{20}^4}{f(z)}
=\eta_{20}(1-z^2)\eta_{20}^5(1+4z-z^2)\pFFq21{\eta_{20}^6&\eta_{20}^8}{&\ol\eta_{20}^2}{z^2}.
\end{multline*}
\end{Conjecture}

\noindent Conjecture \ref{eqn:1conj} is an analogue of the following theorem.

\begin{Theorem}[Theorem 2, \cite{Tu-Yang1}] For $z\in \C$ such that both sides converge,\bk
\begin{multline*}
  \pFq21{\frac 1{20}&\frac14}{&\frac 45}{\frac{64z(1-z-z^2)^5}{(1-z^2)(1+4z-z^2)^5}}\\
  =(1-z^2)^{1/20}(1+4z-z^2)^{1/4}\pFq21{\frac3{10}&\frac25}{&\frac9{10}}{z^2};
\end{multline*}
\begin{multline*}
  (1-z-z^2)\pFq21{\frac 9{20}&\frac14}{&\frac 65}{\frac{64z(1-z-z^2)^5}{(1-z^2)(1+4z-z^2)^5}}\\=(1-z^2)^{1/4}(1+4z-z^2)^{5/4}\pFq21{\frac12&\frac25}{&\frac{11}{10}}{z^2}.
\end{multline*}
\end{Theorem}

\noindent  Next, we have the following conjecture.

\begin{Conjecture}\label{eqn:2conj}
Let $\eta_{6}$ be a primitive character of order $6$ and $A$ a character with $A^6\neq \eps$. Then when $z\neq \pm 1,\pm 3$,
\begin{multline*}
A\eta_{6}(1+z)A^3\phi(1-z/3)\pFFq21{A^2\eta_{6}^2&A\eta_{6}^2}{&A^3}{z^2}\\
=\pFFq21{A\eta_{6}&A\phi}{&A^2 }{\frac{16z^3}{(1+z)(3-z)^3}}.
\end{multline*}
\end{Conjecture}

\noindent  Conjecture \ref{eqn:2conj} is an analogue of the following theorem.

\begin{Theorem}[Theorem 4, \cite{Tu-Yang1}]
For a rational number $a$ such that neither $3a+1$ nor $2a+1$ is a nonpositive integer, in a neighborhood of $z=0$,
\begin{multline*}
  (1+z)^{a+1/6}(1-z/3)^{3a+1/2}\pFq21{2a+\frac13&a+\frac13}{&3a+1}{z^2}\\=\pFq21{a+\frac16&a+\frac12}{&2a+1}{\frac{16z^3}{(1+z)(3-z)^3}}.
\end{multline*}
\end{Theorem}

\noindent  Our final conjecture is stated below.

\begin{Conjecture}\label{eqn:3conj}
Let $\eta_{12}$ be a primitive character of order $12$ and $A$ a character with $A^{12}\neq \eps$. Then when neither $-\frac{27z^2(1-z)}{1-9z}$ nor $-\frac{64z^3}{(1-z)^3(1-9z)}$ equals $0$, $1$, or $\infty$,
\begin{multline*}
  A^9\eta_{12}^9(1-z)\pFFq21{A^4\eta_{12}^4&A^2\eta_{12}^4}{&A^6}{-\frac{27z^2(1-z)}{1-9z}}\\
 =A\eta_{12}(1-9z)\pFFq21{A^3\eta_{12}^3&A\eta_{12}^3}{&A^4 }{-\frac{64z^3}{(1-z)^3(1-9z)}}.
\end{multline*}
\end{Conjecture}

\noindent  Conjecture \ref{eqn:3conj} is an analogue of the following theorem.

\begin{Theorem}[Theorem 5, \cite{Tu-Yang1}]
For a real number $a$ such that neither $6a+1$ nor $4a+1$ is a non-positive integer, then in a neighborhood of $z=0$,
\begin{align*}
  (1-z)^{9a+3/4}&\pFq21{4a+\frac13&2a+\frac13}{&6a+1}{-\frac{27z^2(1-z)}{1-9z}}\\
  &=(1-9z)^{a+1/12}\pFq21{3a+\frac14&a+\frac14}{&4a+1}{-\frac{64z^3}{(1-z)^3(1-9z)}}.
\end{align*}
\end{Theorem}

\noindent Note that it is unclear whether the transformation theorems of \cite{Tu-Yang1} cited here  satisfy the ($\ast$) condition or not as they are proved using automorphic forms and the proofs do not have direct finite field translations. \bk

\section{Appendix}\label{appendix}

In this appendix we address a few remaining topics not otherwise discussed in the bulk of this work. In \S \ref{appendix:Bailey} and \S \ref{ss:13.3} we offer alternate proofs and/or outlines of proofs for some classical results which demonstrate that the ($\ast$) condition is satisfied.

\subsection{Bailey $_3F_2$ cubic transforms}\label{appendix:Bailey}
 Here we show that \eqref{Bailey2} and \eqref{Bailey} satisfy the ($\ast$) condition.  See Bailey \cite{Bailey2} for the original proofs of these transformations.

First observe that from \eqref{classicbinom} we have
\begin{multline}\label{eq:binom-Bailey}
\binom{-a-3i}{n-i}=(-1)^{n}\frac{(a)_n(-n)_i(a+n)_{2i}}{n!(a)_{3i}}\\=(-1)^{n}\frac{(a)_n(-n)_i((a+n)/2)_i((a+n+1)/2)_i}{n!(a/3)_{i}((a+1)/3)_{i}((a+2)/3)_{i}}\(\frac4{27}\)^i,
\end{multline}
\begin{multline}\label{eq:binom-Bailey2}
\binom{-a-3i}{n-2i}=(-1)^{n}\frac{(a)_n(a+n)_i(-n)_{2i}}{n!(a)_{3i}}\\=(-1)^{n}\frac{(a)_n(a+n)_i(-n/2)_{i}((1-n)/2)_{i}}{n!(a/3)_{i}((a+1)/3)_{i}((a+2)/3)_{i}}\(\frac4{27}\)^i.
\end{multline}

\begin{proof}[proof of \eqref{Bailey}]
Note that the left hand side of \eqref{Bailey} is
\begin{multline*}
\sum_{i\ge 0}\frac{\(\frac a3\)_i\(\frac {a+1}3\)_i\(\frac {a+2}3\)_i}{i!(b)_i\(a+\frac 32-b\)_i}\(\frac{-27}{4}\)^ix^{i}(1-x)^{-a-3i}\\
=\sum_{i\ge 0}\frac{\(\frac a3\)_i\(\frac {a+1}3\)_i\(\frac {a+2}3\)_i}{i!(b)_i\(a+\frac 32-b\)_i}\(\frac{-27}{4}\)^ix^{i}\sum_{k\ge0}\binom{-a-3i}{k}(-x)^k\\
\overset{n=i+k}=\sum_{n,i\ge 0,n\ge i}\frac{\(\frac a3\)_i\(\frac {a+1}3\)_i\(\frac {a+2}3\)_i}{i!(b)_i\(a+\frac 32-b\)_i}\(\frac{27}{4}\)^i\binom{-a-3i}{n-i}(-x)^n\\
\overset{\eqref{eq:binom-Bailey}}=\sum_{n\ge 0}\frac{(a)_n}{n!}\pFq{3}{2}{-n&(a+n)/2&(a+n+1)/2}{&b&a+3/2-b}{1}x^n.
\end{multline*}
Applying the Pfaff-SaalSch\"utz formula \eqref{eq:pf-s}, this becomes
\begin{multline*}
\sum_{n\ge 0}\frac{(a)_n}{n!}\frac{(b-(a+n)/2)_n(b-(a+n+1)/2)_n}{(b)_n(b-a-n-1/2)_n}x^n\\
\overset{\eqref{eq:double-rising}}= \sum_{n}\frac{(a)_n}{n!}\frac{(2b-a-n-1)_{2n}}{(b)_n(b-a-n-1/2)_n}\(\frac x4\)^n\\
= \sum_{n}\frac{(a)_n\G(2b-a+n-1)\G(b-a-n-1/2)}{n!(b)_n\G(2b-a-n-1)\G(b-a-1/2)}\(\frac x4\)^n\\
= \sum_{n}\frac{(a)_n(2b-a-1)_n\G(2b-a-1)\G(b-a-n-1/2)}{n!(b)_n\G(2b-a-n-1)\G(b-a-1/2)}\(\frac x4\)^n\\
\overset{\text{Thm.} \ref{thm:reflect}}= \sum_{n}\frac{(a)_n(2b-a-1)_n\G(2-2b+a+n)\G(3/2-b+a)}{n!(b)_n\G(2-2b+a)\G(3/2-b+a+n)}\(\frac x4\)^n,
\end{multline*}
which agrees with the right hand side of \eqref{Bailey}.
\end{proof}

To prove \eqref{Bailey2} we note that one of $-n/2$ and $(1-n)/2$ is a nonpositive integer when $n\ge 0$.  Thus using \eqref{eq:binom-Bailey2} the proof of \eqref{Bailey2} follows similarly to that of \eqref{Bailey}.

\subsection{A proof of a formula by Gessel and Stanton}\label{ss:13.3}
Recall that equation \eqref{(5.18)Gessel-Stanton} states that
\begin{equation*}
\pFq{2}{1}{a&-a}{&\frac 12}{\frac{27x(1-x)^2}4}=\, \pFq{2}{1}{3a&-3a}{&\frac 12}{\frac{3x}4}.
\end{equation*}
 We give a proof here which demonstrates that \eqref{(5.18)Gessel-Stanton} satisfies the $(\ast)$ condition.

\begin{proof}[proof of \eqref{(5.18)Gessel-Stanton}]
Note that using the inversion formula \eqref{eq:Slater-Inversion} one has
\begin{equation}\label{lem:GS-cubic}
\binom{2n-2i}{i}  =\frac{(-1)^i}{i!}\frac{(-2n)_{3i}}{(-2n)_{2i}}=(-1)^i \frac{27^i}{4^i \cdot i!}\frac{(\frac{-2n}3)_i(\frac{1-2n}3)_i(\frac{2-2n}3)_i}{(-n)_i(-n+\frac 12)_i}.\end{equation}
Observe that
\begin{multline*}
\pFq{2}{1}{a &-a}{ & \frac 12}{\frac{27x(1-x)^2}4} =\sum_{k\ge 0}\frac{(a)_k(-a)_k}{(1)_k(\frac 12)_k}\(\frac{27x}4\)^{k}\sum_{i\ge 0}\binom{2k}{i}(-x)^i\\
\overset{n=k+i} =\sum_{n,i\ge 0}\frac{(a)_{n-i}(-a)_{n-i}}{(1)_{n-i}(\frac 12)_{n-i}}\(\frac{27}4\)^{n-i}\binom{2n-2i}{i} (-1)^{i} x^n\\
=\sum_{n \ge 0}\(\frac{27}4\)^{n}\frac{(a)_{n}(-a)_{n}}{(1)_{n}(\frac 12)_{n}}
\(\sum_{i}\frac{(-n)_{i}(\frac 12-n)_{i}}{(1-a-n)_{i}(1+a-n)_{i}}\(-\frac{27}4\)^{-i}\binom{2n-2i}{i} \) x^n.
\end{multline*}
By \eqref{lem:GS-cubic}, the above equals
\begin{multline*}
\sum_{n\ge 0}\frac{(a)_n(-a)_n}{(1)_n(\frac 12)_n}\pFq{3}{2}{\frac{-2n}3 & \frac{1-2n}3 & \frac{2-2n}3 }{ & 1+a-n & 1-a-n}{1}\(\frac{27x}4\)^n\\
\overset{\eqref{eq:pf-s2}}{=}\sum_{n\ge 0}\frac{(a)_n(-a)_n}{(1)_n(\frac 12)_n}\frac{\G(a+\frac {n+2}3)\G(a+\frac {n+1}3)\G(a+\frac n3)\G(1+a-n)}{\G(1+a-\frac n3)\G(a+\frac {2-n}3)\G(a+\frac {1-n}3)\G(a+n)}\(\frac{27x}4\)^n.\end{multline*}
By Theorem \ref{thm:multiplication}, we thus have
\begin{align*}
\pFq{2}{1}{a &-a}{ & \frac 12}{\frac{27x(1-x)^2}4} &= 3\sum_{n\ge 0}\frac{(a)_n(-a)_n}{(1)_n(\frac 12)_n}\frac{\G(3a+n)\G(1+a-n)}{\G(1+3a-n)\G(a+n)}\(\frac{3x}4\)^n\\
&=3\sum_{n\ge 0}\frac{(3a)_n(-a)_n}{(1)_n(\frac 12)_n}\frac{\G(1+a-n)\G(3a)}{\G(1+3a-n)\G(a)}\(\frac{3x}4\)^n\\
&=\sum_{n\ge 0}\frac{(3a)_n(-a)_n}{(1)_n(\frac 12)_n}\frac{\G(1+a-n)\G(3a+1)}{\G(1+3a-n)\G(a+1)}\(\frac{3x}4\)^n\\
&\overset{\text{Thm.} \ref{thm:reflect}}= \sum_{n\ge 0}\frac{(3a)_n(-a)_n}{(1)_n(\frac 12)_n}\frac{\G(-3a+n)\G(-a)}{\G(-3a)\G(-a+n)}\(\frac{3x}4\)^n\\
& = \pFq{2}{1}{3a&-3a}{&\frac 12}{\frac{3x}4}
\end{align*}
as desired.
\end{proof}



\printindex

\end{document}